\newcommand{\Fpred}[1]{#1_{\mathrm{pred}}}
\newcommand{\Fsucc}[1]{#1_{\mathrm{succ}}}
\newcommand{\hide}[1]{}
\newcommand{\half}{\mathbb{H}}
\newcommand{\sm}{\setminus}
\newcommand{\eps}{\varepsilon}
\newcommand{\Newpage}
{\newpage}
\theoremstyle{definition}
\newtheorem{Def}{Definition}[section]
\newtheorem{LemDef}[Def]{Lemma and Definition}
\newtheorem{CorDef}[Def]{Corollary and Definition}
\newtheorem*{notation}{Notation}
\newtheorem*{conv}{Convention}
\newtheorem{remarknumber}[Def]{Remark}
\theoremstyle{plain}
\newtheorem{Thm}[Def]{Theorem}
\newtheorem{Lem}[Def]{Lemma}
\newtheorem{Cor}[Def]{Corollary}
\newtheorem{Pro}[Def]{Proposition}
\newtheorem{maintheorem}{Theorem}
\theoremstyle{remark}
\newtheorem*{remark}{Remark}
\newcommand{\C}{\mathbb{C}} 
\newcommand{\CC}{\hat{\mathbb{C}}} 
\newcommand{\CT}[1]{\mathbb{C}_{#1}} 
\newcommand{\CTT}[1]{\mathbb{C}_{{#1}^{\mathsmaller{\mathsmaller{\infty}}}}} 
\newcommand{\D}{\mathbb{D}} 
\newcommand{\N}{\mathbb{N}} 
\newcommand{\Z}{\mathbb{Z}}
\newcommand{\extended}[1]{\widehat{#1}}
\newcommand{\unbounded}[1]{\accentset{\infty}{#1}}
\DeclareMathOperator{\landing}{land}
\newcommand{\landeq}{\sim_{\landing}}
\def\moverlay{\mathpalette\mov@rlay}
\def\mov@rlay#1#2{\leavevmode\vtop{
		\baselineskip\z@skip \lineskiplimit-\maxdimen
		\ialign{\hfil$\m@th#1##$\hfil\cr#2\crcr}}}
\newcommand{\charfusion}[3][\mathord]{
	#1{\ifx#1\mathop\vphantom{#2}\fi
		\mathpalette\mov@rlay{#2\cr#3}
	}
	\ifx#1\mathop\expandafter\displaylimits\fi}
\let\sv@endpart\@endpart
\def\@endpart{\thispagestyle{empty}\sv@endpart}
\newcommand*{\rom}[1]{\expandafter\@slowromancap\romannumeral #1@}
\newcommand{\cupdot}{\charfusion[\mathbin]{\cup}{\cdot}}
\newcommand{\bigcupdot}{\charfusion[\mathop]{\bigcup}{\cdot}}
\renewcommand{\Re}{\operatorname{Re}}
\DeclareMathOperator{\It}{It}    
\DeclareMathOperator{\cl}{cl}   
\DeclareMathOperator{\Ad}{Ad}   
\DeclareMathOperator{\Per}{PreP}
\newcommand{\ad}[1]{\underline{#1}} 
\newcommand{\AD}{\mathbf{S}} 
\newcommand{\ADPer}{\mathbf{S}^{\Per}} 
\newcommand{\Crit}[1]{\mathcal{C}(#1)} 
\newcommand{\itn}[1]{\mathtt{\underline{#1}}}
\newcommand{\itin}[2]{\It(#1~\vert~#2)}
\newcommand{\itinc}[1]{\It(#1~\vert~\mathcal{I})}
\newcommand{\itinl}[2]{\It^-(#1~\vert~#2)} 
\newcommand{\itinr}[2]{\It^+(#1~\vert~#2)}
\newcommand\restr[2]{\ensuremath{\left.#1\right|_{#2}}}
\newcommand{\EL}{\mathcal{B}} 
\newcommand{\internal}{\beta} 
\newcommand{\dread}[2]{G_{#1}[#2]}
\numberwithin{equation}{section}
\title[Combinatorics of filament pairs]{Filament Pairs, Dynamic Partitions, and Spiders for Post-Singularly Finite Entire Functions}
\author{David Pfrang}
\address{prognostica GmbH, Berliner Platz~6, 97080~W\"urzburg, Germany}
\email{david.pfrang@prognostica.de}
\author{S\"oren Petrat}
\address{Department of Mathematics and Logistics, Constructor University, Campus Ring~1, 28759 Bremen, Germany}
\email{\texttt{s.petrat@constructor.university}}
\author{Bernhard Reinke}
\address{Aix--Marseille Universit\'e, Institut de Math\'ematiques de Marseille, 163 Avenue de Luminy Case 901, 13009 Marseille, France
}
\email{bernhard.reinke@univ-amu.fr}
\author{Dierk Schleicher}
\address{Aix--Marseille Universit\'e, Institut de Math\'ematiques de Marseille, 163 Avenue de Luminy Case 901, 13009 Marseille, France}
\email{\texttt{dierk.schleicher@univ-amu.fr}}
\thanks{We gratefully acknowledge that this project was supported by a grant from the Deutsche Forschungsgemeinschaft (DFG Project number 316866235), as well as by the Advanced Grant ``Hologram'' of the European Research Council.}
\begin{document}
	
	\begin{abstract}
		Filaments are a natural generalization of the well-known concept of dynamic rays in complex dynamics. In this article we investigate which periodic or preperiodic filaments land together for arbitrary post-singularly finite transcendental entire functions. Our first main result is a combinatorial description of the landing relation of filaments in terms of the dynamic partitions of the space of external addresses. One of the main difficulties deals with taming the more complicated topology of filaments. In the end, filaments possess all the topological properties of dynamic rays that are essential for the construction of dynamic partitions. The results of this paper are the foundation for the development of combinatorial models, in particular homotopy Hubbard trees, for arbitrary post-singularly finite transcendental entire functions.

Our second mail result is that every postsingularly finite entire function has an iterated that possesses and invariant spider: spiders are, like homotopy Hubbard trees, an important tool for the combinatorial classification of postsingularly finite polynomials and presumably also for entire functions.
	\end{abstract}
	
	\maketitle
	
	\tableofcontents
	
	\section{Introduction}
	
	For the dynamics of iterated polynomials, the Julia sets tend to have very complicated topology, but they can often be described successfully in terms of symbolic dynamics. The underlying partition comes from pairs or groups of dynamic rays that ``land'' at a common point in the Julia set and thus decompose the Julia set into several parts. These ideas are the foundation for quite a lot of deep work, including the famous puzzle theory developed by Yoccoz and others.
	
	At the basis of this work are dynamic rays, invariant curves consisting of points that ``escape'', i.e.\ converge to $\infty$ under iteration. This is facilitated by the fact that, notably in the important case that the Julia set is connected, the set of escaping points is a disk around infinity (in the Riemann sphere) with very simple dynamics, coming from the fact that the point at $\infty$ is a superattracting fixed point.
	
	The situation is far more complicated for the dynamics of transcendental entire functions. The point at $\infty$ is an essential singularity, so even the function itself has wild behavior near $\infty$, let alone its dynamics. Yet the goal remains to decompose the Julia set (which may well be the entire complex plane) into natural pieces with respect to which one may introduce symbolic dynamics.
	
	For a large class of entire functions, it has been shown in \cite{RRRS} that dynamic rays exist (sometimes also called ``hairs''): these are maximal curves in $\C$ consisting of escaping points. When two such rays land at a common point, they decompose $\C$ and thus the Julia set as desired. However, there are entire functions that do not have any curves in the escaping set \cite[Theorem~8.4]{RRRS}, so rays do not exist in these cases. However, it has been shown in \cite{BR} that a more general structure called \emph{filaments} does exist in many cases: these are invariant continua consisting of escaping points, possibly not containing any curves, but they may still ``land'' in pairs or groups. Despite their possibly complicated topology, they can still partition $\C$ in exactly the same way as before, and thus lead to symbolic dynamics that makes it possible to describe the Julia set in combinatorial terms.
	
{A good number of results in transcendental dynamics depends on the assumption that maps have finite order of growth, which is a condition in \cite{RRRS} for the fact that all escaping points are organized in the form of rays. However, in many cases the rays themselves are not required for various properties, but only the structure that these rays give to the dynamical plane. A very similar structure can be obtained by filaments, so many known results should hold beyond finite order, or at least have natural analogues. }

	One of our conceptual goals is to introduce \emph{Hubbard trees} for post-singularly finite transcendental entire functions: these are invariant trees known from polynomial dynamics that are instrumental for distinguishing and classifying polynomial mappings. The present paper is the first in a sequence, based on the first author's PhD thesis \cite{DavidThesis}, that will accomplish this goal (up to homotopy); here, we prepare the way for symbolic dynamics that is of interest in its own right, and it lays the foundations for the development of homotopy Hubbard trees.

Another concept for symbolic dynamics of polynomials are \emph{spiders} as introduced in \cite{Spiders}: in a way, Hubbard trees describe polynomials ``from the inside'', while spiders describe them ``from the outside'', and both concepts are (at least in principle) equivalent. 

Here is our first main result; for detailed explanation, see Section~\ref{section:Spiders}.

\begin{maintheorem}[Existence of spiders]
\label{maintheorem:ExistenceSpiders}
Every postsingularly finite transcendental entire function has a spider, and it has an iterate that has an invariant spider. 
\end{maintheorem}

Our second main result is the following (stated in a simplified form).
	
	\begin{maintheorem}[Landing equivalence]
\label{maintheorem:landingequivalence}
		Let $f$ be an arbitrary post-singularly finite transcendental entire function. Then there exists an iterate $f^{\circ n}$ such that two periodic or preperiodic rays or filaments land together if and only if their external addresses $\ad s$ and $\ad t$ have equivalent itineraries with respect to a certain dynamic partition.
	\end{maintheorem}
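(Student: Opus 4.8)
The plan is to prove the two implications of the landing equivalence separately, first for periodic external addresses and then reducing the preperiodic case to it. Fix a suitable dynamic partition $\mathcal{D}$ of the space $\AD$ of external addresses --- built from a finite collection of periodic rays and dreadlocks, landing in matched groups, chosen so that their landing points separate the post-singular set and the relevant finite skeleton, together with finitely many of their preimages --- and write $\itin{\ad s}{\mathcal{D}}$ for the $\mathcal{D}$-itinerary of an address $\ad s$, $\shift$ for the shift on $\AD$, and $\landing(G_{\ad s})$ for the landing point of a periodic or preperiodic dreadlock $G_{\ad s}$ (a dynamic ray being the special case of a dreadlock that happens to be a curve), which exists by the results recalled earlier. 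I would isolate at the outset the role of the iterate $f^{\circ n}$: it synchronizes the periods of all the combinatorial data (periodic addresses, their itineraries, the partition pieces, the periodic partition dreadlocks) so that periodic objects may be treated as if fixed, and it arranges that each periodic partition dreadlock maps exactly onto itself, making $\mathcal{D}$ invariant under $f^{\circ n}$ in the strong sense required for the itinerary to be dynamically meaningful.

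\emph{Necessity.} Suppose $G_{\ad s}$ and $G_{\ad t}$ land at a common point $z\in\C$. Applying $f$ repeatedly, $G_{\shift^{k}\ad s}$ and $G_{\shift^{k}\ad t}$ land at $f^{\circ k}(z)$ for every $k\ge 0$, so it suffices to show that two dreadlocks landing at a common point receive the same $\mathcal{D}$-symbol; sampling along the forward orbit of $z$ then yields $\itin{\ad s}{\mathcal{D}}=\itin{\ad t}{\mathcal{D}}$. This last point uses the topological results of the paper that dreadlocks separate $\C$ exactly as dynamic rays do: each partition boundary is a matched group of dreadlocks landing at one of the distinguished points, so if $G_{\ad s}$ and $G_{\ad t}$ carried different symbols, some boundary group would separate them and would have to land at $z$ as well, forcing $z$ to be distinguished and $\ad s,\ad t$ to belong to that group --- a degenerate case absorbed by the convention fixing the symbol on partition boundaries. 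Once the dreadlock separation properties are in hand, this direction is essentially formal.

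\emph{Sufficiency, periodic case.} Assume $\ad s,\ad t$ are periodic with $\itin{\ad s}{\mathcal{D}}=\itin{\ad t}{\mathcal{D}}$; after passing to $f^{\circ n}$ they are fixed by the shift and $x:=\landing(G_{\ad s})$, $y:=\landing(G_{\ad t})$ are fixed points of $f^{\circ n}$. For each $m\ge 0$ the common itinerary places $G_{\ad s}$ and $G_{\ad t}$ in a single region $W_m\subseteq\C$ of the $m$-th refinement $\bigvee_{j=0}^{m}(f^{\circ n})^{-j}\mathcal{D}$, whence $x,y\in\overline{W_m}$ for all $m$. The heart of the argument is then an \emph{expansivity} property of $\mathcal{D}$: any two distinct landing points of periodic dreadlocks of the relevant periods lie in different regions of $\bigvee_{j=0}^{m}(f^{\circ n})^{-j}\mathcal{D}$ once $m$ is large. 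This is the transcendental counterpart of the classical fact that the dynamic partition of a post-critically finite polynomial separates its repelling and parabolic periodic points; it is exactly here that post-singular finiteness is used --- the post-singular set is finite, so a finite partition suffices and can be taken to separate the finite skeleton --- together with the earlier results comparing dreadlocks to rays. Granting it, $x=y$, so $G_{\ad s}$ and $G_{\ad t}$ land together.

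\emph{Sufficiency, preperiodic case, and the main obstacle.} For strictly preperiodic $\ad s,\ad t$ with equal itineraries, pick a common preperiod $\ell$ so that $\shift^{\ell n}\ad s$ and $\shift^{\ell n}\ad t$ are periodic with equal shifted itineraries; by the periodic case they land at one common periodic point $w$, so $x:=\landing(G_{\ad s})$ and $y:=\landing(G_{\ad t})$ are preimages of $w$ under $f^{\circ \ell n}$, and agreement of the initial itinerary block pins down the same preimage once one knows --- again from invariance of $\mathcal{D}$ together with the separation statement at the preperiodic level --- that distinct preimages of $w$ of the same generation lie in different refined regions. Hence $x=y$. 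I expect the decisive obstacle to be precisely this expansivity/separation property and the accompanying choice of the iterate $f^{\circ n}$: unlike in the polynomial setting there is no compactness forcing the diameters of the nested regions $\overline{W_m}$ to shrink, so the statement that no two distinct periodic or preperiodic landing points can share all their $\mathcal{D}$-symbols must be established by a combinatorial--topological argument, carried out while keeping control of the unbounded tracts, the possibly infinite alphabet of external addresses, and the more delicate topology of dreadlocks, which need not be arcs. Once this is in place the remaining steps are bookkeeping resting on the dreadlock--ray comparison developed in the earlier sections.
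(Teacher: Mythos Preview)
Your outline correctly identifies the overall architecture and, crucially, names the decisive obstacle: establishing that distinct periodic landing points cannot share all itinerary symbols. But you leave precisely this step as a black box --- you write that it ``must be established by a combinatorial--topological argument'' without saying what that argument is, and the shrinking-of-nested-regions picture you sketch is not how the paper proceeds and, as you yourself note, cannot work directly for lack of compactness.

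The paper's actual mechanism (Proposition~\ref{pro:UniqueItineraries}) is a Schwarz--Pick argument, not expansivity of refined partitions. One constructs the simply connected domain
\[
W \;=\; \C\setminus\bigcup_{i}\bigcup_{m\ge 0} f^{\circ m}\bigl(\overline{\dread{\ad{s}^i}{a_i}}\bigr),
\]
the complement of the finitely many forward-invariant (extended) dreadlocks attached to the post-singular set; this is exactly why Property~(\ref{en:ForwardInvariance}) of a \emph{simple} dynamic partition is demanded, and why the passage to an iterate (Proposition~\ref{pro:SimpleExistence}) is needed. For each partition sector $D$ the unique inverse branch $f_D^{-1}$ restricts to a univalent self-map of $W$. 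If two periodic points $p\ne q$ both realized the periodic itinerary $\overline{D_0\ldots D_{m-1}}$, then the composition $g:=f_{D_0}^{-1}\circ\cdots\circ f_{D_{m-1}}^{-1}\colon W\to W$ would fix both, forcing $g=\id$ by the Schwarz lemma --- a contradiction. No metric shrinking of your sets $\overline{W_m}$ is ever claimed; the uniqueness is purely conformal. Boundary cases ($p\in\partial W$) need the additional Lemma~\ref{lem:fixedPoints}, which manufactures an approximating backward orbit inside $W$ and then compares hyperbolic lengths.

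A second, smaller gap concerns the preperiodic step. Your claim that ``agreement of the initial itinerary block pins down the same preimage'' fails when the common periodic landing point has a \emph{critical} preimage $c\in C(\extended{f})$: several dreadlocks landing at $c$ can carry genuinely different first symbols. The paper does not try to repair this at the level of the partition; instead it enlarges the equivalence on itineraries (relation~(\ref{en:JuliaCritical}) of Theorem~\ref{thm:landingEquivalence}): addresses whose forward orbits simultaneously enter the same critical set $\Crit{c}$ are declared equivalent even when their left-sided itineraries differ. Without this extra identification the ``only if'' direction of the landing equivalence is false at critical preimages, so the precise form of the equivalence relation is part of the content, not a convention to be absorbed.
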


A precise version of our main result will be proved below as Theorem~\ref{thm:landingEquivalence}. 

Let us make a few explanatory comments. First of all,
	our result holds for arbitrary post-singularly finite entire functions, without any of the usual restrictions such as on its growth (in terms of a ``finite order'' condition). In particular, there may or may not be rays among the escaping points; the escaping points are always organized in the form of rays or filaments \cite[Corollary 4.5]{BR}. It is known \cite[Theorem 8.1]{BR} that in the post-singularly finite case, every periodic ray, and every periodic filament, \emph{lands} in the sense that the closure in $\C$ of the ray or filament contains exactly one additional point, which is the landing point. 
	Moreover, we show in Section~\ref{section:TopologyOfFilaments} that if $k\ge 1$ rays or filaments land at a common point, then their union, together with the common landing point, decomposes $\C$ into exactly $k$ connected components, all of which are open. This provides a combinatorially controlled dynamically meaningful decomposition of $\C$ (and hence of the Julia set), which provides the foundation for further work, in particular our existence proof of homotopy Hubbard trees as developed in \cite{DavidThesis}. 
	
Technically, our result only describes the landing relation, and invariant spiders, for an iterate of $f$, it is useful for $f$
	itself because the filaments of $f$ and $f^{\circ n}$ are the same \cite[Observation 4.13]{BR}. There are two reasons why we replace the function $f$ by an iterate. The first is that a repelling fixed point may not be the landing point of a fixed ray or a fixed filament, but always of a periodic ray or filament (and analogous results are true for periodic points). For an appropriate iterate, the landing rays or filaments are fixed. The second reason occurs for superattracting fixed (or periodic) points: these are of course not landing points of rays or filaments, so we construct \emph{extended filaments} that consist of a filament that lands on the boundary of the Fatou component, and in internal ray within this Fatou component. There are only finitely many fixed internal rays around every superattracting periodic points, and if the landing points of these internal rays on the boundary of the Fatou components happen to be singular values as well, then the (extended) filaments to each singular value fail to be disjoint. This is remedied by choosing a periodic internal ray that lands at a boundary point that is not in the postsingular set. 

	Our paper is organized as follows. In Section~\ref{Sec:Background}, we introduce some general facts about the dynamics of
	post-singularly finite transcendental entire functions. More importantly, we introduce an extension of the complex plane by adding
	iterated preimages of infinite degree for asymptotic values as limit points of asymptotic tracts. This extension is necessary
	in order to deal with preperiodic rays and filaments: even for the simplest case of exponential maps, there are preperiodic dynamic rays that
	do not land anywhere in the finite plane, but that do land in a meaningful way ``at infinity'' in our extension of the plane.
	
	In Section~\ref{section:PsfDynamics}, we collect results about the dynamics of post-singularly finite entire functions, specifically about the Fatou set and the escaping set. We give an overview of results established in \cite{BR} regarding the canonical decomposition of the escaping set into filaments
	and the combinatorial description of the escaping dynamics via external address.
	
	In Section~\ref{section:LandingOfFilaments}, we introduce the concept of \emph{landing} of filaments as established in \cite[Definition 6.4]{BR} and adjust
	it in order to take into account preperiodic filaments landing at points in our extension of the complex plane. One might think that once the landing of all periodic filaments has been introduced in \cite{BR}, it follows automatically that all preperiodic filaments land as well. This is indeed true when the landing point is in $\C$, but we need to be more careful for those preperiodic filaments that land ``at infinity''. The resulting landing results for preperiodic filaments might be useful in their own right.
	
	Section~\ref{section:TopologyOfFilaments} is about establishing some topological facts about filaments. Most notably, we show that filaments that
	land together separate the plane in the same way as dynamic rays. We also show that every preperiodic point in the extended plane is the landing point
	of a preperiodic filament and vice versa.
	
	In Section~\ref{section:DynamicalPartitions}, we introduce dynamic partitions both in the plane and in the space of external addresses of external addresses and establish
	a natural bijection between partition sectors in the plane and in the space of external addresses, and investigate the topology of partition sectors.
	
In Section~\ref{section:Spiders} we establish the existence of {what we call} \emph{simple dynamic partitions} and show that
	every post-singularly finite entire function has an iterate that admits a simple dynamic partition. An immediate consequence is the existence of an invariant spider for this iterate, and thus the existence of a spider for the original function; this proves Main Theorem~\ref{maintheorem:ExistenceSpiders}.

	In Section~\ref{section:Itineraries}, we introduce itineraries with respect to dynamic partitions. Essentially, the itinerary of a point is the sequence of
	partition sectors into which the point is mapped under iteration of the function. However, there are some boundary cases that need to be discussed and certain
	itineraries need to be identified via an adjacency relation because they are in a certain sense realized by the same point.

Section~\ref{section:LandingEquivalence} is the conclusion of this paper. Using the simple dynamic partitions, we describe
	which (pre\nobreakdash)periodic filaments land together via an explicit equivalence relation on the space of external addresses. Essentially, two filaments
	land together if and only if their external addresses have the same itinerary, but the preperiodic case is a bit more complicated because of the existence of
	critical points that lie on the boundary of the dynamic partition. This yields a proof of Main Theorem~\ref{maintheorem:landingequivalence}.

\emph{Note.} The concept of \emph{filament} was originally introduced in \cite{BR} under the name of \emph{dreadlock}, and we had used that name also in a first preprint version of this paper. 

\emph{Acknowledgements}.
This project owes a lot to interesting discussions with many people. Anna Miriam Benini and Lasse Rempe developed the concept of filaments and generously shared their expertise with us. With Dima Dudko and Kostya Drach we have had many interesting discussions on this project (and more); Kostya also provided most of the pictures. Finally, we had valuable discussions with Michael Rothgang especially during the initial phase of the project, which is based on a previous joint paper with him \cite{PRS}. We are most grateful to all of them.

\section{Background and conventions}
	\label{Sec:Background}	

Let us start with the following convention.
	\begin{conv}
	When we speak about an entire function $f$ without further qualification, we mean a \emph{transcendental} entire function.
	\end{conv}
	
	For an entire function $f$, a \emph{critical point} is a point $w\in\C$ with $f'(w)=0$; the associated image $f(w)$ is called a \emph{critical value}. An \emph{asymptotic value} is a point $a\in\C$ such that there is a curve $\gamma\colon[0,\infty)\to\C$ for which, as $t\to\infty$, we have $\gamma(t)\to\infty$ and $f(\gamma(t))\to a$. More generally, a \emph{singular value} is a point $a\in\C$ for which there does not exist a radius $r>0$ so that $f^{-1}(D_r(a))$ is a union of disjoint topological disks so that $f$ maps each of them homeomorphically onto $D_r(a)$. The function is of \emph{finite type}
	if it has only finitely many singular values. In this case, every singular value is either a critical value or an asymptotic value \cite{S2}. We denote the \emph{set of singular values} of $f$ by $S(f)$. It is well-known that $f\colon\C\setminus f^{-1}(S(f))\to\C\setminus S(f)$ is a covering \cite[Lemma 1.1]{GK}. As we will often consider branched coverings over a punctured disk, we give here a classification of such branched covers up to conformal equivalence.
	\begin{Lem}[Coverings of punctured conformal disks]\label{lem:PuncturedCoverings}
	Let $f$ be an entire function, and let $U\subsetneq\C$ be a simply connected domain such that $U\cap S(f)\subset\{a\}$. Let $V$ be a connected component of $f^{-1}(U)$. Then $V$ is simply connected, and exactly one of the following cases is true:
	\begin{enumerate}
		\item There exist biholomorphic maps $\psi\colon V\to\D$ and $\phi\colon U\to\D$ and an integer $d\ge 1$ such that $\phi(a)=0$ and $\phi\circ f\circ \psi^{-1}(z)=z^d$ for all $z\in\D$.
		\item There exist biholomorphic maps $\psi\colon V\to \half$ and $\phi\colon U\to\D$, where $\half:=\{z\in\C\colon \Re(z)<0\}$ is the left half-plane, such that $\phi(a)=0$ and $\phi\circ f\circ\psi^{-1}(z)=\exp(z)$ for all $z\in H$.
	\end{enumerate}
	\end{Lem}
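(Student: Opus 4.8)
The plan is to show first that $V$ is simply connected, and then to split into cases according to whether the covering $f\colon V\setminus f^{-1}(a)\to U\setminus\{a\}$ has finite or infinite degree.

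\textbf{Simple connectedness of $V$.} Since $U\cap S(f)\subseteq\{a\}$, the restriction $f\colon\C\setminus f^{-1}(S(f))\to\C\setminus S(f)$ being a covering gives that $f\colon f^{-1}(U\setminus\{a\})\to U\setminus\{a\}$ is a covering (as $U\setminus\{a\}\subseteq\C\setminus S(f)$). Let $V$ be a component of $f^{-1}(U)$, and let $V'=V\setminus f^{-1}(a)$; then $V'$ is a connected covering space of $U\setminus\{a\}$, an annulus (since $U$ is simply connected and $a\in U$, $U\setminus\{a\}$ is homotopy equivalent to a circle, so $\pi_1=\Z$). Connected covers of an annulus are again annuli or disks (the universal cover being a disk, intermediate covers corresponding to subgroups $n\Z$ of $\Z$). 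To pass from $V'$ to $V$ one adds back the discrete set $V\cap f^{-1}(a)$; I would argue this set consists of a single point (if the degree is finite) or is empty (if the degree is infinite), using that $f$ is proper onto $U$ in the finite-degree case and open in general, and conclude that $V$ is simply connected. A clean way is to use that $f$ restricted to a neighborhood of each preimage of $a$ in $V$ is a finite branched cover of a disk, so locally it looks like $z\mapsto z^k$; filling in these punctures of the annulus/disk $V'$ keeps it simply connected provided there is at most one such puncture, which will follow from connectedness of $V$ together with the covering structure.

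\textbf{The two cases.} Consider the covering $f\colon V'\to U\setminus\{a\}$ with $V'$ connected; it has a well-defined degree $d\in\N\cup\{\infty\}$. If $d<\infty$, then $f\colon V\to U$ is a proper holomorphic map of degree $d$ between simply connected domains, branched only over $a$, and by the Riemann mapping theorem we may choose $\phi\colon U\to\D$ with $\phi(a)=0$; then $\phi\circ f\colon V\to\D$ is a degree-$d$ proper map with a single critical value $0$ whose unique preimage is the point $v_0\in V\cap f^{-1}(a)$. Such a map is, after a Riemann map $\psi\colon V\to\D$ normalized so that $\psi(v_0)=0$, conformally conjugate to $z\mapsto z^d$: indeed $\phi\circ f\circ\psi^{-1}$ is a proper self-map of $\D$ of degree $d$ with a single critical point at $0$, hence a Blaschke product of the form $c\,z^d$ with $|c|=1$, and absorbing $c$ into $\psi$ gives case (1). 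If $d=\infty$, then $V'\to U\setminus\{a\}$ is the universal covering of the annulus; uniformizing $U$ by $\phi\colon U\to\D$ with $\phi(a)=0$ turns this into the universal cover $\D^*\to\D^*$ of the punctured disk, which is $\exp\colon\half\to\D^*$ (where $\half=\{\Re z<0\}$, via $z\mapsto e^z$). Pulling back through $\psi\colon V\to\half$, a Riemann map (note $V$ is simply connected and $\neq\C$, as $V\subseteq f^{-1}(U)$ and $U$ is not all of $\C$ — this is where the hypothesis $U\subsetneq\C$ is used), we get $\phi\circ f\circ\psi^{-1}(z)=\exp(z)$ after possibly post-composing $\psi$ with a translation to fix the remaining ambiguity (a deck transformation $z\mapsto z+2\pi i k$ of $\exp$). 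This yields case (2), and the two cases are clearly mutually exclusive since $z\mapsto z^d$ has a critical point while $\exp$ does not.

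\textbf{Main obstacle.} The routine part is the uniformization and the Blaschke-product normal form; the point requiring genuine care is the structural claim that $V\cap f^{-1}(a)$ is a single point in the finite case and empty in the infinite case, equivalently that $V'$ is connected and that attaching the preimages of $a$ does not destroy simple connectedness. I expect to spend most of the effort there, using properness of $f$ over $U$ (which needs an argument: a priori $f^{-1}(U)$ could meet the boundary behavior of $f$, but since $U$ avoids singular values except $a$ and $f$ is a covering over $U\setminus\{a\}$, each component $V$ maps properly onto $U$), and the local branched-covering normal form near points of $f^{-1}(a)$. Once properness and the degree count are in hand, the rest is a matter of assembling standard facts about coverings of the punctured disk.
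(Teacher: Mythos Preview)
Your argument is essentially correct and, in fact, considerably more detailed than what the paper does: the paper simply cites Forster's \emph{Lectures on Riemann Surfaces}, Theorems~5.10 and~5.11, and moves on. What you have sketched is the standard proof underlying those theorems, so there is no real divergence in approach---you are just unpacking the reference.

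One point of imprecision worth flagging: you write that ``each component $V$ maps properly onto $U$,'' but this is false in case~(2). When the degree is infinite, $f\colon V\to U$ omits the value $a$ entirely and is not proper (indeed $\exp\colon\half\to\D$ misses $0$). Properness holds only in the finite-degree case, and that is exactly where you need it to conclude $V\cap f^{-1}(a)$ is a single point. In the infinite-degree case the right argument is the one you also hint at: $V'=V$ is already simply connected (being the universal cover of the punctured disk), and a hypothetical preimage of $a$ in $V$ would give a loop in $V'$ whose $f$-image has nonzero winding number around $a$, contradicting simple connectedness of $V'$. So the dichotomy you want---one preimage of $a$ if $d<\infty$, none if $d=\infty$---is correct, but the reasoning should be split cleanly rather than appealing to properness uniformly.

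The normalization steps (Blaschke product in the finite case, uniqueness of the universal cover in the infinite case) are routine, as you say.
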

	\begin{proof}
		This is a classical fact from the covering theory of Riemann surfaces; see \cite[Theorems 5.10 and 5.11]{F}.
	\end{proof}
	
	The forward orbits of the singular values form the \emph{post-singular set}
	\[
		P(f):=\overline{\bigcup_{a\in S(f)}\bigcup_{n\geq 0} f^{\circ n}(a)}.
	\]
	\begin{Def}[Post-singulary finite (psf) entire functions]
	The function $f$ is called \emph{post-singularly finite (psf)} if $\vert P(f)\vert < \infty$.
	\end{Def}
	
In particular, a psf function is of finite type and all singular values have finite orbits (i.e., are periodic or preperiodic). Post-singulary finite entire functions are the focus of this paper.
	Within any parameter space of entire functions, they are important representatives that help to understand the general case, and provide structure to parameter space. Their dynamics is in many ways simpler than that of arbitrary functions. In particular, we have the following well-known result.
	
	\begin{Pro}[Periodic points and Fatou components of psf maps]
		Every post-singularly finite entire function has only finitely many superattracting periodic orbits, and all other periodic orbits are repelling. 		
		Every Fatou component is eventually mapped to a cycle of superattracting Fatou components.\label{Pro:PsfProperties}
	\end{Pro}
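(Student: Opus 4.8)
The plan is to derive this from the standard structure theory of finite-type entire dynamics. Since $f$ is psf it is in particular of finite type, and $P(f)$ is a finite, forward-invariant set containing $S(f)$. I would first reduce everything to periodic cycles: by classical results on finite-type functions (Eremenko--Lyubich; Goldberg--Keen), $f$ has no wandering domains and no Baker domains, and since $f$ then has no multiply connected Fatou components (Baker's theorem) every Fatou component is simply connected. In particular every Fatou component is pre-periodic, i.e.\ some forward image of it lies in a periodic cycle of Fatou components, so it suffices to classify the periodic cycles of Fatou components and to show that the only non-repelling periodic points are the (super)attracting ones.

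Next I would rule out all periodic Fatou cycles except attracting ones. By the classical classification of periodic Fatou components, together with the absence of Baker domains, a periodic cycle of Fatou components of $f$ is attracting, parabolic, a Siegel disk, or a Herman ring. A Siegel disk or Herman ring is impossible: its boundary is contained in $\overline{\bigcup_{n\ge 0} f^{\circ n}(S(f))}=P(f)$ while being an infinite set, contradicting $|P(f)|<\infty$. A parabolic cycle is impossible: its immediate basin contains a singular value $v$ whose forward orbit converges to the cycle, but the parabolic periodic points lie in the Julia set whereas the orbit of $v$ stays in the Fatou set, so this orbit never meets the cycle and hence consists of infinitely many distinct points --- again contradicting $|P(f)|<\infty$.

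The main obstacle is excluding attracting cycles that are not superattracting. Suppose $f$ had one, of period $p$ and multiplier $\lambda$ with $0<|\lambda|<1$. Its immediate basin is a cycle of simply connected hyperbolic domains on which $f^{\circ p}$ acts as a hyperbolic contraction fixing the attracting periodic point $z_0$, and it contains a singular value $v$ whose $f^{\circ p}$-orbit converges to $z_0$. If that orbit never reaches $z_0$ it is infinite and we are done; the delicate point is to exclude that the orbit of $v$ lands \emph{exactly} on the cycle. I would handle this with a Koenigs linearization at $z_0$, which is available since $\lambda\neq 0$: once the orbit enters a linearizing neighborhood it is essentially a nonzero geometric sequence and cannot hit $z_0$, and a short additional argument --- equivalently, using that $f$ restricts to a covering $\C\setminus f^{-1}(P(f))\to\C\setminus P(f)$ with $f^{-1}(P(f))\supseteq P(f)$, so that $f$ does not contract the hyperbolic metric of $\C\setminus P(f)$ --- shows the orbit cannot have reached $z_0$ before entering that neighborhood either. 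Hence the orbit of $v$ is infinite, contradicting $|P(f)|<\infty$; so every attracting cycle is superattracting and every other periodic cycle is repelling.

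Finally, a superattracting cycle contains a critical point, so the corresponding critical value is a singular value lying on the cycle and the whole cycle is contained in the finite set $P(f)$; hence there are only finitely many superattracting cycles. Combined with the first paragraph, the only periodic cycles of Fatou components are these finitely many superattracting ones, and every Fatou component is eventually mapped into one of them. In the actual write-up I would, as is customary for background statements of this kind, either spell out the linearization/orbifold argument in the third step or simply cite the standard references (e.g.\ Bergweiler's survey together with the finite-type literature), since the proposition is classical; the only place where genuine care is required is the ``no direct landing'' point in that third step.
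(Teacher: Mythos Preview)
Your overall strategy is sound and is essentially a spelled-out version of what the paper obtains by citation: the paper simply invokes \cite{S2} for the absence of wandering and Baker domains and for the fact that attracting/parabolic basins force an infinite post-singular orbit, and \cite{M} for Cremer points. Your more hands-on route via Koenigs linearization and the hyperbolic metric is fine in principle, though the ``no direct landing'' step is a little breezy as stated.

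There is, however, a genuine gap. You reduce the statement to showing that the only non-repelling periodic points are superattracting, and you then rule out non-super attracting, parabolic, Siegel and Herman. But irrationally indifferent periodic points come in two flavours: Siegel centres and Cremer points. Your Siegel argument goes through the Fatou-component classification (the boundary of the rotation domain lies in $P(f)$), and that says nothing about Cremer points, which lie in the Julia set and have no associated Fatou component. As written, your conclusion ``so every attracting cycle is superattracting and every other periodic cycle is repelling'' does not follow. The paper closes this gap explicitly: it notes that a periodic point in the Julia set is either repelling or Cremer, and that every Cremer point is an accumulation point of $P(f)$ (Milnor, Corollary~14.4, whose proof carries over to the transcendental setting), which is impossible when $P(f)$ is finite. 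You should add exactly this sentence; the argument is the same ``$P(f)$ would have to be infinite'' mechanism you already use for Siegel boundaries.

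Two minor remarks. First, once you have observed that all Fatou components are simply connected, Herman rings are already excluded, so mentioning them separately is harmless but redundant. Second, your hyperbolic-metric aside (``$f$ does not contract the hyperbolic metric of $\C\setminus P(f)$'') points in the right direction but is not quite the statement you need to exclude a singular orbit landing exactly on a non-super attracting cycle; if you want to avoid citing the standard result (as the paper does via \cite[Theorem~2.3]{S2}), you should make that step precise.
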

	\begin{proof}
	For an entire function of finite type, every Fatou component is eventually mapped to a superattracting, attracting, or parabolic component, or to a Siegel disk. This is well known; see for example  \cite[Theorem 2.1 and Theorem 3.4]{S2}. Every attracting or parabolic cycle of Fatou components contains a singular value with infinite forward orbit, and every boundary
		point of a Siegel disk is the limit point of post-singular points \cite[Theorem 2.3]{S2}. Therefore, a psf entire function can only have superattracting Fatou components. As the
		post-singular set is finite, there can only be finitely many of them.
		
		A periodic point in the Julia set is either repelling or it is a Cremer point. By \cite[Corollary 14.4]{M}, every Cremer point is a limit point of
		post-singular points (the proof of \cite[Corollary 14.4]{M} is for rational functions, but the same proof works in the transcendental case).
		Therefore, a psf entire function cannot have Cremer points and every periodic point in the Julia set is repelling.
	\end{proof}	
	
	\subsection{An extension of the complex plane}
	\label{Sub:PlaneExtension}
	
	For an entire function $f\colon\C\to\C$, we define two extensions $\CTT{f}\supset\CT{f}\supset\C$ of the complex plane. The extension $\CT{f}$ is obtained by adding all transcendental singularities of $f^{-1}$ to $\C$ (``{limits of} asymptotic tracts''), while $\CTT{f}$ is the dynamical version of $\CT{f}$ obtained by adding the transcendental singularities of the inverse function for every iterate of $f$. The extension $\CT{f}$ is classical and exists for every non-constant holomorphic map between Riemann surfaces; see \cite{E2}.
	
	This construction can be carried out for all entire functions (compare \cite[Section~2.4]{DavidThesis}), but it is simpler in our case of functions that have only finitely many singular values. Let $a\in\C$ be an asympotic value and $V$ be an associated asymptotic tract: this is a domain $V\subset \C$ so that $f\colon V\to D_r(a)\setminus\{a\}$ is a universal cover for some $r=r(a)>0$ (a more natural definition for a tract $V$ is if there is a topological disk neighborhood $U$ of $a$ such that $f\colon V\to U\sm\{a\}$ is a universal cover). Two asymptotic tracts over the same asymptotic value are called equivalent if they have a common restriction to another asymptotic tract. We will identify asymptotic tracts with their equivalence classes.
	
  In order to construct $\CT{f}$, we add an additional point at $\infty$ for every asymptotic tract. We turn this into a topological space as follows: consider a particular tract $V$ over the asymptotic value $a$, denote the additional point at $\infty$ corresponding to $V$ by $T$, and define as a neighborhood basis of $T$ the sets $f^{-1}(D_r(a))\cap V$ for all $r\in(0,r(a))$. Then $f$ extends continuously to a map $\extended f\colon V\cup\{T\}\to D_r(a)$ by setting $\extended f(T)=a$.

	We define $\C_f$ as the complex plane, extended by additional points at all asymptotic tracts over all asymptotic values; see Figure~\ref{Fig:ExtendedPlane}. We have a {a continuous extension $\extended f\colon \CT f\to\C$}.

The prototypical case is $f=\exp$ where we have a single asymptotic tract, and we have an additional point $T$ that is often denoted by $-\infty$ and that maps to $0$. This case is general in the following sense.
	
	\begin{Lem}[Logarithmic singularities]
		\label{Lem:TranscSingularities}
		For a map $f$ with asymptotic tract\/ $V$ over the asymptotic value $a\in\C$, write $\extended V:=V\cup\{T\}$, where $T$ denotes the additional point at $\infty$. If $V$ is such that $f\colon V\to D_r(a)\setminus\{a\}$ is a universal cover, then there exist Riemann maps $\phi\colon D_r(a)\to\D$ and $\psi\colon V\to \half$ so that $\psi$ extends to a homeomorphism $\psi\colon \extended V\to \half\cup\{-\infty\}$ and so that the following diagram commutes:
\centerline{\includegraphics[width=0.4\textwidth,trim=0 0 0 5,clip]{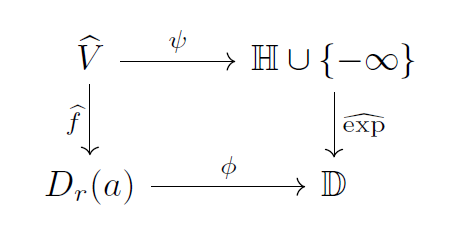}}
	\end{Lem}
	\begin{proof}
		This follows easily from Lemma~\ref{lem:PuncturedCoverings} and the definition of the topology at asymptotic tracts.
	\end{proof}

\begin{figure}[ht]
		\includegraphics[width=0.8\textwidth]{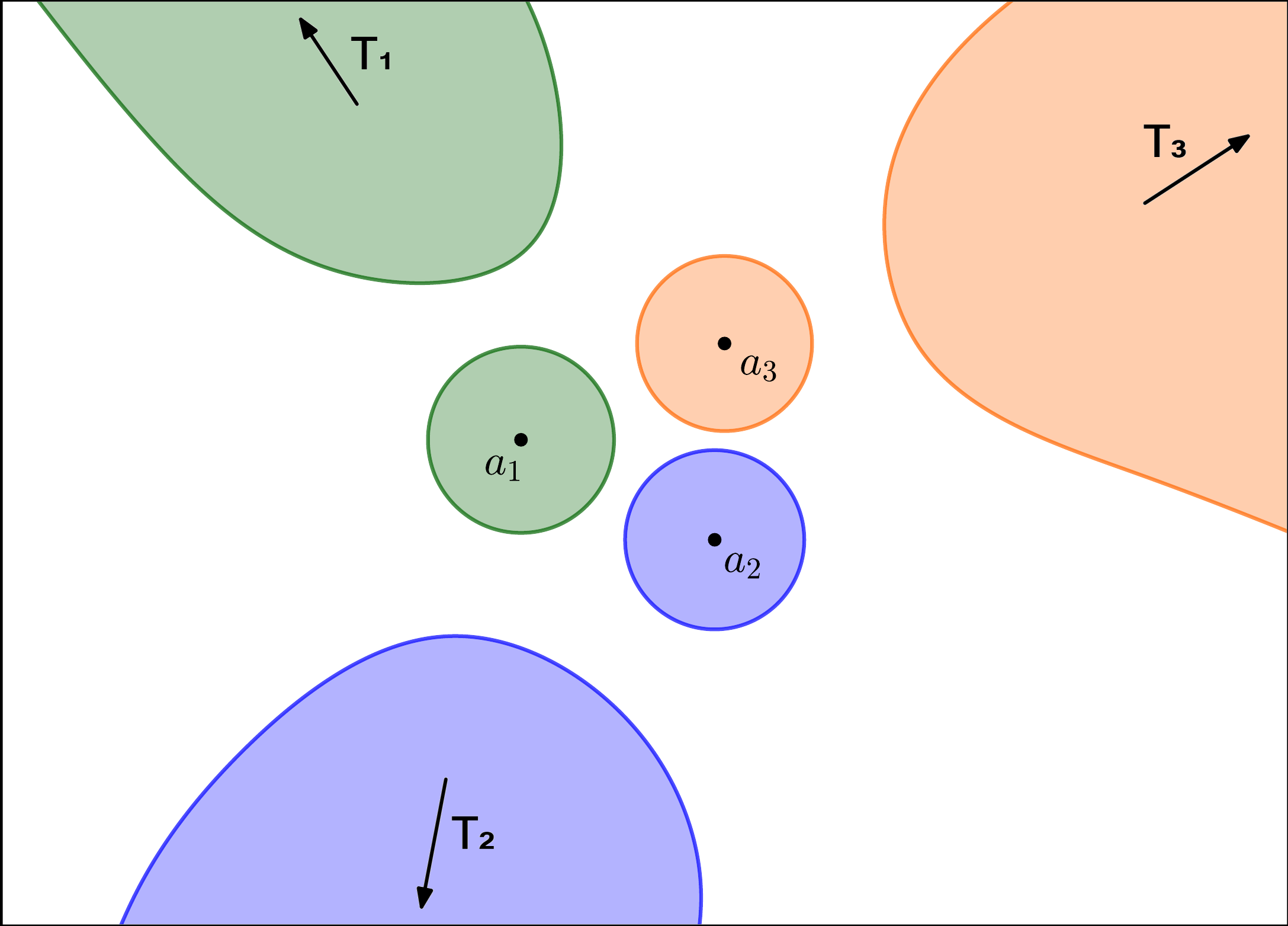}
		\caption{Sketch of a function with three transcendental singularities over three distinct asymptotic values.}
		\label{Fig:ExtendedPlane}
	\end{figure}

Now we proceed to define a dynamical version of $\CT{f}$ that we denote by $\CTT{f}$. In addition to the asymptotic tracts of $f$, we also
	add for every asymptotic value $a\in S(f)$ the asymptotic tracts of every iterate of $f$ to the complex plane (in other words, we add iterated preimages to each extended point in $\CT{f}$). Note that we also need to identify
	tracts for different iterates of $f$ as follows: a sufficiently small tract of $f^{\circ n}$ for an asymptotic value $a$ is also a tract for $f^{\circ (n+1)}$ for the asymptotic value $f(a)$ (strictly speaking, here we need the more natural definition that a tract covers a punctured neighborhood of $a$ that need not necessarily be a round disk). We then set 
	\[
	\CTT{f}:=\C\cup\left(\bigcup_{n\geq 1}\bigcup_{a\in S(f)}\bigcup_{\substack{\text{eq. classes of asympt.}\\\text{tracts of}f^{\circ n}\text{ over }a}}\{T\}\right)/\sim
	\;,
	\]
	where $\sim$ denotes the equivalence relation that identifies tracts for different iterates of $f$. Our function $f$ extends to a continuous map $\extended{f}\colon\CTT{f}\to\CTT{f}$. {In other words, $\CTT f$ extends $\CT f$ by in the minimal way by adding all backwards orbits of the points added in $\CT f$.  }

\begin{Pro}[Basic properties of the extension $\CTT{f}$]
		Let $f$ be an entire function for which the set $S(f)$ of singular values is finite. Then the extended map $\extended{f}\colon\CTT{f}\to\CTT{f}$ is a covering over $\CTT{f}\setminus S(f)$. If $g=f^{\circ n}$ is an iterate of $f$, we have $\CTT{f}=\CTT{g}$.\label{pro:ExtensionCovering}
	\end{Pro}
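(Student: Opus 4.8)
The plan is to prove the two assertions separately, in both cases reducing everything to the local normal forms supplied by Lemma~\ref{lem:PuncturedCoverings} and Lemma~\ref{lem:LogarithmicSingularities}.

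\emph{The covering property.} Note first that finite-type functions are closed under iteration (the singular values of $f^{\circ m}$ lie in $\bigcup_{i=0}^{m-1}f^{\circ i}(S(f))$), so $S(f^{\circ m})$ is a finite, hence discrete, subset of $\C$ for every $m$. Base points $y\in\C\setminus S(f)$ are handled by the classical fact that $f\colon\C\setminus f^{-1}(S(f))\to\C\setminus S(f)$ is a covering \cite[Lemma~1.1]{GK}: a small round disk around $y$ is an evenly covered neighbourhood in $\C$, it is also a neighbourhood in $\CTT f$, and none of its preimages under $\extended f$ is one of the added points, because by construction every added point is mapped by $\extended f$ either to an asymptotic value of $f$ (which lies in $S(f)$) or to another added point; the local normal form of Lemma~\ref{lem:LogarithmicSingularities} shows that the former happens precisely for the added points of $\CT f$, so that $\extended f^{-1}(S(f))=f^{-1}(S(f))\cup(\CT f\setminus\C)$. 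It remains to find an evenly covered neighbourhood of each added point $y$, say an asymptotic tract of $f^{\circ m}$ over $a\in S(f)$. All preimages of $y$ under $\extended f$ are again added points (asymptotic tracts of $f^{\circ(m+1)}$ over $a$); applying Lemma~\ref{lem:LogarithmicSingularities} to the iterate $f^{\circ m}$ at its tract $y$ and to $f^{\circ(m+1)}$ at any such preimage $T'$, and factoring $\extended{f^{\circ m}}=\extended{f}\circ\dots\circ\extended{f}$, one checks that in the coordinates furnished by that lemma $\extended f$ is locally a translation of left half-planes, hence a homeomorphism from a neighbourhood of $T'$ onto a fixed neighbourhood of $y$; shrinking these so that they become pairwise disjoint yields the even covering, and surjectivity onto $\CTT f\setminus S(f)$ follows since every added point has such a preimage.

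\emph{The equality $\CTT f=\CTT g$ for $g=f^{\circ n}$.} Every iterate of $g$ is an iterate of $f$, so the only point is that the asymptotic tracts of iterates of $f$ over singular values of $f$ are exactly the asymptotic tracts of iterates of $g$ over singular values of $g$. Given an asymptotic tract $T$ of $f^{\circ m}$ over $b\in S(f)$, put $j=\lceil m/n\rceil$ and $l=jn-m\in\{0,\dots,n-1\}$ and $a:=f^{\circ l}(b)\in f^{\circ l}(S(f))\subseteq S(g)$; since $f^{\circ jn}=f^{\circ l}\circ f^{\circ m}$, with $f^{\circ m}$ restricting on $T$ to a universal covering of a small punctured disk around $b$ and, by Lemma~\ref{lem:PuncturedCoverings}, $f^{\circ l}$ restricting near $b$ (after shrinking) to a finite branched covering $z\mapsto z^d$ of a punctured disk around $a$, the composition $g^{\circ j}$ restricts on $T$ to a universal covering of a punctured disk around $a$, so $T$ is an asymptotic tract of $g^{\circ j}$ over $a\in S(g)$. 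Conversely, given an asymptotic tract $T$ of $g^{\circ j}$ over $a\in S(g)$, we have $a=f^{\circ l}(b)$ with $b\in S(f)$ and $l\in\{0,\dots,n-1\}$ and $f^{\circ jn}=f^{\circ l}\circ f^{\circ(jn-l)}$; using again the normal form of Lemma~\ref{lem:PuncturedCoverings} for $f^{\circ l}$ near $b$, the hypothesis that $f^{\circ jn}$ restricts on $T$ to a universal covering of $D_r(a)\setminus\{a\}$ forces $f^{\circ(jn-l)}$ to restrict on $T$ to a universal covering of a punctured disk around $b$, so $T$ is an asymptotic tract of $f^{\circ(jn-l)}$ over $b\in S(f)$. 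Finally the two constructions induce the same topology — the defining neighbourhood bases of a common added point are mutually cofinal, again because $f^{\circ l}$ near $b$ is the proper map $z\mapsto z^d$ — and $\extended f$, $\extended g$ act on the common point set compatibly, so $\CTT f=\CTT g$.

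I expect the main obstacle to be the presence of critical points in the post-singular orbit: because of them the ``extra'' maps $f^{\circ l}$ relating $f$ to its iterates are genuine finite branched coverings $z\mapsto z^d$ rather than biholomorphisms, so one must know that precomposing a universal covering of a punctured disk with such a branched covering — and, for the converse direction, that a universal covering of a punctured disk that factors through such a branched covering — again yields a universal covering of a punctured disk. This is exactly the structural input that Lemma~\ref{lem:PuncturedCoverings} is designed to supply. A secondary but necessary point, in the covering part, is the verification that $\extended f$ maps an added point into $\C$ only when it already belongs to $\CT f$; this again follows from the local normal form of Lemma~\ref{lem:LogarithmicSingularities}.
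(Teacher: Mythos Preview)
Your argument for the covering property takes a more elaborate route than the paper's. Where you introduce logarithmic coordinates via Lemma~\ref{lem:LogarithmicSingularities} for both $f^{\circ m}$ and $f^{\circ(m+1)}$ and then argue that $\extended f$ becomes a translation of half-planes, the paper simply observes that a sufficiently small tract neighbourhood $V\subset\C$ of an added point $T$ is simply connected and (for $r$ small enough) satisfies $V\cap P(f)=\emptyset$; hence every component $V'$ of $f^{-1}(V)$ maps biholomorphically onto $V$, and this extends to a homeomorphism $V'\cup\{T'\}\to V\cup\{T\}$. Your approach can be made to work, but the assertion that $\extended f$ is a translation in the two half-plane charts needs the two Riemann maps $\phi\colon D_r(a)\to\D$ to be chosen compatibly, a point you leave to ``one checks''.

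There is a genuine gap in your converse argument for $\CTT f=\CTT g$. Given a tract $T$ of $g^{\circ j}=f^{\circ jn}$ over $a\in S(g)$, you write $a=f^{\circ l}(b)$ with $b\in S(f)$ and then assert that $f^{\circ(jn-l)}$ restricts on $T$ to a universal covering of a punctured disk around $b$. But the connected image $f^{\circ(jn-l)}(V_T)$ merely lies in \emph{some} component of $(f^{\circ l})^{-1}(D_r(a)\setminus\{a\})$, and nothing forces this to be the component centred at the particular $b$ you chose: it could be centred at any other $f^{\circ l}$-preimage of $a$, or it could even be an asymptotic tract of $f^{\circ l}$ over $a$, in which case $f^{\circ(jn-l)}|_{V_T}$ is a biholomorphism onto a half-plane rather than a universal cover of a punctured disk. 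The clean fix is the one implicit in the paper's one-line remark: follow the orbit $T,\extended f(T),\extended f^{\circ 2}(T),\ldots$ until it first enters $\C$, say at step $k$; then $\extended f^{\circ(k-1)}(T)\in\CT f\setminus\C$ is a tract of $f$ over an asymptotic value $c\in S(f)$, so $T$ itself is a tract of $f^{\circ k}$ over $c\in S(f)$ and hence lies in $\CTT f$.
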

	\begin{proof}
		The restriction $\restr{f}{\C\setminus f^{-1}(S(f))}$ is a covering map over $\C\setminus S(f)$. It remains to show that every $T\in\CTT{f}\setminus\C$ has a neighborhood $V$ such that $\extended{f}|_{W}$ is a homeomorphism for every connected component $W$ of $\extended{f}^{-1}(V)$.
		
		Given a $T\in \CTT f\setminus\C$, there exists a smallest $n\geq 1$ and an $a\in S(f)$ such that $\extended f^{\circ n}(T)=a$. Choose $r>0$ such that $D_r(a)\cap P(f)=\{a\}$, and let $V$ be the component of $f^{-n}(D_r(a))$ in the tract represented by $T$. Let $V'$ be a connected component of $f^{-1}(V)$, and let $T'\in\CTT{f}$ denote the tract represented by $V'$. Then $h:=\restr{f}{V'}\colon V'\to V$ is biholomorphic because $V$ is simply connected and satisfies $V\cap P(f)=\emptyset$. The restriction extends to a homeomorphism
		\[
		\extended{h}\colon V'\cup\{T'\}\to V\cup\{T\}.
		\]
		This shows that $\extended{f}$ is a covering over $\CTT{f}\setminus S(f)$.
		
		To prove the second statement, just note that if $T$ is a tract of $f^{\circ n}$ over $a$, then $T$ is also a tract of $f^{\circ (n+m)}$ over $f^{\circ m}(a)$.
	\end{proof}
	
  \begin{remarknumber}\label{rem:VanKampen}
    Let us make some further remarks on the topology on $\CTT{f}$. If $p \in \CTT{f} \setminus \C$ and $V$ is a tract at $p$, then both
    $V$ and $V \setminus \{p\}$ are contractible. This will be useful to compare the topology of open sets $U \subset \CTT{f}$ with $U \cap \C$:

    For every $p \in U \cap (\CTT{f} \setminus \C)$, choose a tract $V_p \subset U$. We can arrange the $V_p$ to be disjoint. If we then apply the groupoid version of the Van Kampen theorem (see for example \cite[Theorem 2.5]{Kammeyer}) to the open covering $(U \cap \C) \cup \bigcup_p (V_p)$, we obtain that $U \cap \C$ and $U$ have equivalent fundamental groupoids. In particular, $U \cap \C$ and $U$ have the same number of connected components, and they have isomorphic fundamental groups.

Another useful property of the topology of $\CTT{f}$ is the following: suppose $\gamma : S^1 \rightarrow \CTT{f}$ is continuous and injective (so a ``Jordan curve" in the extended plane), such that $\gamma$ visits only $\CTT{f} \setminus \C$ only finitely many times. Then there is exactly one connected components $V$ of $\CTT{f} \setminus \gamma$ such that $V \cup \gamma$ is compact (and in fact, homeomorphic to a closed Jordan disk).
  \end{remarknumber}

	There is a natural way to define the Julia set $\mathcal{J}(\extended{f})$ and the Fatou set $\mathcal{F}(\extended{f})$ of the extended map $\extended{f}$: a point in $\CTT f\sm \C$ belongs to the Fatou resp.\ the Julia set if and only if the asymptotic value where the orbit first enters $\C$ belongs to the Fatou resp.\ Julia set of $f$. {Clearly, the Fatou set of $\extended f$ is still open and the Julia set is closed. }

Finally, we define the set of \emph{critical points} of $\extended{f}$ as
	\[
	C(\extended{f}):=C(f)\cup(\CT{f}\setminus\C)
	\;.
	\]
	This is precisely the set of points that do not have neighborhoods on which $\extended{f}$ is injective. In this sense, all singular values of $f$ are critical values of $\extended f$.
	
Let us introduce the following notation that we will often use.
	\begin{Def}[(Pre\nobreakdash-)periodic points]
		We denote by $\Per(\extended{f})\subset\CTT{f}$ the set of points $p\in\CTT{f}$ that are (pre\nobreakdash-)periodic under iteration of $\extended{f}$.\label{def:ExtendedPreperiodic}
	\end{Def}
	
	\section{Dynamics of post-singularly finite entire functions}\label{section:PsfDynamics}
	
	\subsection{Fatou components and internal rays}\label{subsection:FatouDynamics}
	
	Every periodic Fatou component, say $U$, is superattracting by Proposition~\ref{Pro:PsfProperties}. Therefore, $U$ contains a unique superattracting periodic point, called its \emph{center}. The component comes with a Riemann map $\Phi\colon U\to\D$ that sends the center to $0$, and so that it conjugates the first return map on $U$ to $z\mapsto z^d$ on $\D$, for a unique $d\ge 2$ \cite[Theorems 9.1 and 9.3]{M}; we call such a Riemann map a \emph{B\"ottcher map} in analogy to the situation for polynomials. There are exactly $d-1$ choices for $\Phi$ \cite[Theorem 9.1]{M}. An \emph{internal ray} of $U$ is the preimage of a radial line in $\D$ under $\Phi$. For our purposes, an internal ray \emph{lands} if its closure \emph{in the extended plane $\CTT{f}$} intersects $\partial_{\CTT{f}} U$ in a single point. This single point is then called the \emph{landing point} of the ray.
	
	It is well known that every periodic internal ray of a periodic Fatou component lands at a unique point $q\in\C$, and that $q$ is a repelling periodic point so that its period divides the period of the ray. (All this follows exactly as for polynomials, except one needs to show that the internal ray is bounded as a subset of $\C$; see \cite[Theorems B.1 and B.2]{R1}). Preperiodic internal rays of periodic Fatou components need not land in $\C$, but in any case they land at some point in $\CTT{f}$. For example, the function $z\mapsto(z-1)\exp(z)+1$ has a fixed Fatou component with superattracting fixed point at $z=0$, there is a fixed ray that lands at $z=1$, and this ray has a preperiodic preimage ray that lands at a point at $\infty$ (along the negative real axis). 
	
  Let us now extend centers and internal rays to a preperiodic Fatou component $V$. Let $n$ be minimal so that $U:=f^{\circ n}(V)$ is a periodic Fatou component. Then $f^{\circ n}\colon V\to U$ has finite or infinite mapping degree. In the case of finite mapping degree, it follows as for periodic Fatou components that $f^{\circ n}\colon V\to U$ can have only one critical point, and this point must map to the center of $U$. In this case, center and internal rays of $V$ can be defined naturally via pull-back, and all preperiodic internal rays must land in $\CTT{f}$. 

If $f^{\circ n}\colon V\to U$ has infinite mapping degree, similar reasons imply that as a subset of $\C$ it must factor through the exponential map as follows: there are Riemann maps $\psi\colon V\to \half\cup\{-\infty\}$ and $\phi\colon U\to\D$ so that $\phi$ maps the center of $U$ to $0$ and
	$f^{\circ n}=\phi^{-1}\circ\extended\exp\circ\psi\colon V\to U$, but here the additional point $-\infty\in\partial \half$ is part of the extended complex plane. In this case, the center of $V$ is at $\infty$.
	Internal rays in $U$ pull back to radial lines in $\D$ by $\phi$, then to horizontal lines in $\half$ by $\exp$, and finally give rise to uniquely defined internal rays in $V$. Every preperiodic internal ray in $V$ lands either at a repelling preperiodic point of $f$ in $\C$, or at a boundary point of $V$ in $\CTT f\setminus\C$; both are preperiodic points in $\mathcal{J}(\extended{f})$.
	
	In conclusion, every component $U$ of $\mathcal{F}(\extended{f})$ has a well-defined and unique center $p$. We denote the Fatou component with center $p$ by $U(p)$.

\begin{Lem}[Distinct landing points]
		For any Fatou component of $\extended f$, any two periodic or preperiodic internal rays land at distinct boundary points.
		\label{lem:DistinctLandingPoints}
	\end{Lem}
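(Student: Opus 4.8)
The plan is to argue by contradiction. Suppose two distinct periodic or preperiodic internal rays $R_1\neq R_2$ of a Fatou component $U$ land at a common point $q\in\partial_{\CTT f}U$, and let $c$ be the center of $U$. Since each ray lands, its closure in $\CTT f$ is $R_i\cup\{c,q\}$, so $\gamma:=\overline{R_1}\cup\overline{R_2}$ (closures in $\CTT f$) is a Jordan curve through $c$ and $q$ with $\gamma\setminus\{q\}\subseteq U$ and $\gamma\cap\partial_{\CTT f}U=\{q\}$. Under the B\"ottcher map $\Phi\colon U\to\D$ (or $U\to\half\cup\{-\infty\}$ in the infinite--degree preperiodic case), $R_1$ and $R_2$ become two internal radii, so $U\setminus\gamma$ has exactly two components $\Sigma_1,\Sigma_2$, the ``sectors'' cut off by the two rays, and these lie in the two complementary Jordan domains of $\gamma$.

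Next I would reduce to a dynamically convenient situation. Internal rays, centers, Fatou components and the landing relation are unchanged when $f$ is replaced by an iterate, and landing commutes with $\extended f$, so the configuration may be pushed forward by $\extended{f}^{\circ n}$. Either (a) the images of $R_1,R_2$ stay distinct up to a periodic Fatou component, and then a further iterate yields: $U$ fixed, $R_1,R_2$ fixed internal rays, $q$ a repelling fixed point of the first--return map $g$; or (b) at some first step $\extended f$ identifies the two rays, which forces $c$ to be a critical point of $\extended f$, the two rays to be distinct local inverse branches of a single internal ray $\rho$, and $q$ their common landing point with $\extended f(q)$ the landing point of $\rho$. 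In subcase (b) with $q\in\C$ \emph{not} a critical point of $\extended f$ one gets an immediate contradiction: $\extended f$ is injective on a small disk $D$ around $q$, but it sends the disjoint tails of $R_1,R_2$ in $D$ onto two disjoint connected subsets of the simple arc $\rho$, each accumulating at the landing point of $\rho$, and any two such subsets of $\rho$ must overlap. The remaining cases -- (a), and (b) with $q$ a critical point or $q\in\CTT f\setminus\C$ -- are treated uniformly by the dynamical argument below, using the local normal form $z\mapsto z^d$ (resp.\ $\exp$) of $\extended f$ at $q$.

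In the reduced situation $g$ is conjugated by $\Phi$ to $z\mapsto z^d$ with $d\ge 2$, the rays sit at angles fixed under multiplication by $d$, the sectors have angular widths $w_1+w_2=1$, and we may assume $w_1\ge\tfrac12$, so $g$ maps $\Sigma_1$ onto $U\setminus\{c\}$ while a suitable inverse branch $h$ of $g$ fixes $R_1$ and $q$ and maps $\overline{\Sigma_1}$ strictly into itself, squeezing it towards $\overline{R_1}$ under iteration. On the other hand $\partial_{\CTT f}U$ is connected (the boundary of a simply connected subdomain of $\CTT f$), is forward invariant under $g$, and meets \emph{both} complementary Jordan domains of $\gamma$ -- the last point because, by the a.e.\ existence of radial limits together with Privalov's uniqueness theorem, $\Phi$ cannot take the single radial value $q$ over an entire boundary arc. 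Hence $q$ is a cut point of $\partial_{\CTT f}U$ in the precise sense that $\partial_{\CTT f}U\setminus\{q\}$ splits into the two nonempty relatively clopen pieces cut off by the two sectors. Transporting this cut configuration into arbitrarily small neighbourhoods of $q$ by the branch $h$, and using that $q$ is a non--degenerate repelling (pre)periodic point whose neighbourhood in $\mathcal J(\extended f)$ is self--similar and perfect, then contradicts the existence of these two ``trapped'' pieces.

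The hard part, I expect, is exactly the wild--topology difficulty the paper advertises: $\partial_{\CTT f}U$ need not be locally connected, so $\Phi$ need not extend continuously to $\partial\D$ and Carath\'eodory's theorem is unavailable. The boundary information therefore has to be extracted from $\Phi$ by radial--limit arguments (controlling which impressions of prime ends contain $q$) and then made to interlock with the rigid self--similar structure of the Julia set at the repelling periodic landing point $q$; getting these two inputs to fit together cleanly -- in particular in the subcase where $q$ is itself a critical point of $\extended f$, which does not reduce to a repelling fixed point -- is where the substance of the proof lies, the phrase ``easy to see'' notwithstanding.
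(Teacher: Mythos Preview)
Your proposal is not a complete proof: the step you identify as ``where the substance of the proof lies''---extracting a contradiction from the cut-point structure of $\partial_{\CTT f}U$ at $q$ via self-similarity of the Julia set---is never actually carried out, and it is not clear how to make it work, particularly in the subcases where $q$ is critical or lies in $\CTT f\setminus\C$. Prime ends, radial limits, and Privalov's theorem are heavy machinery aimed at a difficulty that does not exist here.

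The paper's argument bypasses the boundary of $U$ entirely. After the same reduction you make (pass to an iterate so that $U$ and both rays $\beta_1,\beta_2$ are fixed, hence $q\in\C$ is a repelling fixed point), note that $\Gamma:=\overline{\beta_1\cup\beta_2}\subset\C$ is a Jordan curve and let $V$ be its \emph{bounded} complementary component. Since $f$ fixes the center, both rays, and $q$, we have $f(\partial V)\subset\partial V$. A standard degree/argument-principle computation (the paper calls it the maximum modulus principle) then gives $f(V)\subset V$: the number of $f$-preimages in $V$ of a point $w$ is locally constant on $\C\setminus f(\partial V)\supset\C\setminus\partial V$ and vanishes for large $|w|$ since $V$ is bounded, hence vanishes on the entire unbounded component. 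On the other hand, in B\"ottcher coordinates the two fixed rays sit at angles $\theta_1,\theta_2$ fixed by multiplication by $d\ge 2$, so the sector $V\cap U$ has angular width $w_1$ with $dw_1\equiv w_1\pmod 1$ and $dw_1>w_1$, forcing $dw_1\ge 1$; its image under $z\mapsto z^d$ is therefore all of $U\setminus\{c\}$. In particular $f(V)$ contains the other sector, which lies outside $V$---contradicting $f(V)\subset V$.

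So the key idea you are missing is that boundedness of one complementary Jordan domain, combined with forward invariance of its boundary, already pins down $f(V)\subset V$; the B\"ottcher model then makes this impossible. No information whatsoever about $\partial_{\CTT f}U$ is needed.
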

	\begin{proof}
Suppose the internal rays $\beta_1$ and $\beta_2$ of the Fatou component $V$ land at the same point $q$.
Let $m$ be large enough such that $\extended{f^m}(\beta_1)$ and $\extended{f^m}(\beta_2)$ are periodic rays. In particular, we get that $\extended{f^{m+k}}(\beta_1 \cup \beta_2)$ remains bounded for $k \geq 0$. Let $U$ be the connected component of $\CTT{f} \setminus (\beta_1 \cup \beta_2)$ such that $U \cup \beta_1 \cup \beta_2$ is compact (see Remark \ref{rem:VanKampen}). By the maximum modulus principle, we get $f^{m+k}$ remains bounded on $U$. So $U \subset \mathcal{F}(\extended{f})$, hence $\beta_1$ and $\beta_2$ are equivalent accesses to $q$. This is a contradiction.

\end{proof}
	
	\subsection{The escaping set}\label{sub:EscapingSet}
	
	One of the most important sets for studying the dynamics of entire functions is the set
	\[
	I(f):=\{z\in\C~\colon\lim_{n\to\infty}f^{\circ n}(z)=\infty\}
	\]
	of escaping points. The escaping set $I(f)$ is fully invariant and $I(f)\neq\emptyset$ \cite{E1}. In many cases, and in particular for psf entire maps, the escaping set carries a rich combinatorial structure. In the following, we will describe some of its aspects.
	
	In \cite{RRRS} it was shown that for a large class of transcendental entire maps the escaping set decomposes in a natural way into dynamic rays distinguished by external addresses so that the escaping dynamics of the map is semi-conjugate to the simple dynamics of the shift map on the space of external addresses. However, this class of maps does not include all post-singularly finite entire functions because components of the escaping set might have a more complicated topology than arcs, depending on the geometry of the tracts of the function \cite{RRRS}. Even in this more complicated case, the escaping set still decomposes in a natural way into unbounded connected components called \emph{filaments} \cite{BR}. These filaments share many similarities with rays: their connected components are again distinguished by external addresses so that the escaping dynamics is semi-conjugate to the dynamics of the shift map, and thus they can be viewed as natural generalizations of dynamic rays. In the following, we describe the natural decomposition of the escaping set into filaments and their combinatorial description via external addresses.
	
	The construction we describe below can be carried out more generally for functions with bounded post-singular set \cite{BR}, but in this paper we focus on post-singularly finite functions $f$. Let $D$ be an open disk containing $0$ and $f(0)$, large enough so as to contain $P(f)$. The connected components of $f^{-1}(\C\setminus\overline{D})$ are called \emph{tracts of $f$ over $\infty$}. It is easy to see that every tract $T$ of $f$ is simply connected, unbounded, that $\partial T$ is a Jordan arc tending to infinity in both directions, and that the restriction $\restr{f}{T}$ is a universal covering over $\C\setminus\overline{D}$. Let $\alpha:[0,1)\to\C$ be an arc connecting $\partial D$ to $\infty$ that satisfies $\alpha\cap D=\emptyset$ and $\alpha\cap f^{-1}(\C\setminus\overline{D})=\emptyset$, so $f(\alpha)\subset\overline D$; see Figure~\ref{fig:Tracts}.
	\begin{figure}[ht]
		\includegraphics[width=0.8\textwidth]{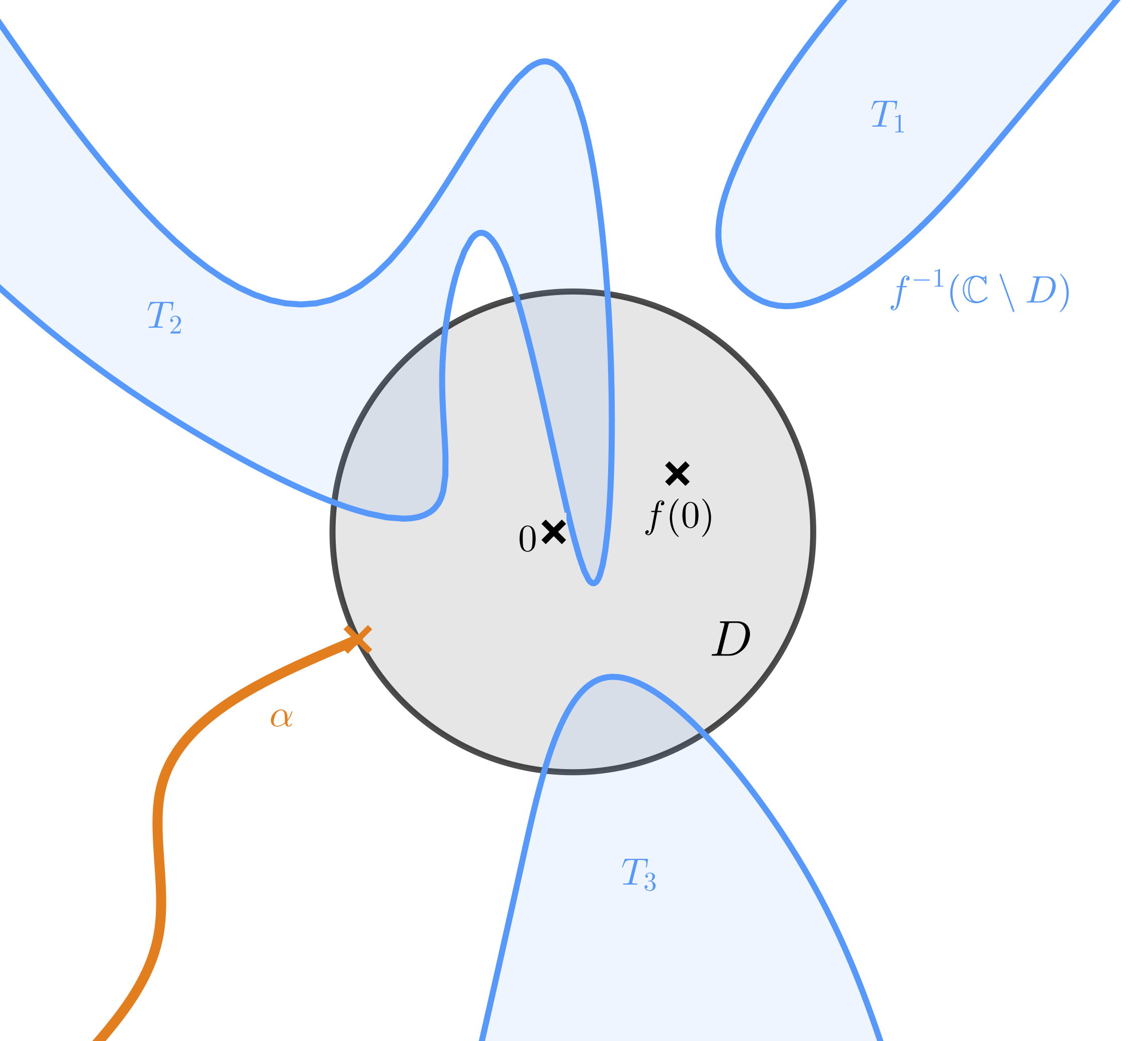}
		\caption{A sketch of a possible configuration of $D$ and $\alpha$.}
		\label{fig:Tracts}\end{figure}
	We set $W_0:=\C\setminus(\overline{D}\cup\alpha)$. The domain $W_0$ is simply connected with $P(f)\cap W_0=\emptyset$. Therefore, for every $n\ge 0$, every component of $f^{-n}(W_0)$ maps biholomorphically onto $W_0$; see Figure~\ref{fig:FundamentalDomain}.
	\begin{figure}[ht]
		\includegraphics[width=\textwidth]{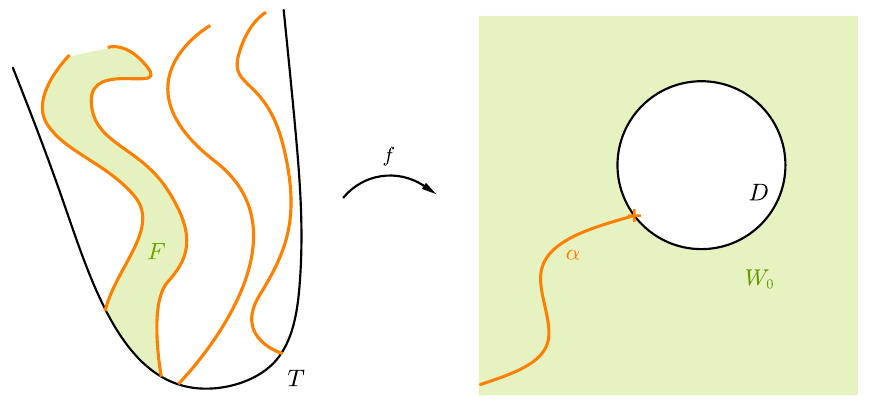}
		\caption{Sketch of $W_0$ and a fundamental domain $F$.}
		\label{fig:FundamentalDomain}\end{figure}
	
	\begin{Def}[Fundamental tails and external addresses]
		A connected component $\tau$ of $f^{-n}(W_0)$ is called a \emph{fundamental tail of level $n$}. A fundamental tail of level $1$ is called a \emph{fundamental domain} and is commonly denoted as $F$.
		We denote the set of all fundamental domains by $\mathscr{S}(D,\alpha)$ and call it a \emph{static partition} for $f$.
		
		An \emph{external address} $\ad{s}:=(F_i)_{i=0}^\infty=F_0F_1\ldots$ is a sequence of fundamental domains $F_i$. We denote the \emph{space of external addresses} by $\AD$ (that is, sequences over the set of tracts) and define the \emph{shift map} \[\sigma\colon\AD\to\AD,~\sigma(F_0F_1F_2\ldots  )=F_1F_2\ldots~~~\text{for all}~\ad{s}=(F_i)_{i=0}^\infty\in\AD.\] We call the external address $\ad{s}$ \emph{bounded} if it contains only finitely many distinct $F_i$. The address $\ad{s}$ is called \emph{periodic} if it is periodic under iteration of $\sigma$, and \emph{preperiodic} if it is preperiodic under iteration of $\sigma$.\label{def:fundamentalDomains}
	\end{Def}
	
	Let $\tau$ be a fundamental tail of level $n > 1$. Then $\tau$ is a Jordan domain on $\CC$ containing $\infty$ on its boundary, and we have $f^{\circ k}(z)\to\infty$ as $z\to\infty$ in $\tau$ for all $k\leq n$.
	It follows that $\tau$ tends to infinity through some fundamental domain. More precisely, there exists a unique fundamental domain $F$ such that $\overline{\tau}\setminus F$ is bounded, see \cite[Lemma 3.6]{BR}.
	Therefore, we can naturally associate a finite external address to each fundamental tail, see \cite[Definition 3.7]{BR}.
	
	\begin{Def}[Addresses of fundamental tails]
		Let $\tau$ be a fundamental tail of level $n$, and denote for $k<n$ by $F_k(\tau)$ the unique fundamental domain whose intersection with the fundamental tail $f^{\circ k}(\tau)$ is unbounded. We call the finite sequence $\ad{s}=F_0(\tau)F_1(\tau)\ldots   F_{n-1}(\tau)$ the \emph{(finite) external address} of $\tau$.
	\end{Def}
	
  Every finite external address is realized by one and only one fundamental tail, see \cite[Definition and Lemma 3.8]{BR}. We extend the shift map $\sigma$ to finite external addresses as follows: we let $\sigma(F_0F_1F_2\ldots F_n)=F_1F_2\ldots F_n.$
	
	\begin{LemDef}[Tails at a given finite address]\label{lemdef:TailsAtAddress}
		Let $n\ge 1$ be an integer and $\ad{s}=F_0F_1\ldots  $ be a finite or infinite sequence of fundamental domains that has length at least $n$. Then there exists a unique fundamental tail $\tau$ of level $n$ that has address $\tilde{\ad{s}}:=F_0F_1\ldots   F_{n-1}$. We denote this fundamental tail by $\tau_n(\ad{s})$; see Figure~\ref{fig:FundamentalTails} for a visualization. We also define the inverse branches
		\[
		f_{\ad{s}}^{-n}:=(\restr{f^{\circ n}}{\tau_n(\ad{s})})^{-1}\colon W_0\to\tau_n(\ad{s}).
		\]
	\end{LemDef}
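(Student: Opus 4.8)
The plan is to deduce both assertions from facts already recorded above together with \cite[Definition and Lemma 3.8]{BR}, which is precisely this statement for \emph{finite} sequences $\ad{s}$. For a general (possibly infinite, or longer than $n$) sequence $\ad{s}$, the only thing to add is the trivial observation that the fundamental tail $\tau_n(\ad{s})$ can depend only on the truncation $\tilde{\ad{s}} = F_0 F_1 \ldots F_{n-1}$, since its address is a length-$n$ word; so the case of an infinite $\ad{s}$ immediately reduces to that of a finite one of length exactly $n$.

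If one prefers to give the argument directly, I would argue by induction on $n$. For $n = 1$ the claim is immediate: by Definition~\ref{def:fundamentalDomains} a fundamental tail of level $1$ is a fundamental domain, and its external address is the one-letter word consisting of its own name, so $\tau_1(\ad{s}) = F_0$ is the unique level-$1$ tail with address $F_0$. For the inductive step I would exploit the following pull-back correspondence. Given a level-$n$ tail $\tau'$, every connected component of $f^{-1}(\tau')$ is a level-$(n+1)$ fundamental tail, it is contained in exactly one fundamental domain $F\in\mathscr{S}(D,\alpha)$ (the one through which it tends to infinity), and, conversely, for each $F$ there is exactly one such component, namely $(\restr{f}{F})^{-1}(\tau')$; moreover, since $f^{\circ k}(\tau) = f^{\circ(k-1)}(\tau')$ for $1 \le k \le n$ whenever $\tau$ is a component of $f^{-1}(\tau')$, the address of such a $\tau$ is $F$ followed by the address of $\tau'$. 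Granting this, existence follows by setting $\tau' := \tau_n(\shift(\ad{s}))$ — the unique level-$n$ tail with address $F_1 F_2 \ldots F_n$, by the inductive hypothesis — and letting $\tau$ be the component of $f^{-1}(\tau')$ inside $F_0$; its address is then $F_0 F_1 \ldots F_n = \tilde{\ad{s}}$. Uniqueness follows because any level-$(n+1)$ tail $\tau$ with address $\tilde{\ad{s}}$ is mapped by $f$ onto a level-$n$ tail with address $F_1 \ldots F_n$ (surjectivity since $f^{\circ(n+1)}$ maps $\tau$ onto $W_0$), hence onto $\tau'$, and $\tau$ is forced to be the unique preimage component of $\tau'$ inside $F_0$.

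I expect the pull-back correspondence to be the only delicate point: one must show that a connected component of $f^{-1}(\tau')$ really does tend to infinity through a single, well-defined fundamental domain and that the resulting map to $\mathscr{S}(D,\alpha)$ is a bijection. This is where the covering-theoretic picture enters — each tract of $f$ over $\infty$ is a universal covering of $\C \setminus \overline{D}$ subdivided into fundamental domains by the preimages of the curve $\alpha$ — together with the fact that fundamental tails are Jordan domains on $\CC$ with $\infty$ on the boundary \cite[Lemma 3.6]{BR}; it is exactly the content of \cite[Definition and Lemma 3.8]{BR}, so I would simply invoke it. The inverse branches $f_{\ad{s}}^{-n}$ then need no extra work: since $W_0$ is simply connected and disjoint from $P(f)$, the restriction $f^{\circ n}\colon \tau_n(\ad{s}) \to W_0$ is biholomorphic, so $(\restr{f^{\circ n}}{\tau_n(\ad{s})})^{-1}$ exists and maps $W_0$ biholomorphically onto $\tau_n(\ad{s})$.
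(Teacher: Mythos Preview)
Your proposal is correct and matches the paper's approach: the paper does not give its own proof here but simply defers to \cite[Definition and Lemma 3.8]{BR}, exactly as you do in your first paragraph. Your additional inductive sketch is a welcome bonus that the paper omits entirely; it is sound, and your identification of the pull-back correspondence as the one nontrivial ingredient (again supplied by \cite{BR}) is accurate.
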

	\begin{figure}[ht]
		\includegraphics[width=\textwidth]{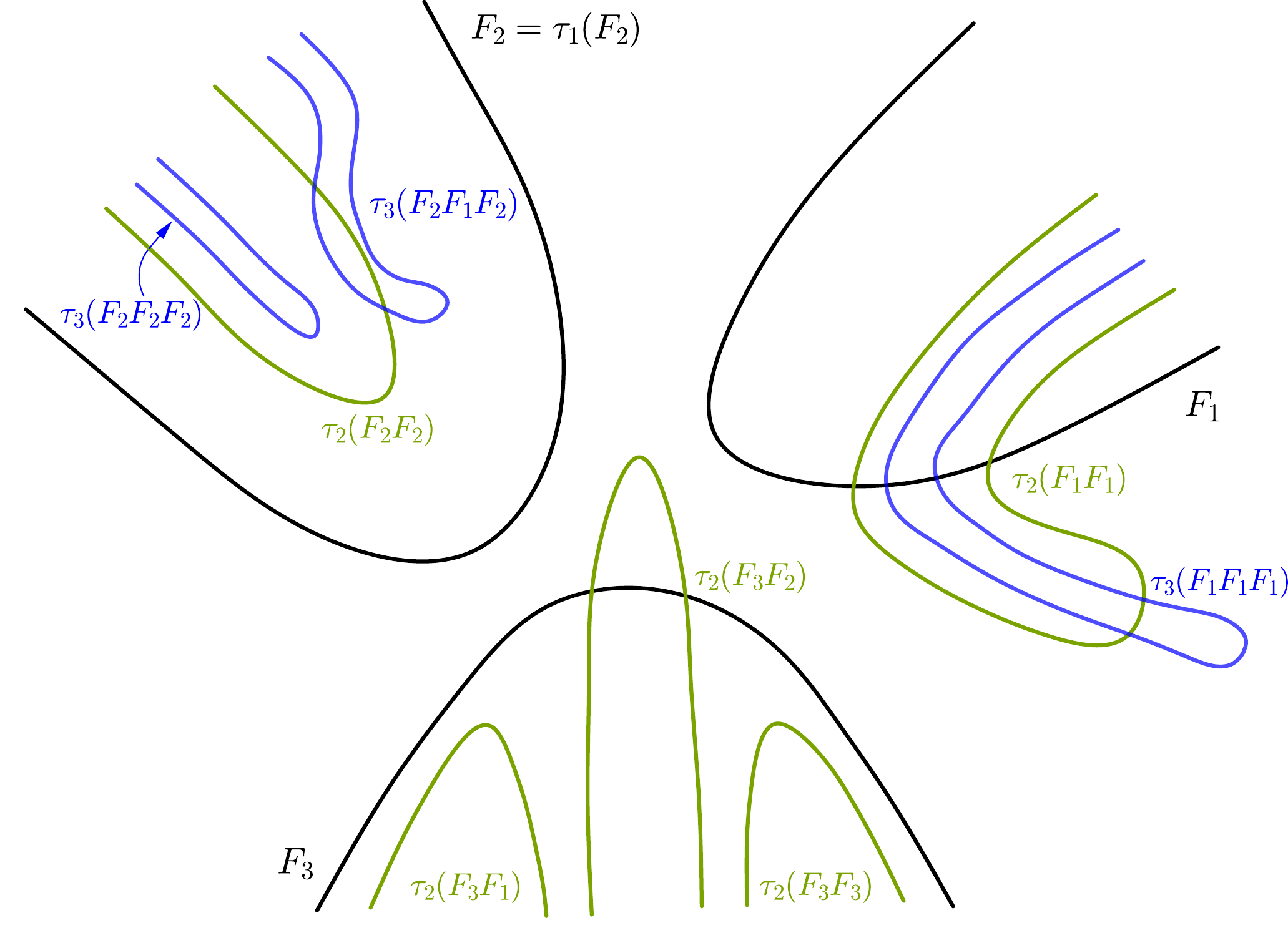}
		\caption{Sketch of some fundamental tails of various levels. For a given address $\ad{s}$, the fundamental tails $\tau_n(\ad{s})$ might be nested or not. Fundamental tails of different levels might intersect, even if one is not the prefix of the other.}
		\label{fig:FundamentalTails}\end{figure}
	Let us collect some more results about fundamental tails from \cite[Observation 3.9]{BR} that follow more or less directly from the preceding results and definitions.
	
	\begin{Lem}[Further facts about fundamental tails]
		Let $\tau$ be a fundamental tail of level $n$ and let $\ad{s}$ be the address of $\tau$. Then the address of $f(\tau)$ is $\sigma(\ad{s})$.
		
		Suppose that $\tau^1$ and $\tau^2$ are fundamental tails of levels $n_1$ and $n_2$ with $n_1\geq n_2$. Let $\ad{s}^1$ and $\ad{s}^2$ be the addresses of $\tau^1$ and $\tau^2$ respectively. Then $\tau^1\cap\tau^2$ is unbounded if and only if $\ad{s}^1$ is a prefix of $\ad{s}^2$, i.e., $\ad{s}^2 = \ad{s}^1 F_{n_1} \ldots F_{n_2-1}$. In this case, if additionally $n_1>n_2$, all sufficiently large points of {the closure} $\overline{\tau^1}$ are contained in $\tau^2$.\label{lem:FurtherFundamentalFacts}
	\end{Lem}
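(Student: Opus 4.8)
The plan is to deduce both assertions from the definition of the address of a fundamental tail together with the facts collected in Section~\ref{sub:EscapingSet} just before Lemma and Definition~\ref{lemdef:TailsAtAddress}: distinct fundamental domains are disjoint; the restriction of $f^{\circ n}$ to a fundamental tail of level $n$ is biholomorphic onto $W_0$ (so in particular $f$ is injective on each fundamental domain); a fundamental tail $\tau$ of level $>1$ is a Jordan domain on $\CC$ with $\infty$ on its boundary, along which $f^{\circ k}(z)\to\infty$ as $z\to\infty$ for $k\le n$; and, crucially, such a $\tau$ lies, outside a bounded set, inside the single fundamental domain given by the first symbol of its address (\cite[Lemma~3.6]{BR}). (This is essentially \cite[Observation~3.9]{BR}; the argument below records how it goes.)

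For the first assertion I would simply unwind definitions. If $\tau$ has level $n$, then $f(\tau)$ is an open connected subset of $f^{-(n-1)}(W_0)$; since $f^{\circ n}|_{\tau}$ is biholomorphic onto $W_0$ and factors through $f(\tau)$, the iterate $f^{\circ(n-1)}$ is biholomorphic on the component of $f^{-(n-1)}(W_0)$ containing $f(\tau)$ and sends $f(\tau)$ onto $W_0$, which forces $f(\tau)$ to be exactly that component. So $f(\tau)$ is a fundamental tail of level $n-1$, and for $k<n-1$ its $k$-th address symbol is the unique fundamental domain meeting $f^{\circ k}(f(\tau))=f^{\circ(k+1)}(\tau)$ in an unbounded set, i.e.\ $F_{k+1}(\tau)$; hence the address of $f(\tau)$ equals $F_1(\tau)\cdots F_{n-1}(\tau)=\sigma(\ad s)$.

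For the intersection criterion I would use the first part freely and take $\tau^1$ to be the tail of larger level, $n_1\ge n_2$, so the claim reads: $\tau^1\cap\tau^2$ is unbounded iff $\ad s^2$ is a prefix of $\ad s^1$. The ``only if'' direction I would prove by contraposition: if $\ad s^2$ is not a prefix of $\ad s^1$, let $k<n_2$ be the first index at which the addresses differ, so $F_k(\tau^1)$ and $F_k(\tau^2)$ are distinct, hence disjoint, fundamental domains. If $\tau^1\cap\tau^2$ contained a sequence $z_m\to\infty$, then $f^{\circ k}(z_m)\to\infty$ (trivially for $k=0$, and by the escaping property of $\tau^1$ otherwise), so $f^{\circ k}(\tau^1)\cap f^{\circ k}(\tau^2)$ would be unbounded; but $f^{\circ k}(\tau^i)$ is a fundamental tail with first address symbol $F_k(\tau^i)$, hence lies outside a bounded set in $F_k(\tau^i)$, and those two fundamental domains are disjoint --- a contradiction. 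For the ``if'' direction I would prove the sharper claim, which also yields the last sentence of the lemma: if $\ad s^2$ is a prefix of $\ad s^1$ and $n_1>n_2$, then all sufficiently large points of $\overline{\tau^1}$ lie in $\tau^2$. Writing $\tau^1=\tau_{n_1}(\ad s^1)$ and $\tau^2=\tau_{n_2}(\ad s^1)$ (Lemma and Definition~\ref{lemdef:TailsAtAddress}), I would induct on $n_2$. For $n_2=1$, $\tau^2$ is the first address symbol of $\tau^1$ and the claim is \cite[Lemma~3.6]{BR}. For $n_2\ge 2$, let $z\in\overline{\tau^1}$ be large; then $f(z)$ is a large point of $\overline{f(\tau^1)}=\overline{\tau_{n_1-1}(\sigma\ad s^1)}$ (as $f|_{\tau^1}$ extends to a homeomorphism of closures on $\CC$ fixing $\infty$), and since $\sigma\ad s^2$ is a prefix of $\sigma\ad s^1$ with smaller second level $n_2-1$, the induction hypothesis gives $f(z)\in\tau_{n_2-1}(\sigma\ad s^1)=f(\tau^2)$. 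Let $z'\in\tau^2$ be its unique $f|_{\tau^2}$-preimage; then $z'$ is again large (because $f|_{\tau^2}$ respects the ends at $\infty$), so by \cite[Lemma~3.6]{BR} both $z$ and $z'$ are large points of the single fundamental domain that is the first symbol of $\ad s^1$, on which $f$ is injective; therefore $z=z'\in\tau^2$.

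The step I expect to be the real obstacle is this last one: ``unboundedness of the intersection'' is a statement about one particular connected component of an iterated preimage, so when pulling a large point back by $f$ one must make sure to land in the correct component. This is precisely what forces the induction and is settled by the two structural inputs from \cite{BR} --- a fundamental tail escapes to $\infty$ through a unique fundamental domain, and $f$ is injective on each fundamental domain --- while everything else is bookkeeping with the shift map via the first assertion.
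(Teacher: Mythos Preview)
The paper does not give its own proof of this lemma; it simply records the statement as \cite[Observation~3.9]{BR} with the remark that it ``follows more or less directly from the preceding results and definitions.'' Your proposal is therefore not competing with a proof in the paper but filling in the argument that the citation stands for, and it does so correctly: the first assertion is a clean unwinding of definitions, the ``only if'' direction of the intersection criterion is the natural contrapositive using disjointness of distinct fundamental domains together with \cite[Lemma~3.6]{BR}, and the inductive pullback argument for the ``if'' direction (and the last sentence) is exactly the right way to pin down the correct preimage component, using injectivity of $f$ on a fundamental domain. You also silently corrected the evident typo in the statement (with $n_1\ge n_2$ it must be $\ad{s}^2$ that is a prefix of $\ad{s}^1$, not the other way around), which is the intended meaning.
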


In order to define filaments, we introduce the unbounded component of fundamental tails.
	
	\begin{Def}[Unbounded component of fundamental tails] \label{def:unboundedComponents}
		Let $F$ be a fundamental domain. We denote by $\unbounded{F}$ the unique unbounded connected component of $F\setminus\overline{D}$; see Figure~\ref{fig:F_infinity}.
		
		More generally, let $\tau$ be a fundamental tail of level $n$. We define $\unbounded{\tau}$ to be the unbounded connected component of $\tau\setminus f^{-(n-1)}(\overline{D})$. In other words, if $F=f^{\circ(n-1)}(\tau)$, then $\unbounded{\tau}$ is the component of $f^{-(n-1)}(\unbounded{F})$ contained in $\tau$.
	\end{Def}
	
	\begin{figure}[ht]
		\includegraphics[width=0.6\textwidth]{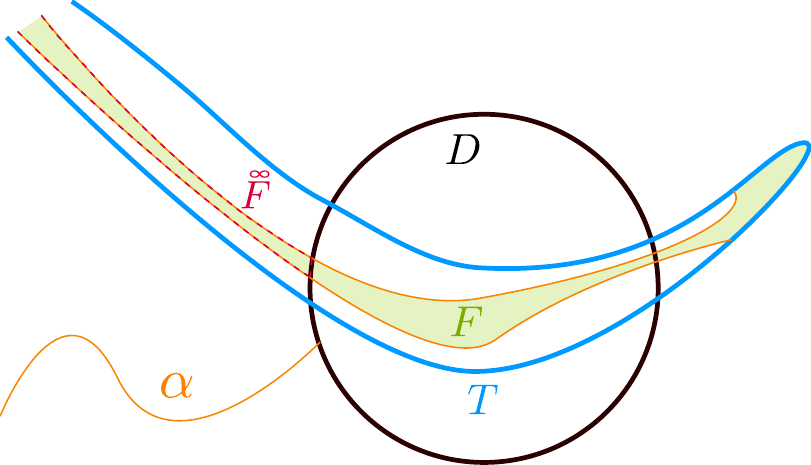}
		\caption{Sketch of the unbounded component of a fundamental domain $F$.}
		\label{fig:F_infinity}\end{figure}
	
	Note that $\unbounded{\tau}_n(\ad{s})$ is the unbounded connected component of $\tau_n(\ad{s})\cap\tau_{n-1}(\ad{s})$ for $n\geq 2$. We have finally gathered the necessary terminology to formally define filaments.
	
	\begin{Def}[Filaments] \label{def:Filaments}
		Let $\ad{s}$ be an external address. We say that a point $z\in\C$ \emph{has external address $\ad{s}$} if $z\in\unbounded{\tau}_n(\ad{s})$ for all sufficiently large $n$.
		
		The \emph{filament} $G_{\ad{s}}$ is defined to be the set of escaping points $z\in \C$ that have external address $\ad{s}$.
	\end{Def}
	
	It follows directly from Definition \ref{def:unboundedComponents} and Definition \ref{def:Filaments} that every escaping point has an external address, and thus every escaping point is contained in one and only one filament (see also \cite[Corollary~4.5]{BR}). 
	
	While filaments provide a decomposition of the escaping set, they depend a priori on the choice of the base domain $W_0$. However, it turns out that the decomposition of the escaping set into filaments is independent of the choice of the base domain, and external addresses w.r.t.\ any two given base domains are in natural bijection to each other; see \cite[Observation~4.12 and the discussion before]{BR} for an explanation. 
	
	Another important fact is that a function has the same filaments as any of its iterates; moreover, external addresses for the function are in natural bijection to external addresses for its iterate, see \cite[Observation~4.13 and the discussion before]{BR}. 
	
	\begin{Lem}[Filaments of iterates]
		Let $f$ be a psf entire function, and let $n\geq 1$. Then every filament of $f$ is a filament of $f^{\circ n}$ and vice versa.\label{lem:FilamentsOfIterates}
	\end{Lem}
	
	We summarize the results obtained on the escaping dynamics of psf entire functions in one theorem below, see \cite[Lemma 4.3, Corollary 4.5, and Proposition~4.10]{BR}.
	
	\begin{Thm}[The escaping set of a post-singularly finite entire function]
		Let $f$ be a post-singularly finite entire function. The escaping set $I(f)=\bigcupdot_{\ad{s}\in\AD} G_{\ad{s}}$ decomposes in a natural way into subsets indexed by external addresses called filaments. For every $\ad{s}\in\AD$, either $G_{\ad{s}}=\emptyset$ or $G_{\ad{s}}$ is unbounded and connected. We have $f(G_{\ad{s}})=G_{\sigma(\ad{s})}$.\label{thm:EscapingSet}
	\end{Thm}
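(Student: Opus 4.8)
The statement collects \cite[Lemma~4.3, Corollary~4.5 and Proposition~4.10]{BR}; the plan is to establish its three assertions in turn.

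\emph{Step 1: the dreadlocks partition $I(f)$.} Every dreadlock lies in $I(f)$ by Definition~\ref{def:Dreadlocks}, so the task is to show that each escaping $z$ has exactly one external address. Uniqueness I would dispatch first, and it is essentially immediate: if $z$ had addresses $\ad{s}\neq\ad{t}$, then for all large $n$ one has $z\in\unbounded{\tau}_n(\ad{s})\cap\unbounded{\tau}_n(\ad{t})\subset\tau_n(\ad{s})\cap\tau_n(\ad{t})$; but $\tau_n(\ad{s})$ and $\tau_n(\ad{t})$ are connected components of $f^{-n}(W_0)$, hence equal, so the length-$n$ prefixes of $\ad{s}$ and $\ad{t}$ coincide for all large $n$, forcing $\ad{s}=\ad{t}$. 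For existence, I would use that $\alpha$ was chosen with $f(\alpha)\subset\overline D$ (this is exactly the condition $\alpha\cap f^{-1}(\C\setminus\overline D)=\emptyset$): an escaping orbit therefore meets $\alpha$, and more generally each $f^{-j}(\overline D\cup\alpha)$, only finitely often, so for all large $k$ the point $f^{\circ k}(z)$ lies in a well-defined fundamental domain $F_k$, and — tending to $\infty$ — in its unbounded component $\unbounded{F_k}$. Tracking the components of $f^{-n}(W_0)$ along the orbit of $z$ and using Lemma~\ref{lem:FurtherFundamentalFacts} to check that they are eventually directed through the $F_k$ then yields an address $\ad{s}$; since $f^{\circ(n-1)}\colon\tau_n(\ad{s})\to F_{n-1}$ is a biholomorphism carrying $\unbounded{\tau}_n(\ad{s})$ onto $\unbounded{F_{n-1}}$ (it respects ends at $\infty$ because $f^{\circ(n-1)}(x)\to\infty$ as $x\to\infty$ in $\tau_n(\ad{s})$), one verifies $z\in\unbounded{\tau}_n(\ad{s})$ for all large $n$, i.e.\ $z\in G_{\ad{s}}$. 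This gives $I(f)=\bigcupdot_{\ad{s}\in\AD}G_{\ad{s}}$.

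\emph{Step 2: a nonempty dreadlock is unbounded and connected.} This I expect to be the main obstacle — it is precisely the point where the topology of dreadlocks (which, unlike dynamic rays, need not be arcs) has to be tamed. The plan is to exhibit $G_{\ad{s}}\cup\{\infty\}$ as a decreasing intersection of continua in $\CC$. Although the fundamental tails $\tau_n(\ad{s})$ need not be nested, Lemma~\ref{lem:FurtherFundamentalFacts} guarantees that all sufficiently large points of $\tau_m(\ad{s})$ lie in $\tau_n(\ad{s})$ whenever $m\ge n$, and this is enough to arrange the continua $\overline{\unbounded{\tau}_n(\ad{s})}$ (closures taken in $\CC$, each containing $\infty$) into a decreasing sequence of continua after passing to finite intersections; their intersection over all $n$ is then a continuum $J$ containing $\infty$, and one shows that $G_{\ad{s}}$ is the set of escaping points of $J$. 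If $G_{\ad{s}}\neq\emptyset$, the continuum $J$ contains a point of $\C$ besides $\infty$; but any connected subset of $\CC$ meeting both $\{\infty\}$ and $\C$ meets every sufficiently large circle $\{|z|=R\}$ (otherwise that circle would disconnect it), so $J$, and a fortiori $G_{\ad{s}}$, is unbounded in $\C$. The genuinely hard part is then to prove that the escaping points of $J$ form a connected set, i.e.\ that $G_{\ad{s}}$ itself (and not merely its closure in $\CC$) is connected; this is the content of \cite[Corollary~4.5]{BR}.

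\emph{Step 3: $f(G_{\ad{s}})=G_{\sigma(\ad{s})}$.} This I would prove by a direct push-forward/pull-back argument. Since $f^{\circ n}\colon\tau_n(\ad{s})\to W_0$ is biholomorphic, so is $f\colon\tau_n(\ad{s})\to\tau_{n-1}(\sigma(\ad{s}))$; it maps $f^{-(n-1)}(\overline D)\cap\tau_n(\ad{s})$ onto $f^{-(n-2)}(\overline D)\cap\tau_{n-1}(\sigma(\ad{s}))$ and respects ends at $\infty$, hence $f(\unbounded{\tau}_n(\ad{s}))=\unbounded{\tau}_{n-1}(\sigma(\ad{s}))$. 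Because $I(f)$ is forward invariant, $z\in G_{\ad{s}}$ gives $f(z)\in I(f)$ and $f(z)\in\unbounded{\tau}_{n-1}(\sigma(\ad{s}))$ for all large $n$, so $f(z)\in G_{\sigma(\ad{s})}$. Conversely, for $w\in G_{\sigma(\ad{s})}$ and each large $n$ there is a unique $z_n\in\unbounded{\tau}_{n+1}(\ad{s})$ with $f(z_n)=w$; since $\unbounded{\tau}_{n+2}(\ad{s})$ is the unbounded component of $\tau_{n+2}(\ad{s})\cap\tau_{n+1}(\ad{s})$ and hence sits inside $\tau_{n+1}(\ad{s})$, on which $f$ is injective, the points $z_n$ all coincide; their common value $z$ satisfies $z\in\unbounded{\tau}_{n+1}(\ad{s})$ for all large $n$ and $f(z)=w$, so $z$ escapes and lies in $G_{\ad{s}}$. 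Hence $f(G_{\ad{s}})=G_{\sigma(\ad{s})}$.
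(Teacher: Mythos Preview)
The paper does not prove this theorem at all: it is stated as a summary of known results from \cite{BR}, with the sentence preceding it reading ``We summarize the results obtained on the escaping dynamics of psf entire functions in one theorem below, see \cite[Lemma 4.3, Corollary 4.5, and Proposition 4.10]{BR}'', and no proof is given. Your opening sentence matches this exactly, so as far as comparison with the paper goes, you are done after the first line.

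Everything after that first line is extra: a sketch of how the cited results from \cite{BR} are actually proved. The sketch is broadly sound, and you correctly flag the genuinely hard step (connectedness of $G_{\ad{s}}$ rather than of its closure) and defer it back to \cite[Corollary~4.5]{BR}. Two places are a bit loose. In Step~1, the assertion that $f^{\circ k}(z)$ eventually lies in the \emph{unbounded} component $\unbounded{F_k}$ of $F_k\setminus\overline D$ is justified only by ``tending to $\infty$''; since bounded components of $F_k\setminus\overline D$ need not be near the origin, this needs an actual argument (in \cite{BR} this is handled via a more careful construction of the set of points with controlled orbits). In Step~2, the closures $\overline{\unbounded{\tau}_n(\ad{s})}$ are not literally nested (only $\unbounded{\tau}_n(\ad{s})\subset\tau_{n-1}(\ad{s})$ is immediate), and ``passing to finite intersections'' hides the real work of producing a decreasing sequence of continua; again \cite{BR} does this more carefully. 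Since the paper itself offers nothing to compare to, these are just comments on the sketch, not on a discrepancy with the paper.
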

	
	Not every external address is actually realized by points in $\C$: the growth of $f$ imposes a limit on the growth of realized external addresses \cite{SZ1}. However, every bounded address is realized, and especially every (pre-)periodic address. The latter are the ones we are interested in here.
	
	\subsubsection{Cyclic order of filaments.}\label{subsubsec:CyclicOrders} We denote a cyclic order on a set $X$ by $a\prec b\prec c$, where $a,b,c\in X$. For convenience, we write
	\[
	a_1\prec a_2\prec\ldots\prec a_n,
	\]
	where we use $\prec$ to stress that this is a cyclic order of $n\geq 3$ elements $a_j\in X$, not a linear order. This expression means that $a_{j-1}\prec a_j\prec a_{j+1}$ for all $j\in\{1,\ldots,n\}$, where indices are labeled mod $n$. Expressions such as
	\[
	a\preceq b\prec c
	\] 
	mean that either $a\prec b\prec c$ or $a=b\prec c$.
	
	There is a natural cyclic order on the set of fundamental domains. Given three fundamental domains $F_0$, $F_1$, and $F_2$, we choose, for $i \in \{0,1,2\}$, arcs $\gamma_i\colon[0,1]\to\CC$ that connect some point $\zeta_i=\gamma_i(0)\in F_i$ to $\infty=\gamma_i(1)$ and satisfy $\gamma_i((0,1))\subset F_i$. Let $R>0$ be large enough such that each of the arcs $\gamma_i$ contains a point of modulus $R$, and set $t_i:=\max_{t\in[0,1]}\{t\colon\vert\gamma_i(t)\vert=R\}$. The points $\gamma_i(t_i)$ have a counter-clockwise cyclic order on the circle $\partial D_R(0)$, and this is by definition the cyclic order of the arcs $\gamma_i$ at infinity. It is not hard to see that this cyclic order is well defined, i.e. independent of the choice of $R$. Indeed, if the cyclic order were different for $\tilde{R}\neq R$, the arcs $\gamma_i$ would not be pairwise disjoint.
	\begin{figure}[ht]
		\includegraphics[width=0.75\textwidth]{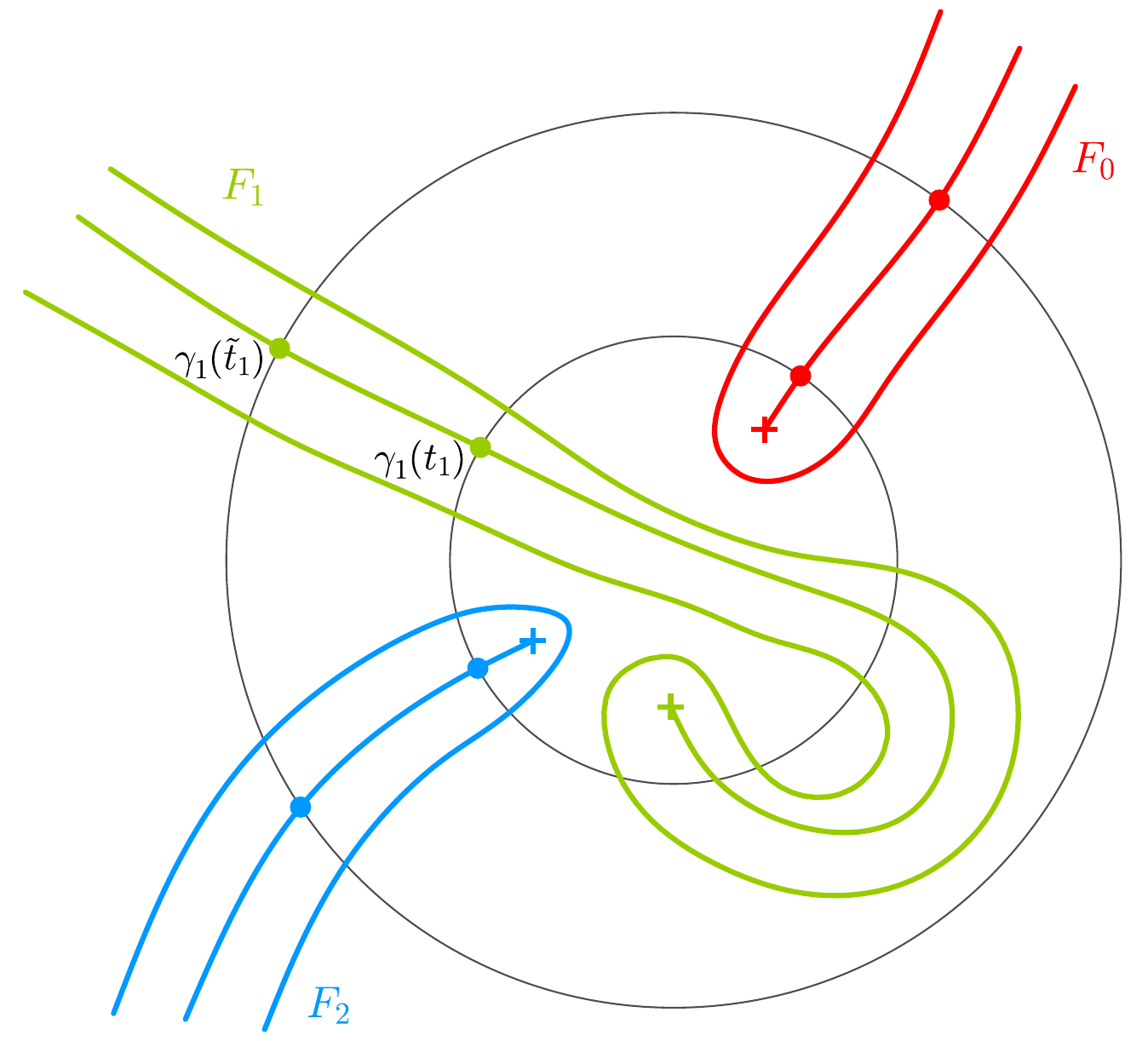}
		\caption{Sketch of the arcs $\gamma_i$ used to define the cyclic order of the fundamental domains $F_i$. The arc $\gamma_1$ shows that it is important to use the maximal potential $t_i$ at which $\gamma_i$ intersects $D_R(0)$.}
		\label{fig:CyclicOrderDomains}\end{figure}
	
	The cyclic order of the fundamental domains $F_i$ is, by definition, the same as the cyclic order of the corresponding curves $\gamma_i$. This order is well-defined, i.e., independent of the choice of the $\gamma_i$, because every fundamental domain has a unique access to $\infty$.
	
	\begin{LemDef}[Successors of fundamental domains]\label{def:Successor}
		Let $\mathscr{S}(D,\alpha)$ be a static partition, and let $F\in\mathscr{S}(D,\alpha)$ be a fundamental domain. Then there are unique fundamental domains $\Fpred{F},\Fsucc{F}\in\mathscr{S}(D,\alpha)$, called the \emph{predecessor} and \emph{successor} of $F$, such that every $F'\in\mathscr{S}(D,\alpha)\setminus\{\Fpred{F},F,\Fsucc{F}\}$ satisfies
		\[
		F'\prec \Fpred{F} \prec F\prec \Fsucc{F}  \;.
		\]
	\end{LemDef}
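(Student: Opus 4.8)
The plan is to reduce the statement to the combinatorics of fundamental domains inside a single tract of $f$ over $\infty$, where the situation is governed by the model $z\mapsto\exp(z)$ of Lemma~\ref{lem:LogarithmicSingularities}. Recall that $\mathscr{S}(D,\alpha)$ is the disjoint union, over the tracts $T$ of $f$ over $\infty$, of the sets $\mathscr{S}^T$ of fundamental domains contained in $T$; each $T$ is simply connected and unbounded, $\restr{f}{T}$ is a universal covering over $\C\setminus\overline{D}$, and $T\cap f^{-1}(\overline{D})=\emptyset$. Since $\alpha$ is a simple arc from $\partial D$ to $\infty$ and $(\C\setminus\overline{D})\setminus\alpha=W_0$ is simply connected, covering theory shows that $f^{-1}(\alpha)\cap T$ is a pairwise disjoint family of simple arcs, each running from $\partial T$ (over $\partial D$) out to $\infty$, and that these arcs cut $T$ into the members of $\mathscr{S}^T$; the deck group of $\restr{f}{T}$ being infinite cyclic, $\mathscr{S}^T$ is a $\Z$-indexed family $(F^T_n)_{n\in\Z}$ in which $F^T_n$ is flanked by exactly two of the arcs, one separating it from $F^T_{n-1}$ and one from $F^T_{n+1}$. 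A small amount of point-set topology (the relevant sub-arc of $\partial T$ appearing in $\partial F^T_n$ is compact) then shows that each fundamental domain $F$ is in fact a Jordan domain in $\CC$ with $\infty\in\partial F$, its boundary consisting of a compact arc of $\partial T$ together with the two flanking lifts $\ell_p,\ell_s\subset f^{-1}(\alpha)$, which are arcs to $\infty$ shared with the neighbouring fundamental domains.

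Fix $F=F^T_n$ and set $\{F_p,F_s\}:=\{F^T_{n-1},F^T_{n+1}\}$, labelled so that $F_p\prec F\prec F_s$ (possible, since $F$ lies cyclically between its two neighbours). I claim this pair satisfies the asserted property; uniqueness is then automatic, because in a total cyclic order on at least three elements an immediate predecessor and an immediate successor, once they exist, are unique. The key observation is that, since $F$ is a Jordan domain, any other fundamental domain $F'$ is connected and disjoint from $\overline{F}=F\cup\partial F$ --- disjoint from $F$ as components of $f^{-1}(W_0)$ are disjoint, and disjoint from $\partial F\subset f^{-1}(\partial D)\cup f^{-1}(\alpha)$ as $F'\subset f^{-1}(W_0)$ --- hence $F'$ is contained in the complementary Jordan domain $\CC\setminus\overline{F}$. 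Therefore the (unique) access of $F'$ to $\infty$ lies inside $\CC\setminus\overline{F}$, while that of $F$ lies inside $F$; since near $\infty$ the two Jordan domains are separated exactly by $\ell_p\cup\ell_s\cup\{\infty\}$, on a large circle $\partial D_R(0)$ the exit points of curves-to-$\infty$ in $F$, in $\ell_p$, and in $\ell_s$ occur consecutively, with the exit of $F'$ --- for every $F'\notin\{F_p,F,F_s\}$ --- on the complementary arc; moreover just across $\ell_p$ (resp.\ $\ell_s$) from $F$ one is in $F_p$ (resp.\ $F_s$), so $F_p$ and $F_s$ occupy the positions immediately adjacent to $F$. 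Reading off the cyclic order of these exit points (which is the cyclic order on $\mathscr{S}(D,\alpha)$, this being well defined thanks to the unique access of each fundamental domain to $\infty$) yields exactly $F'\prec F_p\prec F\prec F_s$.

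The main obstacle is the first step: rigorously establishing, for an arbitrary and possibly very irregularly shaped tract, that its fundamental domains form a $\Z$-indexed family sharing consecutive boundary lifts of $\alpha$ that run to $\infty$, and that each fundamental domain is a Jordan domain in $\CC$ whose boundary meets a large circle $\partial D_R(0)$ only along its two flanking lifts. Once this is in place, the cyclic-order bookkeeping in the second step is routine, but care is needed to connect the combinatorial definition of the cyclic order (via last intersection points of curves with $\partial D_R(0)$) with the sphere-level statement that $\ell_p\cup\ell_s\cup\{\infty\}$ separates the access of $F$ to $\infty$ from the accesses of all other fundamental domains; the cleanest route is to phrase the whole separation argument on $\CC$ using the Jordan curve $\partial F\cup\{\infty\}$ and only at the end translate it into the language of exit points on $\partial D_R(0)$.
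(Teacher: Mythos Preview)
Your argument is correct and is precisely the kind of justification the paper has in mind: its own proof consists of the single sentence ``This follows directly from the topology of fundamental domains,'' and what you have written is a careful unpacking of that sentence. The reduction to a single tract, the $\Z$-indexing coming from the infinite cyclic deck group, the identification of each fundamental domain as a Jordan domain on $\CC$ bounded by two lifts of $\alpha$ together with a compact sub-arc of $\partial T$, and the Jordan-curve separation argument at $\infty$ are exactly the ingredients one needs; the paper simply declines to spell them out.

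One small point worth tightening: to conclude that $F_p$ and $F_s$ are the \emph{immediate} neighbours of $F$ (and not merely that every other $F'$ lies in the complementary Jordan domain $\CC\setminus\overline{F}$), it is cleanest to apply the same Jordan-domain argument once more to the union $\overline{F\cup F_p}$ (whose boundary near $\infty$ consists of $\ell_s$ and the second flanking lift of $F_p$), which shows that no $F'\notin\{F,F_p\}$ can have its access to $\infty$ between those of $F$ and $F_p$. Your phrase ``just across $\ell_p$ from $F$ one is in $F_p$'' is the right intuition for this, but the cyclic order is defined via last intersection points of curves lying in the \emph{interiors} of the fundamental domains, so the ``just across'' observation needs this small extra step to become a statement about those exit points.
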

	\begin{proof}
		This follows directly from the topology of fundamental domains: each of them is contained in a tract, and the fundamental domains within each tract are ordered like $\Z$.
	\end{proof}
	
	For a given $N\geq 1$, the fundamental tails of level $N$ are also pairwise disjoint Jordan domains on $\CC,$ each with a unique access to $\infty$. Therefore, the same construction as above allows us to define a cyclic order on the set of fundamental tails of level $N$ for any given $N\geq 1$. These orders can be used to define a cyclic order on the space of external addresses.
	
	\begin{Def}[Cyclic order of external addresses]\label{def:CyclicOrder}
		Let $\ad{s}$, $\ad{t}$, and $\ad{u}$ be distinct external addresses. Choose $N\geq 1$ large enough so that the fundamental tails $\tau_N(\ad{s})$, $\tau_N(\ad{t})$, and $\tau_N(\ad{u})$ of level $N$ are distinct. Define the cyclic order of the addresses~via
		\[
		\ad{s}\prec\ad{t}\prec\ad{u} :\Leftrightarrow \tau_N(\ad{s})\prec\tau_N(\ad{t})\prec\tau_N(\ad{u}).
		\]
	\end{Def}
	
	By Lemma~\ref{lem:FurtherFundamentalFacts}, all points of $\tau_M(\ad{s})$ with sufficiently large absolute values are contained in $\tau_N(\ad{s})$ for each $M\geq N$. Therefore, the cyclic order on $\AD$ is well defined. It induces a topology on the space of external addresses (this works on the level of all external addresses, whether or not they are realized).
	
	\begin{Def}[Order topology on $\AD$]
		For distinct external addresses $\ad{s},\ad{t}$ we define intervals between $\ad{s},\ad{t}$ as
		\begin{gather*}
		(\ad{s},\ad{t}):=\{\ad{u}\in\AD\colon\ad{s}\prec\ad{u}\prec\ad{t}\},\\
		(\ad{s},\ad{t}]:=\{\ad{u}\in\AD\colon\ad{s}\prec\ad{u}\preceq\ad{t}\},\\
		[\ad{s},\ad{t}):=\{\ad{u}\in\AD\colon\ad{s}\preceq\ad{u}\prec\ad{t}\},\\
		[\ad{s},\ad{t}]:=\{\ad{u}\in\AD\colon\ad{s}\preceq\ad{u}\preceq\ad{t}\}.\\
		\end{gather*}
		
		The open intervals form $(\ad{s},\ad{t})$ the basis of the \emph{order topology} on $\AD$.\label{def:OrderTopology}
	\end{Def}
	
	When we do topological constructions in $\AD$ (as e.g.\ in Proposition~\ref{pro:RealizedItineraries}), the underlying topology is always the order topology.
	As filaments are  distinguished by external addresses, the cyclic order on the space of external addresses can be used to define a cyclic order on the set of filaments.
	
	\begin{Def}[Cyclic order of filaments]
		We define a cyclic order on the set of filaments via
		\[
		G_{\ad{s}}\prec G_{\ad{t}}\prec G_{\ad{u}}:\Leftrightarrow\ad{s}\prec\ad{t}\prec\ad{u}
		\]
		for distinct filaments $G_{\ad{s}}$, $G_{\ad{t}}$, and $G_{\ad{u}}$.\label{def:CyclicOrderFilaments}
	\end{Def}
	
	In \cite[Sections 4 and 13]{BR}, an equivalent but more direct definition of the cyclic order of filaments at infinity that does not need external addresses is given {using the cyclic order of tails at sufficiently large finite level that distinguish all three addresses $\ad s, \ad t, \ad u$}.
	
	\subsubsection{Intermediate addresses and linear order.}\label{subsubsec:CircleOfAddresses} Let $f\in\EL$ be of bounded type, and let $\mathscr{S}(D,\alpha)$ be a static partition for $f$. In \cite[Section 5]{BJR}, a dynamical compactification of the complex plane (depending on $f$ and $\mathscr{S}(D,\alpha)$) was obtained by adding a \emph{circle of addresses} at infinity. The authors defined an extension $\overline{\AD}\supset\AD$, where $\overline{\AD}$ is the completion of $\AD$ w.r.t.\ the cyclic order. The elements of $\overline{\AD}\setminus\AD$ are called \emph{intermediate addresses}, and the extension $\overline{\AD}$ is called the circle of addresses. The dynamical compactification is then defined to be $\C_{\overline{\AD}}:=\C\cupdot\overline{\AD}$ equipped with a suitable topology. In this topology, $\overline{\AD}$ is homeomorphic to the unit circle and $\C_{\overline{\AD}}$ is homeomorphic to the closed unit disk.
	The details of this constructions are described in \cite[Section 5]{BJR}; here, we will just describe a few special cases that we need, associated to the curve $\alpha$ and its immediate preimages.
	
	Given distinct external addresses $\ad{s}\neq \ad{t}\in\AD$, we choose $N\geq 1$ large enough such that the fundamental tails $\tau_N(\ad{s})$ and $\tau_N(\ad{t})$ of level $N$ are disjoint. Choose arcs $\gamma_{\ad{s}}\colon[0,1]\to\tau_N(\ad{s})\cup\{\infty\}$ and $\gamma_{\ad{t}}\colon[0,1]\to\tau_N(\ad{t})\cup\{\infty\}$ that connect arbitrary base points to $\infty$. Since $\alpha$ does not intersect the tracts of $f$, some unbounded part $\alpha'$ of $\alpha$ satisfies $\alpha'\cap\tau_N^i=\emptyset$ ($i\in\{1,2\}$). Hence, the arcs $\gamma_{\ad{s}}$, $\gamma_{\ad{t}}$, and $\alpha'$ are pairwise disjoint and have a well-defined cyclic order at infinity. We define
	\[
	\alpha\prec\ad{s}\prec\ad{t} :\Leftrightarrow \alpha'\prec\gamma_{\ad{s}}\prec\gamma_{\ad{t}}.
	\]
	It follows as before that this cyclic order is well defined. The first intermediate address that we define is $\alpha\in\overline{\AD}\setminus\AD$  (the mildly ambiguous usage of $\alpha$ should not cause any confusion).
	
	Now consider any immediate preimage $\tilde \alpha$ of $\alpha$. There is a unique fundamental domain $F\in\mathscr(D,\alpha)$ such that $\tilde{\alpha}\subset \partial \Fpred{F}\cap\partial F$ (see Definition~\ref{def:Successor}: $\tilde\alpha$ is between $F$ and its predecessor). We denote this lift by $\alpha_{\Fpred{F}}^F\in\overline{\AD}\setminus\AD$. Analogously to the case of $\alpha$, we can extend the cyclic order to all $\alpha_{\Fpred{F}}^F$.
	
	There are many more intermediate addresses, but we will not need them here.
	
	Finally, note that every cyclic order can be turned into a linear order by removing an arbitrary element. We do this with respect to the distinguished intermediate address $\alpha\in\overline{\AD}\setminus\AD$ (the simplest intermediate address).
	
	\begin{Def}[Linear and cyclic order of external addresses]
		Let $\ad{s}=F_0F_1\ldots$ and $\ad{t}=F_0'F_1'$ be distinct external addresses. We define a linear order on $\AD$ via
		\[
		\ad{s}<\ad{t}:\Leftrightarrow\alpha\prec\ad{s}\prec\ad{t}.
		\]\label{def:LinearOrder}
	\end{Def}

\Newpage

\section{Landing of filaments}
	\label{section:LandingOfFilaments}
	
	Our main focus in this paper are (pre\nobreakdash-)periodic filaments and their landing behavior. If the filament $G_{\ad{s}}$ happens to be a dynamic ray, i.e., if it can be parametrized via a homeomorphism $\gamma\colon(0,\infty)\to G_{\ad{s}}$ satisfying $\lim_{t\to+\infty}\gamma(t)=\infty$, then it makes sense to say that $G_{\ad{s}}$ lands if the limit $ \lim_{t\to 0} \gamma(t)$ exists.

	In general, the topology of $G_{\ad{s}}$ is more complicated, so we need a more abstract way to define whether a filament lands. An additional problem comes from the fact that there might be preperiodic filaments that do not accumulate anywhere in $\C$ but that do land, in a meaningful way, in the extended plane $\CTT{f}$. Indeed, this happens if and only if $f$ has asymptotic values, so it happens even for preperiodic dynamic rays of psf exponential maps; see Figure~\ref{fig:nonLandingRay} for an example.
	
	\begin{figure}[ht]
		\includegraphics[width=\textwidth,trim=0 30 0 30]{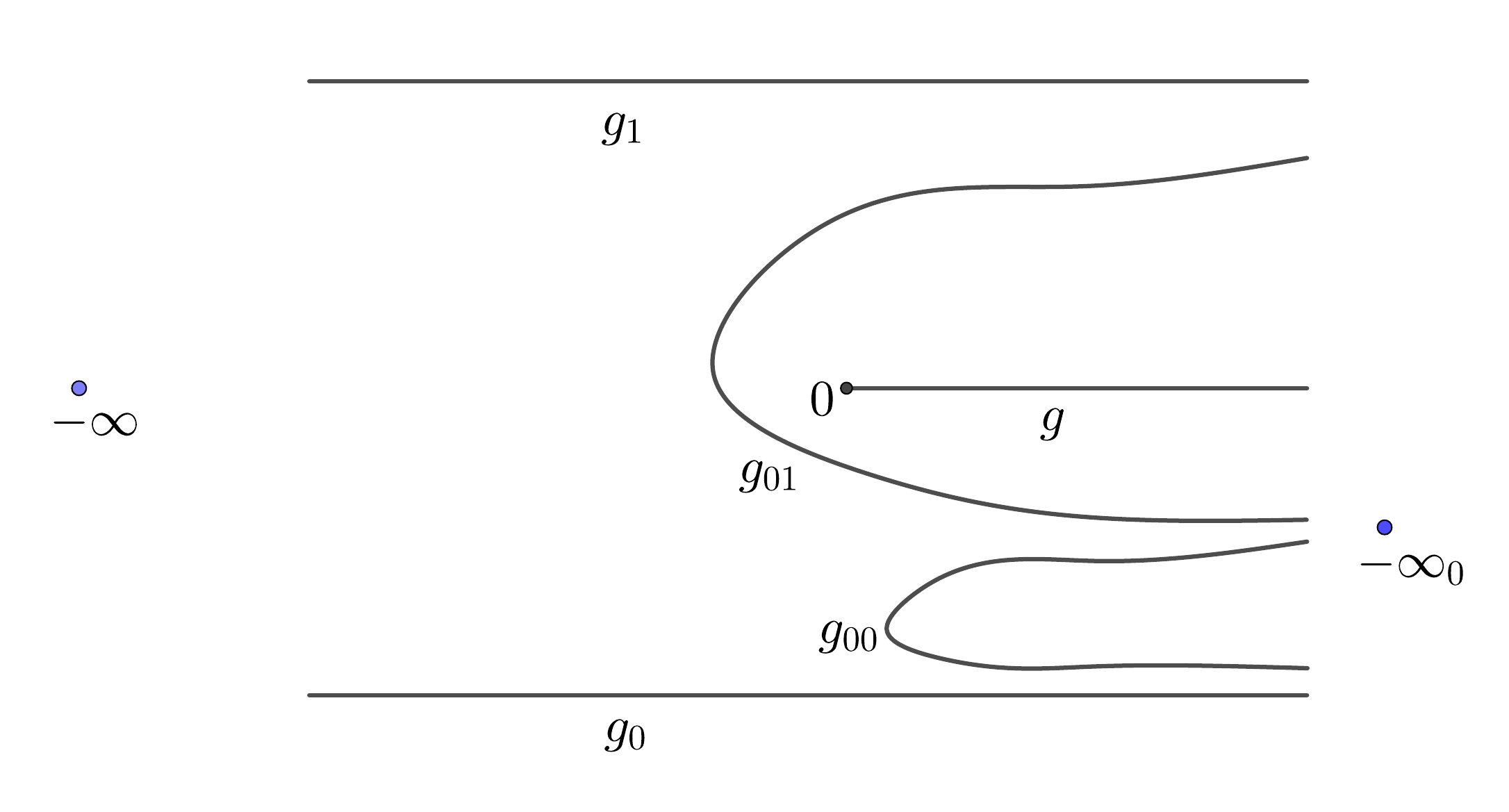}
		\caption{For every psf exponential map $\lambda\mapsto\lambda\exp(z)$, there exists a preperiodic dynamic ray $g$ that lands at $0$ \cite{SZ2}. The preimage rays $g_0$ and $g_1$ of $g$ do not land anywhere in $\C$, but they land at $-\infty\in\CT{\lambda\exp}$. The preimages $g_{01}$ of $g_1$ and $g_{00}$ of $g_0$ land together at a preimage $-\infty_0\in\CTT{\lambda\exp}$ of $-\infty$.}\label{fig:nonLandingRay}
	\end{figure}
	
	Recall the choice of inverse branches $f_{\ad{s}}^{-n}(\zeta)\in\tau_n(\ad{s})$ from Lemma and Definition~\ref{lemdef:TailsAtAddress}, as well as the domain $W_0:=\C\setminus(\overline{D}\cup\alpha)$ from the beginning of Section~\ref{sub:EscapingSet}.
	
	\begin{Def}[Landing of filaments] \label{def:extendedAccumulationSet}
		For a bounded external address $\ad{s}$,
		We say that the filament $G_{\ad{s}}$ \emph{lands} at a point $p\in\CTT{f}$ if and only if for every
    base point $\zeta\in W_0$, the sequence $\zeta_n:=\zeta_n(\ad{s}):=f_{\ad{s}}^{-n}(\zeta)\in\tau_n(\ad{s})$ converges in $\CTT{f}$ to $p$.
  In this case, we also denote the landing point of $G_{\ad{s}}$ by $L(\ad{s})\in\CTT{f}$.
	\end{Def}

  \begin{remark}
    
    In \cite[Definition 6.4]{BR}, the definition of landing of filaments in $\CC$ is defined via the accumulation set of the sequences $\zeta_n$ (see \cite[Definition 6.1]{BR}). As $\CC$ is compact, the accumulation set in $\CC$ is a singleton $\{p\}$ if and only if the sequence converges to $p$. Hence our definition of landing agrees with the original definition.
    Our space $\CTT{f}$ is not compact, so this equivalence fails in general, which is why we prefer to work directly with convergent sequences. It is shown in \cite[Proposition~6.5 (b)]{BR} that a filament lands in $\C$ if its closure (in $\C$) contains exactly one additional point, which is the landing point. This remark also applies for preperiodic filaments that land at infinity in our extended plane.
  \end{remark}

The main result of \cite{BR} is a generalization of the Douady--Hubbard landing theorem for post-singularly bounded polynomials to post-singularly bounded entire functions. We state a restricted version of \cite[Theorem~7.1]{BR} for post-singularly finite entire functions.
	
	\begin{Thm}[Landing theorem for {periodic filaments in $\C$}]
		Let $f$ be a psf entire function.
		Then every periodic filament of $f$ lands at a repelling periodic point $p\in\mathcal{J}(f)$.
		
		Conversely, every periodic point $p\in\mathcal{J}(f)$ is the landing point of at least one and at most finitely many filaments, all of which are periodic of the same period.\label{thm:LandingBR}
	\end{Thm}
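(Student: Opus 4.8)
The plan is to reduce the statement to the corresponding theorem of \cite{BR} about landing in $\CC$, and then to upgrade it to our extended plane $\CTT{f}$. The point is that Theorem~\ref{thm:LandingBR} as stated here differs from \cite[Theorem~7.1]{BR} only in that we now allow landing points in $\CTT{f}\setminus\C$; but for \emph{periodic} dreadlocks this extra flexibility is vacuous, because a periodic point of $\extended f$ in $\CTT{f}\setminus\C$ would map, after finitely many steps, into $\C$ and stay there forever, contradicting periodicity (the points of $\CTT{f}\setminus\C$ never lie in the image of $\extended f$ restricted away from $S(f)$, and more directly each such point eventually lands on an asymptotic value $a\in S(f)\subset\C$ and never returns to $\CTT{f}\setminus\C$). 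Hence for a periodic address $\ad s$, $\Lambda_{\CTT{f}}(\ad s)\cap(\CTT{f}\setminus\C)=\emptyset$, so the accumulation set of Definition~\ref{def:extendedAccumulationSet} coincides with the one of \cite[Definition~6.1]{BR}.

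First I would verify that Definition~\ref{def:extendedAccumulationSet}, restricted to accumulation in $\CC$, agrees with \cite[Definition~6.1]{BR}: both are defined via limit points of the sequences $\zeta_n(\ad s)=f_{\ad s}^{-n}(\zeta)\in\tau_n(\ad s)$ obtained by pulling back a fixed base point $\zeta\in W_0$ along the inverse branches from Lemma and Definition~\ref{lemdef:TailsAtAddress}; since the periodic case excludes accumulation at $\infty$ as well (a periodic dreadlock cannot accumulate at $\infty$ because its landing point is a well-defined point of $\C$), the two notions of accumulation set literally coincide. Then the first assertion — every periodic dreadlock lands at a repelling periodic point $p\in\mathcal J(f)$ — is exactly \cite[Theorem~7.1]{BR} specialized from post-singularly bounded to post-singularly finite functions, together with Proposition~\ref{Pro:PsfProperties} to guarantee that the periodic point in the Julia set is repelling (rather than, say, parabolic or Cremer). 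The statement that the period of $p$ divides the period of the dreadlock follows because $\extended f(G_{\ad s})=G_{\sigma(\ad s)}$ (Theorem~\ref{thm:EscapingSet}) forces $f(L(\ad s))=L(\sigma(\ad s))$.

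For the converse, given a repelling periodic point $p\in\mathcal J(f)$ of period $m$, I would again invoke \cite[Theorem~7.1]{BR}: in the post-singularly bounded setting it is shown that every repelling periodic point is the landing point of at least one periodic dreadlock, and the linearizing coordinate at $p$ together with the combinatorial structure of the cyclic order of dreadlocks around $p$ shows that only finitely many dreadlocks can land there, all of them periodic; the period of each such dreadlock is a multiple of $m$ and is bounded by the number of dreadlocks landing at $p$, hence all landing dreadlocks have a common period. The main obstacle in writing this cleanly is bookkeeping rather than mathematics: one must check carefully that the passage from \cite{BR}'s accumulation set in $\CC$ to the accumulation set in $\CTT{f}$ introduces no spurious landing points and loses none, which amounts to the observation above that $\CTT{f}\setminus\C$ contains no periodic points and cannot be accumulated by a periodic dreadlock. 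Once that is pinned down, the theorem is a direct specialization of \cite[Theorem~7.1]{BR} combined with Proposition~\ref{Pro:PsfProperties}.
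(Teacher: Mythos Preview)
Your proposal is correct and matches the paper's treatment: the paper does not give its own proof of this theorem but simply states it as a restricted version of \cite[Theorem~7.1]{BR}, exactly as you identify. Your additional discussion reconciling the $\CTT{f}$-accumulation set of Definition~\ref{def:extendedAccumulationSet} with the $\CC$-accumulation set used in \cite{BR} goes slightly beyond what the paper bothers to spell out, but is straightforward (and the paper remarks immediately after the theorem that points of $\CTT{f}\setminus\C$ are never periodic, which is the only extra observation needed).
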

	A point $p\in\CTT{f}\sm\C$ cannot be periodic because it is eventually mapped to a periodic post-singular value in $\C$ and is not part of its periodic orbit, so Theorem~\ref{thm:LandingBR} only makes a statement about points in $\C$ and filaments that land in $\C$.
	Using the extended plane $\CTT{f}$, Theorem~\ref{thm:LandingBR} extends neatly to the case of \emph{pre}periodic filaments.

\begin{Thm}[Landing of (pre\nobreakdash-)periodic filaments in $\CTT{f}$]
		Let $f$ be a psf entire function. Then every (pre\nobreakdash-)periodic filament of $f$ lands at a repelling (pre-)periodic point $p\in\mathcal{J}(\extended{f})$.
		
		Conversely, every (pre\nobreakdash-)periodic point $p\in\mathcal{J}(\extended{f})$ is the landing point of at least one and possibly infinitely many (pre\nobreakdash-)periodic filaments, all of which have the same period and preperiod.
\label{thm:PreperiodicLanding}
	\end{Thm}

One difference to the periodic case is that preperiodic rays and filaments can land ``at infinity'' at points in our extended complex plane. Another difference is that there are preperiodic points at which infinitely many filaments land together. This does not happen for preperiodic points in $\C$, but does happen precisely for all $p\in\mathcal{J}(\extended{f})\sm\mathcal{J}(f)$ (the point $-\infty$ in Figure~\ref{fig:nonLandingRay} is an example of such a point).	

	The proof of Theorem~\ref{thm:PreperiodicLanding} is given at the end of Section~\ref{section:TopologyOfFilaments} because we need to establish some topological properties of filaments beforehand.	
	
We are now ready to define the main object of this paper.
	
	\begin{Def}[Landing equivalence]
		We write $\ADPer$ for the set of (pre\nobreakdash-)periodic external addresses. Given two addresses $\ad{s},\ad{t}\in\ADPer$, we write $\ad{s}\landeq\ad{t}$ if the filaments $G_{\ad{s}}$ and $G_{\ad{t}}$ land together in $\CTT{f}$, i.e., if $L(\ad{s})=L(\ad{t})$. We call the equivalence relation $\landeq$ on $\ADPer$ the \emph{landing equivalence relation}.\label{def:landingEquivalence}
	\end{Def}
	
	Our main result is that for all psf entire functions and all (pre-)periodic filaments the landing equivalence relation can be described in terms of itineraries of filaments with respect to a dynamically meaningful partition of the plane. Roughly speaking, two periodic filaments land together if and only if they have the same itinerary with respect to this partition; for preperiodic filaments, the situation is a bit more complicated because preperiodic filaments might share a landing point that lies on the boundary of several partition sectors. The conceptual idea is not new, and so-called dynamic partitions have been defined in many other contexts in complex dynamics: for post-critically finite polynomials, one picks for every critical value an (extended) dynamic ray that lands at this value. The preimages of these (extended) rays naturally divide the complex plane into partition sectors, and periodic points can be distinguished in terms of their itineraries, see \cite{P1}. In \cite{SZ2}, dynamic partitions were defined for certain classes of exponential maps (attracting, parabolic, escaping, and psf parameters), and this is also where the term ``dynamic partition'' was introduced. In \cite{B}, dynamic partitions were defined for geometrically finite entire maps with dynamic rays. While many arguments given in this paper work in analogy to previous work on dynamic partitions, additional complications come from the fact that we are dealing with filaments instead of dynamic rays, and from having to deal with preperiodic filaments that do not land in $\C$ but in our extension of $\C$. We need to show some facts about the topology of filaments before we can define and use dynamic partitions in the general case.
	
	\section{Topology of filaments}
	\label{section:TopologyOfFilaments}
	
	\subsection{Landing of filaments}

	\begin{Lem}[Euclidean shrinking; \protect{\cite[Lemma 6.2]{BR}}]
		Suppose that $\Omega\subset\C\setminus P(f)$ is a bounded Jordan domain. Then for every $\varepsilon >0$ and every compact $K\subset\C$, there exists
		$N_\varepsilon\in\N$ with the following property: for every $n\geq N_\varepsilon$, every connected component of $f^{-n}(\Omega)$ that intersects $K$ has Euclidean
		diameter at most $\varepsilon$.\label{lem:EuclideanShrinking}	\end{Lem}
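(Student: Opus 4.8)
The plan is to reduce the statement to a shrinking estimate for the hyperbolic metric of $\C\setminus P(f)$ and then transfer this estimate to the Euclidean metric, using that the preimage components under consideration are trapped in a fixed compact subset of $\C\setminus P(f)$. First, since $\Omega$ is simply connected and disjoint from $S(f)$, the covering property of $f\colon\C\setminus f^{-1}(S(f))\to\C\setminus S(f)$ together with Lemma~\ref{lem:PuncturedCoverings} shows that $f^{\circ n}$ maps every connected component of $f^{-n}(\Omega)$ biholomorphically onto $\Omega$, and the same holds over any simply connected domain avoiding $S(f)$. I would fix nested bounded Jordan domains $\overline\Omega\subset U$, $\overline U\subset\widetilde U$ with $\widetilde U\cap P(f)=\emptyset$ (possible since $P(f)$ is closed; we may assume $\overline\Omega\cap P(f)=\emptyset$ and $K\cap P(f)=\emptyset$, since near $P(f)$ the hyperbolic metric of $\C\setminus P(f)$ blows up and the diameter bound below already forces preimage components meeting $K$ to be Euclidean-small there). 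As $P(f)$ is forward invariant, $\C\setminus P(f)$ is backward invariant, so every component of $f^{-n}(\widetilde U)$ lies in $\C\setminus P(f)$; this domain is hyperbolic because it omits the infinite set $f^{-1}(P(f))$ ($f$ being transcendental). Write $d$ for its hyperbolic distance.

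The first key step is an $n$-independent bound on hyperbolic diameters. If $V'$ is a component of $f^{-n}(U)$ and $\widetilde V\supset V'$ is the component of $f^{-n}(\widetilde U)$ containing it, then $f^{\circ n}\colon\widetilde V\to\widetilde U$ is a conformal isomorphism taking $V'$ onto $U$, so the intrinsic hyperbolic diameter of $V'$ inside $\widetilde V$ equals that of $\overline U$ inside $\widetilde U$, a finite constant $M$ depending only on $U$ and $\widetilde U$; by the Schwarz--Pick lemma, $\operatorname{diam}_d(V')\le M$ for all $n$. Together with completeness of the hyperbolic metric, this confines every such $V'$ that meets $K$ to the fixed compact set $C:=\{z\in\C\setminus P(f):d(z,K)\le M\}\Subset\C\setminus P(f)$; in particular all such components are bounded, and on $C$ the hyperbolic and Euclidean metrics are comparable. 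It therefore suffices to show that $\operatorname{diam}_d(V)\to 0$ as $n\to\infty$, uniformly over components $V$ of $f^{-n}(\Omega)$ meeting $K$.

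To obtain this shrinking I would argue by contradiction. If it fails, there are $\varepsilon_0>0$, indices $n_k\to\infty$, and components $V_k\subset V_k'$ of $f^{-n_k}(\Omega)$ resp.\ $f^{-n_k}(U)$ with $V_k\cap K\neq\emptyset$ and $\operatorname{diam}(V_k)\ge\varepsilon_0$. The inverse branches $\psi_k:=(f^{\circ n_k}|_{V_k'})^{-1}\colon U\to V_k'$ all omit the fixed infinite set $f^{-1}(P(f))$, hence form a normal family on $U$ by Montel's theorem, and their images $V_k'$ lie in the bounded set $C$; so a locally uniform subsequential limit $\psi\colon U\to\C$ is holomorphic, hence by Hurwitz either constant or univalent. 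If $\psi$ is constant, then $\psi_k\to\psi$ uniformly on the compact set $\overline\Omega\Subset U$, so $\operatorname{diam}(V_k)=\operatorname{diam}(\psi_k(\overline\Omega))\to 0$, contradicting $\operatorname{diam}(V_k)\ge\varepsilon_0$. If $\psi$ is univalent, then $f^{\circ n_k}=\psi_k^{-1}\to\psi^{-1}$ locally uniformly on the open connected set $\psi(U)$, with nonconstant univalent limit; since the derivatives of the iterates are unbounded at every repelling periodic point and repelling periodic points are dense in $\mathcal J(f)$, this forces $\psi(U)\subset\mathcal F(f)$, which by Proposition~\ref{Pro:PsfProperties} is a union of superattracting basins, and on such a component every locally uniform subsequential limit of iterates is constant --- contradicting univalence of $\psi^{-1}$. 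This contradiction establishes the shrinking, and hence the lemma.

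The main obstacle is precisely the passage from the hyperbolic to the Euclidean metric: one must guarantee that the preimage components neither escape towards $\infty$ nor collapse towards $P(f)$, where these two metrics are incomparable. This is exactly what compactness of $K$ buys, via the $n$-independent hyperbolic-diameter bound coming from conformality of $f^{\circ n}$ on the enclosing domains, together with completeness of the hyperbolic metric of $\C\setminus P(f)$; organizing this confinement carefully --- in particular the choice of the nested enclosing Jordan domains and the treatment of components that come close to $P(f)$ --- is the technical heart of the argument.
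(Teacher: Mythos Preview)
The paper does not give its own proof of this lemma; it simply quotes \cite[Lemma~6.2]{BR}. Your argument supplies a complete self-contained proof along the standard lines for such shrinking lemmas---uniform hyperbolic-diameter bound via Schwarz--Pick on nested enclosing domains, confinement of the relevant preimage components to a fixed compact subset of $\C\setminus P(f)$, and then a Montel/Hurwitz dichotomy on the inverse branches, ruling out a univalent limit by invoking Proposition~\ref{Pro:PsfProperties} (only superattracting basins, hence constant subsequential limits of iterates). This is correct and is essentially how the result is proved in \cite{BR}.

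One technical point deserves a cleaner treatment. Your reduction ``we may assume $\overline\Omega\cap P(f)=\emptyset$'' is not justified by the blow-up of the hyperbolic density---that argument works for the reduction on $K$ (the domain side of the inverse branches) but not for $\Omega$ (the target side), since what you need is $\operatorname{diam}_d(\Omega)<\infty$. In fact the lemma as literally stated can fail if $\partial\Omega$ meets a superattracting periodic point: locally $f^{\circ n}$ looks like $z\mapsto z^{d^n}$, so components of $f^{-n}(\Omega)$ touching that point have Euclidean diameter comparable to $(\operatorname{diam}\Omega)^{1/d^n}\to 1$ rather than $0$. The hypothesis in \cite{BR} is effectively $\overline\Omega\subset\C\setminus P(f)$, and in the paper's sole application (the proof of Lemma~\ref{lem:NewNestedEnclosing}) the domains $\Omega_F$ are chosen freely, so one can simply impose this. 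With that hypothesis your argument goes through without change; the reduction on $K$ is fine as you wrote it, since preimage components lie in $\C\setminus P(f)$ and a hyperbolic ball of fixed radius centred near a puncture has small Euclidean diameter.
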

\begin{remark}
  A direct consequence of this lemma is the following: For every point $p \in \CTT{f}$ with neighbourhood $U$, there is a smaller neighbourhood $U'$ of $p$ with the following property: for $V\subset\C\setminus P(f)$ bounded Jordan domain, there is a $N \in \N$ such that for every $n \geq N$, every connected component of $f^{-n}(V)$ that intersects $U$ must be contained in $U'$: if $p \in \C$ it is enough to use the lemma with $\varepsilon$ small enough and $U' = D_{\varepsilon}(p), K = \overline{U'}$. If $p \in \CTT{f}\setminus \C$, the statement follows via pullback.

  This in particular implies that in order to check whether $G_{\ad{s}}$ lands at $p \in \CTT{f}$, it is enough to check it for one base point $\zeta \in W_0$. Indeed, for any other base point $\zeta' \in W_0$ we can choose a bounded Jordan domain $\Omega \subset W_0$ containing both. If the $\zeta_n$ tend to $p$, then by the previous paragraph, so do the $\zeta'_n$.
\end{remark}
We will use the following pair of lemmas from \cite{BR} in our construction. We will use the conclusion of the second lemma, but the statement of the construction refers to the first one, so while we cite the first one in completeness, we only cite the parts of the second lemma that we need.
\begin{Lem}[Domains for bounded-address filaments \protect{\cite[Lemma 6.9]{BR}}]
Let  $\zeta \in W_0$ belong to an unbounded connected component of $W_0 \cap f^{-1}(D)$, and let $R > 0$. Let $\mathcal{F}$ be any finite collection of fundamental domains of $f$. Then there is a Jordan domain $V \Subset \C \setminus P(f)$ with the following properties.
\begin{itemize}
    \item $\zeta \in V$.
    \item For all $F \in \mathcal{F}$, the unique preimage $\zeta_F$ of $\zeta$ in F also belongs to $V$.
    \item For all $F \in \mathcal{F}$, there is a connected component $A_F$ of $V \cap \overline{F}$ containing $\zeta_F$ as well as all points of $\overline{F}$ having modulus at most $R$.
    \item If $U$ is the connected component of $V \cap W_0$ containing $\zeta$, then $U \cap A_F$ intersects the unbounded connected component $\unbounded{F}$ of $F \setminus \overline{D}$ for all $F \in \mathcal{F}$.
\end{itemize}
\end{Lem}

	\begin{Lem}[Preimage domains \protect{\cite[Lemma 6.10]{BR}}]
Let $\mathcal{F}$ be a finite collection of fundamental domains of $f$, and assume that $\mathcal{F}$ contains every fundamental domain $F$ with $\overline{F} \cap \overline{D} \neq \emptyset$. Let $\zeta, R$ and $V$ be as in the previous lemma. If $R$ was chosen sufficiently large (depending only on $\mathcal{F}$), then the following holds.

Let $\ad{s} = F_0F_1 \dots$ be any external address. For $n \geq 1$, set $\zeta_n(\ad{s}) = f^{-n}_{\ad{s}}(\zeta)$. Also let $V_n(\ad{s})$ be the unique component of $f^{-n}(V)$ containing $\zeta_n(\ad{s})$. Then the following holds.
\begin{itemize}
\item The spherical diameter of $V_n(\ad{s})$ tends to $0$ as $n \rightarrow \infty$.
\item If $F_n \in \mathcal{F}$, then $\zeta_{n + 1}(\ad{s}) \in V_n(\ad{s})$, in particular $V_n(\ad{s}) \cap V_{n+1}(\ad{s}) \not= \emptyset$.
\end{itemize}
\label{lem:PreimageDomains}
	\end{Lem}
We will also need the following topological fact about filaments:
	\begin{Pro}[Topology of landing filaments; \protect{\cite[Theorem~7.1]{BR}}]\label{pro:filamentTopology}
		If $G_{\ad{s}}$ is a (pre-)periodic filament that lands in $\C$, then $\cl(G_{\ad{s}})$ does not separate $\C$.
	\end{Pro}
From this we immediately get the following corollary:
\begin{Cor}
    \label{cor:simplyconnectednbd}
    		If $G_{\ad{s}}$ is a (pre-)periodic filament that lands at $ p \in \C$, then there is a simply connected domain $U$ containing $G_{\ad{s}}$ such that $U \cap P(f) \subset \{p\}$.
\end{Cor}
\begin{proof}[Proof of Theorem~\ref{thm:PreperiodicLanding}]
		Let $\ad{t}$ be preperiodic of preperiod $k$, and let $\ad{s}=\sigma^{\circ n}(\ad{t})$ be the first periodic address on the forward orbit of $\ad{t}$.
		We want to show that $G_{\ad{t}}$ lands in $\CTT{f}$. By Theorem~\ref{thm:LandingBR}, the filament $G_{\ad{s}}$ lands at a periodic point $p\in\C$. Let us denote by $D^{sph}_\eps (p)$ the open spherical $\eps$ ball around $p$.
Let $\eps > 0$ small enough such that $D^{sph}_\eps (p) \cap P(f) \subset \{p\}$. Let $\mathcal{F}$ be the set of fundamental domains that appear in $\ad{t}$ or intersect $\overline{D}$. This is a finite set (this follows the fact that $\ad{t}$ is a bounded external address and there are only finitely many domains intersecting $\overline{D}$ (see \cite[Proposition 3.5]{BR})).
So we can choose $\zeta \in W_0$ and $V$ as in Lemma~\ref{lem:PreimageDomains} applied to $\mathcal{F}$. As $G_{\ad{s}}$ lands at $p$, there is an $N$ such that for all $n \geq N$, $V_n(\ad{s}) \subset D^{sph}_\eps (p)$. As $V_n(\ad{t}) \cap V_{n+1}(\ad{t}) \neq \emptyset$, we have that $V_n(\ad{t})$ must stay in the same connected component $U'$ of $f^{-k}(D^{sph}_\eps (p))$. As $D^{sph}_\eps (p) \cap P(f) \subset \{p\}$, it follows from the covering properties of $\extended{f}^{\circ n}$ that $U'$ contains a unique preimage $p'\in\extended{f}^{-n}(p)$. As $\zeta_n(\ad{s})$ tends to $p$, we get under this covering $\zeta_n(\ad{t})$ tends to $p'$, so $G_{\ad{t}}$ lands at $p'$.

Conversely, let $p'\in\CTT{f}$ be preperiodic, and let $p=\extended{f}^{\circ n}(p')$ be the first periodic point on the forward orbit of $p'$. There exists a filament $G_{\ad{s}}$ that lands at $p$. Choose $U$ in Corollary~\ref{cor:simplyconnectednbd}, and let $U'$ be the connected component of $\extended{f}^{-n}(U)$ containing $p'$. There exists a preimage filament $G_{\ad{t}}$ of $G_{\ad{s}}$ under $f^{\circ n}$ that satisfies $G_{\ad{t}}\subset U'$. By the first half of the lemma, the filament $G_{\ad{t}}$ lands at a point $L(\ad{t})\in\extended{f}^{-n}(p)$. As $p'$ is the only preimage of $p$ under $\extended{f}^{\circ n}$ in $U'$, we have $L(\ad{t})=p'$.
	\end{proof}

\subsection{Separation properties}
	
	In this subsection, we show that filaments that land together separate the plane in the same way as dynamic rays would. We have seen in Proposition~\ref{pro:filamentTopology} that a single filament that lands in the complex plane does not separate $\C$, and now we will show that $n$ filaments that land together in $\C$ separate the plane into precisely $n$ connected components. We also consider the separation properties of filaments that land together at points at infinity.

	The following result from \cite[Theorem 63.3 and Theorem 63.5]{Mu} will help us to deduce how several filaments that land together separate the plane.

	\begin{Thm}[A general separation theorem] \label{thm:PlaneSeparation}
		Let $C_1,C_2\subset\CC$ be non-separating continua such that $C_1\cap C_2=\{z\}$ for some $z\in\CC$. Then $\CC\setminus(C_1\cup C_2)$ is connected and simply connected. If, however, we have $C_1\cap C_2=\{z,w\}$ for distinct $z,w\in\CC$, then $\CC\setminus(C_1\cup C_2)$ consists of precisely two connected components that are both simply connected.
	\end{Thm}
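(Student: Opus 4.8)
This is a classical result --- it is Theorems~63.3 and~63.5 of \cite{Mu} --- and in the paper I would simply cite it; the sketch below records the argument I would reconstruct if needed. The plan is to identify $\CC$ with $\Sphere$ and work with \v{C}ech cohomology with $\Z$ coefficients. It rests on two standard inputs: (i) Alexander duality on $\Sphere$, in the form that for a compact set $K\subsetneq\Sphere$ the number of connected components of $\Sphere\setminus K$ equals $1+\operatorname{rank}\check H^1(K)$; and (ii) the classical criterion that an open connected subset $U\subsetneq\Sphere$ is simply connected if and only if $\Sphere\setminus U$ is connected. Under (i), the hypothesis that a continuum $C$ is \emph{non-separating} is precisely the statement $\check H^1(C)=0$.

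First I would observe that, since $C_1\cap C_2\neq\emptyset$ and each $C_i$ is connected, the union $C:=C_1\cup C_2$ is again a continuum, so $\check H^0(C)=\Z$. Then I would feed the closed cover $C=C_1\cup C_2$ into the Mayer--Vietoris sequence for \v{C}ech cohomology (valid because $\Sphere$ is compact Hausdorff):
\[
0\to\check H^0(C)\to\check H^0(C_1)\oplus\check H^0(C_2)\to\check H^0(C_1\cap C_2)\to\check H^1(C)\to\check H^1(C_1)\oplus\check H^1(C_2)\to\check H^1(C_1\cap C_2).
\]
Write $k:=\#(C_1\cap C_2)\in\{1,2\}$. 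Then $\check H^0(C_i)=\Z$ (each $C_i$ connected), $\check H^1(C_i)=0$ (non-separating), $\check H^0(C_1\cap C_2)=\Z^k$, and $\check H^1(C_1\cap C_2)=0$. The restriction $\check H^0(C_i)=\Z\to\check H^0(C_1\cap C_2)=\Z^k$ sends the generator to $(1,\dots,1)$, so the Mayer--Vietoris difference map $\Z^2\to\Z^k$ is $(a,b)\mapsto(a-b)(1,\dots,1)$, with image the diagonal. Since the arrow $\check H^1(C)\to\check H^1(C_1)\oplus\check H^1(C_2)=0$ is zero, exactness makes $\check H^0(C_1\cap C_2)\to\check H^1(C)$ surjective with kernel equal to that diagonal; hence $\check H^1(C)\cong\Z^k/\{(n,\dots,n)\}$, which is $0$ for $k=1$ and $\cong\Z$ for $k=2$.

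Finally I would translate back via (i) and (ii). If $k=1$, then $\check H^1(C)=0$, so $\Sphere\setminus C$ is connected, and, having connected complement $C$, it is simply connected by (ii). If $k=2$, then $\operatorname{rank}\check H^1(C)=1$, so $\Sphere\setminus C$ has exactly two components $V_1,V_2$, which are open; for each $j$ one has $\partial V_j\subset C$, hence $\Sphere\setminus V_j=\overline{V_{3-j}}\cup C$ is the union of two connected sets meeting in $\partial V_{3-j}\neq\emptyset$, therefore connected, and so $V_j$ is simply connected by (ii). The only real obstacle here is infrastructural rather than conceptual: one must have on hand the precise form of Alexander duality on $\Sphere$ and a Mayer--Vietoris sequence valid for \v{C}ech cohomology of closed subsets of a compact Hausdorff space (continua need not be locally connected, so singular cohomology will not do), together with criterion (ii); assembling exactly this package is the content of the cited chapter of \cite{Mu}, which is why in the paper I would defer to it rather than carry out the computation above.
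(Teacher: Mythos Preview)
Your proposal matches the paper exactly: the paper states this theorem with a reference to \cite[Theorem~63.3 and Theorem~63.5]{Mu} and gives no proof, precisely as you suggest doing. Your supplementary \v{C}ech--cohomology sketch is correct and self-contained, though it is more algebraically heavy than Munkres's own argument (which proceeds via the fundamental group and a direct homotopy-lifting analysis rather than Alexander duality and Mayer--Vietoris); since neither you nor the paper include a proof in the text, this difference is immaterial.
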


The analogous result holds for a finite set of non-separating continua $C_i$.

\newpage
		
	\begin{Lem}[Filament separation, finite case]
		Let $G_{\ad{s}^1},\ldots ,G_{\ad{s}^n}$ be (pre\nobreakdash-)periodic filaments that land at a common point $p\in\mathcal{J}(f)$, and assume that they are indexed according to their cyclic order. Then $\CTT{f}\setminus\bigcup_i\cl_{\CTT{f}}(G_{\ad{s}^i})$ consists of precisely $n$ connected components, and all of these are all simply connected. Each of these components is bounded by exactly two filaments in $G_{\ad{s}^j}$ and $G_{\ad{s}^{j+1}}$ (with adjacent indices), together with their common landing point.

        Every filament $G_{\ad{t}}\notin\{G_{\ad{s}^1},\ldots ,G_{\ad{s}^n}\}$ is contained in exactly one connected component of $\CTT{f}\setminus\bigcup_i\cl_{\CTT{f}}(G_{\ad{s}^i})$, and it is that component for which there exists an index $j$ such that $\ad{s}^j\prec\ad{t}\prec\ad{s}^{j+1}$; see also Figure~\ref{fig:separationFinite}.

        Therefore, two filaments $G_{\ad{t}},G_{\ad{u}}\notin\{G_{\ad{s}^1},\ldots ,G_{\ad{s}^n}\}$ are contained in the same connected component of $\CTT{f}\setminus\bigcup_i\cl_{\CTT{f}}(G_{\ad{s}^i})$ if and only if there exists an index $j$ such that $\ad{s}^j\prec\ad{t}\prec\ad{s}^{j+1}$ and $\ad{s}^j\prec\ad{u}\prec\ad{s}^{j+1}$.
        \label{lem:SeparationFinite}
	\end{Lem}
	\begin{figure}[ht]
		\includegraphics[width=0.6\textwidth]{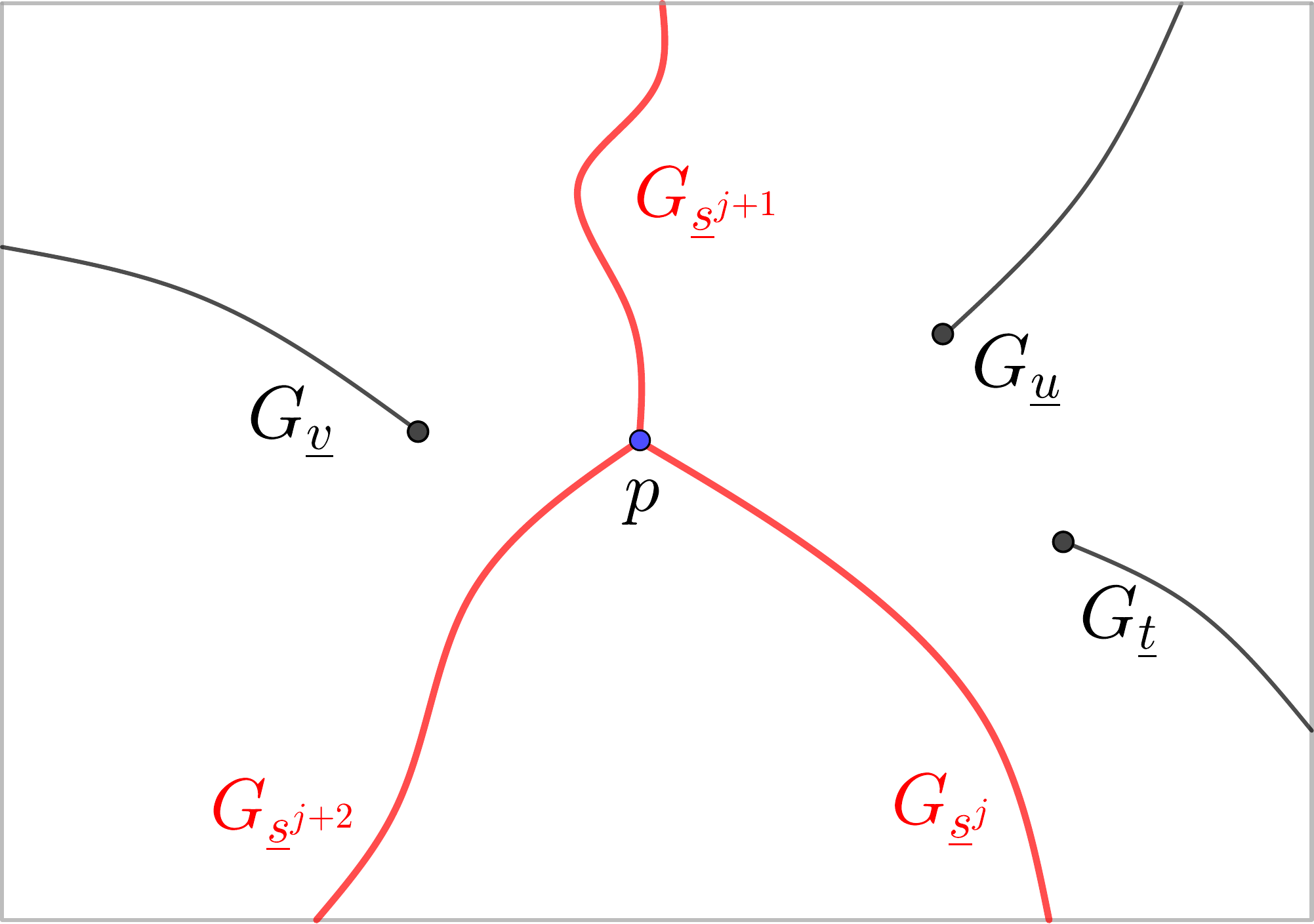}
		\caption{Sketch illustrating the statement of Lemma~\ref{lem:SeparationFinite}.}\label{fig:separationFinite}
	\end{figure}
	\begin{proof}
    We first prove the corresponding statement on the Riemann sphere (just replace $\CTT{f}$ by $\CC$ in the statement of the lemma). Note that the connected components of $X\setminus\bigcup_i\cl_{X}(G_{\ad{s}^i})$ are the same for $X=\C$ and $X=\CC$ because $\infty\in\cl_{\CC}(G_{\ad{s}^i})$. By Theorem~\ref{thm:PlaneSeparation}, the set $\CC\setminus(\cl_{\CC}(G_{\ad{s}^1})\cup\cl_{\CC}(G_{\ad{s}^2}))$ consists of precisely two connected components. Assume that $\CC\setminus\bigcup_{i\leq m}\cl_{\CC}(G_{\ad{s}^i})$ consists of precisely $m$ components $D_1,\ldots ,D_m$ for some $m<n$, all of which are simply connected. As $G_{\ad{s}^{m+1}}$ is connected by Theorem~\ref{thm:EscapingSet}, we have $G_{\ad{s}^{m+1}}\subset D_j$ for some $j$. The set $C:=\CC\setminus D_j$ is a non-separating continuum, and we have $C\cap \cl_{\CC}(G_{\ad{s}^{m+1}})=\{p,\infty\}$. By Theorem~\ref{thm:PlaneSeparation}, the complement $\CC\setminus(C\cup\cl_{\CC}(G_{\ad{s}^{m+1}}))$ consists of precisely two (simply) connected components; we call these $D_j'$ and $D_j''$. 
Therefore, 
\[
\CC\setminus\bigcup_{1\leq m+1}\cl_{\CC}(G_{\ad{s}^i})= \,D'_j \,\dot\cup\, D''_j  \, \dot\cup\,\bigcupdot_{i\neq j}  D_i 
\;,
\]
where all $m+1$ components are simply connected. 

It follows by induction that $\CC\setminus\bigcup_{i=1}^n\cl_{\CC}(G_{\ad{s}^i})$ consists of precisely $n$ simply connected components.
		
		For the second part of the statement, we choose closed unbounded connected sets  $B_i \subset G_{\ad{s}^i}$ and $C \subset G_{\ad{t}}$ such that $f$ escapes uniformly to infinity on these sets (this is possible by \cite[Prop. 4.10]{BR}). We then 
        choose $N$ large enough such that $C \subset \tau_N(\ad{t})$ and as well as the $B_i \subset \tau_N(\ad{s}^i)$ ($i\in\{1,\ldots,n\}$) and the fundamental tails have disjoint closures. It follows that the closed unbounded connected sets $B_i$ and $C$ have the same cyclic ordering at infinity in the sense of \cite[Appending 13]{BR} as the corresponding fundamental tails, so the second part follows.
		
    Finally, Remark~\ref{rem:VanKampen} shows that the lemma also holds in $\CTT{f}$.
	\end{proof}
	
	The content of Lemma~\ref{lem:SeparationFinite} is valid in a much more general context, namely for all post-singularly bounded entire functions and all bounded external addresses, and the reasoning is similar.
    
	We also need to consider filaments that land together at a transcendental singularity $p\in\mathcal{J}(\extended{f})\setminus\mathcal{J}(f)$. Already a single filament that lands at $p$ separates the complex plane (see Figure~\ref{fig:SeparationInfinite}). Yet, if we only consider all the infinitely many filaments that land at $p$, we have a result that is analogous to Lemma~\ref{lem:SeparationFinite}.

	\begin{remark}[Filaments landing at transcendental singularities]
		Given a transcendental singularity $p\in\mathcal{J}(\extended{f})\setminus\mathcal{J}(f)$, it follows from the fact that $p$ is a logarithmic singularity that the ordered set of the countably many filaments that land at $p$ is order-isomorphic to $\Z$.  Hence, we may denote them as $\{G_{\ad{s}^i}\}_{i\in\Z}$ where the $\ad{s}^i$ are indexed according to their cyclic order. Furthermore, it follows from the mapping properties of $f$ that for every $\ad{s}\notin\{\ad{s}^i\}_{i\in\Z}$ there exists an index $j\in\Z$ such that $\ad{s}^j\prec\ad{s}\prec\ad{s}^{j+1}$. This implies the following result, which extends Lemma~\ref{lem:SeparationFinite} to the case of singularities at $\infty$. We omit the proof.
	\end{remark}

	\begin{Lem}[Filament separation at $\infty$]
		Let $p\in\mathcal{J}(\extended{f})\setminus\mathcal{J}(f)$, and let $\{G_{\ad{s}^i}\}_{i\in\Z}$ be a set of distinct filaments that land at $p$ indexed according to their cyclic order. Two filaments $G_{\ad{t}}, G_{\ad{u}}\notin\{G_{\ad{s}^i}\}_{i\in\Z}$ are contained in the same connected component of $\CTT{f}\setminus\bigcup_{i\in\Z}\cl_{\CTT{f}}(G_{\ad{s}^i})$ if and only if there exists an index $j$ such that $\ad{s}^j\prec\ad{t}\prec\ad{s}^{j+1}$ and $\ad{s}^j\prec\ad{u}\prec\ad{s}^{j+1}$.\label{lem:SeparationInfinite}
	\end{Lem}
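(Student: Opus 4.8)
The plan is to mirror the structure of the proof of Lemma~\ref{lem:SeparationFinite}, but to cope with the infinitely many dreadlocks landing at $p$ by passing through \emph{finite} truncations and then taking a nested intersection. First I would fix the two extra dreadlocks $G_{\ad t}, G_{\ad u}$ and note that, by the Remark preceding this lemma, $\{\ad{s}^i\}_{i\in\Z}$ is order-isomorphic to $\Z$ and there is a well-defined index $j$ with $\ad{s}^j\prec\ad t\prec\ad{s}^{j+1}$ (resp.\ for $\ad u$); so the content is to show that $G_{\ad t}$ and $G_{\ad u}$ lie in the same component of $\CTT{f}\setminus\bigcup_{i\in\Z}\cl_{\CTT{f}}(G_{\ad{s}^i})$ precisely when these two indices coincide.

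For the ``if'' direction, suppose $\ad{s}^j\prec\ad t,\ad u\prec\ad{s}^{j+1}$. Consider the finite subfamily $G_{\ad{s}^j}, G_{\ad{s}^{j+1}}$. These two dreadlocks both land at $p$; in $\CTT{f}$ their closures meet in $\{p\}$ (they do not share the point at $\infty$ in the relevant sense, since at $\infty$ they separate, but I would instead run the argument on $\CC$ as in Lemma~\ref{lem:SeparationFinite} and then add the points of $\CTT f\setminus\C$ at the end, exactly as in that proof). On $\CC$, $\cl_{\CC}(G_{\ad{s}^j})$ and $\cl_{\CC}(G_{\ad{s}^{j+1}})$ are non-separating continua (Proposition~\ref{pro:dreadlockTopology}) meeting in $\{p,\infty\}$, so by Theorem~\ref{thm:PlaneSeparation} their union separates $\CC$ into exactly two simply connected components; the topology at $\infty$ (cyclic order of fundamental tails, as used in Lemma~\ref{lem:SeparationFinite}) identifies which component contains $G_{\ad t}$ and $G_{\ad u}$, namely the one corresponding to the sector between $\ad{s}^j$ and $\ad{s}^{j+1}$, and it is the same for both. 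That gives a connected set (a component of $\CC\setminus(\cl G_{\ad{s}^j}\cup\cl G_{\ad{s}^{j+1}})$) containing both $G_{\ad t}$ and $G_{\ad u}$ and disjoint from \emph{every} $G_{\ad{s}^i}$: indeed any $G_{\ad{s}^i}$ with $i\neq j,j+1$ satisfies either $\ad{s}^i\prec\ad{s}^j$ or $\ad{s}^{j+1}\prec\ad{s}^i$ in the induced order, so it lies in the \emph{other} component. Hence $G_{\ad t}$ and $G_{\ad u}$ are in the same component of $\CC\setminus\bigcup_{i\in\Z}\cl_{\CC}(G_{\ad{s}^i})$, and adding the countably many points of $\CTT f\setminus\C$ does not split it (each such point has a small connected neighborhood missing all $\cl_{\CTT f}(G_{\ad{s}^i})$, just as in Lemma~\ref{lem:SeparationFinite}).

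For the ``only if'' direction, suppose the indices differ, say $\ad t$ lies between $\ad{s}^j$ and $\ad{s}^{j+1}$ while $\ad u$ lies between $\ad{s}^k$ and $\ad{s}^{k+1}$ with $k\neq j$; then some $\ad{s}^m$ with $j<m\le k$ (in the cyclic-to-linear sense) satisfies $\ad t\prec\ad{s}^m\prec\ad u$ along one arc. I would use Lemma~\ref{lem:SeparationFinite} applied to the single dreadlock $G_{\ad{s}^m}$ together with $G_{\ad{s}^{m'}}$ for a suitable second index (or, more cleanly, to the pair $\{G_{\ad{s}^m}, G_{\ad{s}^{m+1}}\}$, noting one is irrelevant): that lemma already tells us $G_{\ad t}$ and $G_{\ad u}$ lie in \emph{different} components of the complement of finitely many of the $\cl_{\CTT f}(G_{\ad{s}^i})$, and removing more closed sets only refines the partition into components, so they remain separated in $\CTT f\setminus\bigcup_{i\in\Z}\cl_{\CTT f}(G_{\ad{s}^i})$. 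This uses only that each $\cl_{\CTT f}(G_{\ad{s}^i})$ is closed and that removing a closed set from the complement cannot merge two components.

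The main obstacle I expect is the ``if'' direction: one must verify that the component of $\CC\setminus(\cl G_{\ad{s}^j}\cup\cl G_{\ad{s}^{j+1}})$ identified above really is disjoint from \emph{all} the (infinitely many) remaining $\cl_{\CC}(G_{\ad{s}^i})$, and that passing to $\CTT f$ is harmless. The first point follows from the order statement in Lemma~\ref{lem:SeparationFinite} (each $G_{\ad{s}^i}$, $i\neq j,j+1$, is pinned to the opposite sector), but one should be careful that the cyclic order on $\{\ad{s}^i\}_{i\in\Z}$ is exactly the restriction of the cyclic order on $\AD$, which is guaranteed by the Remark; the second point is handled verbatim as in Lemma~\ref{lem:SeparationFinite}. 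A secondary subtlety is that $\{\ad{s}^i\}$ being order-isomorphic to $\Z$ (not to a finite set or to $\Q$) is what makes ``the index $j$ between consecutive $\ad{s}^i$'' well-defined for every $\ad t\notin\{\ad{s}^i\}$ — again supplied by the Remark, which in turn rests on $p$ being a logarithmic singularity. One could also invoke Lemma~\ref{lem:DierksLemma} to describe $\bigcap$ of the nested complements-of-finite-unions, but in the end the purely order-theoretic bookkeeping above makes that unnecessary.
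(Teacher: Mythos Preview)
Your argument has a genuine gap in both directions, and it stems from a single oversight: you invoke Proposition~\ref{pro:dreadlockTopology} and Lemma~\ref{lem:SeparationFinite} for the dreadlocks $G_{\ad{s}^i}$, but both results are stated for dreadlocks that land in $\C$. Here $p\in\mathcal{J}(\extended{f})\setminus\mathcal{J}(f)\subset\CTT{f}\setminus\C$, and the paper explicitly remarks (immediately before the statement of this lemma) that already a \emph{single} dreadlock landing at such a $p$ separates the complex plane. Consequently $\cl_{\CC}(G_{\ad{s}^j})$ is \emph{not} a non-separating continuum, so Theorem~\ref{thm:PlaneSeparation} cannot be applied as you propose; and in $\CC$ the point $p$ simply does not exist --- both ends of $G_{\ad{s}^j}$ accumulate at $\infty$, so $\cl_{\CC}(G_{\ad{s}^j})\cap\cl_{\CC}(G_{\ad{s}^{j+1}})=\{\infty\}$, not $\{p,\infty\}$. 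For the same reason your ``only if'' reduction to Lemma~\ref{lem:SeparationFinite} is not available: that lemma is for $p\in\mathcal{J}(f)$, and without it you have no established separation statement for a finite subfamily to fall back on.

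The paper circumvents this by a different mechanism. It fixes $l$ with $q:=\extended{f}^{\circ l}(p)\in\C$ and pulls back a small closed disk about $q$ to a neighborhood $V\ni p$ in $\CTT{f}$. For the ``if'' direction, Lemma~\ref{lem:LandingDreadlocks} yields a single closed set $A=\bigcup_i\overline{\tau_N(\ad{s}^i)}\cup V\supset\bigcup_i G_{\ad{s}^i}$, and one exhibits an explicit arc at large radius in the gap between $\tau_N(\ad{s}^j)$ and $\tau_N(\ad{s}^{j+1})$ joining $G_{\ad t}$ to $G_{\ad u}$ while avoiding $A$. For the ``only if'' direction the paper builds a \emph{nested} sequence $A_n$ (shrinking the pulled-back disk) with $\bigcap_n A_n=\bigcup_i G_{\ad{s}^i}$; any arc $\gamma$ from $G_{\ad t}$ to $G_{\ad u}$ must meet every $A_n$, and Lemma~\ref{lem:DierksLemma} then forces $\gamma$ to meet the intersection, a contradiction. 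Your instinct that Lemma~\ref{lem:DierksLemma} is relevant was right --- it is precisely the tool that replaces the unavailable non-separating-continuum argument, and the ``purely order-theoretic bookkeeping'' you describe does not close the gap once individual dreadlocks already separate the plane.
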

	\begin{figure}[ht]
		\includegraphics[width=0.6\textwidth]{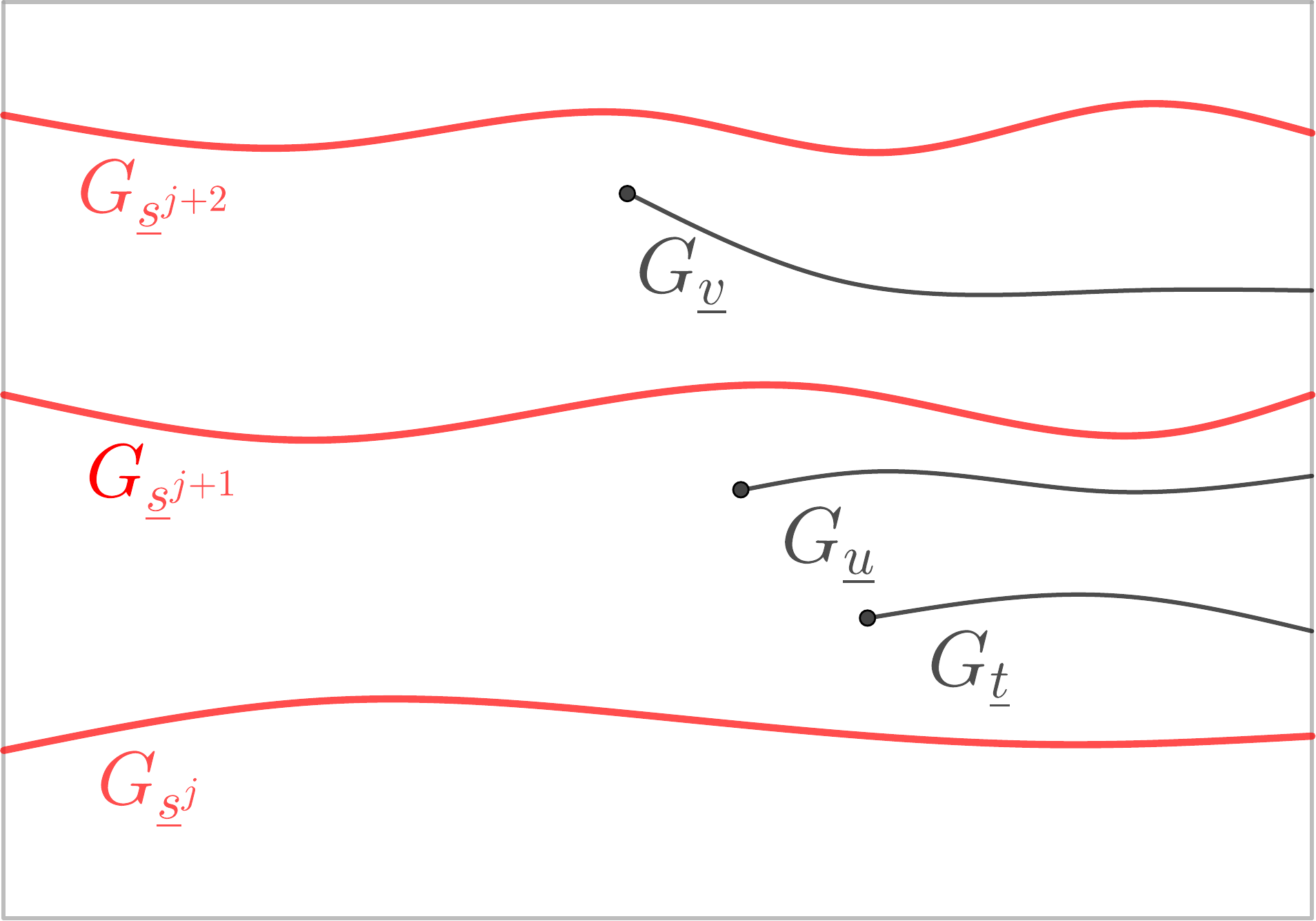}
		\caption{Sketch illustrating the statement of Lemma~\ref{lem:SeparationInfinite}. The filaments $G_{\ad u}$ and $G_{\ad t}$ are not separated by a $G_{\ad s^i}$, so they are in the same component, while $G_{\ad v}$ is in a different component.}
        \label{fig:SeparationInfinite}
	\end{figure}

	Let us summarize the content of Lemmas~\ref{lem:SeparationFinite} and~\ref{lem:SeparationInfinite} (using remark \ref{rem:VanKampen})
.
	
	\begin{Cor}[Separation by filaments]
		Let $p\in\mathcal{J}(\extended{f})$ be (pre\nobreakdash-)periodic, and let $\{G_{\ad{s}^i}\}_{i\in I}$ be the set of filaments that land at $p$ indexed according to their cyclic order, where we either have $I=\Z$ or $I=\Z/m\Z$ for some $m\geq 1$.
		
		Then every connected component of $W:=\CTT{f}\setminus\bigcup_{L(\ad{s})=p}\cl_{\CTT{f}}(G_{\ad{s}})$ is open and simply connected.
		
		Furthermore, two filaments $G_{\ad{t}},G_{\ad{u}}\notin\{G_{\ad{s}^i}\}_{i\in I}$ are
		contained in the same connected component of $W$ if and only if $\ad{s}^j\prec\ad{t}\prec\ad{s}^{j+1}$ and $\ad{s}^j\prec\ad{u}\prec\ad{s}^{j+1}$ for some $j\in I$.
        As before, if this is the case, then the component of $W$ that contains $G_{\ad{t}}$ and $G_{\ad{u}}$ is the one that is bounded by $G_{\ad{s}^j}$ and $G_{\ad{s}^{j+1}}$ and their common landing point. 
		\label{cor:SeparatingFilaments}
	\end{Cor}

\subsection{Extended filaments}

	Since filaments consist of escaping points and are thus in the Julia set, post-singular points in the Fatou set cannot be landing points of  filaments. In this subsection we extend filaments into Fatou components by internal rays of this component in order to obtain a dynamically meaningful continuum that connects a post-singular point in the Fatou set to $\infty$.

	\begin{notation}
		Let $U$ be a component of $\mathcal{F}(\extended{f})$, and let $p\in\partial_{\CTT{f}} U$. By Subsection~\ref{subsection:FatouDynamics},
		in particular Lemma~\ref{lem:DistinctLandingPoints}, there exists at most one internal ray of $U$ that lands at $p$. We denote this ray (if it exists) by $\internal_U[p]$.
	\end{notation}
	
	\begin{Def}[Extended filament]
		Let $q\in\CTT{f}$ be the center of a Fatou component $U(q)$. An \emph{extended filament that lands at $q$}  
		consists of a filament $G_{\ad{s}}$ that lands at a (pre\nobreakdash-)periodic boundary point $p=L(\ad{s})\in\partial_{\CTT{f}}U(q)$, extended by the internal ray $\beta_{U(q)}[p]$ of $U(q)$ that lands at $p$.	The \emph{external address of the extended filament $G_{\ad{s}}[q]$} is by definition $\ad{s}$. We denote this extended filament by $G_{\ad{s}}[q]:=\beta_{U(q)}[p]\cup\cl_{\CTT{f}}(G_{\ad{s}})$.
		\label{def:ExtendedFilament}
	\end{Def}
	
	In general, there are infinitely many extended filaments that land at any given Fatou center $q\in\mathcal{F}(\extended{f})$: the Fatou component $U(q)$ has infinitely many (pre\nobreakdash-)periodic boundary points, and all of them are landing points of (pre\nobreakdash-)periodic filaments.
	
	\begin{conv}
		Let $G_{\ad{s}}$ be a filament that lands at a (pre\nobreakdash-)periodic point $p=L(\ad{s})\in\CTT{f}$. In order to have a unified notation for filaments and extended filaments, we use $\dread{\ad{s}}{p}:=G_{\ad{s}}$ as an equivalent notion for the filament $G_{\ad{s}}$.
	\end{conv}

	Let $q\in\CTT{f}$ be the center of a Fatou component $U(q)$, and let $p\in\partial_{\CTT{f}}U(q)$ be (pre\nobreakdash-)periodic. As in the statement of Corollary~\ref{cor:SeparatingFilaments}, let $\{G_{\ad{s}^i}\}_{i\in I}$ be the set of filaments that land at $p$, indexed according to their cyclic order, and let
	$W:=\CTT{f}\setminus\bigcup_{i \in I}\cl_{\CTT{f}}(G_{\ad{s}^i})$.
	Let $V$ be the connected component of $W$ that contains $U(q)$. By Corollary~\ref{cor:SeparatingFilaments}, there exists an index $j\in I$ such that $G_{\ad{s}^j}\prec G_{\ad{t}}\prec G_{\ad{s}^{j+1}}$ for all filaments $G_{\ad{t}}\subset V$.
	
	\begin{Def}[Left and right supporting filaments]
		We call $G_{\ad{s}^j}$ the \emph{left supporting filament} of $U(q)$ at $p$, and $G_{\ad{s}^{j+1}}$ the \emph{right supporting filament} of $U(q)$ at $p$; see also Figure~\ref{fig:Kartoffel}.
\label{def:leftSupporting}
	\end{Def}

These are the largest, respectively smallest, of all filaments that land at $p$ with respect to the linear order of filaments in the complement of all those filaments that land at $\partial U\sm\{q\}$.	

	\begin{figure}[ht]
		\includegraphics[width=0.7\textwidth]{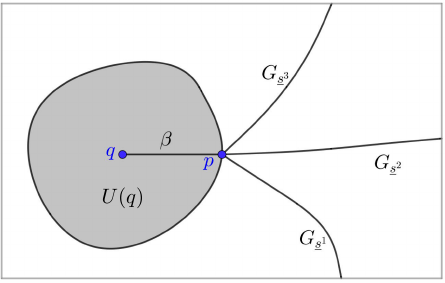
    }
		\caption{Sketch of the left and right supporting filaments $G_{\ad{s}^3}$ and $G_{\ad{s}^1}$ at a boundary point $p\in\partial U$ of the Fatou component $U=U(q)$. We obtain extended filaments by concatenating $G_{\ad{s}^i}$ with $\internal$, for $i=1,2,3$.} \label{fig:Kartoffel}
	\end{figure}
	
	\begin{Pro}[Topology of extended filaments]\label{pro:extendedTopology}
		Let $\dread{\ad{s}}{q}$ be an (extended) filament, and let $p:=L(\ad{s})$. If $p,q\in\C$, then $\cl_{\CC}(\dread{\ad{s}}{q})$ does not separate the Riemann sphere.
	\end{Pro}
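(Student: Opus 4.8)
The plan is to split along the two kinds of objects that the notation $\dread{\ad{s}}{p}$ refers to.

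If $\dread{\ad{s}}{p}$ is an ordinary dreadlock, then by the Convention following Definition~\ref{def:ExtendedDreadlock} we have $p=q=L(\ad{s})$, and the assertion is essentially Proposition~\ref{pro:dreadlockTopology}. That result gives that $\cl(G_{\ad{s}})$ does not separate $\C$. Since a nonempty (pre\nobreakdash-)periodic dreadlock is unbounded by Theorem~\ref{thm:EscapingSet}, we have $\infty\in\cl_{\CC}(G_{\ad{s}})$, hence $\cl_{\CC}(G_{\ad{s}})=\cl(G_{\ad{s}})\cup\{\infty\}$ and $\CC\setminus\cl_{\CC}(G_{\ad{s}})=\C\setminus\cl(G_{\ad{s}})$ is connected. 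This settles the first case.

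Now suppose $\dread{\ad{s}}{p}=G_{\ad{s}}[p]=\internal_{U(p)}[q]\cup\cl_{\CTT{f}}(G_{\ad{s}})$ is a genuine extended dreadlock, with $U:=U(p)$ the Fatou component centered at $p$ and $q=L(\ad{s})\in\partial U$, and with $p,q\in\C$. I would apply the general separation theorem (Theorem~\ref{thm:PlaneSeparation}) to the two sets $C_1:=\cl_{\CC}(\internal_{U(p)}[q])$ and $C_2:=\cl_{\CC}(G_{\ad{s}})$, whose union is $\cl_{\CC}(\dread{\ad{s}}{p})$ (using, as above, that $q\in\C$ implies $\cl_{\CTT{f}}(G_{\ad{s}})=G_{\ad{s}}\cup\{q\}$, which is unbounded, so adding its closure point at infinity recovers $C_2$). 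The verification proceeds in the following steps. First, $C_2$ is a non\nobreakdash-separating continuum: it is connected because $G_{\ad{s}}$ is connected and $\infty\in C_2$, and it is non\nobreakdash-separating by Proposition~\ref{pro:dreadlockTopology}, exactly as in the first case. Second, $C_1$ is a closed arc from $p$ to $q$: since the B\"ottcher/Riemann map of $U$ carries $U$ homeomorphically onto $\D$, the set $\{p\}\cup\internal_{U(p)}[q]$ is a homeomorphic image of the half\nobreakdash-open radial segment, and because the internal ray $\internal_{U(p)}[q]$ lands at $q$, its closure in $\CC$ is the injective continuous image of a closed interval, hence an arc; arcs are classically non\nobreakdash-separating subsets of the sphere, so $C_1$ is a non\nobreakdash-separating continuum, and moreover $C_1\subset\overline{U}\subset\C$, so $\infty\notin C_1$. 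Third, $C_1\cap C_2=\{q\}$: the internal ray lies in $U$, which is disjoint from $\cl(G_{\ad{s}})=G_{\ad{s}}\cup\{q\}$ (as $G_{\ad{s}}\subset\mathcal{J}(f)$ and $q\in\partial U$), the center $p\in U$ likewise fails to lie in $C_2$, while $q$ lies in both closures; together with $\infty\notin C_1$ this forces the intersection to be the single point $q$. Finally, Theorem~\ref{thm:PlaneSeparation} applied with $z=q$ yields that $\CC\setminus(C_1\cup C_2)$ is connected (in fact simply connected), i.e.\ $\cl_{\CC}(\dread{\ad{s}}{p})$ does not separate the sphere.

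The step requiring the most care is the identification of $C_1$: one must know that $\cl_{\CC}(\internal_{U(p)}[q])$ is a genuine compact arc that avoids $\infty$ and meets $C_2$ only at $q$. This is precisely where the hypothesis $p\in\C$ enters — it excludes the case (discussed in Subsection~\ref{subsection:FatouDynamics}) in which the first\nobreakdash-return/pull\nobreakdash-back map onto the periodic component has infinite mapping degree and the ``center'' sits at $\infty$; in the remaining (finite\nobreakdash-degree) situation the internal ray is a bounded curve in $\C$ landing at $q\in\C$, so its closure in $\CC$ is an honest arc disjoint from $\infty$. Once these point\nobreakdash-set identifications are in place, the conclusion is an immediate application of Theorem~\ref{thm:PlaneSeparation}, with no further computation needed.
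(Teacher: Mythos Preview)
Your proof is correct and follows exactly the same route as the paper: split into the non-extended case (handled by Proposition~\ref{pro:dreadlockTopology}) and the extended case, where you apply Theorem~\ref{thm:PlaneSeparation} to the two continua $\cl_{\CC}(\internal_{U(p)}[q])$ and $\cl_{\CC}(G_{\ad{s}})$ meeting only at $q$. One minor slip: the inclusion $\overline{U}\subset\C$ need not hold (Fatou components may be unbounded), but this is harmless since you have already argued correctly that $C_1$ is a compact arc in $\C$ with endpoints $p,q\in\C$, which is what gives $\infty\notin C_1$.
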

	\begin{proof}
		For $q\in\mathcal{J}(f)$, this is the content of Proposition~\ref{pro:filamentTopology}. For $q\in\mathcal{F}(f)$, both $\cl_{\CC}(\internal_{U(q)}[p])$
		and $\cl_{\CC}(G_{\ad{s}})$ do not separate the Riemann sphere, and the intersection $\cl_{\CC}(\internal_{U(q)}[p])\cap\cl_{\CC}(G_{\ad{s}})=\{p\}$ is a singleton. It follows from
		Theorem~\ref{thm:PlaneSeparation} that $\dread{\ad{s}}{q}$ does not separate the Riemann sphere.
	\end{proof}
	A analogous result to Corollary~\ref{cor:SeparatingFilaments} is valid for extended filaments that support the same Fatou component.
	\begin{Lem}[Separation by extended filaments]
		Let $q\in\mathcal{F}(\extended{f})$ be the center of a Fatou component
		$U\subset\mathcal{F}(\extended{f})$. Let $\{G_{\ad{s}^i}\}_{i\in I}$ be filaments
		such that the landing points $L(\ad{s}^i)\in\partial U$ are distinct and satisfy $L(\ad{s}^i)\in\C$. If $q\in\C$, we require that the index set $I=\Z/m\Z$ for some $m\geq 1$. If $q\in\CTT{f}\setminus\C$, we require that $I=\Z$ and that the set $\{L(\ad{s}^i)\}$ of landing points is discrete. In both cases, we assume that the filaments are indexed according to their cyclic order.
		
		Let $G_{\ad{t}}[p]$ and $G_{\ad{u}}[r]$ be (extended) filaments such that \[G_{\ad{t}}[p]\cap G_{\ad{s}^i}[q]=G_{\ad{u}}[r]\cap G_{\ad{s}^i}[q]=\emptyset~~~\text{for all}~i\in I.\]
		Then the extended filaments $G_{\ad{t}}[p]$ and $G_{\ad{t}}[r]$ are contained in the same connected component of $\CTT{f}\setminus\bigcup_{i\in I}\cl_{\CTT{f}}(G_{\ad{s}^i}[q])$ if and only if there exists an index $j\in I$ such that $\ad{s}^j\prec\ad{t}\prec\ad{s}^{j+1}$ as well as $\ad{s}^j\prec\ad{u}\prec\ad{s}^{j+1}$.
		\label{lem:SeparationFatou}
	\end{Lem}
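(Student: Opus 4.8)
The plan is to imitate the proofs of Lemmas~\ref{lem:SeparationFinite} and~\ref{lem:SeparationInfinite}, replacing the input ``a single dreadlock that lands in $\C$ does not separate $\CC$'' (Proposition~\ref{pro:dreadlockTopology}) by its extended counterpart, Proposition~\ref{pro:extendedTopology}. The geometric fact underlying everything is that the continua $\cl_{\CC}(\dread{\ad{s}^i}{p})$ are pairwise ``string\nobreakdash-like'': dreadlocks with distinct addresses are disjoint, internal rays of a single Fatou component meet only at its center, the landing points $L(\ad{s}^i)$ are pairwise distinct, and --- when $p\in\C$ --- the internal rays are bounded while every dreadlock closure contains $\infty$. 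Hence $\cl_{\CC}(\dread{\ad{s}^i}{p})\cap\cl_{\CC}(\dread{\ad{s}^k}{p})=\{p,\infty\}$ for $i\neq k$ when $p\in\C$, and the analogous two\nobreakdash-point intersection holds in $\CTT{f}$ when $p\in\CTT{f}\setminus\C$. As in Lemma~\ref{lem:SeparationFinite}, the connected components of $X\setminus\bigcup_i\cl_X(\dread{\ad{s}^i}{p})$ agree for $X\in\{\C,\CC\}$ in the case $p\in\C$, and adding the points of $\CTT{f}\setminus\C$ different from $p$ creates no new components (each such point has a small neighbourhood in $\CTT{f}$ meeting none of the $\cl_{\CTT{f}}(\dread{\ad{s}^i}{p})$); so I would argue on $\CC$ in the finite case and directly in $\CTT{f}$ in the infinite case.

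For the finite case $I=\Z/m\Z$, $p\in\C$, no counting is needed. Fix a cyclically adjacent pair $\ad{s}^j\prec\ad{s}^{j+1}$. Since $\cl_{\CC}(\dread{\ad{s}^j}{p})$ and $\cl_{\CC}(\dread{\ad{s}^{j+1}}{p})$ are non\nobreakdash-separating continua meeting exactly in $\{p,\infty\}$, Theorem~\ref{thm:PlaneSeparation} gives precisely two simply connected components of their complement in $\CC$; let $B_j$ be the one whose ``side at $\infty$'' is the cyclic interval $(\ad{s}^j,\ad{s}^{j+1})$. Arguing at $\infty$ exactly as in the second part of Lemma~\ref{lem:SeparationFinite} --- choose $N$ so that the tails $\tau_N(\cdot)$ have pairwise disjoint closures, and $R$ so that $\bigcup_i\dread{\ad{s}^i}{p}\subset \overline{D_R(0)}\cup\bigcup_i\tau_N(\ad{s}^i)$, whose unbounded complementary components are indexed by the cyclic gaps --- one sees that every $\ad{s}^i$ with $i\neq j,j+1$ lies outside $(\ad{s}^j,\ad{s}^{j+1})$, so $\dread{\ad{s}^i}{p}$ lies in the component that is not $B_j$; hence $B_j\subset W$. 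Moreover, any (extended) dreadlock $\dread{\ad{t}}{q}$ disjoint from all $\dread{\ad{s}^i}{p}$ is connected (by Theorem~\ref{thm:EscapingSet}, together with the fact that an attached internal ray is connected), and since all sufficiently large points of $G_{\ad{t}}$ lie in $\tau_N(\ad{t})$ it lies in $B_j$ precisely when $\ad{s}^j\prec\ad{t}\prec\ad{s}^{j+1}$. Both implications follow: if $\ad{t},\ad{u}$ lie in a common gap $(\ad{s}^j,\ad{s}^{j+1})$ then $\dread{\ad{t}}{q},\dread{\ad{u}}{r}\subset B_j$ and lie in one component of $W$; conversely, if no common gap contains both, then (using that $\ad{t},\ad{u}\notin\{\ad{s}^i\}_{i\in I}$ by the disjointness hypothesis) we have $\dread{\ad{t}}{q}\subset B_j$ while $\dread{\ad{u}}{r}$ lies in the other component of $\CC\setminus(\cl_{\CC}(\dread{\ad{s}^j}{p})\cup\cl_{\CC}(\dread{\ad{s}^{j+1}}{p}))$, so $\dread{\ad{t}}{q}$ and $\dread{\ad{u}}{r}$ are separated by $\cl_{\CC}(\dread{\ad{s}^j}{p})\cup\cl_{\CC}(\dread{\ad{s}^{j+1}}{p})\subset\bigcup_i\cl_{\CC}(\dread{\ad{s}^i}{p})$.

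For the infinite case $I=\Z$, $p\in\CTT{f}\setminus\C$, with $\{L(\ad{s}^i)\}$ discrete in $\partial U(p)$, I would follow the proof of Lemma~\ref{lem:SeparationInfinite} in structure. For the ``if'' direction one constructs, for every large $R$, an arc $\gamma$ of modulus $\geq R$ running in the sector between $\tau_N(\ad{s}^j)$ and $\tau_N(\ad{s}^{j+1})$, disjoint from all $\tau_N(\ad{s}^i)$ (possible since $\ad{s}^i\prec\ad{s}^j\prec\ad{s}^{j+1}$ for $i\neq j,j+1$) and from the bunch of internal rays that accumulate at $p$; for $R$ large it meets both $G_{\ad{t}}$ and $G_{\ad{u}}$, witnessing that $\dread{\ad{t}}{q}$ and $\dread{\ad{u}}{r}$ lie in a common component. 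For the ``only if'' direction (say $\ad{t}\prec\ad{s}^j\prec\ad{u}\prec\ad{s}^{j+1}$) one takes a nested sequence of closed sets $A_n:=\bigcup_i\overline{\unbounded{\tau}_n(\ad{s}^i)}\cup V_n$ with $\bigcap_n A_n=\bigcup_i\cl_{\CTT{f}}(\dread{\ad{s}^i}{p})$, inside which $\dread{\ad{t}}{q}$ and $\dread{\ad{u}}{r}$ are separated at every level; then any arc $\gamma$ joining them would give a nested family of non\nobreakdash-empty compacts $\gamma\cap A_n$ whose intersection is non\nobreakdash-empty by Lemma~\ref{lem:DierksLemma} and contained in $\bigcup_i\cl_{\CTT{f}}(\dread{\ad{s}^i}{p})$ --- contradicting $\gamma\subset W$. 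As before, the remaining points of $\CTT{f}\setminus(\C\cup\{p\})$ add no components.

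The one genuinely new point --- and the expected main obstacle --- is the construction of the neighbourhoods $V_n$ in the infinite case. In Lemma~\ref{lem:SeparationInfinite} the center was a logarithmic singularity and $V_n$ was just the preimage of a shrinking disk about a finite image point; here the center $p$ lies at infinity and carries the entire bunch of internal rays $\{\internal_{U(p)}[L(\ad{s}^i)]\}_{i\in\Z}$, so one must enclose this bunch by a nested family of simply connected neighbourhoods in the spirit of Lemma~\ref{lem:NewNestedEnclosing}: working in B\"ottcher\nobreakdash-type coordinates on $U(p)$ (in which the internal rays become radial lines, resp.\ preimages of horizontal lines under $\exp$), one chooses a shrinking neighbourhood of the center there and a controlled outer part of each ray, and uses the discreteness of $\{L(\ad{s}^i)\}$ to keep the enclosing sets from accumulating on internal rays outside the bunch. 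Once the $V_n$ are available, the identity $\bigcap_n A_n=\bigcup_i\cl_{\CTT{f}}(\dread{\ad{s}^i}{p})$ and the nested\nobreakdash-compactness argument go through exactly as in Lemma~\ref{lem:SeparationInfinite}, and a final glance at the points of $\CTT{f}\setminus\C$ closes the proof.
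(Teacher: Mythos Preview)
Your proposal is correct and is precisely the approach the paper intends: the paper's own proof consists of the single sentence ``The proof works similarly to the proofs of Lemma~\ref{lem:SeparationFinite} and Lemma~\ref{lem:SeparationInfinite}'', and you have written out in detail exactly that adaptation, correctly replacing Proposition~\ref{pro:dreadlockTopology} by Proposition~\ref{pro:extendedTopology} and correctly identifying the two-point intersection $\{p,\infty\}$ (resp.\ its $\CTT{f}$-analogue) as the input for Theorem~\ref{thm:PlaneSeparation}. Your observation that the construction of the nested neighbourhoods $V_n$ in the infinite case is the only genuinely new ingredient, and your sketch of how to obtain them via B\"ottcher-type coordinates together with the discreteness hypothesis on $\{L(\ad{s}^i)\}$, goes beyond what the paper supplies.
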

	\begin{proof}
		The proof works similarly to the proofs of Lemma~\ref{lem:SeparationFinite} and Lemma~\ref{lem:SeparationInfinite}.
	\end{proof}
	
	\section{Dynamic partitions}
\label{section:DynamicalPartitions}
We construct, for a given psf entire map $f$, a \emph{dynamic partition} by choosing for each singular value an (extended) filament that lands at this value, and taking the preimage of the complement: this way, the (extended) complex plane is partitioned into components called \emph{partition sectors} that map univalently onto the entire extended plane, minus the chosen (extended) filaments.

\begin{Def}[Dynamic partition]
	For a given psf entire map $f$, let $S(f)=\{a_1,\ldots ,a_n\}$ be the set of its singular values. Choose for every $a_i\in S(f)$ an (extended) filament $\dread{\ad{s}^i}{a_i}$ such that $\overline{\dread{\ad{s}^i}{a_i}}\cap\overline{\dread{\ad{s}^j}{a_j}}=\emptyset$ for $i\neq j$. For all $a_i\in S(f)\cap\mathcal{F}(f)$, we require that $L(\ad{s}^i)\in\C$ and that $\ad{s}^i$ is the address of the left supporting filament for $U(a_i)$ at $L(\ad{s}^i)$ (see Definition~\ref{def:leftSupporting}). 
	Define the \emph{base domain}
	\[
	B:=\C\setminus\bigcup_{a_i\in S(f)} \overline{\dread{\ad{s}^i}{a_i}}
	\]
	and define the \emph{dynamic partition} for $f$ (with respect to the chosen filaments) as the collection of connected components of $f^{-1}(B)$ and denote it by $\mathcal D$; see Figure~\ref{fig:dynamicPartition}.
	
	An element $D\in\mathcal{D}$ (i.e., a component of $f^{-1}(B)$) is called a \emph{partition sector}. The set 
\[
\partial\mathcal{D}:=f^{-1}\left(\bigcup_{a_i\in S(f)} \overline{\dread{\ad{s}^i}{a_i}}\right)\subset\C
\]
is called the \emph{boundary} of $\mathcal{D}$.
	
	We sometimes write $\mathcal{D}=\mathcal{D}(\{\dread{\ad{s}^i}{a_i}\})$ if we want to emphasize the (extended) filaments used to define the partition $\mathcal{D}$.\label{def:DynamicalPartitions}
\end{Def}

\begin{remark}
	Since we choose a single filament for each singular value, and in such a way that they are disjoint, the base domain $B$ is always simply connected and free of singular values. Therefore, the restriction $f\colon D\to B$ for any partition sector is biholomorphic. We are going to prove this in more detail in Lemma~\ref{lem:BasicProperties}.
	
	Taking left supporting filaments for singular values in the Fatou set is just a convention that allows for a more convenient description of which (pre\nobreakdash-)periodic filaments land together in terms of itineraries.
\end{remark}

Since we also want to assign itineraries to points at infinity, it is useful to extend the base domain to include $\CTT{f}\setminus\C$.

\begin{Def}[Extended dynamic partition]\label{def:extendedPartition}
	Let $\mathcal{D}=\mathcal{D}(\{\dread{\ad{s}^i}{a_i}\})$ be a dynamic partition for $f$. We call
	\[
	\extended{B}:=\CTT{f}\setminus\bigcup_{a_i\in S(f)} \cl_{\CTT{f}}(\dread{\ad{s}^i}{a_i})
	\]
	the \emph{extended base domain} of $\mathcal D$, and define the \emph{extended dynamic partition $\extended{\mathcal{D}}$} as the collection of connected components of $\extended{f}^{-1}(\extended{B})$.
	An element $\extended{D}\in\extended{\mathcal{D}}$ is called an \emph{extended partition sector}. We also write
	\begin{equation}
		\partial\extended{\mathcal{D}}:=\extended{f}^{-1}\left(\bigcup_{a_i\in S(f)} \cl_{\CTT{f}}\left(\dread{\ad{s}^i}{a_i}\right)\right)\label{Eq:BoundaryBaseDomain}
	\end{equation}
	for the boundary of the extended partition $\extended{\mathcal{D}}$.
\end{Def}
\begin{figure}
	\includegraphics[width=\textwidth]{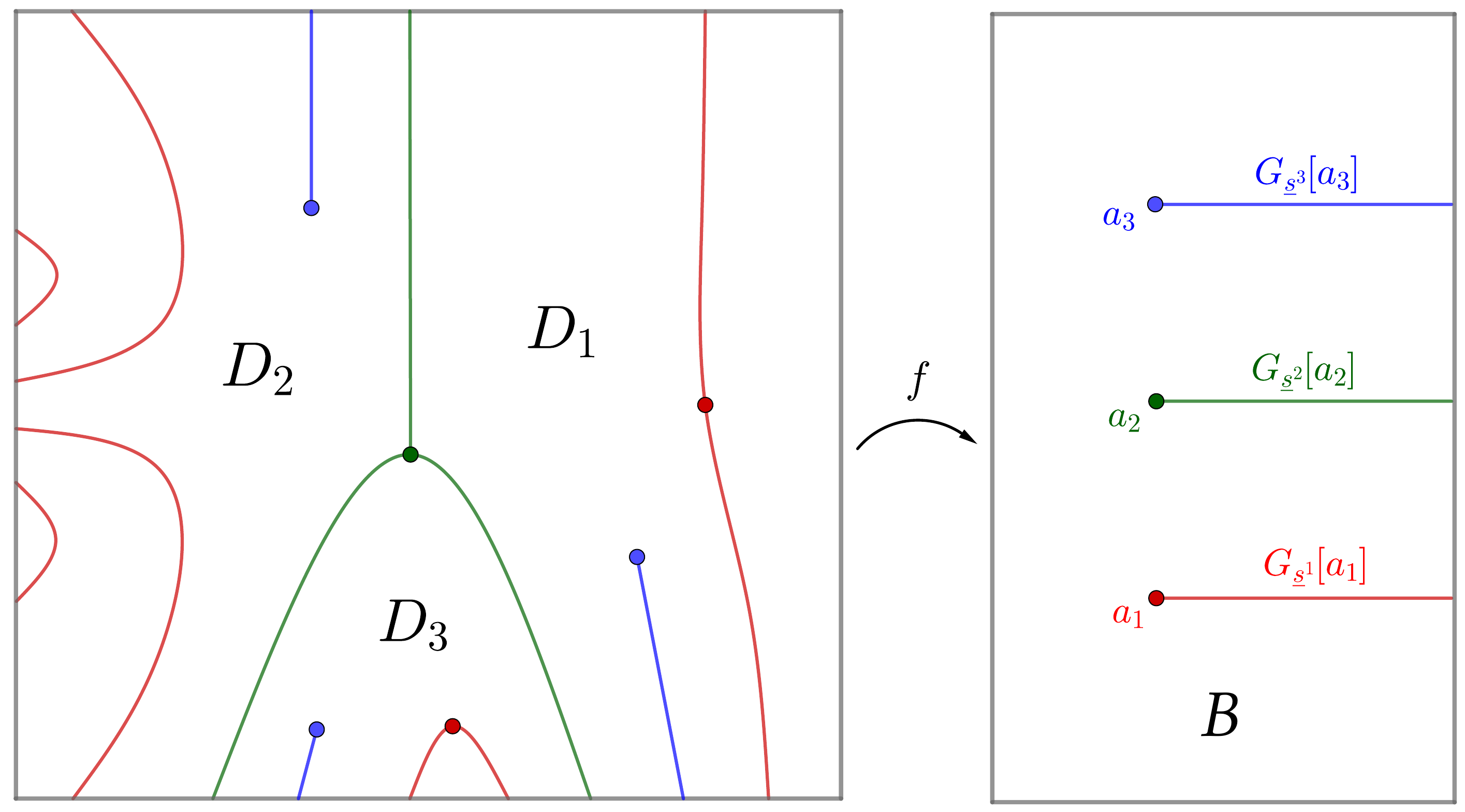}
	\caption{Sketch of some base domain (right), together with the corresponding dynamic partition (left). In the very left, we see (in red) parts of filaments that land together at some point at infinity.  We have drawn the filaments as straight lines in this image, although they are, in general, topologically far more complicated. However, regarding the features that are essential for dynamic partitions, a straight line is equivalent to a more complicated filament.\label{fig:dynamicPartition}}
\end{figure}
An essential property of dynamic partitions is that the base domain is evenly covered.

\begin{Lem}[Covering properties of dynamic partitions]
	Let $f$ be a psf entire function, and let $\mathcal{D}$ be a dynamic partition for $f$ with base domain $B$. Every partition sector $D\in\mathcal{D}$ is mapped biholomorphically onto $B$.
	Every extended partition sector $\extended{D}\in\extended{\mathcal{D}}$ is mapped homeomorphically onto $\extended B$.
\label{lem:BasicProperties}
\end{Lem}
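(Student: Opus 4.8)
The plan is to treat the two assertions separately, reducing each to the fact that the relevant base domain is simply connected (respectively connected), combined with the covering property of $f$ (respectively $\extended f$) away from the singular values.

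\emph{First assertion.} By construction $a_i\in\overline{\dread{\ad{s}^i}{a_i}}$, so $B\cap S(f)=\emptyset$. To see that $B$ is simply connected, I would use that each $\cl_{\CC}(\dread{\ad{s}^i}{a_i})$ is a non-separating continuum of $\CC$: this is Proposition~\ref{pro:dreadlockTopology} if $a_i\in\mathcal{J}(f)$ and Proposition~\ref{pro:extendedTopology} if $a_i\in\mathcal{F}(f)$. Since the chosen (extended) dreadlocks are pairwise disjoint in $\C$ but each is unbounded, their closures in $\CC$ are continua meeting pairwise exactly in $\{\infty\}$. An induction on $\lvert S(f)\rvert$, applying Theorem~\ref{thm:PlaneSeparation} at each step (to the non-separating continuum $\bigcup_{i\le m}\cl_{\CC}(\dread{\ad{s}^i}{a_i})$ and the next one, which meets it only at $\infty$; the base case $m=1$ is Theorem~\ref{thm:PlaneSeparation} applied with a one-point continuum), shows that $B=\CC\setminus\bigcup_i\cl_{\CC}(\dread{\ad{s}^i}{a_i})$ is connected and simply connected. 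Now $f\colon\C\setminus f^{-1}(S(f))\to\C\setminus S(f)$ is a covering \cite[Lemma~1.1]{GK}, its domain contains $f^{-1}(B)$, and $B$ is simply connected; hence every partition sector $D$ is mapped homeomorphically, and therefore (being holomorphic) biholomorphically, onto $B$.

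\emph{Second assertion.} Each chosen (extended) dreadlock lies in $\C$ and lands at a point of $\C$, and a dreadlock landing in $\C$ stays bounded away from the transcendental singularities, since a sufficiently small punctured neighbourhood of such a singularity consists of non-escaping points; hence $\cl_{\CTT{f}}(\dread{\ad{s}^i}{a_i})\subset\C$ and $\extended B=B\cup(\CTT{f}\setminus\C)$. In particular $\extended B\subset\CTT{f}\setminus S(f)$, and $\extended B$ is connected because every transcendental singularity lies in the closure of $B$. By Proposition~\ref{pro:ExtensionCovering}, $\extended f$ is a covering over $\CTT{f}\setminus S(f)$, so its restriction to any extended partition sector $\extended D$ is a covering onto $\extended B$; it remains to check that this covering has degree one. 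Since $\CTT{f}\setminus\C$ is a discrete set and every transcendental singularity that lies in $\extended f^{-1}(\extended B)$ has a neighbourhood in $\extended f^{-1}(\extended B)$ meeting $\C$, the set $D:=\extended D\cap\C$ is a nonempty connected subset of $f^{-1}(B)$, hence a single partition sector, with $\extended D\setminus D\subset\CTT{f}\setminus\C$. Fix $b_0\in B$. No transcendental singularity maps to $b_0$ under $\extended f$ (its image, if in $\C$, would be an asymptotic value, hence in $S(f)$), so $\extended f^{-1}(b_0)\cap\extended D=f^{-1}(b_0)\cap D$, which is a single point by the first assertion. A covering of a connected space with a one-point fibre is a homeomorphism, so $\extended f\colon\extended D\to\extended B$ is a homeomorphism.

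\emph{Main obstacle.} The heart of the argument is the topological step for $B$: one must know that a single (extended) dreadlock landing in $\C$ does not separate the sphere (supplied by Propositions~\ref{pro:dreadlockTopology} and~\ref{pro:extendedTopology}) and then organise the induction so that Theorem~\ref{thm:PlaneSeparation} applies repeatedly at the common point $\infty$. Everything else — identifying $\extended B$ with $B\cup(\CTT{f}\setminus\C)$, its connectedness, and the degree count that feeds off the biholomorphism $f|_D$ already established — is routine bookkeeping once this is in place.
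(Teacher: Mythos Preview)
Your first assertion is essentially the paper's proof: the same appeal to Propositions~\ref{pro:dreadlockTopology}/\ref{pro:extendedTopology} for non-separation, the same use of Theorem~\ref{thm:PlaneSeparation} at the common point $\infty$, and the same covering conclusion.

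For the second assertion you take a genuinely different route. The paper argues locally: each $p\in\CTT{f}\setminus\C$ has a simply connected punctured neighbourhood $V\subset B$, and since $f|_D$ is a biholomorphism there is a unique preimage component of $V$ in $D$, hence a unique preimage of $p$ in $\extended D$. You instead invoke Proposition~\ref{pro:ExtensionCovering} globally to get that $\extended f$ is a covering over $\extended B$, then show the degree over a point of $B$ is one by reducing to the already-established biholomorphism $f|_D$. This is a clean alternative and arguably more conceptual: once you know $\extended f$ is a covering and $\extended B$ is connected, the whole second statement collapses to a fibre count. The paper's approach, by contrast, avoids any global connectedness statement about $\extended B$ and instead builds the homeomorphism point by point over $\CTT{f}\setminus\C$.

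There is, however, one incorrect step in your argument. You claim that ``a sufficiently small punctured neighbourhood of such a singularity consists of non-escaping points'', and use this to conclude that dreadlocks do not accumulate at transcendental singularities. This is false: a punctured neighbourhood of a transcendental singularity is (in $\C$) an asymptotic tract, an unbounded open set that certainly can contain escaping points (already for $z\mapsto\lambda e^z$ the left half-plane contains escaping points). The conclusion you need is true, but the correct justification is the one the paper gives: by Lemma~\ref{lem:LandingDreadlocks}, all sufficiently large points of $\overline{G_{\ad{s}^i}}$ lie in a fundamental tail $\tau_N(\ad{s}^i)$, and fundamental tails are contained in tracts over $\infty$, which are disjoint from sufficiently small asymptotic tracts over finite values. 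Once you replace your sentence by this argument, the rest of your proof goes through.
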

\begin{proof}
	Since we required $L(\ad{s}^i)\in\C$ for all $i$, it follows from Proposition~\ref{pro:extendedTopology} that every individual $\cl_{\CC}(\dread{\ad{s}^i}{a_i})$ does not separate the sphere. Since the continua $\cl_{\CC}(\dread{\ad{s}^i}{a_i})$ only intersect each other at $\infty$, Theorem~\ref{thm:PlaneSeparation} implies that their union also does not separate the sphere, so their complement $B$ is simply connected. We have $B\cap S(f)=\emptyset$, so $f$ is a covering over $B$. This shows that every partition sector $D\in\mathcal{D}$ is mapped biholomorphically onto $B$.
	
	None of the (extended) filaments $\dread{\ad{s}^i}{a_i}$ accumulates at any of the points in $\CTT{f}\setminus\C$. Therefore, every $p\in\CTT{f}\setminus\C$ has a simply connected punctured neighborhood $V\subset B$. By the first paragraph, for a given partition sector $D$, there is a unique preimage component $U\subset D$ of $V$ under $f$. Therefore, the point $p$ has a unique preimage $q$ in the extended partition sector $\extended{D}$.
\end{proof}

The boundary of a dynamic partition consists of the preimages of the (extended) filaments $\dread{\ad{s}^i}{a_i}$, so every filament that is not one of these preimages is entirely contained in some partition sector. As filaments are distinguished by external addresses, a dynamic partition of the plane also partitions the space of external addresses: we call two external addresses \emph{equivalent} if the associated filaments are contained in the same partition sector. We want to describe this equivalence relation on $\AD\setminus\sigma^{-1}(\{\ad{s}^1,\ldots ,\ad{s}^n\})$. To this end, we need the concept of (un)linked sets of external addresses.

\begin{Def}[Unlinked addresses]
	We call two sets $T,U\subset\AD$ of external addresses \emph{unlinked} if $T\cap U=\emptyset$ and there do not exist addresses
	$\ad{t}^1,\ad{t}^2\in T$ and $\ad{u}^1,\ad{u}^2\in U$, such that
	\[
	\ad{t}^1\prec\ad{u}^1\prec\ad{t}^2\prec\ad{u}^2.
	\]
	Otherwise, $T$ and $U$ are called \emph{linked}.\label{def:unlinked}
\end{Def}
\begin{figure}[ht]
	\includegraphics[width=\textwidth]{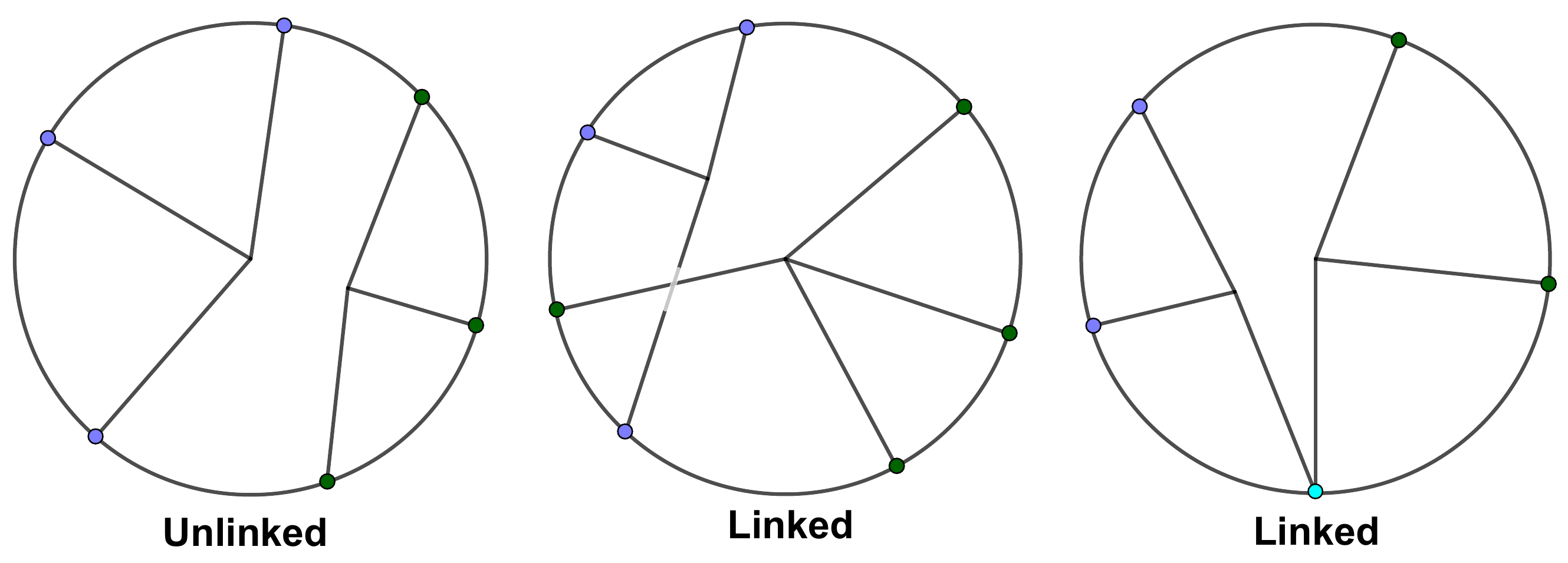}
	\caption{Illustration of linked and unlinked sets of addresses}\label{fig:Unlinked}
\end{figure}

The motivation for this concept comes from filaments that land together.  Using the definition of unlinked addresses, Corollary~\ref{cor:SeparatingFilaments} can be restated as follows: \emph{given a (pre-)periodic $p\in\mathcal{J}(\extended{f})$, two filaments $G_{\ad{t}}$ and $G_{\ad{u}}$ that do not land at $p$ are contained in the same component of $\CTT{f}\setminus\bigcup_{L(\ad{s})=p}\cl_{\CTT{f}}(G_{\ad{s}})$ (i.e.\ they are not separated by two filaments landing at $p$) if and only if the sets $\{\ad{s}\colon L(\ad{s})=p\}$ and $\{\ad{t},\ad{u}\}$ are unlinked}. A nice visualization of unlinked addresses is obtained by passing to the circle of addresses $\overline{\AD}$ that is the order completion of $\AD$ (see the paragraph after Definition~\ref{def:CyclicOrderFilaments}). The extended plane $\C\cup\overline{\AD}$ is homeomorphic to the closed unit disk $\overline{\D}$, and two subsets $T$ and $U$ of $\overline{\AD}$ are unlinked if and only if there are disjoint connected subsets $X,Y\subset\C\cup\overline{\AD}$ such that $T\subset X$ and $U\subset Y$, see \cite[Lemma~2.5]{BFH} and Figure~\ref{fig:Unlinked}.

Let $\ad{s}^1,\ldots ,\ad{s}^n$ be the external addresses of the (extended) filaments that land at the singular values of $f$ as in Definition~\ref{def:DynamicalPartitions}, ordered so that $\ad{s}^1<\ad{s}^2<\ldots <\ad{s}^n$ in the linear order defined in Section~\ref{subsubsec:CircleOfAddresses}. For a point $c\in\extended{f}^{-1}(S(f))$ with $\extended{f}(c)=a_j$, and such that $\ad{s}^j$ is the external address of the (extended) filament $G_{\ad{s}^j}[a_j]$ associated to $a_j$,  
we define 
\begin{align}
\label{def:CriticalSetNew}
\Crit{c}:= \{\ad{c}\in\AD\colon &\text{ $\sigma(\ad{c})=\ad{s}^j$ and the preimage component} \nonumber\\
&\text{of $G_{\ad{s}^j}[a_j]$ at external address $\ad c$ lands at $c$} \}
\;.
\end{align}
(Note that each filament, extended or not has countably many preimage components, and each of these contains a unique filament with a unique external address.)

Using this definition, we can now give a purely combinatorial definition of external addresses that belong to filaments in the same partition sector: we give the definition now and justify it in Proposition~\ref{pro:partition_correspondence}.

\begin{Def}[Dynamic Partitions of $\AD$]
	Two external addresses $\ad{t},\ad{u}\in\AD \sm\sigma^{-1}(\{\ad{s}^1,\ldots ,\ad{s}^n\})$ are called \emph{unlink equivalent} if $\Crit{c}$ and $\{\ad{t},\ad{u}\}$ are unlinked for all
	$c\in\extended{f}^{-1}(S(f))$. We call the resulting set $\mathcal{I}$ of equivalence classes the \emph{dynamic partition of $\AD$} (with respect to $\{\ad{s}^1,\ldots ,\ad{s}^n\}$). The elements $I\in\mathcal{I}$ are
	called \emph{partition sectors of $\AD$}.\label{def:CombinatorialPartition}
\end{Def}

This definition is such that there is a natural correspondence between the partition sectors of the dynamic partition $\mathcal{D}$ of the complex plane and the dynamic partition $\mathcal{I}$ of the space of external addresses. 

\begin{Lem}[Sector boundary]
	\label{lem:DTopology}
	\label{lem:DifferentSectors}
	Let $\mathcal{D}$ be a dynamic partition and $D\in\mathcal{D}$ a partition sector. For every singular value $a_i$, the intersection
	\[
	\partial_{\CTT{f}} D\cap\extended{f}^{-1}(\{a_i\})
	\]
	is a single point that we call $c_i$ (it depends on $D$). We have
	\begin{equation}
		\partial_{\CTT{f}} D\subset\bigcup_{i\in\{1,\ldots,n\}}\bigcup_{\ad{s}\in\Crit{c_i}}\cl_{\CTT{f}}(\dread{\ad{s}}{c_i})
		\label{Eq:BoundaryPartitionSector}
	\end{equation}
	and thus
	\begin{equation}
		D=\bigcap_{i\in\{1,\ldots,n\}} W_i,
		\label{Eq:PartitionSectorComplement}
	\end{equation}
	where $W_i$ is the complementary component of\/ $\bigcup_{\ad{s}\in\Crit{c_i}}\overline{\dread{\ad{s}}{c_i}}$ that contains $D$.
	
	If $p$ and $q$ are two points in different partition sectors, then there exists a $c_i$ such that $p$ and $q$ are contained in different components of \/ $\CTT{f}\setminus\bigcup_{\ad{s}\in\Crit{c}}\cl_{\CTT{f}}(\dread{\ad{s}}{c})$. 
\end{Lem}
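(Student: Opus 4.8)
The plan is to localize the whole statement near the singular values by means of the homeomorphisms $\extended f\colon\extended D\to\extended B$ from Lemma~\ref{lem:BasicProperties}; throughout we identify a partition sector with its extended sector. For each $i$ fix, by Corollary~\ref{cor:SimplyNested} (extended in the obvious way to extended dreadlocks when $a_i\in\mathcal F(f)$, by pulling back an internal ray and using Lemma~\ref{lem:DistinctLandingPoints}), a simply connected domain $U_i\subset\C$ with $\overline{\dread{\ad{s}^i}{a_i}}\subset U_i$, $P(f)\cap U_i\subseteq\{a_i\}$, and $\tau_m(\ad{s}^i)\subset U_i$ for all large $m$; shrinking $U_i$, assume also that $\overline{U_i}$ is disjoint from the other $\overline{\dread{\ad{s}^j}{a_j}}$ and --- using that $\overline{\dread{\ad{s}^i}{a_i}}$ does not separate the sphere (Proposition~\ref{pro:extendedTopology}) --- that $U_i\cap\extended B=U_i\setminus\overline{\dread{\ad{s}^i}{a_i}}$ is connected. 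Since $a_i$ is the only point of $P(f)$ in $U_i$, the map $\extended f$ is a covering over the annulus $U_i\setminus\{a_i\}$ (Proposition~\ref{pro:ExtensionCovering}); hence the components of $\extended f^{-1}(U_i)$ are in bijection with $\extended f^{-1}(\{a_i\})$, the component $U_i^c$ over $c$ having normal form $z\mapsto z^d$ on $\D$ or $z\mapsto\exp(z)$ on $\half\cup\{-\infty\}$ over $U_i$ (Lemmas~\ref{lem:PuncturedCoverings} and~\ref{lem:LogarithmicSingularities}) and containing $c$ as its only preimage of $a_i$, so $\extended f^{-1}(\{a_i\})\cap\overline{U_i^c}=\{c\}$. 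Unwinding the definitions, $\extended f^{-1}(\cl_{\CTT f}(\dread{\ad{s}^i}{a_i}))\cap U_i^c=\bigcup_{\ad{s}\in\Crit c}\cl_{\CTT f}(\dread{\ad{s}}{c})=:E_c$, because the escaping preimages of $G_{\ad{s}^i}$ are precisely the dreadlocks $G_{\ad{s}}$ with $\sigma(\ad{s})=\ad{s}^i$, which are (pre)periodic and land by Lemma~\ref{lem:PreperiodicLanding} at the preimages of the landing point of $\dread{\ad{s}^i}{a_i}$, those landing at $c$ being exactly the ones with $\ad{s}\in\Crit c$; in the Fatou case one adjoins the internal-ray preimages.

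Fix $i$ and a partition sector $D$. Since $D\subset\extended B$, we have $D\cap\extended f^{-1}(U_i)=D\cap\extended f^{-1}(U_i\cap\extended B)$, which is connected because $\extended f$ maps $D$ homeomorphically onto $\extended B$ and $U_i\cap\extended B$ is connected; hence it lies in a single component $U_i^{c_i}$ of $\extended f^{-1}(U_i)$. Given $z_k\in U_i\cap\extended B$ with $z_k\to a_i$, the points $w_k:=(\extended f|_D)^{-1}(z_k)$ lie in $U_i^{c_i}$ and satisfy $\extended f(w_k)\to a_i$, which by the normal form implies $w_k\to c_i$; since $w_k\in D$ and $\extended f(c_i)=a_i\notin\extended B$, this gives $c_i\in\partial_{\CTT f}D$. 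Conversely, every point of $\partial_{\CTT f}D\cap\extended f^{-1}(\{a_i\})$ lies in $\overline D\cap\extended f^{-1}(U_i)\subset\overline{U_i^{c_i}}$ and maps to $a_i$, hence equals $c_i$. This proves $\partial_{\CTT f}D\cap\extended f^{-1}(\{a_i\})=\{c_i\}$. For~(\ref{Eq:BoundaryPartitionSector}): if $z\in\partial_{\CTT f}D\subset\partial\extended{\mathcal D}$, then $\extended f(z)\in\cl_{\CTT f}(\dread{\ad{s}^i}{a_i})\subset U_i$ for a unique $i$; thus $z\in\overline D\cap\extended f^{-1}(U_i)\subset\overline{U_i^{c_i}}$, and as $\extended f(z)$ is not a boundary value of $U_i$ we get $z\in U_i^{c_i}$, whence $z\in E_{c_i}=\bigcup_{\ad{s}\in\Crit{c_i}}\cl_{\CTT f}(\dread{\ad{s}}{c_i})$.

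It remains to deduce~(\ref{Eq:PartitionSectorComplement}) and the last assertion. The sets $E_c$, $c\in\extended f^{-1}(S(f))$, are pairwise disjoint closed connected sets contained in $\partial\extended{\mathcal D}$: each is a union of continua all containing $c$, dreadlocks with distinct landing points --- together with their internal-ray extensions --- are disjoint, and $\extended f(E_c)\subset\cl_{\CTT f}(\dread{\ad{s}^j}{a_j})$. By~(\ref{Eq:BoundaryPartitionSector}), $\partial_{\CTT f}D\subset E_{c_1}\cup\dots\cup E_{c_n}$, so the open connected set $D$ is a connected component of $\CTT f\setminus(E_{c_1}\cup\dots\cup E_{c_n})$. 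The unicoherence of the sphere --- the mechanism underlying the induction in the proof of Lemma~\ref{lem:SeparationFinite} via Theorem~\ref{thm:PlaneSeparation}, the transcendental singularities causing no trouble since each has a neighbourhood meeting only the $E_c$ containing it --- implies: if two points lie in different components of the complement of a finite disjoint union of continua, then some single one of the continua already separates them. Hence, given $z\in\bigcap_iW_i$: we have $z\notin\bigcup_iE_{c_i}\supseteq\partial_{\CTT f}D$, and if $z\notin D$ then some $E_{c_i}$ separates $z$ from $D$, contradicting $z\in W_i$; so $z\in D$, i.e.\ $D=\bigcap_iW_i$. Finally, if $p,q$ lie in distinct partition sectors, apply this identity to the sector containing $p$: from $q\notin\bigcap_iW_i$ we get $q\notin W_{i_0}$ for some $i_0$, so $p$ and $q$ lie in different components of $\CTT f\setminus E_{c_{i_0}}=\CTT f\setminus\bigcup_{\ad{s}\in\Crit{c_{i_0}}}\cl_{\CTT f}(\dread{\ad{s}}{c_{i_0}})$, as claimed.

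I expect the main obstacle to be the first two paragraphs, where $c_i$ must be pinned down: this forces one to control the local structure of $\extended f$ near the landing point and, most delicately, to arrange that $U_i\cap\extended B$ is connected, so that the connected trace $D\cap\extended f^{-1}(U_i)$ can reach only one component $U_i^{c_i}$. This is precisely where the possibly wild topology of dreadlocks (as opposed to rays) must be tamed, relying on Proposition~\ref{pro:extendedTopology} and the enclosing-domain construction of Lemma~\ref{lem:NewNestedEnclosing} and Corollary~\ref{cor:SimplyNested}; the separation argument in the third paragraph is then routine once~(\ref{Eq:BoundaryPartitionSector}) is available.
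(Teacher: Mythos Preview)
Your proof is correct and follows the same approach as the paper's: localize each extended dreadlock in a simply connected $U_i$, use the homeomorphism $\extended f\colon\extended D\to\extended B$ together with the connectedness of $U_i\cap\extended B$ to pin down a unique preimage component $U_i^{c_i}$ (and hence a unique $c_i$), derive \eqref{Eq:BoundaryPartitionSector}, and then deduce \eqref{Eq:PartitionSectorComplement} and the final claim by a separation argument. The paper's proof is much terser and leaves several of the steps you spell out (notably the connectedness of $U_i\cap\extended B$ and the passage from \eqref{Eq:BoundaryPartitionSector} to \eqref{Eq:PartitionSectorComplement}) to the reader; your version is a faithful expansion.

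One small remark on phrasing: in your third paragraph you invoke the principle ``if two points lie in different components of the complement of a finite \emph{disjoint} union of continua, then some single one already separates them.'' In $\CC$, however, the continua $\cl_{\CC}E_{c_i}$ all contain $\infty$ and are therefore not pairwise disjoint. What you actually need (and what Theorem~\ref{thm:PlaneSeparation} gives, by induction exactly as in Lemma~\ref{lem:SeparationFinite}) is Janiszewski's theorem: if $A,B\subset\CC$ are closed with $A\cap B$ connected and neither separates $p$ from $q$, then $A\cup B$ does not separate $p$ from $q$. Since the $\cl_{\CC}E_{c_i}$ pairwise meet only in $\{\infty\}$, this applies and yields your conclusion. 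You clearly have the right mechanism in mind (you cite Theorem~\ref{thm:PlaneSeparation}), so this is only a wording issue, not a gap.
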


\begin{proof}
	Choose pairwise disjoint domains $U_i\supset\dread{\ad{s}^i}{a_i}$ such that each $U_i$ is simply connected. Since $\extended f\colon \extended D\to \extended f(\extended D)=\extended{B}$ is a homeomorphism, $D$ intersects a single preimage component of each $U_i$. 
	
	Since $\partial\extended{\mathcal{D}}=\extended{f}^{-1}\left(\CTT{f}\setminus\bigcup_{a_i\in S(f)} \cl_{\CTT{f}}(\dread{\ad{s}^i}{a_i})\right)$ (see \eqref{Eq:BoundaryBaseDomain}), $\partial_{\CTT{f}} D$ is contained in the union of preimages of  those $\cl_{\CTT{f}}(\dread{\ad{s}^i}{a_i})$ that land at the $c_i$, and this is \eqref{Eq:BoundaryPartitionSector}.
	
	It follows that $D$ is separated from its complement by pairs of filaments that land at the various $c_i$, and this shows \eqref{Eq:PartitionSectorComplement} and the final claim.
\end{proof}

Using this lemma, we can establish the natural correspondence between partition sectors of the plane and of the space of external addresses.

\begin{Pro}[Sector correspondence]
	Consider two filaments $G_{\ad{t}}$ and $G_{\ad{u}}$ that are both not on the boundary of a partition sector. They are 
	contained in the same sector of $\mathcal{D}$ if and only if $\ad{t}$ and $\ad{u}$ are contained in the same sector of $\mathcal{I}$.\label{pro:partition_correspondence}
\end{Pro}
\begin{proof}
	First, assume that $G_{\ad{t}}$ and $G_{\ad{u}}$ are contained in the same sector of $\mathcal{D}$, and assume to the contrary that $\Crit{c}$ and
	$\{\ad{t},\ad{u}\}$ are linked for some $c\in\extended{f}^{-1}(S(f))$. By definition, there are external addresses $\ad{c}^1,\ad{c}^2\in\Crit{c}$ such that
	$\ad{c}^1\prec\ad{t}\prec\ad{c}^2\prec\ad{u}$. The (extended) filaments $\dread{\ad{c}^1}{c}$ and $\dread{\ad{c}^2}{c}$ land together at the point $c \in\CTT{f}$ and are contained in $\partial\mathcal{D}$. By Lemma~\ref{lem:SeparationFinite}, Lemma~\ref{lem:SeparationInfinite}, and Lemma~\ref{lem:SeparationFatou}, the filaments $G_{\ad{t}}$ and $G_{\ad{u}}$ are contained in distinct connected components of $\C\setminus\bigcup_{\ad{s}\in\Crit{c}}\overline{G_{\ad{s}}[c]}$ and thus in distinct partition sectors, a contradiction.
	
	Conversely, assume that $\ad{t}$ and $\ad{u}$ are unlink equivalent. If $G_{\ad{t}}$ and $G_{\ad{u}}$ were contained in distinct sectors of $\mathcal{D}$, then we could find a point $c\in\extended{f}^{-1}(S(f))$ such that $G_{\ad{t}}$ and $G_{\ad{u}}$ are contained in distinct connected components of $\C\setminus\bigcup_{\ad{s}\in\Crit{c}}\overline{G_{\ad{s}}[c]}$ by Lemma~\ref{lem:DifferentSectors}. By Lemma~\ref{lem:SeparationFinite}, Lemma~\ref{lem:SeparationInfinite}, and Lemma~\ref{lem:SeparationFatou}, this implies the existence of $\ad{c}^1,\ad{c}^2\in\Crit{c}$ such that $\ad{c}^1\prec\ad{t}\prec\ad{c}^2\prec\ad{u}$. Hence, $\{\ad{t},\ad{u}\}$ and $\Crit{c}$ would be linked, again a contradiction.
\end{proof}

Having thus established a natural bijection between the partition sectors of $\mathcal{D}$ and $\mathcal{I}$, we write $D(I)$ and $I(D)$ for the sector $D\in\mathcal{D}$ corresponding to $I\in\mathcal{I}$ and the sector $I$ corresponding to $D$ respectively.\label{def:CorrespondingSector}

\begin{Pro}[Topology of $I$]~\label{pro:SectorTopology}
	Let $I$ be a sector of the dynamic partition $\mathcal{I}$. The restriction
	\[
	\left.\sigma\right|_I\colon I\to\AD\setminus\{\ad{s}^1,\ldots ,\ad{s}^n\}
	\]
	is a bijection that preserves the cyclic order. We have
	\[
	I=(\ad{t}^1,\ad{t}^2)\cupdot(\ad{t}^3,\ad{t}^4)\cupdot\ldots \cupdot(\ad{t}^{2n-1},\ad{t}^{2n})
	\]
	with superscripts labeled modulo $2n$ and $\sigma(\ad{t}^{2i})=\sigma(\ad{t}^{2i+1})=\ad{s}^i$. There are distinct $c_i\in\extended{f}^{-1}(S(f))$, $i\in\{1,\ldots ,n\}$, such that $\ad{t}^{2i},\ad{t}^{2i+1}\in\Crit{c_i}$. \end{Pro}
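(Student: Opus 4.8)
\emph{Strategy.} The plan is to transfer the statement to the complex plane via the sector correspondence (Proposition~\ref{pro:partition_correspondence}): let $D=D(I)\in\mathcal{D}$ be the partition sector corresponding to $I$. By Lemma~\ref{lem:DTopology} we have $D=\bigcap_{i=1}^{n}W_i$, where $c_i$ is the unique point of $\partial_{\CTT{f}}D\cap\extended{f}^{-1}(a_i)$ and $W_i$ is the component containing $D$ of $\CTT{f}\setminus\bigcup_{\ad{s}\in\Crit{c_i}}\cl_{\CTT{f}}(\dread{\ad{s}}{c_i})$, and $D$ is separated from its complement by exactly one pair of dreadlocks $\dread{\ad{a}^i}{c_i},\dread{\ad{b}^i}{c_i}$ landing at each $c_i$ (with $\ad{a}^i,\ad{b}^i\in\Crit{c_i}$, possibly equal when $\Crit{c_i}$ is a singleton). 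By Corollary~\ref{cor:SeparatingDreadlocks} the dreadlocks lying inside $W_i$ are precisely those with address in $(\ad{a}^i,\ad{b}^i)$. Since every preimage of a chosen dreadlock lies in $\partial\mathcal{D}$, hence in no $W_i$, combining this with Proposition~\ref{pro:partition_correspondence} gives the description
\[
I=\bigcap_{i=1}^{n}(\ad{a}^i,\ad{b}^i)\subset\AD .
\]

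\emph{The map $\restr{\sigma}{I}$.} I would then show that $\restr{\sigma}{I}\colon I\to\AD\setminus\{\ad{s}^1,\dots,\ad{s}^n\}$ is an order-preserving bijection. For bijectivity, use that $\restr{f}{D}\colon D\to B[\mathcal{D}]$ is biholomorphic (Lemma~\ref{lem:BasicProperties}) and that $B[\mathcal{D}]$ contains precisely the dreadlocks $G_{\ad{u}}$ with $\ad{u}\notin\{\ad{s}^1,\dots,\ad{s}^n\}$: each such $G_{\ad{u}}$ has a unique lift $G_{\ad{t}}\subset D$ with $f(G_{\ad{t}})=G_{\ad{u}}$, which by $f(G_{\ad{s}})=G_{\sigma(\ad{s})}$ (Theorem~\ref{thm:EscapingSet}) means $\sigma(\ad{t})=\ad{u}$ and $\ad{t}\in I$, while injectivity of $\restr{f}{D}$ forces injectivity of $\restr{\sigma}{I}$; the step from realized to all addresses is handled by density in the order topology, or read off directly from the description $I=\bigcap_i(\ad{a}^i,\ad{b}^i)$. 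Order-preservation holds because $\sigma$ is continuous and locally orientation-preserving on the circle of addresses $\overline{\AD}$ — on each block of addresses with a common first symbol it is induced by a conformal inverse branch $f_F^{-1}$, and these blocks match up consistently across the immediate preimages of $\alpha$ — so any injective restriction of $\sigma$ to a subset of $\AD$ preserves the cyclic order. Hence $\restr{\sigma}{I}$ is an order isomorphism of $I$ onto the circle $\AD$ with the $n$ points $\ad{s}^1,\dots,\ad{s}^n$ removed, and therefore $I$ is a disjoint union of exactly $n$ open intervals; write $I=(\ad{t}^1,\ad{t}^2)\cupdot\dots\cupdot(\ad{t}^{2n-1},\ad{t}^{2n})$.

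\emph{Endpoints.} Each $\ad{t}^k$ is a boundary address of $\mathcal{D}$, hence lies in $\sigma^{-1}(\{\ad{s}^1,\dots,\ad{s}^n\})\subset\AD$, and by continuity of $\sigma$ its image is the common $\sigma$-value on the adjacent interval of $I$, so $\sigma(\ad{t}^k)=\ad{s}^{j}$ for some $j$. The two endpoints $\ad{t}^{2i},\ad{t}^{2i+1}$ flanking one gap of $I$ correspond, via $I=\bigcap_i(\ad{a}^i,\ad{b}^i)$, to the two boundary dreadlocks of a single factor $W_i$, i.e.\ $\{\ad{t}^{2i},\ad{t}^{2i+1}\}=\{\ad{a}^i,\ad{b}^i\}\subset\Crit{c_i}$, whence $\sigma(\ad{t}^{2i})=\sigma(\ad{t}^{2i+1})=\ad{s}^i$ and both land at the common point $c_i\in\extended{f}^{-1}(a_i)$; the $c_i$ are distinct since they project to the distinct $a_i$. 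That the $n$ gaps of $I$ match bijectively with the $n$ factors $W_i$ — equivalently, that the $n$ complementary arcs $\AD\setminus(\ad{a}^i,\ad{b}^i)$ are pairwise disjoint — follows since $\dread{\ad{b}^i}{c_i}\subset\partial_{\CTT{f}}D\subset\overline{W_j}$ for $j\neq i$, while $\cl_{\CTT{f}}(\dread{\ad{b}^i}{c_i})$ meets $\cl_{\CTT{f}}(\dread{\ad{s}}{c_j})$ only at $\infty$ (their distinct landing points $c_i\neq c_j$), so $\dread{\ad{b}^i}{c_i}$ cannot be separated from $D$ by the $\Crit{c_j}$-family. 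Relabeling the $\ad{s}^i$ along the cyclic order puts the result in the asserted form.

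\emph{Main obstacle.} The hard part is the order-preservation of $\restr{\sigma}{I}$ together with pinning down the exact count $n$: both require carefully matching the conformal and separation data inside $\CTT{f}$ with the combinatorial cyclic order on $\AD$ that is defined through fundamental tails at infinity. By contrast, the bijectivity of $\restr{\sigma}{I}$ is routine once Lemma~\ref{lem:BasicProperties} is available, and the endpoint identities are then a direct reading of Lemma~\ref{lem:DTopology} and Corollary~\ref{cor:SeparatingDreadlocks}.
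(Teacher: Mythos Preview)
Your overall architecture is sound and in places more geometric than the paper's: you derive the interval structure of $I$ from the decomposition $D=\bigcap_i W_i$ of Lemma~\ref{lem:DTopology}, translated via the separation lemmas into $I=\bigcap_i(\ad{a}^i,\ad{b}^i)$, whereas the paper instead computes the intervals of $\AD\setminus\partial\mathcal I$ explicitly as $(F\ad{s}^{j},F\ad{s}^{j+1})$ (with the predecessor/successor adjustment at $j=n$) and then selects $n$ of them using the already-established order-preserving bijection. Your route has the advantage that the endpoint identifications $\{\ad t^{2i},\ad t^{2i+1}\}\subset\Crit{c_i}$ are read off directly from the geometry; the paper's route has the advantage that the exact form of the intervals is visible and reusable (it is in fact used in Lemma~\ref{lem:IntervalPullbacks}).

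There is, however, a genuine gap in your order-preservation step. The assertion ``$\sigma$ is a local orientation-preserving homeomorphism on $\overline{\AD}$, so any injective restriction of $\sigma$ to a subset of $\AD$ preserves the cyclic order'' is false. Already for the doubling map $z\mapsto z^2$ on $S^1$ (a degree-$2$ orientation-preserving covering) one can choose three points on which the map is injective but reverses their cyclic order; the infinite-degree situation here is no better. Knowing that $\restr\sigma I$ is a bijection onto the full punctured circle does not by itself help, because establishing that $I$ is the image of a \emph{continuous} section of $\sigma$ over $\AD\setminus\{\ad s^1,\dots,\ad s^n\}$ is precisely the content of the proposition. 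The paper closes this gap cleanly: for realized addresses $\ad t,\ad u,\ad v\in I$ with $G_{\ad t}\prec G_{\ad u}\prec G_{\ad v}$, the conformal isomorphism $\restr f{D(I)}\colon D(I)\to B[\mathcal D]$ is orientation-preserving, hence sends this cyclic order at infinity to $G_{\sigma(\ad t)}\prec G_{\sigma(\ad u)}\prec G_{\sigma(\ad v)}$; density of realized addresses then gives the general statement. You should replace your combinatorial claim by this argument (which you in fact allude to in your ``main obstacle'' paragraph).

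A smaller point: in your endpoint discussion, the step ``the $n$ complementary arcs $\AD\setminus(\ad a^i,\ad b^i)$ are pairwise disjoint'' needs slightly more than $\ad a^i,\ad b^i\in(\ad a^j,\ad b^j)$; you also need that the \emph{short} arc between $\ad b^i$ and $\ad a^i$ (the one not containing $I$) avoids $\{\ad a^j,\ad b^j\}$. This follows once order-preservation of $\restr\sigma I$ is in hand (the $n$ gaps of $I$ map, in cyclic order, to the $n$ punctures $\ad s^i$), so fixing the order-preservation argument resolves this as well.
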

\begin{proof}
	As shown in Lemma~\ref{lem:BasicProperties}, the restriction of $f$ to $D(I)$ is a conformal isomorphism onto $\C\setminus\bigcup_{i=1}^n \overline{\dread{\ad{s}^i}{a_i}}$. Therefore, every filament $G_{\ad{t}}$ with $\ad{t}\notin\{\ad{s}^1,\ldots ,\ad{s}^n\}$ has a unique preimage in $D(I)$. By Proposition \ref{pro:partition_correspondence}, every external address $\ad{u}\in\AD\setminus\{\ad{s}^1,\ldots ,\ad{s}^n\}$ that is realized by some filament has a unique preimage in $I$. Since realized addresses are dense in $\AD$, every external address $\ad{u}\in\AD\setminus\{\ad{s}^1,\ldots ,\ad{s}^n\}$ has a unique preimage in $I$. This shows that $\restr{\sigma}{I}$ is a bijection onto $\AD\setminus\{\ad{s}^1,\ldots ,\ad{s}^n\}$. Let $G_{\ad{t}},G_{\ad{u}},G_{\ad{v}}\subset D(I)$ be distinct filaments satisfying $G_{\ad{t}}\prec G_{\ad{u}}\prec G_{\ad{v}}$. As $\restr{f}{D_i}$ is a conformal map onto $B$ by Lemma~\ref{lem:BasicProperties}, thus an orientation-preserving homeomorphism, we have $G_{\sigma(\ad{t})}\prec G_{\sigma(\ad{u})}\prec G_{\sigma(\ad{v})}$. On the level of external addresses, this means that $\restr{\sigma}{I}$ preserves the cyclic order for every triple of realized external addresses. As realized addresses are dense in $\AD$, we conclude that $\restr{\sigma}{I}$ is order-preserving.
	
	Let $\ad{s}\in\AD\setminus\partial\mathcal{I}$ be an external address, and let $F$ be its initial entry. We have $\sigma(\ad{s})\in(\ad{s}^j,\ad{s}^{j+1})$ for some $j$. Recall that we required $\ad{s}^1<\ldots <\ad{s}^n$ in the linear order induced by $\alpha$ (see Definition~\ref{def:LinearOrder}). If $j\neq n$, then we have
	$\ad{s}\in(F\ad{s}^j,F\ad{s}^{j+1})$. If $j=n$, we further distinguish whether $\sigma(\ad{s})<\ad{s}^1$ or $\sigma(\ad{s})>\ad{s}^1$. In the former case, we have
	$\ad{s}\in(\Fpred{F}\ad{s}^n,F\ad{s}^1)$, while in the latter case we have $\ad{s}\in(F\ad{s}^n,\Fsucc{F}\ad{s}^1)$.
	Hence, $\AD\setminus\partial\mathcal{I}$ can be written as the disjoint union of intervals of the above form. Each of these intervals is fully contained in some sector of $\mathcal{I}$, and the restriction of $\sigma$ to any of these intervals preserves the cyclic order. By the above, $I$ is mapped bijectively onto $\AD\setminus\{\ad{s}^1,\ldots ,\ad{s}^n\}$, so there are fundamental domains $F_j$ ($j\in\{1,\ldots ,n\}$) such that
	\[
	I=((F_1)_p\ad{s}^n,F_1\ad{s}^1)\cupdot(F_2\ad{s}^1,F_2\ad{s}^2)\cupdot\ldots \cupdot(F_n\ad{s}^n,(F_n)_s\ad{s}^1).
	\]
	Hence, we see that $I$ is of the claimed form. The last statement follows from Lemma~\ref{lem:DTopology} where the $c_i$ from Lemma~\ref{lem:DTopology} coincide with the $c_i$ in the statement of this proposition.
\end{proof}

\begin{Cor}[Boundary correspondence]
	Let $I\in\mathcal{I}$ be a partition sector. In the notation of Proposition~\ref{pro:SectorTopology}, we have $G_{\ad{t}}\cap\overline{D(I)}\neq\emptyset$ if and only if $\ad{t}\in[\ad{t}^{2j-1},\ad{t}^{2j}]$ for some $j\in\{1,\ldots,n\}$. If $G_{\ad{t}}$ is (pre\nobreakdash-)periodic and $G_{\ad{t}}\cap\overline{D(I)}\neq\emptyset$, we have $L(\ad{t})\in\cl_{\CTT{f}}(D(I))$.\label{cor:BoundaryAddresses}
\end{Cor}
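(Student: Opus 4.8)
The plan is to distinguish whether $\ad{t}$ is a \emph{boundary address}, i.e.\ whether $\ad{t}\in\sigma^{-1}(\{\ad{s}^1,\dots,\ad{s}^n\})=\bigcup_{c\in\extended{f}^{-1}(S(f))}\Crit{c}$ (equivalently $G_{\ad{t}}$ is one of the dreadlocks making up $\partial\mathcal{D}$) or not; throughout write $D:=D(I)$ and tacitly assume $G_{\ad{t}}\neq\emptyset$, which is automatic when $\ad{t}$ is (pre\nobreakdash-)periodic. Suppose first $\ad{t}$ is not a boundary address. Then $G_{\ad{t}}$ is a connected subset of $\C$ disjoint from $\partial\mathcal{D}$, so it is contained in a unique partition sector $D'$; since $\partial D\subset\partial\mathcal{D}$, we have $G_{\ad{t}}\cap\overline{D}\neq\emptyset$ if and only if $D'=D$, and by Proposition~\ref{pro:partition_correspondence} together with the definition of the correspondence $D\leftrightarrow I$ this holds if and only if $\ad{t}\in I$, i.e.\ by Proposition~\ref{pro:SectorTopology} if and only if $\ad{t}\in\bigcup_j(\ad{t}^{2j-1},\ad{t}^{2j})$; since $\ad{t}$ is none of the endpoints $\ad{t}^m$, this is the same as $\ad{t}\in\bigcup_j[\ad{t}^{2j-1},\ad{t}^{2j}]$. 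In the positive case $G_{\ad{t}}\subset D$, so if $G_{\ad{t}}$ is (pre\nobreakdash-)periodic then $L(\ad{t})\in\cl_{\CTT{f}}(G_{\ad{t}})\subset\cl_{\CTT{f}}(D)$; this is the second assertion in this case.

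Now let $\ad{t}\in\Crit{c}$ be a boundary address. By Lemma~\ref{lem:DTopology}, $D=\bigcap_{i=1}^n W_i$, where $W_i$ is the complementary component of $\bigcup_{\ad{s}\in\Crit{c_i}}\cl_{\CTT{f}}(\dread{\ad{s}}{c_i})$ containing $D$, the points $c_1,\dots,c_n\in\extended{f}^{-1}(S(f))$ are distinct, and each $c_i\in\cl_{\CTT{f}}(D)$. The structural input to be read off from Proposition~\ref{pro:SectorTopology} is that $\dread{\ad{t}^{2i}}{c_i}$ and $\dread{\ad{t}^{2i+1}}{c_i}$ are exactly the two \emph{cyclically consecutive} members of $\{\dread{\ad{s}}{c_i}\}_{\ad{s}\in\Crit{c_i}}$ bounding $W_i$: the addresses realized in $D$ are precisely the pieces $(\ad{t}^{2k-1},\ad{t}^{2k})$ of $I$, whose cyclic span is the arc $(\ad{t}^{2i+1},\ad{t}^{2i})$ complementary to the ``gap'' $(\ad{t}^{2i},\ad{t}^{2i+1})$, so the component containing $D$ can carry no further member of $\Crit{c_i}$ in that arc. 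Consequently, for every $m\notin\{2k,2k+1\}$ the address $\ad{t}^m$ lies strictly inside the arc $(\ad{t}^{2k+1},\ad{t}^{2k})$ describing $W_k$, whence $G_{\ad{t}^m}\subset W_k$ by Corollary~\ref{cor:SeparatingDreadlocks} (and by Lemma~\ref{lem:SeparationFatou}, using that all of $U(c_k)$ lies in one such component, when $c_k$ is a Fatou centre).

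Granting this, suppose $c=c_i$ and $\ad{t}\in\{\ad{t}^{2i},\ad{t}^{2i+1}\}$. Then $\dread{\ad{t}}{c_i}\subset\partial W_i\subset\overline{W_i}$, while $\dread{\ad{t}}{c_i}\subset W_k$ for each $k\neq i$ (noting $\ad{t}\notin\Crit{c_k}$, as the $c_k$ are distinct). Hence $\dread{\ad{t}}{c_i}\subset\bigl(\bigcap_{k\neq i}W_k\bigr)\cap\overline{W_i}$, and any point of this set is either in $D=\bigcap_k W_k$ or a boundary point of $W_i$ whose neighbourhoods, once shrunk into the open set $\bigcap_{k\neq i}W_k$, still meet $W_i$ and hence $D$; so $\dread{\ad{t}}{c_i}\subset\cl_{\CTT{f}}(D)$, which yields $G_{\ad{t}}\subset\overline{D}$ and $L(\ad{t})\in\cl_{\CTT{f}}(D)$. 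Conversely, if $\ad{t}\in\Crit{c}$ with $(c,\ad{t})$ not of that form, then $\ad{t}$ lies strictly inside some gap $(\ad{t}^{2j},\ad{t}^{2j+1})$, so $G_{\ad{t}}$ is disjoint from $W_j$ and from $\partial W_j=\cl_{\CTT{f}}(\dread{\ad{t}^{2j}}{c_j})\cup\cl_{\CTT{f}}(\dread{\ad{t}^{2j+1}}{c_j})$ (by Corollary~\ref{cor:SeparatingDreadlocks}, respectively because distinct (extended) dreadlocks meet only at the centre), and, consisting of escaping points, $G_{\ad{t}}$ also misses the non\nobreakdash-escaping point $c_j$; thus $G_{\ad{t}}\cap\overline{W_j}=\emptyset$, and since $\overline{D}\subset\overline{W_j}$ we conclude $G_{\ad{t}}\cap\overline{D}=\emptyset$. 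This establishes the equivalence, and the second claim has been obtained along the way.

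I expect the main obstacle to be the structural input above: matching the two addresses $\ad{t}^{2i},\ad{t}^{2i+1}$ supplied by Proposition~\ref{pro:SectorTopology} with the two consecutive members of $\Crit{c_i}$ adjacent to $D$. In general $\Crit{c_i}$ has more than two elements — its cardinality is the local degree of $\extended{f}$ at $c_i$, which is infinite when $c_i\in\CTT{f}\setminus\C$ — and the surplus elements sit in gaps of $I$ belonging to other $c_k$'s, so one must rule out that they separate $D$ from $\dread{\ad{t}^{2i}}{c_i}$ or $\dread{\ad{t}^{2i+1}}{c_i}$; this is where the cyclic\nobreakdash-order bookkeeping requires care. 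Tracking the Fatou\nobreakdash-centre case via Lemma~\ref{lem:SeparationFatou} is a further, routine complication; everything else is a direct application of Proposition~\ref{pro:partition_correspondence} and the separation lemmas.
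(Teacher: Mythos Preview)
Your proof is correct and follows exactly the approach the paper takes: splitting into the interior case (handled by Proposition~\ref{pro:partition_correspondence}) and the boundary case (handled via Proposition~\ref{pro:SectorTopology} and Lemma~\ref{lem:DTopology}). The paper's own proof is only two sentences and asserts that the boundary case ``follows easily'' from those results; your detailed treatment of that case --- identifying $\ad{t}^{2i},\ad{t}^{2i+1}$ as the cyclically consecutive members of $\Crit{c_i}$ adjacent to $W_i$ and then invoking the separation lemmas --- makes explicit precisely what the paper leaves to the reader.
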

Note that $cl(G_{\ad{t}})\cap\overline{D(I)}$ might be also nonempty if $\ad{t} \in \Crit{c_i}$. 
\begin{proof}
	If $\ad{t}\notin\partial\mathcal{I}$, this follows directly from Proposition~\ref{pro:partition_correspondence}. In the boundary case, this follows easily from Proposition~\ref{pro:SectorTopology} and Lemma~\ref{lem:DTopology}.
\end{proof}

By adding either all left boundary points or all right boundary points to the sectors of $\mathcal{I}$, we obtain two full partitions of the space of external addresses.
\begin{CorDef}[Full partitions]
	For a partition sector $I\in\mathcal{I}$, we set
	\[
	I^-:=(\ad{t}^1,\ad{t}^2]\cup(\ad{t}^3,\ad{t}^4]\cup\ldots \cup(\ad{t}^{2n-1},\ad{t}^{2n}]
	\]
	and
	\[
	I^+:=[\ad{t}^1,\ad{t}^2)\cup[\ad{t}^3,\ad{t}^4)\cup\ldots \cup[\ad{t}^{2n-1},\ad{t}^{2n}),
	\]
	where the $\ad{t}^i$ are the external addresses introduced in Proposition~\ref{pro:SectorTopology}. In this way, we get two full partitions
	\[
	\AD=\bigcupdot_{I\in\mathcal{I}}I^-=\bigcupdot_{I\in\mathcal{I}}I^+
	\]
	of the space of external addresses.
	\label{cor:FullPartition}
\end{CorDef}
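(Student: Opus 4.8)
The plan is to reduce the whole statement to one combinatorial bookkeeping fact about the set $\partial\mathcal{I}:=\sigma^{-1}(\{\ad{s}^1,\ldots,\ad{s}^n\})$ of addresses on which $\mathcal{I}$ is not defined, namely: \emph{every address in $\partial\mathcal{I}$ is the left endpoint of exactly one connected component of $\AD\setminus\partial\mathcal{I}$, and the right endpoint of exactly one.} First recall from Definition~\ref{def:CombinatorialPartition} that the sectors $I\in\mathcal{I}$ partition $\AD\setminus\partial\mathcal{I}$, and from the proof of Proposition~\ref{pro:SectorTopology} that each sector is a disjoint union $I=(\ad{t}^1,\ad{t}^2)\cupdot\cdots\cupdot(\ad{t}^{2n-1},\ad{t}^{2n})$ of exactly $n$ of these components, with all $2n$ endpoints lying in $\partial\mathcal{I}$; since $\sigma(\ad{t}^{2i})=\sigma(\ad{t}^{2i+1})=\ad{s}^i$, the $n$ right endpoints $\ad{t}^2,\ad{t}^4,\ldots,\ad{t}^{2n}$ are pairwise distinct, and so are the $n$ left endpoints $\ad{t}^1,\ad{t}^3,\ldots,\ad{t}^{2n-1}$. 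Thus $I^-=I\cupdot\{\ad{t}^2,\ad{t}^4,\ldots,\ad{t}^{2n}\}$ and $I^+=I\cupdot\{\ad{t}^1,\ad{t}^3,\ldots,\ad{t}^{2n-1}\}$, each a disjoint union of a sector with $n$ of its endpoints.

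Granting the bookkeeping fact, the corollary follows formally. Each component of $\AD\setminus\partial\mathcal{I}$ lies in a unique sector, so from the fact that each $p\in\partial\mathcal{I}$ is the right endpoint of a unique component, it follows that $p$ occurs among the right endpoints of exactly one sector; consequently the right-endpoint sets $\{\ad{t}^2(I),\ldots,\ad{t}^{2n}(I)\}$ of the various $I$ are pairwise disjoint and their union is $\partial\mathcal{I}$. Combined with $\bigcupdot_{I\in\mathcal{I}}I=\AD\setminus\partial\mathcal{I}$ and with the fact that the right endpoints lie in $\partial\mathcal{I}$ while each sector lies in $\AD\setminus\partial\mathcal{I}$, this gives $\bigcupdot_{I\in\mathcal{I}}I^-=(\AD\setminus\partial\mathcal{I})\cupdot\partial\mathcal{I}=\AD$. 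The identity $\bigcupdot_{I\in\mathcal{I}}I^+=\AD$ is obtained the same way with ``right'' replaced by ``left''.

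It remains to prove the bookkeeping fact, and here I would invoke the explicit enumeration of components from the proof of Proposition~\ref{pro:SectorTopology}. On the one hand $\partial\mathcal{I}=\{F\ad{s}^j:F\text{ a fundamental domain},\,1\le j\le n\}$, since $\sigma(\ad{x})=\ad{s}^j$ forces $\ad{x}=F\ad{s}^j$ with $F$ the initial entry of $\ad{x}$. On the other hand, that proof shows that the components of $\AD\setminus\partial\mathcal{I}$ are precisely the intervals $(F\ad{s}^j,F\ad{s}^{j+1})$ for $1\le j\le n-1$ together with $(F\ad{s}^n,F_s\ad{s}^1)$, with $F$ ranging over all fundamental domains (the ``wrap-around'' interval $(F_p\ad{s}^n,F\ad{s}^1)$ listed there equals $(F'\ad{s}^n,F'_s\ad{s}^1)$ for $F'=F_p$). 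Using that $F\mapsto F_s$ is a bijection of the fundamental domains with inverse $F\mapsto F_p$, one reads off that the left endpoints of these components are exactly $\{F\ad{s}^j:1\le j\le n\}=\partial\mathcal{I}$, each occurring once, and the right endpoints are exactly $\{F\ad{s}^m:2\le m\le n\}\cup\{F\ad{s}^1:F\}=\partial\mathcal{I}$, each occurring once. The only delicate point --- and the one I expect to be the main obstacle --- is precisely this enumeration: one must be careful that the wrap-around interval straddling the cylinder boundary $\alpha_{F_p}^F$ is counted once rather than twice, and that $F\ad{s}^1$ and $F\ad{s}^n$ really are the smallest and largest elements of $\partial\mathcal{I}$ inside the cylinder of $F$ relative to the cut point $\alpha$. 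However, this has already been settled in Subsubsection~\ref{subsubsec:CircleOfAddresses} and in the proof of Proposition~\ref{pro:SectorTopology}, so no new argument is needed; the rest of the proof is purely formal.
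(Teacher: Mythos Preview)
Your proof is correct and follows essentially the natural route: the paper states this as a Corollary and Definition with no proof at all, treating it as immediate from Proposition~\ref{pro:SectorTopology}, and what you have written is precisely the bookkeeping argument that makes ``immediate'' rigorous. The only remark is that your write-up is considerably more detailed than anything the paper supplies; in particular, your explicit verification that every element of $\partial\mathcal{I}$ occurs exactly once as a left endpoint and exactly once as a right endpoint of a component of $\AD\setminus\partial\mathcal{I}$ (via the enumeration $(F\ad{s}^j,F\ad{s}^{j+1})$ and the wrap-around intervals, together with the bijectivity of $F\mapsto F_s$) is the honest content behind the paper's bare assertion.
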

\begin{Lem}[Left and right sectors]
	Let $I\in\mathcal{I}$ be a partition sector. Then
	\begin{gather*}
		I^-=(\ad{u}^1,\ad{u}^2]\cupdot(\ad{u}^3,\ad{u}^4]\cupdot\ldots \cupdot(\ad{u}^{2m-1},\ad{u}^{2m}],\\
		I^+=[\ad{u}^1,\ad{u}^2)\cupdot[\ad{u}^3,\ad{u}^4)\cupdot\ldots \cupdot[\ad{u}^{2m-1},\ad{u}^{2m}),\\
		\overline{I}=[\ad{u}^1,\ad{u}^2]\cupdot[\ad{u}^3,\ad{u}^4]\cupdot\ldots \cupdot[\ad{u}^{2m-1},\ad{u}^{2m}]
	\end{gather*}
	for some $1\leq m\leq n$ and certain $\ad{u}^i\in\ADPer$. There are distinct $c_k\in C(\extended{f})$, $k\in\{1,\ldots ,m\}$, such that $\ad{u}^{2k},\ad{u}^{2k+1}\in\Crit{c_k}$, and we have $\ad{u}^{2k}\neq\ad{u}^{2k+1}$. The restrictions
	\[
	\left.\sigma\right|_{I^\pm}\colon I^\pm\to\AD
	\]
	are order-preserving bijections.\label{lem:LeftRight}
\end{Lem}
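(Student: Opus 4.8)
The plan is to read off all three decompositions from the explicit description of $I$ in Proposition~\ref{pro:SectorTopology}, and then to \emph{merge} consecutive constituent intervals whose adjacent boundary addresses happen to coincide.

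I would start from $I=\bigcupdot_{j=1}^{n}(\ad{t}^{2j-1},\ad{t}^{2j})$ (superscripts mod $2n$) as in Proposition~\ref{pro:SectorTopology}, with $\sigma(\ad{t}^{2j})=\sigma(\ad{t}^{2j+1})=\ad{s}^{j}$ and $\ad{t}^{2j},\ad{t}^{2j+1}\in\Crit{c_j}$ for distinct $c_j\in\extended{f}^{-1}(S(f))$; put $\ad{s}^{0}:=\ad{s}^{n}$. By Corollary and Definition~\ref{cor:FullPartition}, $I^{-}=\bigcup_{j}(\ad{t}^{2j-1},\ad{t}^{2j}]$ and $I^{+}=\bigcup_{j}[\ad{t}^{2j-1},\ad{t}^{2j})$. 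Since $\restr{\sigma}{I}$ is a cyclic-order-preserving bijection onto $\AD\setminus\{\ad{s}^{1},\ldots,\ad{s}^{n}\}$, the $j$-th constituent arc of $I$ maps order-isomorphically onto a constituent arc of the target; by continuity of $\sigma$ and the endpoint values $\sigma(\ad{t}^{2j-1})=\ad{s}^{j-1}$, $\sigma(\ad{t}^{2j})=\ad{s}^{j}$, that arc is $(\ad{s}^{j-1},\ad{s}^{j})$, and in fact $\sigma$ carries the closed arc $[\ad{t}^{2j-1},\ad{t}^{2j}]$ order-isomorphically onto $[\ad{s}^{j-1},\ad{s}^{j}]$. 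Since $\AD$ is densely ordered, $\overline{(\ad{t}^{2j-1},\ad{t}^{2j})}=[\ad{t}^{2j-1},\ad{t}^{2j}]$, hence $\overline{I}=\bigcup_{j}[\ad{t}^{2j-1},\ad{t}^{2j}]$.

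The heart of the argument is the merging. The addresses $\ad{t}^{2j}$ and $\ad{t}^{2j+1}$ both lie in $\Crit{c_j}$, and I claim they coincide if and only if $c_j\notin C(\extended{f})$: if $c_j$ is a regular preimage of the singular value $\extended{f}(c_j)$ then $\extended{f}$ is injective near $c_j$, so $\Crit{c_j}$ is a singleton; if instead $c_j\in C(\extended{f})$, then $\abs{\Crit{c_j}}\ge 2$, the (extended) dreadlocks landing at $c_j$ cut a neighbourhood of $c_j$ into $\abs{\Crit{c_j}}$ local sectors, and the partition sector $D(I)$ occupies one of them (Lemma~\ref{lem:BasicProperties}), so $\ad{t}^{2j}$ and $\ad{t}^{2j+1}$ are two \emph{distinct} consecutive elements of $\Crit{c_j}$. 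Now group $\{1,\ldots,n\}$ into the maximal cyclic runs on which $\ad{t}^{2j}=\ad{t}^{2j+1}$; if the $k$-th run is $\{j_0,\ldots,j_1\}$ (so $\ad{t}^{2j}=\ad{t}^{2j+1}$ for $j_0\le j<j_1$ and $\ad{t}^{2j_1}\neq\ad{t}^{2j_1+1}$), the corresponding constituent intervals of $I^{-}$, $I^{+}$, $\overline{I}$ telescope by $(a,b]\cupdot(b,c]=(a,c]$, $[a,b)\cupdot[b,c)=[a,c)$, $[a,b]\cup[b,c]=[a,c]$. Setting $\ad{u}^{2k-1}:=\ad{t}^{2j_0-1}$, $\ad{u}^{2k}:=\ad{t}^{2j_1}$, $c_k:=c_{j_1}$, for $k=1,\ldots,m$, produces the three claimed decompositions with common endpoints. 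By construction $\ad{u}^{2k}\neq\ad{u}^{2k+1}$; since $\ad{u}^{2k},\ad{u}^{2k+1}\in\Crit{c_k}$, the $c_k$ are distinct and, having $\abs{\Crit{c_k}}\ge 2$, lie in $C(\extended{f})$. Each $\ad{u}^{i}$ is a $\sigma$-preimage of some $\ad{s}^{j}$, and $\ad{s}^{j}\in\ADPer$ because $\dread{\ad{s}^{j}}{a_j}$ lands at a (pre\nobreakdash-)periodic point (Theorem~\ref{thm:LandingBR}, Lemma~\ref{lem:PreperiodicLanding}), so $\ad{u}^{i}\in\ADPer$. Finally $m\le n$ is clear and $m\ge 1$ since $I^{-}\supseteq I\neq\emptyset$; the runs cannot close up all the way around (which would force $I^{-}=\AD$) because $\AD=\bigcupdot_{I'\in\mathcal{I}}(I')^{-}$ is a partition into at least two nonempty proper pieces ($f$ is not injective), so at least one coincidence fails and each merged piece has distinct endpoints.

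For the last assertion, by the second step $\sigma$ maps each $(\ad{t}^{2j-1},\ad{t}^{2j}]$ (resp.\ $[\ad{t}^{2j-1},\ad{t}^{2j})$) order-isomorphically onto $(\ad{s}^{j-1},\ad{s}^{j}]$ (resp.\ $[\ad{s}^{j-1},\ad{s}^{j})$); since $\ad{s}^{1}\prec\cdots\prec\ad{s}^{n}$, these $n$ half-open arcs partition $\AD$ into consecutive arcs, so $\restr{\sigma}{I^{\pm}}\colon I^{\pm}\to\AD$, being a concatenation of cyclic-order-preserving bijections between matching consecutive arcs, is an order-preserving bijection. I expect the main obstacle to be this bookkeeping — pinning the coincidence $\ad{t}^{2j}=\ad{t}^{2j+1}$ to $c_j$ being non-critical, checking that $I^{-}$, $I^{+}$ and $\overline{I}$ telescope to the \emph{same} endpoints $\ad{u}^{i}$, and confirming the surviving $c_k$ belong to $C(\extended{f})$ — rather than anything analytically hard; the only genuinely topological input, that $\sigma$ carries each closed supporting interval order-isomorphically onto the closed interval between the corresponding $\ad{s}^{j}$'s, is already implicit in the proof of Proposition~\ref{pro:SectorTopology}.
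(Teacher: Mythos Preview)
Your proof is correct and follows essentially the same approach as the paper: start from the decomposition of $I$ in Proposition~\ref{pro:SectorTopology}, then merge adjacent intervals precisely when the shared boundary point $c_j$ is a regular (non-critical) preimage of a singular value, using that $\Crit{c_j}$ is a singleton in exactly that case. The paper's own proof is a single terse paragraph that leaves the merging and bijectivity bookkeeping implicit, so your version is a faithful (and considerably more detailed) expansion of the intended argument.
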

\begin{proof}
	The only statement that does not follow immediately from Proposition~\ref{pro:SectorTopology} is that each $c_k$ is a critical preimage of some $a_i\in S(f)$. The reason for this is that for a regular preimage $c\in\extended{f}^{-1}(S(f))$ the set $\Crit{c}=:\{\ad{c}\}$ is a singleton, so the left and right sectors of $\ad{c}$ agree, i.e., $\ad{c}\in I^-\cap I^+$ for some $I\in\mathcal{I}$. In this case, it follows that $\ad{c}$ is in the interior of some interval $(\ad{u}^{2k-1},\ad{u}^{2k})$, so $I^-$ would consist of less than $n$ intervals, a contradiction.
\end{proof}

It will sometimes be useful to talk about projections onto partition sectors in the plane as well as in the space of external addresses.
\begin{Def}[Projections]
	Let $D\in\mathcal{D}$ be a partition sector, and let $I=I(D)\in\mathcal{I}$ be the corresponding partition sector of the space of external addresses. We define the \emph{left projection map} $\pi_I^-\colon\AD\to I^-$ via
	\[
	\pi_I^-(\ad{s}):=
	\begin{cases}
		\ad{s}~~~\text{for}~\ad{s}\in I^-,\\
		\ad{u}^{2k}~~~\text{for}~\ad{s}\in(\ad{u}^{2k},\ad{u}^{2k+1}],
	\end{cases}
	\]
	and the \emph{right projection map} $\pi_I^+\colon\AD\to I^+$ via
	\[
	\pi_I^+(\ad{s}):=
	\begin{cases}
		\ad{s}~~~\text{for}~\ad{s}\in I^+,\\
		\ad{u}^{2k+1}~~~\text{for}~\ad{s}\in[\ad{u}^{2k},\ad{u}^{2k+1}).
	\end{cases}
	\]
	We define the \emph{projection map} $\pi_D\colon\Per(\extended{f})\to\Per(\extended{f})$ (see Definition~\ref{def:ExtendedPreperiodic}) in the following way: for a point $p\in\Per(\extended{f})$, let $\ad{s}$ be the address of an (extended) filament $\dread{\ad{s}}{p}$ that lands at $p$. We set
	\[
	\pi_D(p):=
	\begin{cases}
		p~~~\text{if}~\pi_I^-(\ad{s})=\ad{s},\\
		c_k~~~\text{if}~\pi_I^-(\ad{s})=\ad{u}^{2k}.
	\end{cases}
	\]
	It is easy to see that $\pi_D$ is well-defined, i.e., independent of the choice of address $\ad{s}$ and independent of the choice of $\pi_I^-$ versus $\pi_I^+$.\label{def:Projections}
\end{Def}

\section{Simple dynamic partitions and spiders}
\label{section:Spiders}

	In order to obtain a nice combinatorial description of the landing equivalence relation, we use dynamic partitions that satisfy some additional properties. One of these properties concerns the choice of extended filaments for singular values in the Fatou set.
	
	\begin{Def}[Minimal extended filament] \label{def:minimalLeftSupporting}
		Let $q\in\CTT{f}$ be the center of a Fatou component, let $m$ be the preperiod of $p$, and let $n$ be the period of $q$. An extended filament $\dread{\ad{s}}{q}$ is called \emph{minimal} if $\extended{f}^{\circ m+n}(\dread{\ad{s}}{q})=\extended{f}^{\circ m}(\dread{\ad{s}}{q})$.
	\end{Def}
	
Strictly speaking, it is not the filament itself that is minimal, but its period. This condition implies that filament and landing points have equal periods.

	We want to choose for every $p\in S(f)\cap\mathcal{J}(f)$ a filament that lands at $p$, and for every $q\in S(f)\cap\mathcal{F}(f)$ a minimal extended filament that lands at $q$, and in such a way that the chosen (extended) filaments are pairwise disjoint. This is not always
	possible, as for example the only fixed point on the boundary of a degree~$2$ fixed Fatou component might itself be a singular value. But if this is possible, and some additional properties hold, we are able to define a dynamic partition, called \emph{simple}, that has particularly nice properties. 	After the definition, we show that every psf entire function has an iterate that admits a simple dynamic partition.

\begin{Def}[Simple dynamic partition]\label{def:dynamicalPartitions}
		Let $f$ be a psf entire function, and let $\mathcal{D}=\mathcal{D}(\{G_{\ad{s}^i}[a_i]\})$ be a dynamic partition for $f$. We call $\mathcal{D}$ a \emph{simple dynamic partition} if the following properties are satisfied.
		\begin{enumerate}
			\item All periodic post-singular points are fixed. \label{en:PeriodicMeansFixed}
\item All (extended) filaments that land at periodic post-singular points are fixed. \label{en:Minimality}

\item
Every $a\in P(f)$ has an extended filament $\dread{\ad{s}(a)}{a}$ that lands at $a$ in such a way that 
$\bigcup_{a_i\in S(f)}\bigcup_{j\geq 0}f^{\circ j}(G_{\ad{s}^i}[a_i])=\bigcupdot_{a\in P(f)}\dread{\ad{s}(a)}{a}$. 
\label{en:ForwardInvariance}
		\end{enumerate}
	\end{Def}	
In condition \eqref{en:ForwardInvariance}, the union over $a\in P(f)$ on the right hand side means that $f$ has a spider in the sense of Definition~\ref{def:InvariantSpider} below, and the left hand side means that this spider is forward invariant.

	\begin{Pro}[Existence of simple dynamic partitions]
		Let $f$ be a psf entire function. There exists an $n\geq 1$ such that $f^{\circ n}$ admits a simple dynamic partition.\label{pro:SimpleExistence}
	\end{Pro}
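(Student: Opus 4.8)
The plan is to pass to a suitably large and divisible iterate $g=f^{\circ n}$ and to build, by pull-back from the finitely many fixed post-singular points, a coherent and pairwise disjoint system of (extended) dreadlocks realizing Definitions~\ref{def:DynamicalPartitions} and~\ref{def:dynamicalPartitions}. Throughout one uses that passing to an iterate changes neither the dreadlocks (Lemma~\ref{lem:DreadlocksOfIterates}) nor the extended plane (Proposition~\ref{pro:ExtensionCovering}), and that $S(f^{\circ n})=\bigcup_{j=0}^{n-1}f^{\circ j}(S(f))\subset P(f)$, so $P(f^{\circ n})=P(f)$ is a fixed finite set and in fact $S(f^{\circ n})=P(f)$ once $n$ is large.

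The core of the argument is the choice of $n$, and it is exactly here that one exploits the iterate: for $f$ itself the disjoint system need not exist (e.g.\ a degree-two fixed Fatou component whose unique fixed boundary point happens to be a singular value), but iterating creates room. By Proposition~\ref{Pro:PsfProperties} every superattracting periodic point lies in $P(f)$, so there are only finitely many periodic Fatou components, each with post-singular center and first-return degree $\ge2$; under $f^{\circ n}$ such a component becomes fixed with first-return degree growing exponentially in $n$, hence carries exponentially many fixed internal rays, each of which lands at a repelling fixed boundary point of $\mathcal J(f)$ by the internal-ray theory of Subsection~\ref{subsection:FatouDynamics}. Since the number of internal rays of these finitely many components that land at points of the fixed finite set $P(f)$ is bounded while $S(f^{\circ n})\subset P(f)$, for $n$ large each such component $U$ admits a fixed internal ray landing at some $w_U\notin S(f^{\circ n})$. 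I would then in addition require $n$ divisible by the periods of all post-singular periodic points, of all (finitely many) dreadlocks landing at post-singular periodic points of $\mathcal J(f)$, and of the chosen rays, the points $w_U$, and the dreadlocks landing at the $w_U$ — realizing the counting step first for some $n_1$ and then replacing $n_1$ by its product with this least common multiple, which keeps everything already arranged. For $g=f^{\circ n}$ we then have: every post-singular periodic point is fixed, every dreadlock landing at such a point of $\mathcal J(g)$ is fixed, and for each fixed Fatou component with post-singular center a $g$-fixed \emph{minimal} extended dreadlock is available — the fixed internal ray from the center to $w_U$, concatenated with the ($g$-fixed) left supporting dreadlock of $U$ at $w_U$ — which also secures $L(\ad s^i)\in\C$ and the left-supporting requirement of Definition~\ref{def:DynamicalPartitions}.

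Next I would assemble the full system by pull-back. For each $g$-fixed $q\in P(f)$ fix a $g$-fixed (extended) dreadlock $\dread{\ad s(q)}{q}$: any fixed dreadlock landing at $q$ if $q\in\mathcal J(g)$ (these exist and are fixed by Theorem~\ref{thm:LandingBR} and the choice of $n$), the fixed minimal extended dreadlock of its Fatou component if $q\in\mathcal F(g)$. These finitely many fixed members can be surrounded by simply connected domains that are pairwise disjoint (they land at distinct points and tend to $\infty$ through distinct accesses; cf.\ Corollary~\ref{cor:SimplyNested} and Proposition~\ref{pro:extendedTopology}). For an arbitrary $p\in P(f)$, writing $q:=g^{\circ k}(p)$ for the first fixed point on its forward orbit, define $\dread{\ad s(p)}{p}$ as the connected preimage of $\dread{\ad s(q)}{q}$ under $\extended g^{\circ k}$ whose closure contains $p$, choosing the preimage internal ray and preimage dreadlock compatibly for centers and using that $\extended g$ maps centers to centers. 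By construction the family is forward coherent, $\extended g(\dread{\ad s(p)}{p})=\dread{\ad s(g(p))}{g(p)}$, stabilizes to the fixed members in finitely many steps, and, since disjointness of closed sets passes to preimages, its distinct members have pairwise disjoint closures.

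Finally I would verify that $\mathcal D:=\mathcal D(\{\dread{\ad s(a_i)}{a_i}\}_{a_i\in S(g)})$ is a simple dynamic partition for $g$: it is a legitimate dynamic partition by the disjointness and the left-supporting property just arranged; condition~\ref{en:PeriodicMeansFixed} of Definition~\ref{def:dynamicalPartitions} holds by the choice of $n$; condition~\ref{en:Minimality} holds because forward coherence makes $\extended g^{\circ m}(\dread{\ad s(a_i)}{a_i})$ equal to the invariant fixed member $\dread{\ad s(q)}{q}$ once $m$ exceeds the preperiod of $a_i$; and condition~\ref{en:ForwardInvariance} holds because forward coherence turns its left-hand side into $\bigcup_{a_i,\,j\ge0}\dread{\ad s(g^{\circ j}(a_i))}{g^{\circ j}(a_i)}$, and as $a_i$ ranges over $S(g)=\bigcup_{l=0}^{n-1}f^{\circ l}(S(f))$ and $j$ over $\N$ the points $g^{\circ j}(a_i)$ exhaust $P(f)$, so this is the disjoint union $\bigcupdot_{p\in P(f)}\dread{\ad s(p)}{p}$. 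The step I expect to be the main obstacle is the second paragraph: arranging, with a \emph{single} $n$, both the divisibility that turns periodic data into fixed data and the abundance of fixed boundary points needed to dodge all singular values (made possible by the exponential growth of the number of fixed internal rays against the bounded size of $S(f^{\circ n})\subset P(f)$); the remaining points — compatibility of the preimage choices, and the precise handling of extended dreadlocks, including the marginal case of a post-singular point lying in a preperiodic Fatou component whose center is in $\CTT f\setminus\C$ — are routine bookkeeping given Sections~\ref{section:TopologyOfDreadlocks} and~\ref{section:DynamicalPartitions}.
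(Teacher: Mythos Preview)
Your proposal is correct and follows essentially the same route as the paper's proof: pass to an iterate so that all periodic post-singular data (points, dreadlocks, internal rays, and the chosen boundary points $w_U$) become fixed, pick a fixed minimal left-supporting extended dreadlock at each fixed Fatou center and a fixed dreadlock at each fixed Julia post-singular point, and then pull back inductively along the preperiodic tail of $P(f)$ to obtain a coherent disjoint system; properties~(\ref{en:PeriodicMeansFixed})--(\ref{en:ForwardInvariance}) follow by construction. Your explicit counting argument (exponentially many fixed internal rays versus the fixed finite set $P(f)$) makes precise what the paper leaves as ``possibly after passing to an iterate for a second time, we are able to choose\ldots'', and your worry about a post-singular point in a preperiodic Fatou component with center at infinity is in fact vacuous: any $p\in P(f)\cap\mathcal F(f)$ maps in $\C$ to the center of the periodic component, which forces every step $f\colon f^{\circ j}(V)\to f^{\circ(j+1)}(V)$ along the way to have finite degree (the $\exp$-model has no finite preimage of the center), so $p$ is always the center of its component.
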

	
	\begin{proof}
		By passing to a suitable iterate, we can make sure that Property~(\ref{en:PeriodicMeansFixed}) is satisfied. This property remains satisfied when passing to an iterate of this iterate. In particular, after passing to an iterate for a second time, we can make sure that all filaments that land at fixed post-singular points in $\mathcal{J}(f)$ are fixed, and choose one filament for each such point. Possibly after passing to an iterate for a third time, we are able to choose for every fixed $q\in P(f)\cap\mathcal{F}(f)$ a fixed internal ray $\internal_{U(q)}[p(q)]$ such that the landing points $p(q)$ are distinct, are not contained in $P(f)$, and all filaments that land at any of the $p(q)$ are fixed. It follows that there exists for every fixed $q\in P(f)\cap\mathcal{F}(f)$ a minimal left-supporting filament $\dread{\ad{s}(q)}{q}$ and for every fixed $p\in P(f)\cap\mathcal{J}(f)$ a filament $G_{\ad{s}(p)}$ that lands at $p$ such that the chosen (extended) filaments are pairwise disjoint.
		
		Assume that for a given $i\geq 0$ we have already chosen an (extended) filament for every $a\in P(f)$ such that $f^{\circ i}(a)$ is periodic (and hence fixed). Let $b\in P(f)$ be a point that is mapped to a periodic point after $i+1$ iterations, and set $c:=f(b)$. Let $\dread{\ad{s}(c)}{c}$ be the extended filament chosen for $c$. If $b$ has local mapping degree $d\geq 1$, then there are $d$ ways to lift $\dread{\ad{s}(c)}{c}$ to an (extended) filament that lands at $p$. It does not matter which lift we choose, so let $\dread{\ad{s}(b)}{b}$ be one of those lifts. By construction, the resulting filaments $\dread{\ad{s}(b)}{b}$ are still pairwise disjoint. We continue inductively until we have chosen an (extended) filament for every post-singular point. In particular, we have chosen an (extended) filament for every singular value. Denote these filaments by $\dread{\ad{s}^i}{a_i}$. Properties (\ref{en:Minimality}) and (\ref{en:ForwardInvariance}) are satisfied by construction.
	\end{proof}

	\begin{Def}[(Invariant) spider] \label{def:InvariantSpider}
A \emph{spider} for a psf entire function $f$ with postsingular set $P(f)=\{a_i\}$ is a choice, for each $a_i$, of a (possibly extended) filament $\dread{\ad{s}^i}{a_i}$ that lands at $a_i$, and so that all these filaments are disjoint. The spider is called \emph{invariant} if $\dread{\ad{s}^{i+1}}{a_{i+1}}=f(\dread{\ad{s}^i}{a_i})$ for all $i$.
	\end{Def}

\begin{Thm}[Invariant spider for iterates]
Every postsingularly finite entire function has a spider, and it has an  iterate that has an invariant spider.
\end{Thm}

\begin{proof}
The existence of an invariant spider for an iterate is an immediate consequence of the previous result. The existence of a spider for the original function follows.
\end{proof}

\begin{remark}
Spiders for postsingularly finite entire functions are also discussed in \cite[Section~6]{Bernhard}. In particular, the existence of periodic spiders (consisting of curves not filaments) is shown up to homotopy.
\end{remark}

\begin{remark}
Not every postsingularly finite entire function has an invariant spider; in facto not even every postcritically finite polynomial has one. Simple counterexamples are provided by quadratic polynomials where the critical orbit terminates at the interior fixed point (the ``$\alpha$-fixed point''): this fixed point is the landing point of several dynamic rays of period greater than $1$ that are permuted cyclically by the dynamics. There is an iterate of the polynomial for which all these rays are fixed, and this iterate has an invariant spider consisting of a single one of these fixed rays, plus appropriate preimages.
\end{remark}

Spiders for postcritically finite polynomials have been introduced in \cite{Spiders}. Postcritically finite polynomials can be classified in at least two fundamental ways: by invariant Hubbard trees and by invariant (or possibly periodic) spiders. There is every reason to believe that a similar classification should be possible for postsingularly finite entire functions, also in terms of spiders, and in terms of Hubbard trees. The existence of Hubbard trees for psf entire functions is a more subtle issue: in general, invariant Hubbard trees do not exist, not even for postsingularly finite exponential maps \cite{PRS}. The proper analog for transcendental maps are \emph{Homotopy Hubbard trees}. Their existence has been shown in \cite{DavidThesis}, and the present paper forms the first step towards this result.

\Newpage

\section{Itineraries and boundary symbols}
\label{section:Itineraries}

The main purpose of dynamic partitions is to distinguish points combinatorially in terms of itineraries. The itinerary of a point (or,  more precisely, of an orbit) is the sequence of partition sectors the point visits under iteration. This section discusses the case when the orbit of a  point lands on the partition boundary. This boundary consists of (possibly extended) filaments and their landing points. These landing points are (pre-)periodic and map under $f$ in the first step to a (possibly extended) filament that lands at a singular value.

There are several types of points on boundary filaments: every point on a filament escapes, so it has an external address; this case will be discussed in Section~\ref{Sub:BoundariesExtAddr}. If a boundary filament is non-extended, then its landing point is a (pre-)periodic point in the Julia set, and several filaments can land at the same point. This case is treated in Section~\ref{Sub:NonExtFilament}. If a boundary filament is extended, then we have the endpoint, which is in the Fatou set, and it can also be the landing point of several filaments; see Section~\ref{Sub:ExtFilament}. Finally, an extended filament contains the landing point of the non-extended filament, which is a (pre-)periodic point in the Julia set, but it is not on any other filament; this case is discussed in Section~\ref{Sub:IntermediateFilamentPoint} (the points in the extended filament along the internal ray in the Fatou set, other than the endpoint, have the same combinatorics).

Every boundary filament is a preimage of a (possibly extended) filament that lands at a singular value. Most interesting are those preimages that land either at a critical point, or at a tract in the extended complex plane. However, in general a singular value has regular preimages in $\C$ (so that a neighborhood in $\C$ of this preimage maps univalently to a neighborhood of the singular value), and there are preimage filaments that land at such regular preimages. Among all boundary filaments, they land alone at their respective landing points, and we call these \emph{trivial boundary filaments} because they contribute to the partition boundary only in a trivial way.

Now we discuss the various kinds of points on partition filaments in order.

\subsection{Landing point of a non-extended filament }
\label{Sub:NonExtFilament}

Let $p$ be the landing point of a non-extended filament; it is necessarily in the Julia set.
 In this case, $\extended{f}(p)=a_i\in S(f)$ is a singular value, and we say that $p$ is a \emph{Julia pre-singular boundary point} (here ``pre-singular'' is understood with respect to the first iterate, not higher iterates).

\begin{figure}[ht]
	\includegraphics[width=0.6\textwidth]{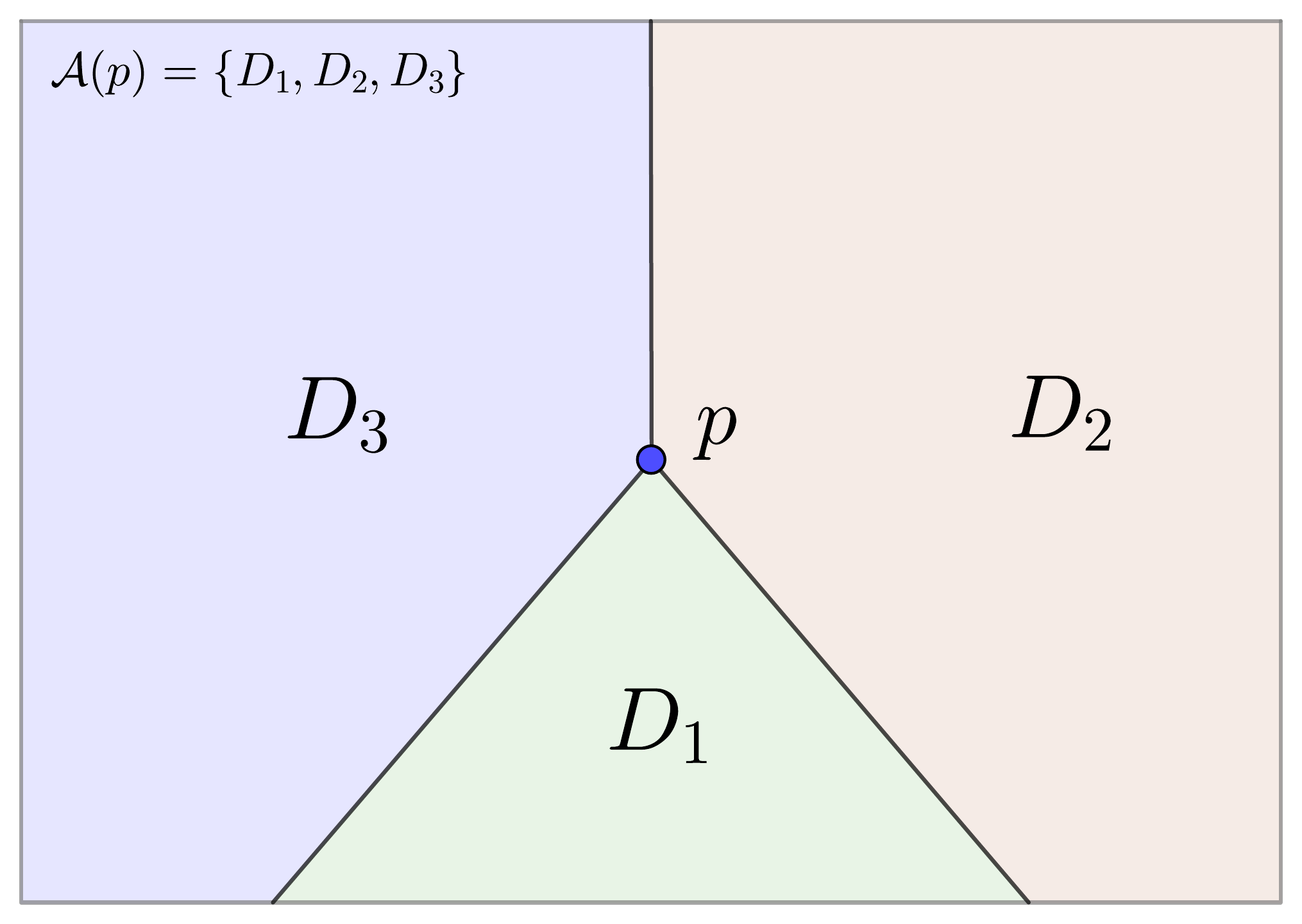}
	\caption{Sketch of a Julia pre-singular boundary point $p\in\C$, with $\mathcal A(f)$ finite}
	\label{fig:BoundarySymbols1}
\end{figure}
Let $\mathcal{A}(p)\subset\mathcal{D}$ consist of all partition sectors $D\in\mathcal{D}$ for which $p\in\partial_{\CTT{f}}D$; see Figure~\ref{fig:BoundarySymbols1}. The set $\mathcal{A}(p)$ is finite if $p\in\C$ is a critical point, and infinite if $p\in\CTT{f}\setminus\C$ is a transcendental singularity over $a_i$. The set $\mathcal A(p)$ consists of at least two elements except for trivial boundary filaments.

We introduce the boundary symbol $\star_{\mathcal{A}(p)}$\label{def:JuliaPreCritical} and call it a \emph{Julia pre-singular boundary symbol}.

\subsection{Non-escaping Julia point on an extended filament}
\label{Sub:IntermediateFilamentPoint}

Let $p$ be the landing point of a non-extended filament that is part of an extended filament on the boundary; see Figure~\ref{fig:BoundarySymbols2} for an illustration.. In this case, $w:=\extended{f}(p)\in\C$ is the landing point of the filament $G_{\ad{s}^i}$ used for the definition of the extended filament $\dread{\ad{s}^i}{a_i}$ that lands at the singular value $a_i\in S(f)\cap\mathcal{F}(f)$.

Recall that we have chosen the extended filaments so as to avoid all further singular values, so $w$ is a regular value and hence $p\in\C$. We say that $p$ is a \emph{Julia regular boundary point}.

Let $G_{\ad{t}}$ be the filament that lands at $p$ so that $f(G_{\ad{t}})=G_{\ad{s}^i}$. By Corollary~\ref{cor:FullPartition}, there are unique partition sectors $I_l, I_r\in\mathcal{I}$ such that $\ad{t}\in I_l^+\cap I_r^-$. We denote by $D_l(p), D_r(p)\in\mathcal{D}$ the corresponding partition sectors in the dynamic plane.
\begin{figure}[ht]
	\includegraphics[width=0.6\textwidth]{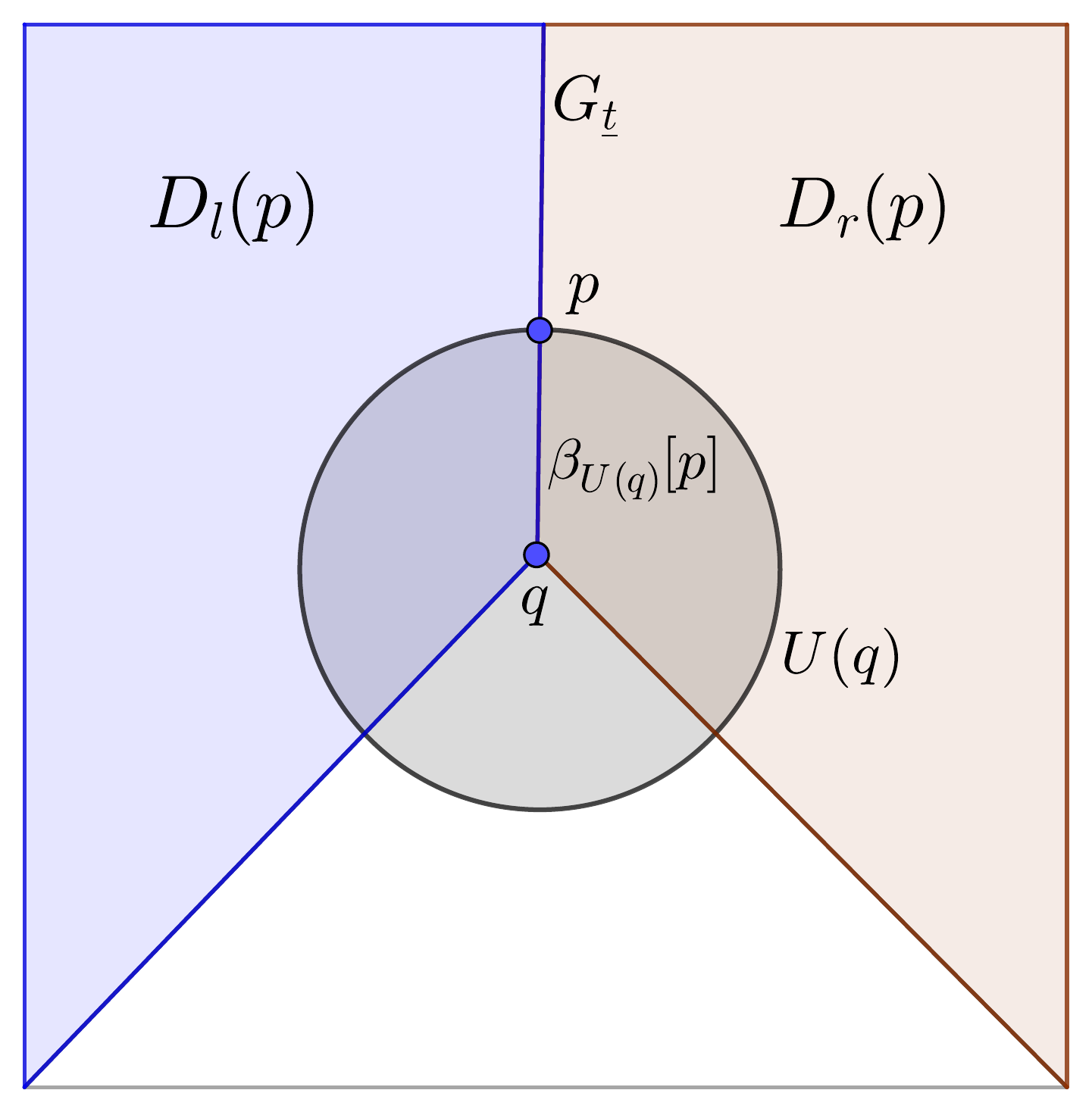}
	\caption{Sketch of a Julia regular boundary point $p$ (the shaded disk denotes the Fatou component that contains a point $q$ with $f(q)=a_i$).}\label{fig:BoundarySymbols2}
\end{figure}
We introduce the boundary symbol $\binom{D_l(z)}{D_r(z)}$ and call it a \emph{Julia regular boundary symbol}\label{def:JuliaRegular}, and we call the point $p$ a \emph{Julia regular boundary point}.

The two types of boundary symbols that we have introduced so far allow us to unambiguously define itineraries for all (pre\nobreakdash-)periodic points in the extended Julia set.

\Newpage

\begin{Def}[Itineraries of boundary points]\label{def:boundaryItineraries}
	Let $p\in\mathcal{J}(\extended{f})$ be (pre\nobreakdash-)periodic and write $p_i:=\extended{f}^{\circ i}(p)$. We define the \emph{itinerary}
	\[
	\It(p~\vert~\extended{\mathcal{D}}):=\itin{p}{\mathcal{D}}:=\itn{u}=\mathtt{u_0u_1\ldots }
	\]
	\emph{of\/ $p$ w.r.t.\ $\mathcal{D}$} as the sequence of partition sectors and boundary symbols defined via
	\[
	\mathtt{u_i}:=
	\begin{cases}
		D_i~~~\text{if}~p_i\in\extended{D_i},\\
		\star_{\mathcal{A}(p_i)}~~~\text{if}~p_i\in\partial\extended{\mathcal{D}}~\text{and}~p_i~\text{is Julia pre-singular},\\
		\binom{D_l(p_i)}{D_r(p_i)}~~~\text{if}~p_i\in\partial\extended{\mathcal{D}}~\text{and}~p_i~\text{is Julia regular}.
	\end{cases}
	\]
\end{Def}

\begin{remarknumber}
The dynamic partitions we are most interested in have the property that for all boundary points $p$ in the Julia set the itinerary $\itin{p}{\mathcal{D}}$ contains boundary symbols of only one of the two kinds. In order for this to be the case, we choose the extended filament that leads to a singular value in the Fatou set in such a way that it runs through a  (pre-)periodic point $p$ in the Julia set for which the forward orbit is disjoint from all postsingular points. This is easily possible since there are only finitely many postsingular points to avoid. This condition is not encoded in Definition~\ref{def:DynamicalPartitions} of dynamic partitions because the latter definition uses only function theoretic properties of the map $f$, not the dynamics. However, it will be satisfied in \emph{simple dynamic partitions} that we introduce in Definition~\ref{def:dynamicalPartitions}.
\end{remarknumber}

In a way, points on the partition boundary realize several (pre\nobreakdash-)periodic itineraries simultaneously. For example, if $a_i\in S(f)\cap\mathcal{F}(f)$ is a {superattracting} fixed point and $p=L(\ad{s}^i)$, where $\ad{s}^i$ is the address from Definition~\ref{def:DynamicalPartitions}, then there are sectors $D_r:=D_r(p)\in\mathcal{D}$ to the right of $\dread{\ad{s}^i}{a_i}$ and $D_l:=D_l(p)$ to the left of $\dread{\ad{s}^i}{a_i}$ (when standing at $a_i$ looking in the direction of the internal ray part of $\dread{\ad{s}^i}{a_i}$) as described above (see also Figure~\ref{fig:BoundarySymbols2}). It turns out that, in this case, there is no periodic point of itinerary $D_rD_rD_r\ldots$ and, likewise, no periodic point of itinerary $D_lD_lD_l\ldots$ (this is easy to prove using a standard hyperbolic contraction argument for the backwards iteration, see Proposition~\ref{pro:UniqueItineraries}). So one can say that the there is a point $p$ on the boundary of these sectors that realizes both of these itineraries at the same time. 

In the following, we define adjacency relations to describe which (pre\nobreakdash-)periodic sequences of partition sectors are realized by boundary points.

\begin{Def}[Adjacent itineraries]\label{def:adjacentItineraries}
	Let $\itn{u}=(D_i)_{i=0}^\infty$ be a sequence of partition sectors $D_i\in\mathcal{D}$. Let $p\in\mathcal{J}(\extended{f})$ be (pre\nobreakdash-)periodic. We call $\itin{p}{\mathcal{D}}$ \emph{adjacent to} $\itn{u}$ if one of the following is true:
	\begin{enumerate}[label=(\arabic*)]
		\item We have $\itin{p}{\mathcal{D}}=\itn{u}$. Then $\itin{p}{\mathcal{D}}$ is free of boundary symbols.
		\item The itinerary $\itin{p}{\mathcal{D}}=\itn{t}=\mathtt{t_1t_2\ldots }$ contains Julia pre-singular boundary symbols. For all boundary symbols $\mathtt{t}_i=\star_{\mathcal{A}_i}$, we have $D_i\in\mathcal{A}_i$. Otherwise, we have $\mathtt{t_i}=D_i$.\label{en:JuliaPreCritical}
		\item The itinerary $\itin{p}{\mathcal{D}}=\itn{t}=\mathtt{t_1t_2\ldots }$ contains Julia regular boundary symbols. For all boundary symbols $\mathtt{t}_i=\binom{D_l^i}{D_r^i}$, we have $D_l^i=D_i$. Otherwise, we have $\mathtt{t_i}=D_i$.\label{en:left}
		\item The itinerary $\itin{p}{\mathcal{D}}=\itn{t}=\mathtt{t_1t_2\ldots }$ contains Julia regular boundary \mbox{symbols}. For all boundary symbols $\mathtt{t}_i=\binom{D_l^i}{D_r^i}$, we have $D_r^i=D_i$. Otherwise, we have $\mathtt{t_i}=D_i$.\label{en:right}
	\end{enumerate}
\end{Def}

Every (pre\nobreakdash-)periodic point in the Julia set has an itinerary that is adjacent to an itinerary without boundary symbols.

\subsection{Landing point of extended filament}
\label{Sub:ExtFilament}

Let us now turn attention to the case of (pre\nobreakdash-)periodic points in the Fatou set and explain how they fit into the context of dynamic partitions. 

We may still define itineraries of (pre\nobreakdash-)periodic Fatou points as sequences that are, by definition, neither equal nor adjacent to any of the itineraries realized by points in the Julia set. To do this, we introduce another kind of boundary symbol. For a (pre\nobreakdash-)periodic point $p\in\mathcal{F}(\extended{f})$ that lies on the partition boundary, we denote by $\mathcal{A}(p)\subset\mathcal{D}$ --- just as in the Julia pre-crictial case --- the set of partition sectors $D\in\mathcal D$  for which $p\in\partial_{\CTT{f}}D$. We introduce the boundary symbol $\diamond_{\mathcal{A}(p)}$ and call it a \emph{Fatou boundary symbol}\label{def:ThirdKind}. It is analogous to $\star_{\mathcal{A}(p_i)}$ but for points in the Fatou set; see also Figure~\ref{fig:BoundarySymbols2}.

\begin{Def}[Itineraries of points in the Fatou set]
	Let $p\in\mathcal{F}(\extended{f})$ be (pre\nobreakdash-)periodic and write $p_i=\extended{f}^{\circ i}(p)$. We define the \emph{itinerary}
	\[
	\itin{p}{\extended{\mathcal{D}}}:=\itin{p}{\mathcal{D}}:=\itn{u}=\mathtt{u_0u_1}\ldots
	\]
	\emph{of $p$ w.r.t.\ $\mathcal{D}$} to be the sequence of partition sectors and Fatou boundary symbols defined via
	\[
	\mathtt{u_i}:=
	\begin{cases}
		D_i~~~\text{if}~p_i\in\extended{D_i},\\
		\diamond_{\mathcal{A}(p_i)}~~~\text{if}~p_i\in\partial_{\CTT{f}}\mathcal{D}.
	\end{cases}
	\]
\end{Def}

Extended boundary filaments also contain the internal ray in the Fatou set that connect the two (pre-)periodic points $q$ and $p$ in the Fatou set resp.\ the Julia set. These points all converge to a periodic orbit in the Fatou set, and they are on the boundary of the same partition sectors as the point $p$ in the Julia set, as described in Section~\ref{Sub:IntermediateFilamentPoint}.

\subsection{Boundaries and external addresses}
\label{Sub:BoundariesExtAddr}

In this subsection, we define itineraries and the adjacency relation on the level of external addresses. They also serve to provide symbolic dynamics to the escaping points on boundary filaments.

\begin{Def}[Combinatorial itineraries]
	Let $\mathcal{I}$ be a dynamic partition of $\AD$. For every external address $\ad{t}\in\partial\mathcal{I}$ there exist unique partition sectors $I_r(\ad{t}),I_l(\ad{t})\in\mathcal{I}$ such that $\ad{t}\in I_r(\ad{t})^-\cap I_l(\ad{t})^+$ by Corollary~\ref{cor:FullPartition}.
	
	We define the \emph{itinerary $\itinc{\ad{t}}=\itn{u}=\mathtt{u_0u_1\ldots }$ of $\ad{t}$ w.r.t.\ $\mathcal{I}$} as the sequence defined via 
	\[
	\mathtt{u_i}=
	\begin{cases}
		I~~~\text{if}~\sigma^{\circ i}(\ad{t})\in I,\\
		\binom{I_l(\ad{t})}{I_r(\ad{t})}~~~\text{if}~\sigma^{\circ i}(\ad{t})\in\partial\mathcal{I}.
	\end{cases}
	\]
\end{Def}

Adding consistently either the left- or the right-sided boundary addresses to the partition sectors, we obtain two full partitions of the space of external addresses, i.e., we have $\AD=\bigcupdot_{I\in\mathcal{I}}I^-$ and $\AD=\bigcupdot_{I\in\mathcal{I}}I^+$. Therefore, it makes sense to distinguish between left-sided and right-sided itineraries that contain boundary symbols.

\begin{Def}[Left- and right-sided itineraries]\label{def:LeftSidedItineraries}
	Let $\ad{s}$ be an external address and let $\mathcal{I}$ be a dynamic partition. For every $j\geq 0$, there exists a unique partition sector $I_j^l$ such that $\sigma^{\circ j}(\ad{s})\in(I_j^l)^-$. The sequence
	\[
	\itinl{\ad{s}}{\mathcal{I}}:=(I_j^l)_{j=0}^{\infty}=I_0^lI_1^l\ldots
	\]
	is called the \emph{left-sided itinerary of $\ad{s}$ w.r.t.\ $\mathcal{I}$}. In the same manner, we define the \emph{right-sided itinerary of $\ad{s}$ w.r.t.\ $\mathcal{I}$} as the sequence
	\[
	\itinr{\ad{s}}{\mathcal{I}}:=(I_j^r)_{j=0}^{\infty}=I_0^rI_1^r\ldots 
	\]
	where we have $\sigma^{\circ j}(\ad{s})\in (I_j^r)^+$.
	We call the itinerary $\itinc{\ad{s}}$ \emph{adjacent} to $\itn{u}\in\mathcal{I}^\N$ if $\itinl{\ad{s}}{\mathcal{I}}=\itn{u}$ or $\itinr{\ad{s}}{\mathcal{I}}=\itn{u}$.
\end{Def}

Let us describe the relationship between the adjacency relations on the space of external addresses and on the plane (at least in one direction).

\begin{Lem}[Adjacent itineraries]
	Let $\ad{s}\in\AD$ be (pre\nobreakdash-)periodic and let $(I_j)_{j=0}^\infty$ be a sequence of partition sectors. If $\itinc{\ad{s}}$ is adjacent to $(I_j)_{j=0}^\infty$, then $\itin{L(\ad{s})}{\mathcal{D}}$ is adjacent to $(D(I_j))_{j=0}^\infty$.\label{lem:AdjacentItineraries}
\end{Lem}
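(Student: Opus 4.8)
The plan is to trace, entry by entry, what it means for $\itinc{\ad s}$ to be adjacent to $(I_j)_{j=0}^\infty$ and deduce the corresponding statement for $\itin{L(\ad s)}{\mathcal D}$, using the sector correspondence of Proposition~\ref{pro:partition_correspondence}, the boundary correspondence of Corollary~\ref{cor:BoundaryAddresses}, and Lemma~\ref{lem:DifferentSectors} to pass between the combinatorial side and the dynamical side. First I would recall that adjacency of $\itinc{\ad s}$ to $(I_j)$ means (Definition~\ref{def:LeftSidedItineraries}) that either the left-sided itinerary $\itinl{\ad s}{\mathcal I}$ or the right-sided itinerary $\itinr{\ad s}{\mathcal I}$ equals $(I_j)$; by symmetry it suffices to treat the left-sided case, so assume $\sigma^{\circ j}(\ad s)\in(I_j)^-$ for all $j\geq 0$. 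Write $p:=L(\ad s)$ and $p_j:=\extended f^{\circ j}(p)$; since landing commutes with the dynamics, $p_j=L(\sigma^{\circ j}(\ad s))$, so the task reduces to analyzing, for each fixed $j$, the single address $\ad t:=\sigma^{\circ j}(\ad s)$ with $\ad t\in (I_j)^-$ and the point $L(\ad t)$, and showing that the $j$-th entry of $\itin{L(\ad s)}{\mathcal D}$ is ``compatible'' with $D(I_j)$ in the sense of Definition~\ref{def:adjacentItineraries}.

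The core of the argument is the per-coordinate case distinction. If $\ad t\notin\partial\mathcal I$, then $\ad t$ lies in the interior of exactly one sector $I_j$ and the dreadlock $G_{\ad t}$ lies in the single partition sector $D(I_j)$ by Proposition~\ref{pro:partition_correspondence}; since $L(\ad t)\in\cl_{\CTT f}(D(I_j))$ by the last sentence of Corollary~\ref{cor:BoundaryAddresses}, and $L(\ad t)$ is not on $\partial\extended{\mathcal D}$ unless $G_{\ad t}$ is a preimage of one of the chosen dreadlocks (a case I would handle separately by noting $\ad t\in\partial\mathcal I$ then), the $j$-th entry of $\itin{p}{\mathcal D}$ is exactly $D(I_j)=D_j$, matching the ``otherwise'' clause in all the cases of Definition~\ref{def:adjacentItineraries}. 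If instead $\ad t\in\partial\mathcal I$, then $\ad t\in I_r(\ad t)^-\cap I_l(\ad t)^+$ with $I_r(\ad t)\neq I_l(\ad t)$, and the left-sided convention forces $I_j=I_r(\ad t)$; correspondingly $L(\ad t)$ is a boundary point of $\extended{\mathcal D}$, and I must check that $D(I_j)=D(I_r(\ad t))$ appears in the appropriate slot of the boundary symbol occurring at position $j$ in $\itin{p}{\mathcal D}$. Here one uses that $\sigma$ on $I^-$ is an order-preserving bijection (Lemma~\ref{lem:LeftRight}) together with the fact that $f$ restricted to $D(I)$ is an orientation-preserving homeomorphism onto $B[\mathcal D]$ (Lemma~\ref{lem:BasicProperties}): the right-hand sector of $\ad t$ in the address partition corresponds, under $D(\cdot)$, to the right-hand sector of $L(\ad t)$ in the plane partition — more precisely, if the boundary point is Julia pre-critical we need $D(I_r(\ad t))\in\mathcal A(L(\ad t))$, which follows from Corollary~\ref{cor:BoundaryAddresses} since $G_{\ad t}\cap\overline{D(I_r(\ad t))}\neq\emptyset$ gives $L(\ad t)\in\cl_{\CTT f}(D(I_r(\ad t)))$, hence $L(\ad t)\in\partial_{\CTT f}D(I_r(\ad t))$; and if it is Julia regular we need $D_r(L(\ad t))=D(I_r(\ad t))$, which is precisely the definition of $D_r$ in the paragraph preceding Definition~\ref{def:JuliaRegular} transported through the correspondence $I\leftrightarrow D(I)$. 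Assembling these coordinate-wise facts, every boundary symbol $\mathtt t_j$ of $\itin{p}{\mathcal D}$ satisfies the requirement that its ``right component'' (respectively, that it contains $D_j$ in its set, in the pre-critical case) equals $D_j$, while every non-boundary entry equals $D_j$ outright; this is exactly clause~\ref{en:right} (resp.~clause~\ref{en:JuliaPreCritical}) of Definition~\ref{def:adjacentItineraries}, so $\itin{p}{\mathcal D}$ is adjacent to $(D_j)=(D(I_j))$. The right-sided case is identical with ``right'' replaced by ``left'' throughout, landing in clause~\ref{en:left}.

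The main obstacle I anticipate is the bookkeeping of orientations: one must make sure that the left/right labelling of sectors at a boundary address in $\AD$ is carried to the correct left/right labelling of sectors at the landing point in $\CTT f$ under the correspondence $D(\cdot)$, and that this is consistent with the convention (fixed in Definition~\ref{def:DynamicalPartitions}) of using left supporting dreadlocks at Fatou singular values. This is where Lemma~\ref{lem:BasicProperties} (that $f|_{D(I)}$ is orientation-preserving) and the order-preservation in Proposition~\ref{pro:SectorTopology} and Lemma~\ref{lem:LeftRight} do the real work; once one is careful that ``adjacent from the left'' on addresses maps to ``$D_l$-slot'' on the plane and ``from the right'' to ``$D_r$-slot'', the rest is a routine unwinding of definitions. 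A secondary subtlety is the degenerate coincidence that can occur for a \emph{regular} preimage $c$ of a singular value, where $\Crit c$ is a singleton and the left and right address-sectors of the corresponding boundary address agree; in that situation $\ad t$ is not genuinely on $\partial\mathcal I$ in the separating sense, $L(\ad t)$ is an interior point of a single plane sector, and the coordinate simply reduces to the non-boundary case — so it should be pointed out but causes no real difficulty.
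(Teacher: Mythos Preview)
Your overall strategy matches the paper's: argue entry by entry, use Proposition~\ref{pro:partition_correspondence} when the orbit point sits inside a sector, and invoke Corollary~\ref{cor:BoundaryAddresses} together with the left/right-sector definitions for boundary entries; the paper likewise treats only the left-sided case and mirrors.

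There is, however, a genuine gap in your case split. You organise the argument around whether $\ad t=\sigma^{\circ j}(\ad s)$ lies in $\partial\mathcal I$, and assert that if $\ad t\notin\partial\mathcal I$ then $L(\ad t)\notin\partial\extended{\mathcal D}$, the latter happening only when ``$G_{\ad t}$ is a preimage of one of the chosen dreadlocks''. This implication is false. Whether the $j$-th entry of $\itin{p}{\mathcal D}$ is a boundary symbol depends only on the \emph{point} $p_j$, not on which dreadlock landing at $p_j$ you are tracking. If, say, $a_i\in S(f)\cap\mathcal J(f)$ is the landing point of several periodic dreadlocks and $\sigma(\ad t)$ is one of the \emph{other} addresses landing there (so $\sigma(\ad t)\neq\ad s^i$ but $L(\sigma(\ad t))=a_i$), then $p_j=L(\ad t)$ is a Julia pre-critical boundary point even though $\ad t\notin\partial\mathcal I$. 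Your ``secondary subtlety'' paragraph repeats the error: a regular preimage $c$ of a Julia singular value \emph{does} lie on $\partial\extended{\mathcal D}$ (with $\mathcal A(c)$ a singleton), so the itinerary entry there is $\star_{\{D\}}$, not the plain sector $D$.

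The fix is exactly what the paper does: split on whether $p_j\in\partial\extended{\mathcal D}$ rather than on whether $\ad t\in\partial\mathcal I$. In the Julia pre-critical case, $\ad t\in(I_j)^-\subset\overline{I_j}$ together with Corollary~\ref{cor:BoundaryAddresses} gives $p_j\in\cl_{\CTT f}(D(I_j))$ and hence $D(I_j)\in\mathcal A(p_j)$, regardless of whether $\ad t$ itself is a boundary address. In the Julia regular case one uses, as you anticipated, the left-supporting convention of Definition~\ref{def:DynamicalPartitions}: it forces \emph{every} address landing at $p_j$ --- not just the boundary address $\ad t'$ --- to lie in the single set $(I_r)^-$ with $D(I_r)=D_r(p_j)$, whence $I_j=I_r$ and $D(I_j)=D_r(p_j)$, which is clause~\ref{en:right}. (Your identification of clause~\ref{en:right} here is correct; the paper's citation of clause~\ref{en:left} at this point appears to be a slip.)
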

\begin{proof}
	Let $p:=L(\ad{s})$ be the landing point of $G_{\ad{s}}$, and set $p_j := \extended{f}^{\circ j}(p)$ for $j\geq 0$. If the forward orbit of $p$ does not intersect the partition boundary, the statement of the lemma follows from Proposition~\ref{pro:partition_correspondence}. Otherwise, we distinguish whether the forward orbit of $p$ contains Julia pre-singular or Julia regular boundary points. In both cases, we only prove the lemma for left-sided itineraries, i.e., we assume that $\itinl{\ad{s}}{\mathcal{I}}=(I_j)_{j=0}^\infty$. For right-sided itineraries, the proof works in complete analogy.
	
	Assume that $p$ contains Julia pre-singular points on its forward orbit, and choose $k\geq 0$ such that $p_k\in\partial\extended{\mathcal{D}}$. By Corollary~\ref{cor:BoundaryAddresses}, we have $p_k\in\cl_{\CTT{f}}(D(I_k))$. Therefore, $\itin{p}{\mathcal{D}}$ is adjacent to $(D(I_j))_{j=0}^\infty$ by Property~\ref{en:JuliaPreCritical} of Definition~\ref{def:adjacentItineraries}.
	
	Next, assume that $p$ contains Julia regular boundary points on its forward orbit, and choose $k\geq 0$ such that $p_k\in\partial\extended{\mathcal{D}}$. It follows from the definition of left and right sectors that $D(I_k)=D_r(p_k)$. Therefore, $\itin{p}{\mathcal{D}}$ is adjacent to $(D(I_j))_{j=0}^\infty$ by Property~\ref{en:left} of Definition~\ref{def:adjacentItineraries}.
\end{proof}
	
\Newpage

\section{The landing equivalence relation}\label{section:LandingEquivalence}

Our first important result on simple dynamic partitions is that for every\linebreak (pre\nobreakdash-)periodic sequence $(D_i)_{i=0}^{\infty}$ of partition sectors there is at most one\linebreak (pre\nobreakdash-)periodic point $p\in\mathcal{J}(\extended{f})$ for which $\itin{p}{\mathcal{D}}$ is adjacent to $(D_i)_{i=0}^{\infty}$. The following two lemmas are needed for the proof.
	
	\begin{Lem}[Fixed boundary points]
		Let $\mathcal{D}$ be a simple dynamic partition, and let $p\in\partial\mathcal{D}\cap\mathcal{J}(f)$ be periodic and hence fixed. Let $\itn{u}\in\mathcal{D}^{\N}$ be adjacent to $\itin{p}{\mathcal{D}}$. Then $\itn{u}=DDD\ldots $ for some $D\in\mathcal{D}$.
		
		Let $g:=f^{-1}\colon\C\setminus\bigcup_i\overline{\dread{\ad{s}^i}{a_i}}\to D$ be the inverse branch mapping the complement of the (extended) filaments used for the definition of $\mathcal{D}$ onto $D$. Then, for every neighborhood $U$ of $p$, there exists a point $b_0\in U\setminus\bigcup_i\overline{\dread{\ad{s}^i}{a_i}}$ such that the sequence $(b_i)_{i=0}^\infty$ defined via $b_i:=g^{\circ i}(b_0)$ is well-defined and satisfies $b_i\to p$.\label{lem:fixedPoints}
	\end{Lem}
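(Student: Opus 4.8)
The plan is to leverage two facts: $p$ is a repelling fixed point, and near $p$ the boundary $\partial\mathcal D$ is just the germ of a single \emph{fixed} (extended) dreadlock. The local inverse branch of $f$ at $p$ then both realizes the map $g$ near $p$ and contracts towards $p$, which yields both assertions at once.

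\emph{Setup.} Since $p\in\mathcal J(f)$ is periodic it is a repelling fixed point (Proposition~\ref{Pro:PsfProperties}); in particular $p$ is not a critical point of $\extended f$, so $\extended f$ is a local homeomorphism at $p$ and Koenigs linearization provides a neighbourhood $N$ of $p$ and a holomorphic inverse branch $h\colon\extended f(N)\to N$ of $f$ with $h(p)=p$, $\overline N\subset\extended f(N)$, $\overline{h(N)}\subset N$ and $h^{\circ i}\to p$ uniformly on $N$. Because $p\in\partial\mathcal D$ and $\extended f(p)=p$, the point $p$ lies on exactly one of the sets $\overline{\dread{\ad s^j}{a_j}}$, by the disjointness required in Definition~\ref{def:DynamicalPartitions}; write $\Gamma:=\overline{\dread{\ad s^i}{a_i}}$ for this one, so $p\in\Gamma$. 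The construction of a simple dynamic partition (Proposition~\ref{pro:SimpleExistence}) guarantees that $\Gamma$ is fixed: $\ad s^i$ is a fixed external address, and if $a_i\in\mathcal F(f)$ the internal ray part of $\dread{\ad s^i}{a_i}$ is a fixed internal ray, so $\extended f(\Gamma)=\Gamma$. After shrinking $N$ so that it meets no $\overline{\dread{\ad s^j}{a_j}}$ with $j\neq i$, local injectivity of $\extended f$ at $p$ together with $\extended f(\Gamma)=\Gamma$ gives $\partial\mathcal D\cap N=\extended f^{-1}(\Gamma)\cap N=\Gamma\cap N$ and $h(\Gamma\cap N)\subset\Gamma$; hence $h$ maps $N\setminus\Gamma$ into itself and permutes its connected components.

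\emph{First statement.} Since $p$ is fixed, every coordinate of $\itin{p}{\mathcal D}$ carries the same boundary symbol. If $p$ is Julia regular, that symbol is $\binom{D_l(p)}{D_r(p)}$, and the only applicable cases of the adjacency relation are parts~\ref{en:left} and~\ref{en:right} of Definition~\ref{def:adjacentItineraries}, so any $\itn u$ adjacent to $\itin{p}{\mathcal D}$ is $D_l(p)D_l(p)\ldots$ or $D_r(p)D_r(p)\ldots$. If $p$ is Julia pre-critical then $a_i=\extended f(p)=p$; since $\extended f$ is injective near $p$ and $\ad s^i$ is fixed, the set $\Crit{a_i}$ is the singleton $\{\ad s^i\}$, so by Proposition~\ref{pro:dreadlockTopology} (the single continuum $\Gamma$ does not separate the plane) and Lemma~\ref{lem:DTopology} (the dreadlocks attached to the other preimages of singular values stay away from $p$) there is exactly one partition sector with $p$ on its boundary; thus $\mathcal A(p)=\{D\}$ and again $\itn u=DDD\ldots$. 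In every case $\itn u=DDD\ldots$ for the claimed sector $D$.

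\emph{Second statement.} Let $D$ be that sector and $g=(\restr{f}{D})^{-1}\colon B\to D$ the branch of Lemma~\ref{lem:BasicProperties}, where $B=\C\setminus\bigcup_i\overline{\dread{\ad s^i}{a_i}}$. The set $D\cap N$ is a union of connected components of $N\setminus\Gamma$, and since $p\in\partial D$ some component $C\subset D$ of $N\setminus\Gamma$ has $p$ in its closure. As $h$ is orientation preserving and permutes the (one, respectively two) components of $N\setminus\Gamma$ whose closure contains $p$, it fixes each of them, so $h(C)\subset C$. Moreover $C\subset B$ (it avoids $\Gamma$, and $N$ avoids the remaining chosen dreadlocks), and for $z\in C$ the point $h(z)\in C\subset D$ is the unique $f$-preimage of $z$ in $D$, so $g(z)=h(z)$. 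Hence, given any neighbourhood $U$ of $p$, pick $b_0\in C\cap U$ (nonempty, since $p\in\overline C$); then $b_i:=g^{\circ i}(b_0)=h^{\circ i}(b_0)$ is well defined, never leaves $C\subset B$, and converges to $p$. The one genuinely delicate point is the \emph{Setup} step — verifying that near the repelling fixed point $p$ the partition boundary coincides with the germ of the single fixed (extended) dreadlock $\Gamma$ and that this germ, hence the local sector piece $C\subset D$, is $h$-invariant; once this local picture is in place, both statements are a routine Koenigs contraction argument.
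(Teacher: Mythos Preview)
Your argument follows the same two–case strategy as the paper (Julia pre–critical vs.\ Julia regular) and uses the same engine: near the repelling fixed point $p$ the global branch $g=f^{-1}\colon B\to D$ coincides with the Koenigs contraction $h$, so iterates converge to~$p$. Two points deserve comment. First, you justify $\extended f(\Gamma)=\Gamma$ by appealing to the construction in Proposition~\ref{pro:SimpleExistence}; the lemma, however, is stated for an \emph{arbitrary} simple dynamic partition, so you should instead derive fixedness from Definition~\ref{def:dynamicalPartitions}: Property~(\ref{en:ForwardInvariance}) forces $a_i$ to be fixed once $p$ is, and then Properties~(\ref{en:PeriodicMeansFixed}) and~(\ref{en:Minimality}) give that $\ad s^i$ (and, in the Fatou case, the internal ray) is fixed. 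Second, in the Julia regular case you assert that $N\setminus\Gamma$ has exactly two components touching $p$ and that the orientation–preserving map $h$ fixes each; for a genuinely wild dreadlock this local picture is not obvious. The paper sidesteps this by using the \emph{arc} structure of the internal ray: one chooses $b_0\in D\cap U$ joined to a point $w\in\internal_{U(a_i)}[p]$ by an arc in $D\cap U$, and then tracks sides by lifting this arc (the internal ray is honestly an arc, so orientation is unambiguous). Your abstract component argument is morally the same but would need more care to make rigorous; the paper's concrete arc–lifting is the cleaner route here.
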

	\begin{proof}
		We distinguish two cases. First, assume that $p=a_i\in S(f)$. Let $U_i\supset\dread{\ad{s}^i}{a_i}$ be simply connected such that $U_i\cap\dread{\ad{s}^j}{a_j}=\emptyset$ for $j\neq i$. Let $V_i$ be the connected component of $f^{-1}(U_i)$ containing $p$. Then $\partial\mathcal{D}\cap V_i=\dread{\ad{s}^i}{a_i}$, and the complement $V:=V_i\setminus\dread{\ad{s}^i}{a_i}$ is simply connected by Theorem~\ref{thm:PlaneSeparation}. It follows that $V\subset D$ for some partition sector $D\in\mathcal{D}$. This implies that $D$ is the only partition sector for which $p\in\partial_{\CTT{f}} D$. By Definition~\ref{def:adjacentItineraries}, the only itinerary adjacent to $\itin{p}{\mathcal{D}}$ is $\itn{u}=DDD\ldots $.
		
		Let $U\subset V_i$ be a linearizing neighborhood of $p$, and let $b_0\in U\cap V$. Let $f_D^{-1}\colon\C\setminus\bigcup_i\overline{\dread{\ad{s}^i}{a_i}}\to D$ be the unique inverse branch of $f$ with the prescribed domain and co-domain. Inductively, we define $b_i:=f_D^{-1}(b_{i-1})$. By the preceding paragraph, we have $b_i\in V\subset D$ for all $i\geq 0$. As $U$ is a linearizing neighborhood of $p$, it follows that $\lim_{i\to\infty} b_i=p$.
		
		The second case is that $p\notin S(f)$, so $p\in\dread{\ad{s}^i}{a_i}$ for some fixed $a_i\in S(f)\cap\mathcal{F}(f)$. It follows from Definition~\ref{def:adjacentItineraries} that either $\itn{u}=D_l(p)D_l(p)\ldots $ or $\itn{u}=D_r(p)D_r(p)\ldots $. Assume w.l.o.g.\ that $\itn{u}=D_l(p)D_l(p)\ldots $; the second case works analogously. Let $U$ be a linearizing neighborhood for $p$. There exists a point $b_0\in U\cap D_l(p)$ that can be connected to a point $w\in\internal_{U(a_i)}[p]$ via an arc $\gamma\colon[0,1]\to\C$ satisfying $\gamma([0,1))\subset D_l(p)\cap U$. Inductively, we define $b_i:=f_{D_l(p)}^{-1}(b_{i-1})$. Let $w'$ be the unique preimage of $w$ on $\internal_{U(a_i)}[p]$. The unique lift of $\gamma$ starting at $w'$ ends at $b_1$, as conformal maps are orientation-preserving. It follows inductively that $b_i\in U$ for all $i\geq 0$ and therefore $b_i\to p$.
	\end{proof}
	
	\begin{Lem}[Preimage itineraries]
		Let $\itn{u}=(D_i)_{i=0}^\infty$ be a sequence of itinerary domains, and let $p\in\mathcal{J}(\extended{f})$ be a (pre\nobreakdash-)periodic point whose itinerary $\itin{p}{\mathcal{D}}$ is adjacent to $\itn{u}$. Let $D\in\mathcal{D}$ be a partition sector. Then there exists one and only one $q\in\extended{f}^{-1}(p)$ such that $\itin{q}{\mathcal{D}}$ is adjacent to $D\itn{u}$.\label{lem:PreimageItineraries}
	\end{Lem}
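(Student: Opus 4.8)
The plan is to pull $p$ back through the homeomorphisms $\extended{f}\colon\extended{D}\to\extended{B}$ of Lemma~\ref{lem:BasicProperties}, using Lemma~\ref{lem:DTopology} and the full partitions $I^{\pm}$ from Lemma~\ref{lem:LeftRight} to keep track of what happens over $\partial\extended{\mathcal{D}}$, and to reduce the whole statement to a single observation about the zeroth symbol of an itinerary.

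First I would record the basic mechanism. For any $q\in\extended{f}^{-1}(p)$ one has $\itin{q}{\mathcal{D}}=\mathtt{v}_0\cdot\itin{p}{\mathcal{D}}$, where $\mathtt{v}_0$ is the zeroth symbol of $\itin{q}{\mathcal{D}}$: the partition sector containing $q$ if $q\notin\partial\extended{\mathcal{D}}$, and otherwise the appropriate Julia boundary symbol, since $q\in\mathcal{J}(\extended{f})$ (as $p$ is and $\mathcal{J}(\extended{f})$ is backward invariant), so no Fatou boundary symbol occurs. Indeed, the $i$-th symbol of $\itin{q}{\mathcal{D}}$ is determined by $\extended{f}^{\circ i}(q)$, and for $i\ge 1$ this equals $\extended{f}^{\circ(i-1)}(p)$. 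Since adjacency (Definition~\ref{def:adjacentItineraries}) is a position-wise condition, $\itin{q}{\mathcal{D}}$ is adjacent to $D\itn{u}$ if and only if $\mathtt{v}_0$ is \emph{$D$-compatible}: either $\mathtt{v}_0=D$; or $\mathtt{v}_0=\star_{\mathcal{A}}$ with $D\in\mathcal{A}$; or $\mathtt{v}_0$ is a Julia regular symbol whose left entry is $D$, provided $\itin{p}{\mathcal{D}}$ is adjacent to $\itn{u}$ via the left rule~\ref{en:left}; or $\mathtt{v}_0$ is a Julia regular symbol whose right entry is $D$, provided it is adjacent via the right rule~\ref{en:right}. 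Thus the task becomes: show that exactly one preimage of $p$ carries a $D$-compatible zeroth symbol.

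The next step is a case split on the position of $p$. If $p$ is not the landing point of any of the chosen (extended) dreadlocks $\dread{\ad{s}^i}{a_i}$, then --- since the only (pre\nobreakdash-)periodic points of $\mathcal{J}(\extended{f})$ lying on one of the sets $\cl_{\CTT{f}}(\dread{\ad{s}^i}{a_i})$ are the landing points $L(\ad{s}^i)$ --- we have $p\in\extended{B}$, hence $\extended{f}^{-1}(p)\subset\extended{f}^{-1}(\extended{B})=\bigcupdot_{\extended{D}\in\extended{\mathcal{D}}}\extended{D}$; every preimage then lies in an open sector, $D$-compatibility means simply ``equal to $D$'', and the unique point of $\extended{D}\cap\extended{f}^{-1}(p)$ supplied by the homeomorphism $\extended{f}\colon\extended{D}\to\extended{B}$ is the desired $q$, with uniqueness immediate from the ``otherwise $\mathtt{t_i}=D_i$'' clauses of Definition~\ref{def:adjacentItineraries}. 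Otherwise $p=L(\ad{s}^i)$ for a necessarily unique $i$, and then $\extended{f}(q)=p$ lies on $\cl_{\CTT{f}}(\dread{\ad{s}^i}{a_i})$ for \emph{every} $q\in\extended{f}^{-1}(p)$, so now all preimages lie on $\partial\extended{\mathcal{D}}$. If $a_i\in\mathcal{J}(f)$, so $p=a_i$, then every preimage is Julia pre-critical, $\itin{p}{\mathcal{D}}$ contains only sectors and pre-critical symbols, and Lemma~\ref{lem:DTopology} says $\partial_{\CTT{f}}D\cap\extended{f}^{-1}(a_i)$ is a single point; that point is the unique preimage of $a_i$ with $D$ among the sectors it bounds, giving existence and uniqueness at once. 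If $a_i\in\mathcal{F}(f)$, so $p=L(\ad{s}^i)\in\partial U(a_i)$ is a regular value, then every preimage of $p$ is a Julia regular boundary point, and for such $q$ the address $\ad{t}$ with $\extended{f}(G_{\ad{t}})=G_{\ad{s}^i}$ landing at $q$ satisfies $\sigma(\ad{t})=\ad{s}^i$ and $\ad{t}\in I_l^{+}\cap I_r^{-}$ with $D_l(q)=D(I_l)$, $D_r(q)=D(I_r)$; since $\restr{\sigma}{I^{+}}\colon I^{+}\to\AD$ and $\restr{\sigma}{I^{-}}\colon I^{-}\to\AD$ are order-preserving bijections (Lemma~\ref{lem:LeftRight}), there is exactly one $\ad{t}\in I^{+}$ and exactly one $\ad{t}\in I^{-}$ mapping to $\ad{s}^i$, hence exactly one preimage $q$ with $D_l(q)=D$ and exactly one with $D_r(q)=D$, and one picks whichever matches the side of the adjacency of $\itin{p}{\mathcal{D}}$ to $\itn{u}$.

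The main obstacle is the Julia regular case. One must check that the two notions of ``side'' line up: the left/right preimage produced by $\restr{\sigma}{I^{\pm}}$ on the one hand, and the left/right adjacency rule~\ref{en:left},~\ref{en:right} satisfied by $\itin{p}{\mathcal{D}}$ relative to $\itn{u}$ on the other; and, in the degenerate situation where $\itin{p}{\mathcal{D}}$ satisfies both rules (carrying no genuinely two-sided boundary symbol), that the left and right candidates then coincide as points of $\CTT{f}$ --- as they must, because by Proposition~\ref{pro:SectorTopology} the two endpoint addresses in question lie in a common $\Crit{c}$ and therefore land together, so uniqueness is not violated. Settling this cleanly is where the order-preserving property of $\restr{\sigma}{I^{\pm}}$ (Lemma~\ref{lem:LeftRight}), the sector correspondence (Proposition~\ref{pro:partition_correspondence}, Corollary~\ref{cor:BoundaryAddresses}), and the homeomorphism property from Lemma~\ref{lem:BasicProperties} must be combined with care; the remaining cases are routine pull-backs through $\extended{f}\colon\extended{D}\to\extended{B}$.
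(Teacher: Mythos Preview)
Your proof is correct and follows essentially the same route as the paper's: the same case split on whether $p$ lies on one of the closures $\cl_{\CTT{f}}(\dread{\ad{s}^i}{a_i})$, the same use of Lemma~\ref{lem:BasicProperties} in the generic case, Lemma~\ref{lem:DTopology} for $p\in S(f)$, and Proposition~\ref{pro:SectorTopology}/Lemma~\ref{lem:LeftRight} to produce the unique left and right preimages when $p=L(\ad{s}^i)$ with $a_i\in\mathcal{F}(f)$. Your explicit reduction to a ``$D$-compatibility'' condition on the zeroth symbol is a clean way to organise the argument, and you are more careful than the paper in one respect: the paper tacitly assumes $\itin{p}{\mathcal{D}}$ is adjacent to $\itn{u}$ via rule~\ref{en:left} or~\ref{en:right} in the Julia regular subcase, whereas you correctly note that if $\itin{p}{\mathcal{D}}$ carries no Julia regular symbol then both rules apply, and you supply the missing observation that the two candidate preimages $q_l,q_r$ then coincide because the relevant boundary addresses $\ad{t}^{2i},\ad{t}^{2i+1}$ of $I$ both lie in $\Crit{c_i}$ by Proposition~\ref{pro:SectorTopology}.
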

	\begin{proof}
		First, assume that $p\in\CTT{f}\setminus\bigcup_{i\in\{1,\ldots ,n\}}\cl_{\CTT{f}}(\dread{\ad{s}^i}{a_i})$. Then every preimage of $p$ is contained in some partition sector and every partition sector contains precisely one preimage of $p$ by Lemma~\ref{lem:BasicProperties}. Therefore, the unique preimage $q\in\extended{f}^{-1}(p)\cap D$ is the only point for which $\itin{q}{\mathcal{D}}$ is adjacent to $D\itn{u}$.
		
		Else, we have $p\in\overline{\dread{\ad{s}^i}{a_i}}$ for some $i\in\{1,\ldots ,n\}$. If $p\in S(f)$, then there exists a unique preimage $q\in\extended{f}^{-1}(p)$ satisfying $q\in\partial_{\CTT{f}}D$ by Lemma~\ref{lem:DTopology}. By the definition of adjacency, this implies that $q$ is the only preimage of $p$ whose itinerary is adjacent to $D\itn{u}$.
		
		Otherwise, we have $p\notin S(f)$ and $p=L(\ad{s}^i)$. By Proposition~\ref{pro:SectorTopology} and the definition of right and left sectors, there are unique preimages $q_l,q_r\in f^{-1}(p)$ such that $D_l(q_l)=D$ and $D_r(q_r)=D$. If $\itin{p}{\mathcal{D}}$ is related to $\itn{u}$ via case~\ref{en:left} of Definition~\ref{def:adjacentItineraries}, then $q_l$ is the only preimage of $p$ whose itinerary is adjacent to $D\itn{u}$. If instead $\itin{p}{\mathcal{D}}$ is related to $\itn{u}$ via case~\ref{en:right} of Definition~\ref{def:adjacentItineraries}, then $q_r$ is the only preimage of $p$ whose itinerary is adjacent to $D\itn{u}$.
	\end{proof}
	
	We are now in the position to prove that a (pre\nobreakdash-)periodic itinerary is realized by at most one (pre\nobreakdash-)periodic point. The result is a generalization of \cite[Proposition 4.4]{SZ2}, and the underlying proof strategy is taken from there.
	
	\begin{Pro}[Unique itineraries]
		Let $\mathcal{D}$ be a simple dynamic partition, and let $\itn{u}=E_0\ldots  E_{k-1}\overline{D_0D_1\ldots  D_{m-1}}$ be a (pre\nobreakdash-)periodic sequence of partition sectors. Let $p,q\in\mathcal{J}(\extended{f})$ be (pre\nobreakdash-)periodic points, and assume that $\It(p~\vert~\mathcal{D})$ and $\It(q~\vert~\mathcal{D})$ are both adjacent to $\itn{u}$. Then $p=q$.\label{pro:UniqueItineraries}
	\end{Pro}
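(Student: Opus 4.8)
The plan is to reduce to a periodic itinerary by an inductive pull-back, and then to apply a Denjoy--Wolff (hyperbolic contraction) argument to the inverse branches of $f$ over the partition sectors. For the reduction, write $\itn{u}=E_0\ldots E_{k-1}\itn{v}$ with $\itn{v}=\overline{D_0D_1\ldots D_{m-1}}$. The itinerary of $\extended{f}(z)$ is the shift of the itinerary of $z$, and each of the three clauses of the adjacency relation in Definition~\ref{def:adjacentItineraries} is checked coordinatewise, so adjacency commutes with the shift; hence $\itin{\extended{f}^{\circ k}(p)}{\mathcal{D}}$ and $\itin{\extended{f}^{\circ k}(q)}{\mathcal{D}}$ are both adjacent to the periodic sequence $\itn{v}$. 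Granting $\extended{f}^{\circ k}(p)=\extended{f}^{\circ k}(q)$, Lemma~\ref{lem:PreimageItineraries} applied $k$ times gives $p=q$: at each stage $\extended{f}^{\circ(i-1)}(p)$ and $\extended{f}^{\circ(i-1)}(q)$ are preimages of one and the same point with itineraries adjacent to the same sequence $E_{i-1}E_{i}\ldots E_{k-1}\itn{v}$, and such a preimage is unique. So it suffices to treat the case $\itn{u}=\itn{v}$ periodic.

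Suppose first that $\itin{p}{\mathcal{D}}=\itin{q}{\mathcal{D}}=\overline{D_0\ldots D_{m-1}}$ with no boundary symbols. Set $B:=B[\mathcal{D}]$; by Lemma~\ref{lem:BasicProperties} it is simply connected and a proper subdomain of $\C$, hence hyperbolic, and each partition sector $D_j$ is mapped biholomorphically onto $B$ by $f$. Put $g_{D_j}:=(f|_{D_j})^{-1}\colon B\to D_j$ and $g:=g_{D_0}\circ\cdots\circ g_{D_{m-1}}$. Every point on the (finite) forward orbit of $p$ lies in $B$: it is (pre)periodic, hence neither escaping nor a Fatou center, and were it the landing point of one of the $\dread{\ad{s}^i}{a_i}$ it would be a fixed singular value lying on $\partial\mathcal{D}$ by Property~\ref{en:PeriodicMeansFixed} of a simple dynamic partition, contradicting the absence of boundary symbols; similarly for $q$. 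Thus $g$ is a well-defined holomorphic self-map of $B$ with $g(B)=D_0\subsetneq B$, so it is not an automorphism and, by the Denjoy--Wolff theorem transported to $B$ via a Riemann map, $g^{\circ n}$ converges locally uniformly to a constant $z^{*}\in\overline{B}$. Unwinding the inverse branches along the itinerary yields $p=g^{\circ j}\bigl(\extended{f}^{\circ jm}(p)\bigr)$ for all $j\ge 0$; since $\bigl(\extended{f}^{\circ jm}(p)\bigr)_j$ is eventually periodic it equals a fixed $z_0\in B$ for infinitely many $j$, whence $p=\lim_{j}g^{\circ j}(z_0)=z^{*}$. The same computation gives $q=z^{*}$, so $p=q$.

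Suppose now that $\itin{p}{\mathcal{D}}$ contains a boundary symbol. Because the itinerary is periodic, the forward orbit of $p$ meets $\partial\mathcal{D}$ in a periodic point; by Property~\ref{en:PeriodicMeansFixed} together with a short analysis using Lemma~\ref{lem:DTopology} (the intersection $\partial_{\CTT{f}}D\cap\extended{f}^{-1}(\{a_i\})$ is a single point), one sees that this periodic boundary point must be $p$ itself --- namely $p=a_i$ a fixed singular value in the Julia pre-critical case, or $p=L(\ad{s}^i)$ in the Julia regular case. By Lemma~\ref{lem:fixedPoints} the only sequence adjacent to $\itin{p}{\mathcal{D}}$ is then $\overline{D}$ for a single partition sector $D$ (the unique sector at $p$, resp.\ the chosen left or right sector), so $\itn{v}=\overline{D}$; and Lemma~\ref{lem:fixedPoints} produces a backward orbit $b_i=g_D^{\circ i}(b_0)\to p$ with $b_0\in B$, which by uniqueness of the Denjoy--Wolff point identifies $p$ with the Denjoy--Wolff point of $g_D\colon B\to D\subseteq B$. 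Running the same discussion for $q$: if the orbit of $q$ also meets $\partial\mathcal{D}$ then $q$ is likewise the Denjoy--Wolff point of the same map, so $q=p$; if not, then $\itin{q}{\mathcal{D}}=\overline{D}$ has no boundary symbols and the computation of the previous paragraph gives $q=\lim_j g_D^{\circ j}\bigl(\extended{f}^{\circ j}(q)\bigr)=p$.

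The Denjoy--Wolff input is soft once the relevant self-map is in place, so the real work lies in two points. The first is the combinatorial bookkeeping of the third paragraph: showing that a periodic itinerary containing a boundary symbol must be of the form $\overline{D}$ and is realized only at the corresponding fixed boundary point; this is where Lemma~\ref{lem:DTopology}, the choice of landing points in Proposition~\ref{pro:SimpleExistence}, and Property~\ref{en:PeriodicMeansFixed} are all used. The second is checking that the composed inverse branch $g$ is a genuine (strict) contraction, i.e.\ that $g(B)=D_0\subsetneq B$ and that the entire relevant forward orbit stays inside $B$; the latter is exactly where one invokes that the chosen dreadlocks consist of escaping points and that periodic post-singular points are fixed and lie on $\partial\mathcal{D}$.
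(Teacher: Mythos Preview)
Your overall strategy matches the paper's: reduce to the periodic case via Lemma~\ref{lem:PreimageItineraries}, then run a hyperbolic contraction argument on a composition of inverse branches, and handle the boundary cases through Lemma~\ref{lem:fixedPoints}. The gap is in the choice of the ambient hyperbolic domain.

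You take the domain to be the base domain $B=B[\mathcal{D}]$ and assert that $g=g_{D_0}\circ\cdots\circ g_{D_{m-1}}$ is a holomorphic self-map of $B$ with $g(B)=D_0\subsetneq B$. This fails: $B$ is \emph{not} backward invariant. The complement $\C\setminus B=\bigcup_i\overline{\dread{\ad{s}^i}{a_i}}$ is not forward invariant unless each $\sigma(\ad{s}^i)$ happens to be one of the $\ad{s}^j$, which is not assumed. Concretely, the dreadlock $G_{\ad{s}^i}$ itself typically lies inside some partition sector $D_j$, so $D_j\not\subset B$; hence $g_{D_{m-1}}(B)=D_{m-1}$ need not be contained in the domain of $g_{D_{m-2}}$, and $g$ is not even defined on all of $B$, let alone a strict contraction of it. The Denjoy--Wolff argument therefore has no domain to run on. Your auxiliary claim that the forward orbit of $p$ lies in $B$ is also not enough (and is itself shaky: $f^{\circ j}(p)$ could equal $L(\ad{s}^i)$ for some $a_i\in\mathcal{F}(f)$ without producing a boundary symbol at step~$j$, since $L(\ad{s}^i)$ is not on $\partial\mathcal{D}$ unless $f(L(\ad{s}^i))$ again hits one of the chosen dreadlocks).

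The paper repairs this by working instead on
\[
W:=\C\setminus\bigcup_{i}\bigcup_{m\ge 0} f^{\circ m}\bigl(\overline{\dread{\ad{s}^i}{a_i}}\bigr),
\]
which is backward invariant by construction. Property~(\ref{en:ForwardInvariance}) of a simple dynamic partition guarantees that $\C\setminus W$ is a finite disjoint union of (extended) dreadlocks, so $W$ is simply connected; then the inverse branches restrict to $W$ and compose to a genuine univalent self-map $g\colon W\to W$ with image properly contained in $W$, and the Schwarz--Pick/two-fixed-point argument goes through. This is precisely the missing idea, and it is also why you never needed Property~(\ref{en:ForwardInvariance}) in your write-up.
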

	\begin{proof}
		Let $\dread{\ad{s}^1}{a_1},\ldots ,\dread{\ad{s}^n}{a_n}$ be the (extended) filaments from Definition~\ref{def:dynamicalPartitions} that land at the singular values of $f$. We set
		\[W:=\C\setminus\bigcup_{i=1}^n\bigcup_{m\geq 0}f^{\circ m}(\overline{\dread{\ad{s}^i}{a_i}}).\]By Property~(\ref{en:ForwardInvariance}) of Definition~\ref{def:dynamicalPartitions}, the complement $\C\setminus W$ consists of finitely many pairwise disjoint (extended) filaments. By Proposition~\ref{pro:filamentTopology} and Proposition~\ref{pro:extendedTopology}, none of these (extended) filaments separates the plane. Hence, the complement $W$ is simply connected by Theorem~\ref{thm:PlaneSeparation}. The domain $W$ is backward invariant and satisfies $W\subset\C\setminus\bigcup_{i=1}^n \overline{\dread{\ad{s}^i}{a_i}}$. Hence, for every itinerary domain $D$ the unique branch $f^{-1}\colon\C\setminus\bigcup_{i=1}^n \overline{\dread{\ad{s}^i}{a_i}}\to D$ of the inverse of $f$ restricts to a branch $f_D^{-1}\colon W\to D$. We set $W_D:=f_D^{-1}(W)\subset W$.
		
		Assume for now that $\itn{u}=\overline{D_0\ldots  D_{m-1}}$ is periodic, and $p$ and $q$ are periodic. Assume by contradiction that $p\neq q$. In addition, assume that $p,q\in W$. Note that this implies $f^{\circ i}(p),f^{\circ i}(q)\in W$ for all $i\geq 0$, and therefore $\itin{p}{\mathcal{D}}=\itin{q}{\mathcal{D}}=\itn{u}$. Choose $m\in\N$ such that $f^{\circ m}(p)=p$ as well as $f^{\circ m}(q)=q$, and write $p_i:=f^{\circ i}(p)$ as well as $q_i:=f^{\circ i}(q)$. We have $p_i=f_{D_i}^{-1}(p_{i+1})$ as well as $q_i=f_{D_i}^{-1}(q_{i+1})$. Setting
		$g:=f_{D_0}^{-1}\circ f_{D_1}^{-1}\circ\ldots \circ f_{D_{m-1}}^{-1}\colon W\to W$, we obtain a univalent self-map of $W$ satisfying $g(p)=p$ and $g(q)=q$. If $p$ and
		$q$ were distinct, this would imply $g=\text{id}$. Hence, we have $p=q$.
		
		In general, both $p$ and $q$ might be contained in $\partial W$. Assume for now that $p\in\partial W$ and $q\in W$. Then Property~(\ref{en:PeriodicMeansFixed}) of
		Definition~\ref{def:dynamicalPartitions} implies that $p$ is fixed and so is every itinerary adjacent to $\itin{p}{\mathcal{D}}$ by Lemma~\ref{lem:fixedPoints}. Hence, we have $\itn{u}=DDD\ldots $ for some itinerary domain $D\in\mathcal{D}$. By Lemma~\ref{lem:fixedPoints}, there exists a point $b_0\in W$ such that the sequence $(b_i)_{i=0}^\infty$ defined via $b_{i+1}:=f_D^{-1}(b_i)$ converges to $p$. Assume that $q\in W$, and let $\alpha_0\colon[0,1]\to W$ be a smooth arc of finite length w.r.t.\ the hyperbolic metric on $W$ that connects $b_0=\alpha_0(0)$ to $q=\alpha_0(1)$. By the preceding paragraph, we have $f_D^{-1}(q)=q$. Hence, the lift $\alpha_1:=f_D^{-1}\circ\alpha_0$ connects $b_1$ to $q$, and it satisfies
		\[
		l_W(\alpha_1)< l_{f_D^{-1}(W)}(\alpha_1)=l_W(\alpha_0),
		\]
		where the first inequality follows from the Comparison Principle, and the second equality follows from the Schwarz Lemma. Inductively, we define $\alpha_{i+1}:=f_D^{-1}\circ\alpha_i$. As we have $l_W(\alpha_i)< l_W(\alpha_0)$ and $\alpha_i(0)=b_i\to p\in\partial W$ for $i\to\infty$, the Euclidean lengths of the $\alpha_i$ tend to $0$ contradicting our assumption that $p$ and $q$ are distinct.
		
		An analogous argument works if both $p$ and $q$ are contained in $\partial W$. In this case, we also have to take a point $\tilde{b}_0$ as in Lemma~\ref{lem:fixedPoints} for $q$ and connect $b_0$ to $\tilde{b}_0$ via an arc $\alpha_0$ of finite hyperbolic length.
		
		Let us now prove the full statement of the lemma. We set $p_i:=\extended{f}^{\circ i}(p)$ and $q_i:=\extended{f}^{\circ i}(q)$. Choose $n\geq 0$ such that $p_n$ and $q_n$ are both periodic. Then $\itn{\tilde{u}}:=\sigma^{\circ n}(\itn{u})$ is also periodic, and both $\itin{p_n}{\mathcal{D}}$ and $\itin{q_n}{\mathcal{D}}$ are adjacent to $\itn{\tilde{u}}$. By the periodic case proved above, we have $p_n=q_n$. Applying Lemma~\ref{lem:PreimageItineraries} inductively for $n$ times, we obtain $p=q$.
	\end{proof}
	We have thus shown that every (pre\nobreakdash-)periodic itinerary is realized by at most one (pre\nobreakdash-)periodic point. The next step is to show that every (pre\nobreakdash-)periodic itinerary is, in fact, realized.
	\begin{Lem}[Pullbacks of intervals]
		Let $I, I'\in\mathcal{I}$ be partition sectors, and let $J\subset I$ be an interval. If $\ad{s}^i\notin J$ for all $i$, then $J':=\sigma^{-1}(J)\cap I'$ is an interval of the form $J'=\{F\ad{t}~\vert~\ad{t}\in J\}$ for some fundamental domain $F$. \label{lem:IntervalPullbacks}
	\end{Lem}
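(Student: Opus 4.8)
The plan is to read the statement off the order-theoretic structure of partition sectors provided by Proposition~\ref{pro:SectorTopology}. First I would record that, since $\ad{s}^i\notin J$ for every $i$, the interval $J$ is contained in $\AD\setminus\{\ad{s}^1,\ldots,\ad{s}^n\}$. By Proposition~\ref{pro:SectorTopology}, the restriction $\left.\sigma\right|_{I'}\colon I'\to\AD\setminus\{\ad{s}^1,\ldots,\ad{s}^n\}$ is an order-preserving bijection, so $\left(\left.\sigma\right|_{I'}\right)^{-1}$ is defined on all of $J$, and
\[
J'=\sigma^{-1}(J)\cap I'=\left(\left.\sigma\right|_{I'}\right)^{-1}(J)
\]
is an interval in $I'$ which $\sigma$ maps order-isomorphically onto $J$.

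Next I would reduce the claim to showing that all addresses in $J'$ have one and the same first entry. Suppose this is known and call that common first entry $F$. On the one hand, every $\ad{u}\in J'$ then has the form $\ad{u}=F\sigma(\ad{u})$ with $\sigma(\ad{u})\in J$, so $J'\subseteq\{F\ad{t}\colon\ad{t}\in J\}$. On the other hand, given $\ad{t}\in J\subseteq\AD\setminus\{\ad{s}^1,\ldots,\ad{s}^n\}$, there is a unique $\ad{u}\in I'$ with $\sigma(\ad{u})=\ad{t}$ because $\left.\sigma\right|_{I'}$ is a bijection; this $\ad{u}$ lies in $J'$ and, having first entry $F$, equals $F\ad{t}$. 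Hence $J'=\{F\ad{t}\colon\ad{t}\in J\}$, as desired.

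It remains to see that the first entry is constant on $J'$. Here I would use the explicit description of $I'$ in Proposition~\ref{pro:SectorTopology}: $I'$ is a finite disjoint union of order-intervals, on each of which $\sigma$ acts simply by deleting the first entry, and on each of which this first entry is constant, with possible changes only across the level-one intermediate address separating two consecutive fundamental-domain blocks, i.e.\ across a $\sigma$-preimage of the distinguished intermediate address $\alpha$. Since $J'$ is an order-interval, it lies in one of these pieces unless it crosses one of the separating endpoints. If $J'$ crossed a sector-boundary address $\ad{t}^k$ (one of the endpoints of Proposition~\ref{pro:SectorTopology}, with $\sigma(\ad{t}^k)\in\{\ad{s}^1,\ldots,\ad{s}^n\}$), then, as $\ad{t}^k\in\AD$, we would have $\ad{t}^k\in J'$ and hence $\ad{s}^i=\sigma(\ad{t}^k)\in J$, contradicting the hypothesis. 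The only remaining possibility is that $J'$ straddles a $\sigma$-preimage of $\alpha$, which would force $J=\sigma(J')$ to encircle $\alpha$; but an interval $J$ contained in the single sector $I$ does not wrap around $\alpha$ (with ``interval'' read in the linear order of Definition~\ref{def:LinearOrder}, which is precisely the cyclic order cut at $\alpha$). Therefore $J'$ is contained in a single constant-first-entry piece, and the common first entry $F$ exists.

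The step I expect to be the main obstacle is the bookkeeping around the distinguished address $\alpha$: one must make sure that ``interval'' is interpreted so that neither $J$ nor $J'$ encircles $\alpha$, since otherwise the pulled-back interval would pick up addresses with two different first entries (predecessor and successor fundamental domains), and the claimed form would fail. Away from this wrap-around subtlety, the argument is a direct transport of the hypotheses through the order-isomorphism $\left.\sigma\right|_{I'}$ of Proposition~\ref{pro:SectorTopology}.
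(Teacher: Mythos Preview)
Your proposal is correct and follows essentially the same approach as the paper's proof. Both arguments invoke Proposition~\ref{pro:SectorTopology} to get the explicit interval decomposition of $I'$ and the order-preserving bijection $\left.\sigma\right|_{I'}$, then reduce the question to showing $J'$ lies in a single constant-first-entry piece, and finally handle the one delicate case---the piece straddling a preimage of $\alpha$---by using that $J\subset I$ forces $\alpha\notin\cl_{\overline{\AD}}(J)$, so that $J'$ cannot cross $\sigma^{-1}(\alpha)$. The paper's write-up is more terse and works directly with the explicit endpoints $F_j\ad{s}^{j-1},F_j\ad{s}^j$, whereas you phrase the same step as an ``interval in the linear order cut at $\alpha$'' observation, but the content is identical.
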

	\begin{proof}
		For the proof, we are going to use concepts introduced in Sections~\ref{subsubsec:CyclicOrders} and \ref{subsubsec:CircleOfAddresses}. In particular, we are going to
		talk about the predecessor $\Fpred{F}$ of a fundamental domain, the circle of addresses $\overline{\AD}$, and the intermediate address $\alpha$ with its preimages $\alpha_{\Fpred{F}}^F\in\sigma^{-1}(\alpha)$.
		
		By the proof of Proposition~\ref{pro:SectorTopology}, there are fundamental domains $F_1,\ldots ,F_n$ such that
		\[
		I'=(\Fpred{(F_1)}\ad{s}^n,F_1\ad{s}^1)\cupdot(F_2\ad{s}^1,F_2\ad{s}^2)\cupdot\ldots \cupdot(F_n\ad{s}^n,\Fsucc{(F_n)}\ad{s}^1).
		\]
		The restriction $\restr{\sigma}{I'}$ is an order-preserving bijection onto $\AD\setminus\{\ad{s}^1,\ldots ,\ad{s}^n\}$. As $\ad{s}^i\notin J$ for all $i$ by hypothesis, the preimage $J'=\restr{\sigma}{I'}^{-1}(J)$ is contained in one of the intervals above.
		
		If $J'\subset(F_i\ad{s}^{i-1},F_i\ad{s}^i)$ for some $i\in\{2,\ldots,n-1\}$, the claim follows because all external addresses of the interval $(F_i\ad{s}^{i-1},F_i\ad{s}^i)$ have first entry $F_i$. If instead $J'\subset(\Fpred{(F_1)}\ad{s}^n,F_1\ad{s}^1)$, the claim follows from the fact that $J\subset I$ for some partition sector $I$: we have $\alpha\notin\cl_{\overline{\AD}}(J)$, so $J'\cap\sigma^{-1}(\alpha)=\emptyset$. Hence, either $J'\subset(\Fpred{(F_1)}\ad{s}^n,\alpha_{\Fpred{(F_1)}}^{F_1})\subset \Fpred{(F_1)}$ or $J'\subset(\alpha_{\Fpred{(F_1)}}^{F_1},F_1\ad{s}^1)\subset F_1$. The case $J'\subset(F_n\ad{s}^n,\Fsucc{(F_n)}\ad{s}^1)$ works analogously.
	\end{proof}

\begin{Pro}[Realized itineraries]
		Let $\mathcal{I}$ be a (not necessarily simple) dynamic partition. Then, for every periodic sequence $\itn{u}=\overline{I_0I_1\ldots  I_{m-1}}$ of itinerary domains, there exists a periodic external address $\ad{s}\in\mathcal{S}$ such that either $\It^+(\ad{s}~\vert~\mathcal{I})=\itn{u}$ or $\It^-(\ad{s}~\vert~\mathcal{I})=\itn{u}$.\label{pro:RealizedItineraries}
	\end{Pro}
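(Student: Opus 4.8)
The plan is to realize the periodic itinerary as the external address of a fixed point of a composition of inverse branches of the shift. Write $I_k:=I_{k\bmod m}$ for all $k\ge 0$. By Lemma~\ref{lem:LeftRight}, each restriction $\restr{\sigma}{I_k^-}\colon I_k^-\to\AD$ is an order-preserving bijection; let $\psi_k\colon\AD\to I_k^-$ denote its inverse branch, an order-preserving injection. Then the composition
\[
g:=\psi_0\circ\psi_1\circ\cdots\circ\psi_{m-1}\colon\AD\to I_0^-
\]
satisfies $\sigma\circ\psi_k=\id_\AD$, hence $\sigma^{\circ m}\circ g=\id_\AD$ and $\sigma^{\circ k}\circ g=\psi_k\circ\cdots\circ\psi_{m-1}$ has image contained in $I_k^-$ for $0\le k\le m-1$. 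Consequently, if $\ad s\in\AD$ satisfies $g(\ad s)=\ad s$, then $\sigma^{\circ m}(\ad s)=\ad s$ and $\sigma^{\circ k}(\ad s)\in I_{k}^-$ for every $k\ge 0$, which by Definition~\ref{def:LeftSidedItineraries} says exactly $\itinl{\ad s}{\mathcal I}=\itn u$. Thus it suffices to produce a fixed point of $g$ lying in $\AD$; running the same argument with the branches $(\restr{\sigma}{I_k^+})^{-1}$ instead produces an $\ad s$ with $\itinr{\ad s}{\mathcal I}=\itn u$.

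To produce the fixed point I would iterate $g$ and pass to a nested limit in the circle of addresses $\overline\AD$ of Subsubsection~\ref{subsubsec:CircleOfAddresses}. Since $g(\AD)\subseteq I_0^-\subseteq\AD$, the images $Y_N:=g^{\circ N}(\AD)$ form a decreasing sequence with $g(Y_N)\subseteq Y_N$. Using the explicit description of sectors from Proposition~\ref{pro:SectorTopology} and Corollary~\ref{cor:FullPartition}, each $I_k^-$ is a finite union of intervals, and applying Lemma~\ref{lem:IntervalPullbacks} component by component (after excising the finitely many addresses in the forward orbits of $\ad s^1,\dots,\ad s^n$, so that the hypothesis of that lemma holds) shows that each step $Y_N\mapsto Y_{N+1}$ turns every component interval into one of the shape $\{F_0\cdots F_{m-1}\,\ad t:\ad t\in J\}$; hence along any nested chain of component intervals $J_0\supseteq J_1\supseteq\cdots$ with $J_N$ a component of $Y_N$, all addresses in $J_N$ share a common initial block of fundamental domains of length $mN$, and these blocks are mutually consistent. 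Passing to closures in the compact space $\overline\AD$, the sets $\overline{Y_N}$ are nonempty decreasing compacta, so $K:=\bigcap_N\overline{Y_N}\neq\emptyset$ and $g(K)\subseteq K$; since every point of $\overline{Y_N}$ still carries a well-defined fundamental-domain prefix of length $mN$, every point of $K$ has a well-defined infinite fundamental-domain expansion and is therefore a genuine external address in $\AD$, not an intermediate one. Finally, $K$ is a compact linearly ordered space (it avoids the distinguished intermediate address $\alpha$), $\restr{g}{K}$ is an order-preserving continuous self-map, so a standard monotonicity argument — the supremum of $\{\ad x\in K:g(\ad x)\succeq\ad x\}$ is fixed — yields $\ad s\in K\subseteq\AD$ with $g(\ad s)=\ad s$; and writing $g(\ad s)=F_0\cdots F_{m-1}\,\ad s$ (the length-$m$ block that $g$ prepends on the component containing $\ad s$) shows that $\ad s$ is periodic of period dividing $m$, as required.

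I expect the main obstacle to be the bookkeeping forced by the boundary addresses $\ad s^1,\dots,\ad s^n$, which a priori may lie in the interiors of the sectors $I_k$: there the clean ``prepend a fundamental domain'' conclusion of Lemma~\ref{lem:IntervalPullbacks} is not directly available, so one must keep splitting the component intervals at the finitely many points in the orbits of the $\ad s^i$ and verify that this splitting neither exhausts all components nor forces the nested intersection onto the orbit of some $\ad s^i$ or onto an intermediate address. The second delicate point is making the fixed-point step honest — confirming that $\restr{g}{K}$ genuinely has a fixed point (rather than behaving like a fixed-point-free rotation) and that the point obtained is a true periodic external address; the prefix-stabilization from Lemma~\ref{lem:IntervalPullbacks} is exactly what rules out an intermediate limit, and the expansivity encoded in $\sigma^{\circ m}\circ g=\id$ is what rules out rotation-type behaviour. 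The remaining ingredients — the reduction in the first paragraph, compactness of $\overline\AD$, and the monotone fixed-point principle — are routine.
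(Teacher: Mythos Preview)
Your setup is correct: the inverse branches $\psi_k=(\restr{\sigma}{I_k^-})^{-1}$ exist by Lemma~\ref{lem:LeftRight}, and a fixed point of $g=\psi_0\circ\cdots\circ\psi_{m-1}$ is precisely a periodic address with $\itinl{\ad s}{\mathcal I}=\itn u$. You also correctly identify Lemma~\ref{lem:IntervalPullbacks} as the engine that forces common prefixes on pulled-back intervals. Where you diverge from the paper is in how you extract the periodic address from the nested pullbacks.

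The paper avoids the fixed-point machinery entirely by a finite pigeonhole. After excising the finite forward-invariant set $T:=\bigcup_{i,j\ge 0}\{\sigma^{\circ j}(\ad s^i)\}$, the open sector $I_0\setminus T$ breaks into finitely many open intervals $J_1^{(0)},\dots,J_N^{(0)}$. Pulling back $m$ times along $I_{m-1},\dots,I_0$ sends each $J_i^{(0)}$ into some $J_{\rho(i)}^{(0)}$, giving a self-map $\rho$ of the finite index set $\{1,\dots,N\}$; any such map has a cycle, say $\rho^{\circ n_0}(i_0)=i_0$. The nested intervals $J^{(l)}:=J_{i_0}^{(lmn_0)}$ then share, by repeated use of Lemma~\ref{lem:IntervalPullbacks}, a common prefix $F_1\cdots F_{n_1}$ of length $n_1=mn_0$, and one simply \emph{writes down} $\ad s:=\overline{F_1\cdots F_{n_1}}\in\bigcap_l\cl(J^{(l)})$. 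No compactness of $\overline{\AD}$, no order-theoretic fixed-point principle.

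Your route has a real gap at ``$\restr{g}{K}$ is an order-preserving continuous self-map''. The branches $\psi_k$ are \emph{not} continuous at $\ad s^1,\dots,\ad s^n$ --- this is exactly why $I_k^-$ is a union of half-open rather than open intervals --- so $g$ is discontinuous on a countable set, and you have not shown that $K$ avoids it. Without continuity you cannot pass from $g(Y_N)\subseteq Y_{N+1}$ to $g(\overline{Y_N})\subseteq\overline{Y_{N+1}}$, hence $g(K)\subseteq K$ is unproved; and without that inclusion the Knaster--Tarski step has no domain to act on. You flag this as ``bookkeeping'', but it is the heart of the matter: once you excise $T$ and track the finitely many open intervals as the paper does, the pigeonhole on $\rho$ replaces your entire second paragraph, and the periodic address falls out explicitly rather than via an existence theorem.
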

	\begin{proof}
		The set
		\[
		T:=\bigcup_{i\in\{1,\ldots ,n\}}\bigcup_{j\geq 0}\{\sigma^{\circ j}(\ad{s}^i)\}
		\]
		is finite and forward invariant. By Proposition~\ref{pro:SectorTopology}, we have
		\[
		I_0\setminus T=J_1^{(0)}\cupdot\ldots \cupdot J_N^{(0)},
		\]
		where the $J_i^{(0)}$ are open intervals in $\AD$. Inductively, we define\[J_i^{(l)}:=\sigma^{-1}(J_i^{(l-1)})\cap I_{\mathtt{u}_{-l}}.\]As $\AD\setminus T$ is backward invariant, we have $J_i^{(l)}\cap T=\emptyset$ for all $i$ and $l$. Hence, Lemma~\ref{lem:IntervalPullbacks} implies that every $J_i^{(l)}$ is an open interval and the first $l$ entries are the same for all addresses in $J_i^{(l)}$.
		
		After $m$ pullbacks, we are back at subintervals of our initial partition sector $I_0$. We define $\rho\colon\{1,\ldots ,N\}\to\{1,\ldots ,N\}$ via $J_i^{(m)}\subset J_{\rho(i)}^{(0)}$.
		Choose $n_0$ and $i_0$ such that $\rho^{\circ n_0}(i_0)=i_0$, and set $J^{(l)}:=J_{i_0}^{(lmn_0)}$.
		The $J^{(l)}$ form a nested sequence of intervals. By Lemma~\ref{lem:IntervalPullbacks}, there exist fundamental domains $F_1,\ldots , F_{n_1}$
		(where $n_1:=mn_0$) such that every external address in $J^{(l)}$ begins with $l$ times the sequence $F_1\ldots  F_{n_1}$. Therefore, we have \[\ad{s}:=\overline{F_1\ldots  F_{n_1}}\in\bigcap_l \cl(J^{(l)}).\] Either the left- or the right-sided itinerary of $\ad{s}$ (or both) equals $\itn{u}$.
	\end{proof}
	
	\begin{Cor}[Itineraries are uniquely realized]
		Let $\itn{u}\in\mathcal{D}^\N$ be (pre\nobreakdash-)periodic. Then there exists a unique (pre\nobreakdash-)periodic point $p\in\mathcal{J}(\extended{f})$ such that $\itin{p}{\mathcal{D}}$ is adjacent to $\itn{u}$.\label{cor:UniquelyRealized}
	\end{Cor}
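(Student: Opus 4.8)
The plan is to assemble this corollary from results already in hand, with uniqueness handed to us directly by Proposition~\ref{pro:UniqueItineraries} and existence obtained by first realizing the periodic tail of $\itn{u}$ by a landing dreadlock and then pulling back its preperiodic prefix. The only genuine work is bookkeeping through the bijection $I\mapsto D(I)$ between sectors of $\mathcal I$ and sectors of $\mathcal D$, and checking that the point produced stays in $\mathcal J(\extended f)$ and is (pre\nobreakdash-)periodic so that the relevant lemmas apply.

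For uniqueness: if $p,q\in\mathcal J(\extended f)$ are (pre\nobreakdash-)periodic and $\itin{p}{\mathcal D}$ and $\itin{q}{\mathcal D}$ are both adjacent to $\itn{u}$, then $p=q$ by Proposition~\ref{pro:UniqueItineraries}. So it remains to produce such a point. Write $\itn{u}=E_0\ldots E_{k-1}\,\overline{D_0D_1\ldots D_{m-1}}$. First treat the periodic tail: form the periodic sequence $\itn{v}:=\overline{I(D_0)I(D_1)\ldots I(D_{m-1})}$ of sectors of $\mathcal I$. By Proposition~\ref{pro:RealizedItineraries} there is a periodic external address $\ad s$ with $\It^+(\ad s~\vert~\mathcal I)=\itn{v}$ or $\It^-(\ad s~\vert~\mathcal I)=\itn{v}$; in either case $\itinc{\ad s}$ is adjacent to $\itn{v}$. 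Since $\ad s$ is periodic, Theorem~\ref{thm:LandingBR} (equivalently Lemma~\ref{lem:PreperiodicLanding}) gives a repelling periodic landing point $p':=L(\ad s)\in\mathcal J(f)\subset\mathcal J(\extended f)$. Now Lemma~\ref{lem:AdjacentItineraries} applies and shows that $\itin{p'}{\mathcal D}$ is adjacent to $(D(I(D_j)))_{j}=(D_j)_j$, i.e.\ adjacent to $\overline{D_0\ldots D_{m-1}}$. Thus the periodic case is done.

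It remains to prepend the finite prefix $E_0\ldots E_{k-1}$. Starting from the periodic point $p'$ whose itinerary is adjacent to $\overline{D_0\ldots D_{m-1}}$, apply Lemma~\ref{lem:PreimageItineraries} with the sector $E_{k-1}$ to obtain a point $q_1\in\extended{f}^{-1}(p')\subset\mathcal J(\extended f)$ whose itinerary is adjacent to $E_{k-1}\,\overline{D_0\ldots D_{m-1}}$; iterating, prepending $E_{k-2},\ldots,E_0$ in turn, yields after $k$ steps a point $p\in\mathcal J(\extended f)$ with $\itin{p}{\mathcal D}$ adjacent to $\itn{u}$. Since $\extended{f}^{\circ k}(p)=p'$ is periodic, $p$ is (pre\nobreakdash-)periodic, which completes the existence part. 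The main thing to be careful about — and the only place the argument could slip — is that the landing point supplied by Proposition~\ref{pro:RealizedItineraries} genuinely lies in $\mathcal J(\extended f)$ (so Lemma~\ref{lem:AdjacentItineraries} is applicable) and that prepending finitely many sectors to a periodic tail preserves (pre\nobreakdash-)periodicity; both are immediate, so there is no substantial obstacle once Propositions~\ref{pro:UniqueItineraries} and~\ref{pro:RealizedItineraries} are available.
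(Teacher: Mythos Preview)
Your proof is correct and follows essentially the same route as the paper: uniqueness from Proposition~\ref{pro:UniqueItineraries}, and existence by combining Proposition~\ref{pro:RealizedItineraries} with Lemma~\ref{lem:AdjacentItineraries}. The paper's proof is a two-line sketch that does not spell out the preperiodic step (Proposition~\ref{pro:RealizedItineraries} as stated only produces addresses for \emph{periodic} itineraries); you fill this gap explicitly by first realizing the periodic tail and then prepending the finite prefix via Lemma~\ref{lem:PreimageItineraries}, which is exactly the kind of detail the paper's sentence is implicitly relying on.
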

	\begin{proof}
		Lemma~\ref{lem:AdjacentItineraries} and Proposition~\ref{pro:RealizedItineraries} imply that there exists a (pre\nobreakdash-)periodic point $p\in\mathcal{J}(\extended{f})$ for which $\itin{p}{\mathcal{D}}$ is adjacent to $\itn{u}$. Proposition~\ref{pro:UniqueItineraries} says that this point is unique.
	\end{proof}
	
	This corollary does not imply that the map sending a (pre\nobreakdash-)periodic itinerary $\itn{u}$ to the unique (pre\nobreakdash-)periodic point $p\in\CTT{f}$ for which $\itin{p}{\mathcal{D}}$ is adjacent to $\itn{u}$ is a bijection. Indeed, this is never the case. As noted in the preceding section, we have $C(\extended{f})\neq\emptyset$ for all post-singularly finite maps $f$. The existence of critical points implies that several (pre\nobreakdash-)periodic itineraries correspond to the same (pre\nobreakdash-)periodic point.
	
	The remaining goal of this paper is to describe the landing equivalence relation in terms of itineraries w.r.t.\ some simple dynamic partition. The following lemma will be useful for the proof of the periodic case.
	\begin{Lem}
		Let $p\in\mathcal{J}(\extended{f})\setminus C(\extended{f})$ be (pre\nobreakdash-)periodic. Then there exists a partition sector $I\in\mathcal{I}$ such that the set
		\[
		\Ad(p):=\{\ad{s}\in\AD\colon L(\ad{s})=p\}
		\]
		satisfies $\Ad(p)\subset I^-$.\label{lem:SameSector}
	\end{Lem}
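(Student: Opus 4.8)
The plan is to distinguish cases according to the position of $p$ relative to the partition boundary $\partial\extended{\mathcal{D}}$, using throughout that $p\notin C(\extended{f})$ makes $\extended{f}$ a local homeomorphism at $p$. I first record two consequences of this that will be used repeatedly. (i) A dreadlock lands at $p$ if and only if its $\extended{f}$\nobreakdash-image lands at $w:=\extended{f}(p)$, and since distinct dreadlocks have distinct landing points, $\sigma$ restricts to a cyclic\nobreakdash-order\nobreakdash-preserving bijection $\Ad(p)\to\Ad(w)$. (ii) At most one of the dreadlocks landing at $p$ lies in $\partial\mathcal{D}$: the chosen (extended) dreadlocks are pairwise disjoint away from $\infty$, so near $w\neq\infty$ the union $\bigcup_j\cl_{\CTT{f}}(\dread{\ad{s}^j}{a_j})$ is, if nonempty there, a single arc, whose immediate preimage through $p$ is again a single such arc. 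I will also use that any dreadlock landing at $p$ which is \emph{not} contained in $\partial\mathcal{D}$ is in fact disjoint from $\partial\mathcal{D}$, hence lies in a single partition sector: it is distinct from every boundary dreadlock, and being an escaping continuum it misses the internal rays and the (non\nobreakdash-escaping) landing points of $\partial\mathcal{D}$.

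If $p\notin\partial\extended{\mathcal{D}}$, then $p$ lies in one extended partition sector $\extended{D}$; each $G_{\ad{s}}$ with $\ad{s}\in\Ad(p)$ lies in a single partition sector and has $p$ in its closure, hence lies in $D$, and no $\ad{s}\in\Ad(p)$ is in $\partial\mathcal{I}$ (otherwise $w$ would be a landing point of a chosen dreadlock and $p$ would lie on $\partial\extended{\mathcal{D}}$). Proposition~\ref{pro:partition_correspondence} applied to pairs then gives $\Ad(p)\subset I(D)\subset I(D)^-$, so $I:=I(D)$ works. Now suppose $p\in\partial\extended{\mathcal{D}}$; since $p\in\mathcal{J}(\extended{f})$, $p$ is a Julia pre\nobreakdash-critical or a Julia regular boundary point. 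In the pre\nobreakdash-critical case $w=a_i\in S(f)$ forces $p\in\C$; the unique boundary dreadlock $G_{\ad{b}}$ at $p$ is non\nobreakdash-extended, so it does not separate $\C$ (Proposition~\ref{pro:dreadlockTopology}), and $\Crit{p}=\{\ad{b}\}$ is a singleton because $p$ is a regular preimage of $a_i$. Exactly as in the first part of the proof of Lemma~\ref{lem:fixedPoints}, Theorem~\ref{thm:PlaneSeparation} then yields a unique partition sector $D$ with $p\in\partial_{\CTT{f}}D$ such that every dreadlock landing at $p$ other than $G_{\ad{b}}$ lies in $D$, whence their addresses lie in $I(D)$; a local analysis of the slit $G_{\ad{b}}$ near its landing point (this is where the argument of Lemma~\ref{lem:LeftRight} for a singleton $\Crit{p}$ is used) shows moreover that $\ad{b}$ itself lies in the interior of an interval of $I(D)^-$, so $\Ad(p)\subset I(D)^-$.

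The remaining case, which I expect to be the main obstacle, is that $p$ is a Julia regular boundary point: then $w=L(\ad{s}^i)\in\partial U(a_i)$ with $a_i\in S(f)\cap\mathcal{F}(f)$, $U'$ is the Fatou component at $p$, and the unique boundary dreadlock $G_{\ad{b}}$ at $p$ (with $\extended{f}(G_{\ad{b}})=G_{\ad{s}^i}$) is extended inside $\partial\mathcal{D}$ by the internal ray $\internal_{U'}[p]$. Because $\partial\mathcal{D}$ consists of the \emph{immediate} preimages of the chosen (extended) dreadlocks, $G_{\ad{b}}\cup\internal_{U'}[p]$ maps in one step onto $\dread{\ad{s}^i}{a_i}$, which by Definition~\ref{def:DynamicalPartitions} is the left\nobreakdash-supporting dreadlock for $U(a_i)$ at $w$; since $\extended{f}$ is an orientation\nobreakdash-preserving local homeomorphism at $p$, it follows that $G_{\ad{b}}\cup\internal_{U'}[p]$ is the left\nobreakdash-supporting dreadlock for $U'$ at $p$. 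Hence $U'$ sits on the $D_l(p)$\nobreakdash-side of this arc, while every dreadlock landing at $p$ other than $G_{\ad{b}}$ sits on the $D_r(p)$\nobreakdash-side; such a dreadlock is not a boundary dreadlock (its $\extended{f}$\nobreakdash-image is a dreadlock landing at $w$ distinct from $G_{\ad{s}^i}$, hence is not the dreadlock part of any chosen dreadlock), so it is disjoint from $\partial\mathcal{D}$ and therefore lies in the single sector $D_r(p)$. With $\ad{b}\in I(D_r(p))^-$ by the definition of $D_r(p)$, this yields $\Ad(p)\subset I(D_r(p))^-$, and $I:=I(D_r(p))$ finishes the proof. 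The delicate point is precisely this last case: one must carefully track ``left'' versus ``right'' through the orientation\nobreakdash-preserving local inverse of $\extended{f}$ to see both that the left\nobreakdash-supporting property is inherited at $p$ and that the non\nobreakdash-boundary dreadlocks landing at $p$ really all fall into $D_r(p)$ rather than straddling the extended boundary dreadlock --- which is exactly the place where the hypothesis $p\notin C(\extended{f})$, i.e.\ the absence of critical branching at $p$, is indispensable.
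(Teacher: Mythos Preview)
Your proof is correct and follows essentially the same three-case structure as the paper's proof: interior point, Julia pre-critical boundary point, and Julia regular boundary point. The paper is terser---in the pre-critical case it simply invokes Lemma~\ref{lem:LeftRight} (whose proof contains exactly your observation that $\Crit{p}$ is a singleton for a regular preimage, forcing $\ad{b}$ into the interior of a consolidated interval of some $I^-$), and in the regular case it appeals directly to the left-supporting convention without spelling out the orientation transfer---but your more explicit tracking of the local homeomorphism and the $D_l$/$D_r$ sides arrives at the same sector $I=I(D_r(p))$ that the paper singles out.
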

	\begin{proof}
		First, assume that $p\notin\partial_{\CTT{f}}\mathcal{D}$, so $p\in\extended{D}$ (see Definition~\ref{def:extendedPartition}) for some $D\in\mathcal{D}$. All filaments that land at $p$ are entirely contained in $\extended{D}$, so $\Ad(p)\subset I(D)$ by Proposition~\ref{pro:partition_correspondence}.
		
		If $p\in\partial_{\CTT{f}}\mathcal{D}$ is a Julia critical boundary point, it follows from $p\notin C(\extended{f})$ and Lemma~\ref{lem:LeftRight} that there is a partition sector $I\in\mathcal{I}$ so that $\Ad(p)\subset I^-$.
		
		If instead $p\in\partial_{\CTT{f}}\mathcal{D}$ is a Julia regular point, there is a Fatou center $c\in\extended{f}^{-1}(S(f))\cap\mathcal{F}(\extended{f})$ and a unique address $\ad{p}\in\Ad(p)$ such that $\ad{p}\in\Crit{c}$ (see the paragraph before Definition~\ref{def:CombinatorialPartition} for the definition of $\Crit{c}$). It follows from the fact that $G_{\ad{p}}[c]$ is the left supporting filament for $U(c)$ at $p$ that there exists a sector $I\in\mathcal{I}$ such that $\ad{\tilde{p}}\in I$ for all $\ad{\tilde{p}}\in\Ad(p)\setminus\{\ad{p}\}$ and $\ad{p}\in I^-\setminus I$.
	\end{proof}
	
	\begin{Pro}[Landing behavior of periodic filaments]~\label{pro:PeriodicLanding}
		Let $\mathcal{D}$ be a simple dynamic partition for the psf entire function $f$, and let $G_{\ad{t}}$ and $G_{\ad{u}}$ be periodic filaments. Then $G_{\ad{t}}$ and $G_{\ad{u}}$ land together if and only if $\itinl{\ad{t}}{\mathcal{I}}=\itinl{\ad{u}}{\mathcal{I}}$.
	\end{Pro}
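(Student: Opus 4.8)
The plan is to prove the two implications separately. The forward direction uses Lemma~\ref{lem:SameSector} applied along the orbit of the common landing point, and the backward direction uses the translation Lemma~\ref{lem:AdjacentItineraries} together with the uniqueness statement Proposition~\ref{pro:UniqueItineraries}. First I would record a preliminary observation used in both directions: since $\ad{t}$ is periodic, the dreadlock $G_{\ad{t}}$ is periodic, so by Theorem~\ref{thm:LandingBR} its landing point $p:=L(\ad{t})$ is a repelling periodic point in $\mathcal{J}(f)\subset\C$; in particular $p\notin C(\extended{f})$, because a point of $C(\extended{f})$ either lies in $C(f)$ (where some iterate has vanishing derivative, impossible for a repelling periodic point) or lies in $\CTT{f}\setminus\C$ (which contains no periodic points). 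Each $p_j:=\extended{f}^{\circ j}(p)$ on the finite orbit of $p$ is again the landing point of the periodic dreadlock $G_{\sigma^{\circ j}(\ad{t})}$, hence likewise $p_j\notin C(\extended{f})$. Also, if $\ad{t}$ is periodic then $\itinl{\ad{t}}{\mathcal{I}}$ is a periodic sequence of partition sectors.

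For the implication ``$L(\ad{t})=L(\ad{u})\Rightarrow\itinl{\ad{t}}{\mathcal{I}}=\itinl{\ad{u}}{\mathcal{I}}$'', I would set $p:=L(\ad{t})=L(\ad{u})$ and $p_j:=\extended{f}^{\circ j}(p)$. For each $j\geq 0$ the addresses $\sigma^{\circ j}(\ad{t})$ and $\sigma^{\circ j}(\ad{u})$ both land at $p_j$, so both lie in $\Ad(p_j)$. Since $p_j\notin C(\extended{f})$, Lemma~\ref{lem:SameSector} yields a partition sector $I_j\in\mathcal{I}$ with $\Ad(p_j)\subset I_j^-$; since the left sectors $\{I^-\colon I\in\mathcal{I}\}$ form a partition of $\AD$ (Corollary and Definition~\ref{cor:FullPartition}), the $j$-th entries of both $\itinl{\ad{t}}{\mathcal{I}}$ and $\itinl{\ad{u}}{\mathcal{I}}$ are forced to equal $I_j$. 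Hence the two left-sided itineraries coincide.

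For the converse, I would assume $\itinl{\ad{t}}{\mathcal{I}}=\itinl{\ad{u}}{\mathcal{I}}=:\itn{u}$, a periodic sequence $(I_j)_{j\geq 0}$ of partition sectors of $\AD$. By Definition~\ref{def:LeftSidedItineraries}, $\itinc{\ad{t}}$ and $\itinc{\ad{u}}$ are each adjacent to $\itn{u}$, so Lemma~\ref{lem:AdjacentItineraries} gives that $\itin{L(\ad{t})}{\mathcal{D}}$ and $\itin{L(\ad{u})}{\mathcal{D}}$ are both adjacent to the periodic sequence $(D(I_j))_{j\geq 0}$ of partition sectors of $\mathcal{D}$. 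Since $\mathcal{D}$ is a simple dynamic partition and $L(\ad{t})$, $L(\ad{u})$ are periodic points of $\mathcal{J}(\extended{f})$, Proposition~\ref{pro:UniqueItineraries} (unique itineraries) forces $L(\ad{t})=L(\ad{u})$, i.e.\ $G_{\ad{t}}$ and $G_{\ad{u}}$ land together.

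The step I expect to be the main obstacle is the boundary case: a priori some $\sigma^{\circ j}(\ad{t})$ may lie on $\partial\mathcal{I}$, so that $p_j$ lies on $\partial\extended{\mathcal{D}}$ and its ``plain'' itinerary is ambiguous. Working with \emph{left}-sided itineraries and invoking Lemma~\ref{lem:SameSector} is precisely what resolves this, since all dreadlocks landing at a non-critical boundary point lie consistently on the same side of the relevant boundary dreadlock, so their left-sided itineraries agree and are determined by the common landing point. The small but essential input that keeps Lemma~\ref{lem:SameSector} applicable along the entire orbit is the observation that no point on the orbit of a periodic landing point can be a critical point of $\extended{f}$; verifying this carefully (including ruling out transcendental singularities, which are never periodic) is where I would be most cautious.
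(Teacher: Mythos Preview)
Your proposal is correct and follows essentially the same approach as the paper: the forward direction applies Lemma~\ref{lem:SameSector} along the orbit of the common landing point (using that a repelling periodic point cannot lie in $C(\extended{f})$), and the backward direction combines Lemma~\ref{lem:AdjacentItineraries} with Proposition~\ref{pro:UniqueItineraries}. Your write-up is in fact more explicit than the paper's, in particular your justification that no $p_j$ lies in $C(\extended{f})$ and your invocation of Corollary~\ref{cor:FullPartition} to pin down the $j$-th entry uniquely.
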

	\begin{proof}
		First, assume that $G_{\ad{t}}$ and $G_{\ad{u}}$ have the same landing point $p\in\C$. As $p\notin C(\extended{f})$, Lemma~\ref{lem:SameSector} implies that $\ad{t},\ad{u}\in I^-$ for some $I\in\mathcal{I}$. The same reasoning applies to all points on the forward orbit of $p$, so we have $\itinl{\ad{t}}{\mathcal{I}}=\itinl{\ad{u}}{\mathcal{I}}$.
		
		Conversely, assume that $\itinl{\ad{t}}{\mathcal{I}}=\itinl{\ad{u}}{\mathcal{I}}$. By Lemma~\ref{lem:AdjacentItineraries}, there exists a periodic itinerary to which both $\itin{L(\ad{t})}{\mathcal{D}}$ and $\itin{L(\ad{u})}{\mathcal{D}}$ are adjacent. By Proposition~\ref{pro:UniqueItineraries}, this implies $L(\ad{t})=L(\ad{u})$.
	\end{proof}
	For the proof of the full statement, we need two additional lemmata.
	\begin{Lem}[Preimages and the landing equivalence]
		Let $\mathcal{I}$ be a simple dynamic partition, and let $I\in\mathcal{I}$ be a partition sector. Let $\ad{s},\ad{t}\in I^-$ be distinct (pre-)periodic addresses. Then $\ad{s}\landeq\ad{t}$ if and only if $\sigma(\ad{s})\landeq\sigma(\ad{t})$.\label{lem:PreimageLanding}
	\end{Lem}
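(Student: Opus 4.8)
The plan is to establish the two implications separately; the forward one is essentially formal, and the hypothesis $\ad{s},\ad{t}\in I^{-}$ is only needed for the converse.

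The basic observation is that landing points are compatible with the dynamics: for every (pre-)periodic address $\ad{r}$ one has $\extended{f}(L(\ad{r}))=L(\sigma(\ad{r}))$. Indeed, $\extended{f}$ is continuous and $\extended{f}(G_{\ad{r}})=G_{\sigma(\ad{r})}$ by Theorem~\ref{thm:EscapingSet}, so $\extended{f}$ carries $\cl_{\CTT{f}}(G_{\ad{r}})$ into $\cl_{\CTT{f}}(G_{\sigma(\ad{r})})$; since $L(\ad{r})\in\cl_{\CTT{f}}(G_{\ad{r}})$ is (pre-)periodic by Lemma~\ref{lem:PreperiodicLanding}, its image is not escaping and hence coincides with the unique non-escaping point $L(\sigma(\ad{r}))$ of $\cl_{\CTT{f}}(G_{\sigma(\ad{r})})$. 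From this the forward implication is immediate: if $\ad{s}\landeq\ad{t}$, that is $L(\ad{s})=L(\ad{t})$, then $L(\sigma(\ad{s}))=\extended{f}(L(\ad{s}))=\extended{f}(L(\ad{t}))=L(\sigma(\ad{t}))$.

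For the converse, suppose $q:=L(\sigma(\ad{s}))=L(\sigma(\ad{t}))$, and let $D:=D(I)\in\mathcal{D}$ be the partition sector of the plane corresponding to $I$. By the previous paragraph both $L(\ad{s})$ and $L(\ad{t})$ lie in $\extended{f}^{-1}(q)$, so it suffices to show they are equal; I would do this by proving that $\cl_{\CTT{f}}(D)\cap\extended{f}^{-1}(q)$ consists of a single point and contains both of them. That it contains $L(\ad{s})$ (and likewise $L(\ad{t})$) uses $\ad{s}\in I^{-}\subseteq\overline{I}$: if $\ad{s}$ lies in the interior sector $I$ then $G_{\ad{s}}\cap\overline{D}\neq\emptyset$ and, $G_{\ad{s}}$ being (pre-)periodic, $L(\ad{s})\in\cl_{\CTT{f}}(D)$ by Corollary~\ref{cor:BoundaryAddresses}; if instead $\ad{s}\in I^{-}\setminus I$ then $\ad{s}$ is one of the boundary addresses $\ad{t}^{2i}$ of Proposition~\ref{pro:SectorTopology}, so $L(\ad{s})$ equals one of the points $c_{i}\in\partial_{\CTT{f}}D$ furnished by Lemma~\ref{lem:DTopology}. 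To see that the set is a singleton I would combine three points. First, $\extended{f}$ restricts to a homeomorphism from the extended sector $\extended{D}$ onto the extended base domain $\extended{B}$ (Lemma~\ref{lem:BasicProperties}), so $q$ has at most one preimage in $\extended{D}$. Second, by Lemma~\ref{lem:DTopology} the boundary $\partial_{\CTT{f}}D$ is covered by closures of dreadlocks landing at the finitely many points $c_{i}$, and all of those over a fixed index $i$ land at the single point $c_{i}$; a point of $\partial_{\CTT{f}}D$ mapping to $q$ must be such a landing point (the escaping points on these dreadlocks, and the interior internal-ray points when a dreadlock is extended, cannot map to the (pre-)periodic Julia point $q$), so $q$ has at most one preimage on $\partial_{\CTT{f}}D$. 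Third, $q$ cannot have preimages both in $\extended{D}$ and on $\partial_{\CTT{f}}D$: if $q\in\extended{B}$ there is no boundary preimage, and if $q\notin\extended{B}$ --- equivalently, $q$ lies on one of the chosen dreadlocks $\dread{\ad{s}^{i}}{a_{i}}$, necessarily as its landing point since $q$ is (pre-)periodic while dreadlock points escape --- there is no interior preimage. Hence $\cl_{\CTT{f}}(D)\cap\extended{f}^{-1}(q)$ is a single point, forcing $L(\ad{s})=L(\ad{t})$, i.e.\ $\ad{s}\landeq\ad{t}$.

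The step I expect to be the main obstacle is the uniqueness of the boundary preimage and the dichotomy in the last point above, together with the bookkeeping when the singular value $a_{i}$ on whose chosen dreadlock $q$ lies belongs to the Fatou set: in that case $\dread{\ad{s}^{i}}{a_{i}}$ is an \emph{extended} dreadlock, $q$ is the (pre-)periodic boundary point $L(\ad{s}^{i})\in\partial U(a_{i})$ rather than the Fatou center, and one has to identify correctly the corresponding boundary point of $D$ within the structure described by Lemma~\ref{lem:DTopology}. Everything else is a routine use of the covering properties of dynamic partitions from Section~\ref{section:DynamicalPartitions}; simplicity of $\mathcal{D}$ does not appear to be needed for this particular step beyond what is already contained in the definition of a dynamic partition.
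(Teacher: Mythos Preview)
Your forward implication and the general plan for the converse are fine, and your observation $\extended{f}(L(\ad{r}))=L(\sigma(\ad{r}))$ is exactly what is needed. However, the heart of your converse --- that $\cl_{\CTT{f}}(D)\cap\extended{f}^{-1}(q)$ is a singleton --- is false in precisely the Fatou case you flag as the main obstacle, and this is a genuine gap rather than a bookkeeping issue.

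Concretely, suppose $a_i\in S(f)\cap\mathcal{F}(f)$, so $q=L(\ad{s}^i)\in\partial U(a_i)$, and suppose the associated boundary center $c_i\in\partial_{\CTT{f}}D$ is a genuine critical point (or transcendental singularity) in the Fatou set. In the notation of Proposition~\ref{pro:SectorTopology} there are two boundary addresses $\ad{t}^{2i},\ad{t}^{2i+1}\in\Crit{c_i}$, and the corresponding extended dreadlocks $\dread{\ad{t}^{2i}}{c_i}$, $\dread{\ad{t}^{2i+1}}{c_i}$ both lie on $\partial_{\CTT{f}}D$. Their dreadlock parts land at points $L(\ad{t}^{2i}),L(\ad{t}^{2i+1})\in\partial U(c_i)$ which are \emph{distinct} by Lemma~\ref{lem:DistinctLandingPoints} (distinct internal rays have distinct landing points), yet both map to $q$ under $\extended{f}$ since $\sigma(\ad{t}^{2i})=\sigma(\ad{t}^{2i+1})=\ad{s}^i$. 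Hence $\cl_{\CTT{f}}(D)\cap\extended{f}^{-1}(q)$ contains at least two points, and your singleton claim fails.

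What rescues the statement is the hypothesis $\ad{s},\ad{t}\in I^{-}$ used at the level of addresses rather than merely of $\cl_{\CTT{f}}(D)$: only $\ad{t}^{2i}$ lies in $I^{-}$, while $\ad{t}^{2i+1}$ lies in the neighboring sector's left closure. One then needs to know that \emph{every} address landing at $L(\ad{t}^{2i+1})$ lies outside $I^{-}$, so that $\ad{s},\ad{t}\in I^{-}$ forces $L(\ad{s})=L(\ad{t})=L(\ad{t}^{2i})$. This is exactly the content of Lemma~\ref{lem:SameSector}, and it is how the paper proves the converse in one line: there is only one preimage $p$ of $q$ with $\Ad(p)\subset I^{-}$. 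Your geometric approach via $\cl_{\CTT{f}}(D)$ is too coarse to see the left/right distinction, and to repair it you would end up re-proving Lemma~\ref{lem:SameSector}.
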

	\begin{proof}
		Of course, $\ad{s}\landeq\ad{t}$ implies $\sigma(\ad{s})\landeq\sigma(\ad{t})$. For the other direction, note that both filaments $G_{\ad{s}}$ and $G_{\ad{t}}$ land at a preimage of $L(\sigma(\ad{s}))$. By Lemma~\ref{lem:SameSector}, there is only one preimage $p$ of $L(\sigma(\ad{s}))$ so that the addresses of the filaments that land at $p$ are contained in $I^-$. Hence, both $G_{\ad{s}}$ and $G_{\ad{t}}$ land at $p$.
	\end{proof}
	
	\begin{Lem}
		Let $\ad{s}\in\AD$ be (pre\nobreakdash-)periodic, and let $\mathcal{I}$ be a dynamic partition of $\AD$. Assume that $\itinl{\ad{s}}{\mathcal{I}}=\overline{I_0\ldots I_{n-1}}$ is periodic. Then $\ad{s}$ is periodic.\label{lem:NoPreperiodc}
	\end{Lem}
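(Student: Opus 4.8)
The plan is to exploit Lemma~\ref{lem:LeftRight}, which tells us that for every partition sector $I\in\mathcal{I}$ the restriction $\restr{\sigma}{I^-}\colon I^-\to\AD$ is an order-preserving bijection; in particular it admits a well-defined inverse branch $\tau_I\colon\AD\to I^-$ with $\tau_I\circ\restr{\sigma}{I^-}=\id_{I^-}$. Writing $\itinl{\ad{s}}{\mathcal{I}}=\overline{I_0\ldots I_{n-1}}$, so that $\sigma^{\circ j}(\ad{s})\in I_j^-$ for all $j\geq 0$ with indices read modulo $n$ (the overline denotes genuine periodicity, $I_{j+n}=I_j$), I would set
\[
g:=\tau_{I_0}\circ\tau_{I_1}\circ\cdots\circ\tau_{I_{n-1}}\colon\AD\to I_0^-.
\]

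The first step is to record the key identity that $g$ undoes one period of the shift along the orbit of $\ad{s}$: for every $k\geq 0$,
\[
g\bigl(\sigma^{\circ n(k+1)}(\ad{s})\bigr)=\sigma^{\circ nk}(\ad{s}).
\]
Indeed, since shifting the purely periodic sequence $\overline{I_0\ldots I_{n-1}}$ by the multiple $nk$ of its period gives the same sequence, the point $\sigma^{\circ nk}(\ad{s})$ again has left-sided itinerary $\overline{I_0\ldots I_{n-1}}$, so $\sigma^{\circ(nk+j)}(\ad{s})\in I_j^-$ for $j=0,\ldots,n-1$. Applying $\tau_{I_j}\circ\restr{\sigma}{I_j^-}=\id$ at the point $\sigma^{\circ(nk+j)}(\ad{s})$ for each $j$ and chaining these equalities together yields $\sigma^{\circ nk}(\ad{s})=\tau_{I_0}\bigl(\tau_{I_1}\bigl(\cdots\tau_{I_{n-1}}\bigl(\sigma^{\circ n(k+1)}(\ad{s})\bigr)\cdots\bigr)\bigr)=g\bigl(\sigma^{\circ n(k+1)}(\ad{s})\bigr)$.

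Now I would invoke the hypothesis that $\ad{s}$ is (pre\nobreakdash-)periodic under $\sigma$: its forward $\sigma$-orbit is finite, hence so is $\{\sigma^{\circ nk}(\ad{s})\colon k\geq 0\}$, and there are integers $0\leq a<b$ with $\sigma^{\circ na}(\ad{s})=\sigma^{\circ nb}(\ad{s})$. Applying the identity from the previous step $a$ times gives $g^{\circ a}\bigl(\sigma^{\circ na}(\ad{s})\bigr)=\ad{s}$ and $g^{\circ a}\bigl(\sigma^{\circ nb}(\ad{s})\bigr)=\sigma^{\circ n(b-a)}(\ad{s})$, so $\ad{s}=\sigma^{\circ n(b-a)}(\ad{s})$ with $n(b-a)\geq 1$. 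Thus $\ad{s}$ is periodic.

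I do not expect a serious obstacle: the only nontrivial input is the bijectivity of $\restr{\sigma}{I^-}$ supplied by Lemma~\ref{lem:LeftRight}, and once the inverse branches $\tau_I$ are available the argument is the standard "finite orbit together with an injective first-return map forces a cycle." The one point requiring a little care is the bookkeeping of indices modulo $n$ when checking that $\sigma^{\circ nk}(\ad{s})$ retains the itinerary $\overline{I_0\ldots I_{n-1}}$, but this is immediate from the purely periodic form of the assumed left-sided itinerary.
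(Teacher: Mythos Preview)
Your proof is correct. Both your argument and the paper's rest on the same key ingredient, Lemma~\ref{lem:LeftRight} (bijectivity of $\restr{\sigma}{I^-}$), but the executions differ. The paper argues by contradiction: if $\ad{s}$ were strictly preperiodic, the last preperiodic point $\ad{t}$ on its orbit and the periodic preimage $\tilde{\ad{t}}$ of $\sigma(\ad{t})$ would be two distinct points of the same $I_j^-$ with the same $\sigma$-image, violating injectivity. You instead build the inverse branches $\tau_{I_j}$ and the composite $g=\tau_{I_0}\circ\cdots\circ\tau_{I_{n-1}}$, then use the standard ``finite orbit plus injective first-return map forces a cycle'' argument directly. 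Your route is slightly longer but more constructive and avoids having to locate the transition point from the preperiodic to the periodic part of the orbit; the paper's route is shorter but requires a moment's thought to see why $\ad{t}$ and $\tilde{\ad{t}}$ lie in the \emph{same} $I_j^-$. Either way, the substance is the same.
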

	\begin{proof}
		Assume to the contrary that $\ad{s}$ is preperiodic, and let $\ad{t}$ be the last preperiodic address contained in $\Omega^+(\ad{s})$. Then $\ad{u}:=\sigma(\ad{t})$ is periodic and has a unique periodic preimage $\ad{\tilde{t}}$. By hypothesis, we have $\ad{t},\ad{\tilde{t}}\in I_{j}^-$ for some $j\in\{0,\ldots,n-1\}$, $\ad{t}\neq\tilde{\ad{t}}$, and $\sigma(\ad{t})=\sigma(\ad{\tilde{t}})=\ad{u}$. This contradicts the fact that $\restr{\sigma}{I_j^-}$ is injective by Lemma~\ref{lem:LeftRight}.
	\end{proof}
	
	We are now ready to prove the main result of this paper.
	
	\begin{Thm}[The landing equivalence relation]
		Let $\mathcal{I}$ be a simple dynamic partition of $\AD$. Then the landing equivalence relation $\landeq$ on the set $\ADPer$ of (pre\nobreakdash-)periodic external addresses is the equivalence relation generated by the following relations:
		\begin{enumerate}
			\item We have $\ad{s}\landeq\ad{t}$ if $\itinl{\ad{s}}{\mathcal{I}}=\itinl{\ad{t}}{\mathcal{I}}$. \label{en:leftSided}
			\item We have $\ad{s}\landeq\ad{t}$ if there exists an $i_0\geq 0$ such that $I_i^-(\ad{s})=I_i^-(\ad{t})$ for all $i<i_0$ and $\sigma^{\circ i_0}(\ad{s}),\sigma^{\circ i_0}(\ad{t})\in\mathcal{C}(c)$ for some $c\in C(\extended{f})\cap\mathcal{J}(\extended{f})$. \label{en:JuliaCritical}
		\end{enumerate} \label{thm:landingEquivalence}
	\end{Thm}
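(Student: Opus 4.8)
The plan is to establish separately the two inclusions between $\landeq$ and the equivalence relation $\approx$ generated by~\eqref{en:leftSided} and~\eqref{en:JuliaCritical}. That $\approx\ \subseteq\ \landeq$ is the shorter half. For~\eqref{en:leftSided}: if $\itinl{\ad s}{\mathcal I}=\itinl{\ad t}{\mathcal I}$ then, since $\itinc{\ad s}$ is adjacent to $\itinl{\ad s}{\mathcal I}$, Lemma~\ref{lem:AdjacentItineraries} shows that $\itin{L(\ad s)}{\mathcal D}$ and $\itin{L(\ad t)}{\mathcal D}$ are both adjacent to the same (pre\nobreakdash-)periodic sequence $(D(I_j))_{j\ge 0}$, whence $L(\ad s)=L(\ad t)$ by Proposition~\ref{pro:UniqueItineraries}. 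For~\eqref{en:JuliaCritical}: the defining property of $\Crit{c}$ gives $L(\sigma^{\circ i_0}(\ad s))=c=L(\sigma^{\circ i_0}(\ad t))$, and since $\sigma^{\circ i}(\ad s)$ and $\sigma^{\circ i}(\ad t)$ lie in the common left sector $I_i^-$ for all $i<i_0$, applying Lemma~\ref{lem:PreimageLanding} repeatedly descends this equality of landing points $i_0$ steps down to $\ad s\landeq\ad t$ (the trivial subcase where the two addresses agree at some stage is immediate).

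For the reverse inclusion $\landeq\ \subseteq\ \approx$, I fix $\ad s,\ad t\in\ADPer$ with $L(\ad s)=L(\ad t)=:p$ and argue by induction on the preperiod $\ell$ of $p$. The base case $\ell=0$ is essentially Proposition~\ref{pro:PeriodicLanding}: such a $p$ is a repelling periodic point, so it and all its iterates lie in $\C\setminus C(\extended f)$, and applying Lemma~\ref{lem:SameSector} at each $\extended{f}^{\circ i}(p)$ forces $\itinl{\ad s}{\mathcal I}=\itinl{\ad t}{\mathcal I}$, i.e.\ relation~\eqref{en:leftSided}. For $\ell\ge 1$, compare the two left itineraries. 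If they agree we are in case~\eqref{en:leftSided}; otherwise let $i_0$ be the first index with $I_{i_0}^-(\ad s)\ne I_{i_0}^-(\ad t)$. Then $\sigma^{\circ i_0}(\ad s)$ and $\sigma^{\circ i_0}(\ad t)$ land at $p_{i_0}:=\extended{f}^{\circ i_0}(p)$ but lie in distinct left sectors, so Lemma~\ref{lem:SameSector} forces $p_{i_0}\in C(\extended f)\cap\mathcal J(\extended f)$; since dreadlocks never accumulate at points of $\CTT{f}\setminus\C$ while $p_{i_0}\in\partial\extended{\mathcal D}$, the point $p_{i_0}$ must be a Julia pre\nobreakdash-critical boundary point with $\extended f(p_{i_0})=a_j\in S(f)\cap\mathcal J(f)$, so in particular $\Crit{p_{i_0}}$ is defined.

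The key step is a lifting argument through $p_{i_0}$. The addresses $\sigma^{\circ i_0+1}(\ad s)$, $\sigma^{\circ i_0+1}(\ad t)$, and $\ad s^j$ all land at $a_j$, a point of preperiod $\ell-i_0-1<\ell$, so by the inductive hypothesis each of the first two is joined to $\ad s^j$ by a chain of relations~\eqref{en:leftSided} and~\eqref{en:JuliaCritical}, all of whose members land at $a_j$ by the first inclusion. Starting from $\sigma^{\circ i_0}(\ad s)$, which lies in some left sector $K^-$, I lift such a chain by replacing each member $\ad r$ with $(\restr{\sigma}{K^-})^{-1}(\ad r)$, which is well defined since $\restr{\sigma}{K^-}$ is an order-preserving bijection onto $\AD$ by Lemma~\ref{lem:LeftRight}. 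A direct bookkeeping check shows that each link of the lifted chain is again a relation~\eqref{en:leftSided} or~\eqref{en:JuliaCritical} (the latter with its index increased by one and an extra leading sector $K$), and the first inclusion then shows inductively that every lifted member again lands at $p_{i_0}$; in particular the terminal member lies over $\ad s^j$ and lands at $p_{i_0}$, hence lies in $\Crit{p_{i_0}}$. Carrying this out for $\ad s$ and for $\ad t$ produces a chain $\sigma^{\circ i_0}(\ad s)\approx\ad c\approx\ad c'\approx\sigma^{\circ i_0}(\ad t)$ with $\ad c,\ad c'\in\Crit{p_{i_0}}$, the middle link being relation~\eqref{en:JuliaCritical} with $i_0=0$. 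Since $I_i^-(\ad s)=I_i^-(\ad t)$ for $i<i_0$, pulling this chain back the remaining $i_0$ steps is the benign same-sector lift (using $(\restr{\sigma}{J^-})^{-1}$ with $J=I_i^-(\ad s)$ at step $i$) and yields $\ad s\approx\ad t$, closing the induction.

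The main obstacle is precisely this lift through the Julia pre\nobreakdash-critical point $p_{i_0}$: there the preimages of one address that land at $p_{i_0}$ are scattered over many different left sectors, so a chain cannot simply be transported one step downward while staying over $p_{i_0}$. Relation~\eqref{en:JuliaCritical} is exactly what bridges these sectors once the chain has reached $\Crit{p_{i_0}}$, and the combination of the inverse branches $(\restr{\sigma}{K^-})^{-1}$ with the already-established implication ``relations $\Rightarrow$ $\landeq$'' is what prevents the lifted chain from straying to a wrong preimage of $a_j$. The remaining work — verifying that each of the two relation types survives the lift with the correct reindexing, that the relevant preperiods strictly decrease so the induction closes, and disposing of the trivial subcases where two of the addresses coincide at some stage — is routine.
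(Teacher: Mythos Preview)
Your proposal is correct and follows essentially the same route as the paper. The two organizational differences are minor: you establish $\approx\subseteq\landeq$ directly via Lemma~\ref{lem:AdjacentItineraries} and Proposition~\ref{pro:UniqueItineraries} (the paper folds this into the simultaneous induction), and in the reverse direction you jump straight to the first discrepancy index $i_0$ whereas the paper peels off one shift at a time, splitting each step into the non-critical case (Lemma~\ref{lem:SameSector} lift) and the critical case ($\Crit{c}$ bridge). Your ``benign same-sector lift'' for the final $i_0$ steps is exactly the paper's non-critical case iterated, and your treatment at $p_{i_0}$ matches the paper's critical case. One small expository point: the sentence invoking ``dreadlocks never accumulate at points of $\CTT{f}\setminus\C$'' is not the real reason $p_{i_0}$ is Julia pre-critical; it follows directly because $p_{i_0}\in C(\extended f)$ forces $\extended f(p_{i_0})\in S(f)$, which is precisely the defining property of Case~1 in Section~\ref{section:Itineraries}.
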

	\begin{proof}
		It follows from Theorem~\ref{thm:PreperiodicLanding} that landing-related addresses have equal preperiod and period. Let $\sim$ denote the smallest equivalence relation on $\ADPer$ generated by the relations of type (\ref{en:leftSided}) and (\ref{en:JuliaCritical}). The statement of the theorem is that $\sim~=~\landeq$. By the above, one necessary condition for the theorem to hold is that addresses identified via $\sim$ have equal preperiod and period.
		\vspace{2mm}
		
		\textbf{Claim.} Assume that $\ad{s},\ad{t}\in\ADPer$ satisfy $\ad{s}\sim\ad{t}$. Then $\ad{s}$ and $\ad{t}$ have equal preperiod and period.
		\vspace{2mm}
		
		If $\ad{s}$ and $\ad{t}$ are related via (\ref{en:leftSided}), then $\itinl{\ad{s}}{\mathcal{I}}=\itinl{\ad{t}}{\mathcal{I}}$. It follows from Lemma~\ref{lem:NoPreperiodc} that $\ad{s}$ and $\ad{t}$ have equal preperiod and period.
		If $\ad{s}$ and $\ad{t}$ are related via (\ref{en:JuliaCritical}), then there exists a point $c\in C(\extended{f})\cap\mathcal{J}(\extended{f})$ and an $i_0\geq 0$ such that $\sigma^{\circ i_0}(\ad{s}),\sigma^{\circ i_0}(\ad{t})\in\Crit{c}$. As all addresses in $\Crit{c}$ are preperiodic of equal preperiod and period, the same is true for $\ad{s}$ and $\ad{t}$.\hfill$\triangle$
		\vspace{2mm}
		
		This allows us to prove the theorem by induction over the length of the preperiod. First, assume that $\ad{s}$ and $\ad{t}$ are periodic. Then Proposition~\ref{pro:PeriodicLanding} says that $\ad{s}\landeq\ad{t}$ if and only if $\itinl{\ad{s}}{\mathcal{I}}=\itinl{\ad{t}}{\mathcal{I}}$. The addresses $\ad{s}$ and $\ad{t}$ cannot be related by a relation of type (\ref{en:JuliaCritical}), because for a point $c\in C(\extended{f})\cap\mathcal{J}(\extended{f})$ all addresses contained in $\mathcal{C}(c)$ are preperiodic. Therefore, we have $\ad{s}\landeq\ad{t}$ if and only if $\ad{s}\sim\ad{t}$.
		
		Assume that there exists an $m\in\N_0$ such that the theorem is true for all external addresses $\ad{s}$ such that $\sigma^{\circ m}(\ad{s})$ is periodic. Let $\ad{s},\ad{t}$ be preperiodic addresses such that $\sigma^{\circ m+1}(\ad{s})$ and $\sigma^{\circ m+1}(\ad{t})$ are periodic, but $\sigma^{\circ m}(\ad{s})$ and $\sigma^{\circ m}(\ad{t})$ are not. We need to show that $\ad{s}\landeq\ad{t}$ if and only if $\ad{s}\sim\ad{t}$.
		
		First, assume that $\ad{s}\landeq\ad{t}$, and let $p=L(\ad{s})=L(\ad{t})\in\CTT{f}$ be the common landing point of $G_{\ad{s}}$ and $G_{\ad{t}}$. The filaments $G_{\sigma(\ad{s})}$ and $G_{\sigma(\ad{t})}$ are landing together at $\extended{f}(p)$, so we also have $\sigma(\ad{s})\landeq\sigma(\ad{t})$. By the inductive hypothesis, the addresses $\sigma(\ad{s})$ and $\sigma(\ad{t})$ are related by a finite chain of relations of type (\ref{en:leftSided}) or (\ref{en:JuliaCritical}), say \[\sigma(\ad{s})=:\tilde{\ad{u}}^0\sim\tilde{\ad{u}}^1\sim\ldots \sim\tilde{\ad{u}}^{m-1}\sim\tilde{\ad{u}}^m:=\sigma(\ad{t}),\]where each address in the chain is directly related to the next either by (\ref{en:leftSided}) or by (\ref{en:JuliaCritical}). All filaments $G_{\tilde{\ad{u}}^i}$ land at $\extended{f}(p)$ by the inductive hypothesis.
		
		If $p\notin C(\extended{f})$, then there exists a partition sector $I\in\mathcal{I}$ such that $\ad{s}\in I^-$ for all addresses satisfying $L(\ad{s})=p$ by Lemma~\ref{lem:SameSector}. Let $\ad{u}^i$ be the unique preimage of $\tilde{\ad{u}}^i$ in $I^-$. Then every $G_{\ad{u}^i}$ lands at $p$ by Lemma~\ref{lem:PreimageLanding}. No matter whether $\tilde{\ad{u}}^i$ and $\tilde{\ad{u}}^{i+1}$ are related by (\ref{en:leftSided}) or (\ref{en:JuliaCritical}), it follows from $\ad{u}^i,\ad{u}^{i+1}\in I^-$ that $\ad{u}^i\sim\ad{u}^{i+1}$ are related of the same type. As $\ad{u}^0=\ad{s}$ and $\ad{u}^{m}=\ad{t}$, this implies $\ad{s}\sim\ad{t}$.
		
		Otherwise, the point $p=c\in C(\extended{f})\cap\mathcal{J}(\extended{f})$ is critical. Let $I$ be the partition sector for which $\ad{s}\in I^-$. There exists an address $\ad{c}\in\Crit{c}\cap I^-$. By the inductive hypothesis, we have $\sigma(\ad{c})\sim\sigma(\ad{s})$. As in the preceding paragraph, it follows from $\ad{c},\ad{s}\in I^-$ that $\ad{s}\sim\ad{c}$. In the same way, we find an address $\tilde{\ad{c}}\in\Crit{c}$ such that $\ad{t}\sim\tilde{\ad{c}}$. The addresses $\ad{c},\tilde{\ad{c}}\in\Crit{c}$ are related via (\ref{en:JuliaCritical}), so we have $\ad{s}\sim\ad{c}\sim\tilde{\ad{c}}\sim\ad{t}$.
		
		Conversely, assume that $\ad{s}\sim\ad{t}$. We want to show that the filaments $G_{\ad{s}}$ and $G_{\ad{t}}$ are landing together. By transitivity, we can restrict to the case that
		$\ad{s}$ and $\ad{t}$ are directly related via (\ref{en:leftSided}) or (\ref{en:JuliaCritical}).
		
		If $\ad{s}$ and $\ad{t}$ are related via (\ref{en:leftSided}), the same is true for $\sigma(\ad{s})$ and $\sigma(\ad{t})$. By the inductive hypothesis, the shifts $\sigma(\ad{s})\landeq\sigma(\ad{t})$ are landing-related. As we have $I_0^-(\ad{s})=I_0^-(\ad{t})$, it follows from Lemma~\ref{lem:PreimageLanding} that $\ad{s}\landeq\ad{t}$.
		
		If $\ad{s}\sim\ad{t}$ via (\ref{en:JuliaCritical}), there exists an $i_0\geq 0$ such that $I_i^-(\ad{s})=I_i^-(\ad{t})$ for all $i<i_0$ and $\sigma^{\circ i_0}(\ad{s}),\sigma^{\circ i_0}(\ad{t})\in\mathcal{C}(c)$ for some $c\in C(\extended{f})\cap\mathcal{J}(\extended{f})$. If we have $i_0=0$, then $\ad{s},\ad{t}\in\Crit{c}$ and all addresses in $\Crit{c}$ are landing-related. Otherwise, the shifts $\sigma(\ad{s})$ and $\sigma(\ad{t})$ are also related via (\ref{en:JuliaCritical}), so $\sigma(\ad{s})\landeq\sigma(\ad{t})$ by the inductive hypothesis. As before, Lemma~\ref{lem:PreimageLanding} implies $\ad{s}\landeq\ad{t}$.
	\end{proof}

	\let\oldaddcontentsline\addcontentsline
	\renewcommand{\addcontentsline}[3]{}

	\let\addcontentsline\oldaddcontentsline
\end{document}